\DeclarePairedDelimiter\floor{\lfloor}{\rfloor}
\theoremstyle{plain}
\newtheorem{thm}{Theorem}
\newtheorem{prop}{Proposition}[section]
\newtheorem{lem}[prop]{Lemma}
\newtheorem{cor}[prop]{Corollary}
\newtheorem{rmk}[prop]{Remark}
\newcommand {\R} {\mathbb{R}} 
 \newcommand {\N} {\mathbb{N}}
\newcommand {\p} {\partial}
\newcommand {\D} {\Delta}
\newcommand {\supp} {\text{supp}}
\DeclareMathOperator{\Dom}{Dom}
\DeclareMathOperator{\di}{div}
\DeclareMathOperator {\dist} {dist}
\DeclareMathOperator{\F} {\mathcal{F}}
\DeclareMathOperator{\spec}{spec}
\title{Strong unique continuation for the higher order fractional Laplacian}
\author{Mar\'ia-\'Angeles Garc\'ia-Ferrero}
\address{Max-Planck-Institute for Mathematics in the Sciences, Inselstr. 22, 04103 Leipzig}
\email{garcia@mis.mpg.de}
\author{Angkana Rüland}
\address{Max-Planck-Institute for Mathematics in the Sciences, Inselstr. 22, 04103 Leipzig}
\email{rueland@mis.mpg.de}
\begin{document}

\begin{abstract}
In this article we study the strong unique continuation property for solutions of higher order (variable coefficient) fractional Schrödinger operators. We deduce the strong unique continuation property in the presence of subcritical and critical Hardy type potentials. In the same setting, we address the unique continuation property from measurable sets of positive Lebesgue measure. As applications we prove the antilocality of the higher order fractional Laplacian and Runge type approximation theorems which have recently been exploited in the context of nonlocal Calder\'on type problems. As our main tools, we rely on the characterisation of the higher order fractional Laplacian through a generalised Caffarelli-Silvestre type extension problem and on adapted, iterated Carleman estimates.
\end{abstract}

\maketitle

\section{Introduction}
\label{sec:intro}

Higher order local and nonlocal elliptic equations arise naturally in problems from conformal geometry and scattering theory \cite{CdMG11, GZ03}, (nonlinear) higher order elliptic PDEs and free boundary value problems \cite{S84,CF79}, control theory \cite{AKW18, BHS17} and inverse problems \cite{GSU16,GRSU18,RS17}. Motivated by these applications, in this article we study the \emph{strong unique continuation property} for \emph{higher order fractional Schrödinger equations}. More precisely, here we are concerned with equations of the form
\begin{align}
\label{eq:frac}
(-\D)^{\gamma} u + q u  = 0 \mbox{ in } \R^n,
\end{align}
where $\gamma \in \R_+ \setminus \N$ with suitable, possibly singular (critical and subcritical) potentials $q$. Here we say that a solution $u$ to \eqref{eq:frac} satisfies the strong unique continuation property if the condition that $u$ vanishes of infinite order at a point $x_0 \in \R^n$, i.e. if for all $m\in \N$
\begin{align*}
\lim\limits_{r\rightarrow 0}r^{-m} \|u\|_{L^2(B_r(x_0))}^2 = 0,
\end{align*}
already implies that $u\equiv 0$ in $\R^n$. The strong unique continuation property can hence be viewed as a generalisation of analyticity to rougher equations.

Apart from dealing with the model equation \eqref{eq:frac}, we also address the corresponding differential inequalities and the setting of variable coefficient fractional Schrödinger operators with coefficients which might be only of low regularity. This extends the results from \cite{Rue15} and \cite{Yu16}, where $C^2$ and $C^{1,\alpha}$ regular coefficients had been treated in the case $\gamma \in (0,1)$, to the setting of Lipschitz coefficients. Further, we discuss possible applications of the unique continuation results to inverse and control theoretic problems.

\subsection{Main results on the strong unique continuation property}

Let us outline our main results: As a model situation we deal with the strong unique continuation property (SUCP) for Schrödinger equations of the form \eqref{eq:frac}. Without loss of generality, here we normalise our set-up such that $x_0=0$.

\begin{thm}[SUCP]
\label{prop:SUCP}
Let $\gamma \in \R_+ \setminus \N $ and let $u\in H^{2 \gamma}(\R^n)$ be a solution to \eqref{eq:frac},
where 
the potential $q$ satisfies the following bounds
\begin{align*}
|q(x)|
\leq \left\{ 
\begin{array}{ll}
C_q|x|^{-2\gamma}, \mbox{ if } \gamma> \frac{1}{2},\\
c_0|x|^{-2\gamma}, \mbox{ if } \gamma = \frac{1}{2},\\
C_q |x|^{-2\gamma+\epsilon}, \mbox{ if } \gamma \in (\frac{1}{4},\frac{1}{2}).
\end{array}
\right.
\end{align*}
Here $c_0>0$ is a sufficiently small constant, and $C_q>0$ is an arbitrarily large, finite constant.
Assume that $u$ vanishes of infinite order at $x_0=0$, i.e. for all $m\in \N$
\begin{align*}
\lim\limits_{r\rightarrow 0}r^{-m} \|u\|_{L^2(B_r(0))}^2 = 0.
\end{align*}
Then $u\equiv 0$ in $\R^n$.
\end{thm}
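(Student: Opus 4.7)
The strategy is to reduce \eqref{eq:frac} to a local degenerate elliptic problem via a generalised Caffarelli-Silvestre type extension in $\R^{n+1}_+$, and then close the argument with iterated Carleman estimates adapted to the resulting higher-order operator. This extends the $\gamma\in(0,1)$ approach of \cite{Rue15} to all non-integer $\gamma$.

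\emph{Extension step.} First I associate to $u$ a function $\tilde u$ on $\R^{n+1}_+$ solving a local degenerate elliptic equation $L_\gamma \tilde u = 0$, where $L_\gamma$ factorises into $\lceil \gamma \rceil$ Caffarelli-Silvestre-type operators, one for each integer step between $\gamma$ and its fractional part $\gamma-\lfloor\gamma\rfloor$. The boundary trace $\tilde u|_{\{x_{n+1}=0\}}$ equals $u$, while appropriately normalised weighted Neumann-type traces reproduce $(-\D)^{\gamma} u$ together with the intermediate integer Laplacians $(-\D)^k u$ for $1\le k\le \lfloor\gamma\rfloor$. The equation \eqref{eq:frac} thereby becomes a Robin-type boundary condition for the local problem, and the infinite-order vanishing of $u$ at $0$ translates into super-polynomial decay of $\tilde u$ near $0\in\overline{\R^{n+1}_+}$.

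\emph{Carleman estimate.} After passing to conformal (Emden-Fowler) coordinates $t=\log|x|$ on a half-cylinder, the main effort is to prove an iterated Carleman inequality schematically of the form
\[
\sum_{j=0}^{2\lceil\gamma\rceil-1} \tau^{2\lceil\gamma\rceil-j}\,\|e^{\tau\phi}\nabla^j\tilde u\|_{L^2(w)} \lesssim \|e^{\tau\phi}L_\gamma\tilde u\|_{L^2(w)} + (\text{boundary terms}),
\]
for a pseudoconvex radial weight $\phi(t)$, large $\tau$, and the natural degenerate weight $w$. The inequality is obtained by iterating the single-step Carleman estimate of \cite{Rue15}, carefully tracking the boundary commutators produced at each stage. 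The boundary contribution generated by $qu$ is then absorbed via fractional trace and Hardy inequalities: if $\gamma>1/2$ the degeneration in $x_{n+1}$ leaves enough room to absorb critical $|x|^{-2\gamma}$ potentials for any $C_q$; at the critical exponent $\gamma=1/2$ only potentials with a small coupling $c_0$ can be absorbed; and for $\gamma\in(1/4,1/2)$ the available boundary regularity is too weak at the exact Hardy scaling, which forces the $\epsilon$-loss in the exponent of $q$.

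\emph{Conclusion.} Plugging the infinite-order vanishing of $\tilde u$ at $0$ into the Carleman inequality and sending $\tau\to\infty$ forces $\tilde u$ to vanish on a full neighbourhood of $0$ in $\overline{\R^{n+1}_+}$. Since $L_\gamma\tilde u=0$ is real-analytic hypoelliptic off the boundary, a classical interior weak unique continuation argument propagates this to $\tilde u\equiv 0$ on $\R^{n+1}_+$, and taking the trace at $\{x_{n+1}=0\}$ yields $u\equiv 0$ on $\R^n$. The principal obstacle throughout is the Carleman step: producing the correct $\tau$-power in every intermediate derivative norm while simultaneously controlling the boundary contribution of $qu$ at the sharp Hardy scaling. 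The critical thresholds $\gamma=1/2$ and $\gamma=1/4$ of the theorem reflect precisely the balance between the Carleman gain, the weighted trace estimates, and the Hardy inequality applied to the boundary term.
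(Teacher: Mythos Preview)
Your outline captures the overall architecture---extension, iterated Carleman estimate, propagation---and the explanation of the thresholds $\gamma=1/2$ and $\gamma=1/4$ is essentially right. But there is a genuine gap in the passage from the boundary hypothesis to the Carleman step.

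You assert that ``the infinite-order vanishing of $u$ at $0$ translates into super-polynomial decay of $\tilde u$ near $0\in\overline{\R^{n+1}_+}$.'' This is not automatic and is in fact the central difficulty. The hypothesis gives vanishing of infinite order only \emph{tangentially}; the behaviour of the extension $\tilde u$ (or of the system components $u_j=L_b^j\tilde u$) in the \emph{normal} direction $x_{n+1}$ is governed only by the equation, and there is no a priori reason why $\|x_{n+1}^{b/2}u_j\|_{L^2(B_r^+)}$ should decay faster than every power of $r$. Without this, the cutoff errors near the origin in the Carleman estimate cannot be absorbed, and the $\tau\to\infty$ argument breaks down.

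The paper resolves this by a dichotomy. If all bulk quantities $\|x_{n+1}^{b/2}u_j\|_{L^2(B_r^+)}$ vanish of infinite order, your direct Carleman argument applies (this is Proposition~\ref{prop:SUCP_inf_order}). If instead some $u_j$ vanishes only to finite order in the bulk, one first proves a doubling inequality (Proposition~\ref{prop:doubling}) via the Carleman estimate, then runs a blow-up/compactness argument (Proposition~\ref{prop:limit}): the rescaled limits $u_{0,j}$ solve the homogeneous system with both Dirichlet and Neumann data vanishing on $B_1'$, and one concludes by the \emph{weak} unique continuation property for the system (Proposition~\ref{prop:WUCP}), bootstrapping through the components $u_m,u_{m-1},\dots,u_0$. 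This reduction of SUCP to WUCP, driven by compactness, is the missing ingredient in your sketch.

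A secondary point: to feed the Carleman machinery you also need that the intermediate traces $u_j(\cdot,0)=c_{n,\gamma,j}(-\Delta)^j u$ vanish of infinite order, which the paper obtains by interpolating $\|(-\Delta)^j u\|_{L^2(B_r')}$ between $\|u\|_{L^2(B_{4r}')}$ and $\|u\|_{H^{2\gamma}(B_{4r}')}$. This is straightforward but should be stated.
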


\begin{rmk}
\label{rmk:small_gamma}
We remark that the limitation of the result to $\gamma>\frac{1}{4}$ arises naturally and was already present in \cite{Rue15}: Relying on Carleman estimates with weights which only have a radial dependence, we do not directly obtain positive boundary contributions but have to derive these through boundary-bulk interpolation estimates (see Lemma ~\ref{lem:bdry_bulk} and Corollary ~\ref{cor:boundary_bulk}). With respect to bulk estimates, $L^2$ Carleman estimates are however subelliptic in the large parameter $\tau$. Through the boundary-bulk estimates this is propagated to the boundary which is then reflected in the loss of a quarter derivative in $\tau$ on the boundary. In the case that one only considers radial Carleman weights this loss seems unavoidable. In order to extend the unique continuation results to the regime to $\gamma \in (0,\frac{1}{4})$ in a setting where only radial Carleman weights are used, the loss in $\tau$ has thus to be compensated by regularity of the potential (see \cite{Rue15} for corresponding results). In this case the lower order contributions would be included in the main part of the operator in the Carleman estimates (this then allows one to treat exact Hardy type potentials, but any type of perturbation of such potentials will need to obey regularity assumptions). 

One could hope to avoid this loss of derivatives by considering Carleman weights which are not only of a radial structure but also depend on the normal directions. However, due to the weighted form of the inequalities, the exact Lopatinskii type conditions necessary for this are not immediate. We do not pursue this further in this article but postpone this to future work.
\end{rmk}

\begin{rmk}
\label{rmk:nonlinearity}
It would also have been possible to treat additional nonlinear terms in the the equations. As we are mainly interested in the associated differential inequalities, we do not consider them here.
\end{rmk}

Motivated by the work on the strong unique continuation properties on higher order elliptic equations (see \cite{CK10} and the references therein), it is natural to wonder whether it is possible to extend the unique continuation property to (Hardy type higher) gradient potentials. Using iterative applications of our main Carleman estimate, we note that this is indeed the case:

\begin{thm}[SUCP with gradient potentials]
\label{prop:SUCP_higher}
Let $\gamma \in \R_+ \setminus \N $ and let $u\in H^{2 \gamma}(\R^n)$ be a weak solution to the differential inequality
\begin{align*}
|(-\D)^{\gamma}u(x)| \leq \sum\limits_{j=0}^{\floor{\gamma}}|q_j(x)||\nabla^j u(x)|  \mbox{ in } \R^n,
\end{align*}
where 
the potentials $q_j$ satisfy the following bounds
\begin{align*}
|q_j(x)|&\leq C_{q_j}|x|^{-2\gamma + j} \mbox{ if } j<\floor{\gamma},\\
|q_{\floor{\gamma}}(x)|&
\leq \left\{ 
\begin{array}{ll}
C_{q_{\floor{\gamma}}}|x|^{-2\gamma+\floor{\gamma}}, \mbox{ if } \gamma-\floor{\gamma}> \frac{1}{2},\\
c_0|x|^{-2\gamma+j}, \mbox{ if } \gamma-\floor{\gamma} = \frac{1}{2},\\
C_{q_{\floor{\gamma}}} |x|^{-2\gamma+\floor{\gamma}+\epsilon}, \mbox{ if } 
\left\{
\begin{array}{l}
\gamma\in(\frac 1 4,\frac 1 2),\\
\floor{\gamma}\geq 1 \mbox{ and }\gamma - \floor{\gamma} \in (0,\frac{1}{2}),
\end{array}
\right.
\end{array}
\right.
\end{align*}
Here $c_0>0$ is a sufficiently small constant, and $C_{q_j}>0$ are arbitrarily large, finite constants.
Assume that $u$ vanishes of infinite order at $x_0=0$.
Then $u\equiv 0$ in $\R^n$.
\end{thm}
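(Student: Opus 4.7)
The natural plan is to mirror the proof of Theorem~\ref{prop:SUCP} but replace its single Carleman estimate by an iterated one that simultaneously controls all derivatives $\nabla^j u$ for $j=0,\dots,\floor{\gamma}$. As in Theorem~\ref{prop:SUCP}, I would first pass to the generalised Caffarelli--Silvestre extension $\tilde u$ that characterises $(-\Delta)^{\gamma} u$ as a weighted boundary operator, so that the nonlocal inequality becomes a degenerate elliptic inequality on $\R^n \times \R_+$ whose "bulk" part vanishes and whose "boundary" data match $(-\Delta)^{\gamma}u$ up to harmless multiplicative constants. On this extended problem one already has the radial Carleman estimate used in Theorem~\ref{prop:SUCP}, which carries a factor $\tau^{2\gamma}$ in front of the weighted $L^2$ norm $\|e^{\tau\phi}|x|^{-2\gamma} u\|^2$ on the left hand side.

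The key step is then to upgrade this to an estimate of the form
\begin{equation*}
\sum_{j=0}^{\floor{\gamma}} \tau^{2(\gamma-j)} \bigl\|e^{\tau\phi}|x|^{-2\gamma+j}\nabla^j u\bigr\|_{L^2}^2 \;\leq\; C\, \bigl\|e^{\tau\phi}(-\Delta)^{\gamma} u\bigr\|_{L^2}^2 + (\text{harmless remainder}),
\end{equation*}
where each $\tau$-power is precisely the one dictated by the scaling of the corresponding Hardy potential in the hypothesis. I would obtain this either by applying the base Carleman estimate to the shifted objects $\nabla^j u$ (which again solve inhomogeneous $(-\Delta)^{\gamma}$ equations because the fractional Laplacian commutes with $\nabla$), together with suitable boundary--bulk and Hardy interpolation arguments to convert derivative losses into the right $\tau$-powers; the interpolations involved are already developed in the boundary-bulk lemma referenced in Remark~\ref{rmk:small_gamma}. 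At the critical ranges $\gamma-\floor{\gamma}\in\{1/2\}$ one only gains a single power of $\tau$, which is the reason smallness of $c_0$ has to be assumed.

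With such an iterated estimate in hand, the differential inequality gives pointwise $|(-\Delta)^{\gamma}u|^2\le C\sum_j |x|^{-4\gamma+2j}|\nabla^j u|^2$, and each term on the right is exactly of the type whose $e^{\tau\phi}$-weighted norm is bounded on the left hand side of the iterated Carleman inequality. Choosing $\tau$ large (respectively $c_0$ small in the critical case and exploiting the gain $|x|^\epsilon$ in the subcritical case $\gamma-\floor{\gamma}<1/2$ to produce a small constant upon restricting to a tiny ball) lets me absorb every potential contribution into the left hand side. Combining this absorption with the infinite order vanishing assumption, which forces the left hand side to be finite despite sending $\tau\to\infty$, one concludes that $u$ vanishes on a small ball around the origin; a Runge/weak unique continuation argument (or the antilocality results developed in the paper) then propagates this to $u\equiv 0$ in $\R^n$. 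The main technical obstacle is producing the right $\tau$-powers for each gradient order simultaneously while respecting the boundary-bulk loss of a quarter derivative, especially at the critical exponents; this is where the iteration and the careful accounting in Remark~\ref{rmk:small_gamma} do the real work.
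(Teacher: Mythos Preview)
Your overall plan---pass to the Caffarelli--Silvestre system, prove an iterated Carleman estimate controlling all $(\nabla')^j u$ for $j\le\floor{\gamma}$, then absorb the Hardy-type potentials---matches the paper's machinery (this is essentially Proposition~\ref{prop:syst_Carl_1} plus Corollary~\ref{cor:boundary_bulk}). However, there is a genuine gap in the endgame.

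The Carleman estimate lives on the extension in $\R^{n+1}_+$ and is applied to $\eta u_j$ with $\eta$ a radial cut-off supported in $B_4^+\setminus B_\epsilon^+$. For the inner cut-off error to vanish as $\epsilon\to 0$ you need infinite order vanishing of the \emph{bulk} functions $u_j$ in the full half-ball $B_r^+$, i.e.\ $r^{-k}\|x_{n+1}^{b/2}u_j\|_{L^2(B_r^+)}\to 0$ for all $k$. The hypothesis only gives this on the \emph{boundary} $B_r'$ (and, by interpolation, for the traces $u_j(\cdot,0)=c_{n,\gamma,j}(-\Delta)^j u$). It is not automatic that this propagates into the normal direction; the paper makes this explicit at the start of Section~\ref{sec:SUCP}. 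Your absorption argument is therefore only the easy half of the proof (Proposition~\ref{prop:SUCP_inf_order}).

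The missing half is the case where some $u_j$ vanishes only to \emph{finite} order in $B_r^+$. The paper handles this not by Carleman absorption but by using the Carleman estimate to derive a \emph{doubling inequality} (Proposition~\ref{prop:doubling}), then running a compactness/blow-up argument (Proposition~\ref{prop:limit}) that produces a nontrivial limit solving the homogeneous system with both Dirichlet and Neumann data vanishing on $B_1'$; this limit is killed by the \emph{weak} unique continuation bootstrap (Proposition~\ref{prop:WUCP}), yielding a contradiction. Your proposal does not contain this reduction, and without it the argument does not close. As a minor point, the final propagation to $\R^n$ is done by interior WUCP for the elliptic system in the upper half-plane, not by Runge/antilocality (those are downstream applications).
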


\begin{rmk}
\label{rmk:improve}
As in \cite{CK10} it would have been possible to extend this result even further to (slightly subcritical) gradient potentials  involving also contributions $|q_j(x)||\nabla^j u(x)|$ with $j \in (\floor{\gamma},\frac{3}{2}\floor{\gamma})$. As this requires some extra care, we do not present the details of this here but refer to the ideas in \cite{CK10}.
\end{rmk}

Further, relying on the methods from \cite{Rue15, KT01} as well as the spliting argument from \cite{GRSU18}, we also address the case with variable coefficient metrics and study the unique continuation properties at a point $x_0 \in \R^n$. In the sequel, without loss of generality, we will normalise our set-up such that $x_0=0$. Under this assumption, we consider the operator
\begin{align*}
L=-\nabla \cdot \tilde a \nabla
\end{align*}
for Lipschitz metrics with the following structural conditions:
\begin{itemize}
\item[(A1)]\label{cond:a1}  $\tilde{a}: \R^n \rightarrow \R^{n\times n}$ is symmetric, (strictly) positive definite, bounded.
\item[(A2)]\label{cond:a2} $\tilde{a}\in C^{\mu,1}_{loc}(\R^n, \R^{n\times n}_{sym})$ with $[\tilde{a}^{ij}]_{\dot{C}^{\mu,1}(B_4')} + [\tilde{a}^{ij}]_{\dot{C}^{0,1}(B_4')} \ll \delta$, where ${B_4':=\{x\in \R^n: \ |x|\leq 4\}}$, for some small parameter $\delta>0$. The constant $\mu>0$ is specified below.
\item[(A3)]\label{cond:a3} We assume that $\tilde{a}^{ij}(0)=\delta^{ij}$.
\end{itemize}

For this class of coefficients, we can prove the analogue of Theorem ~\ref{prop:SUCP}:
\begin{thm}[SUCP with variable coefficients]
\label{prop:SUCP_var}
Let $\gamma \in \R_+ \setminus \N $ and let $u\in \Dom(L^{\gamma})$ be a solution to 
\begin{align}
\label{eq:frac1}
|(-\nabla \cdot \tilde{a} \nabla)^{\gamma} u(x) | \leq \sum\limits_{j=0}^{\floor{\gamma}}|q_j(x)||\nabla^j u(x)|  \mbox{ in } \R^n, 
\end{align}
where
\begin{itemize}
\item the metric $\tilde{a}$ satisfies the conditions \hyperref[cond:a1]{(A1)}-\hyperref[cond:a3]{(A3)} with $\mu = 2\floor{\gamma}$,
\item the potentials $q_j$ satisfy the following bounds
\begin{align*}
|q_j(x)|&\leq C_{q_j}|x|^{-2\gamma + j} \mbox{ if } j<\floor{\gamma},\\
|q_{\floor{\gamma}}(x)|&
\leq \left\{ 
\begin{array}{ll}
C_{q_{\floor{\gamma}}}|x|^{-2\gamma+\floor{\gamma}}, \mbox{ if } \gamma-\floor{\gamma}> \frac{1}{2},\\
c_0|x|^{-2\gamma+j}, \mbox{ if } \gamma-\floor{\gamma} = \frac{1}{2},\\
C_{q_{\floor{\gamma}}} |x|^{-2\gamma+\floor{\gamma}+\epsilon}, \mbox{ if } 
\left\{
\begin{array}{l}
\gamma\in(\frac 1 4,\frac 1 2),\\
\floor{\gamma}\geq 1 \mbox{ and }\gamma - \floor{\gamma} \in (0,\frac{1}{2}),
\end{array}
\right.
\end{array}
\right.
\end{align*}
where $c_0>0$ is a sufficiently small constant, and $C_{q_j}>0$ are arbitrarily large, finite constants.
\end{itemize}
Then the strong unique continuation property holds at $x_0=0$, i.e. if $u$ vanishes of infinite order at $x_0 =0$, then $u\equiv 0$ in $\R^n$.
\end{thm}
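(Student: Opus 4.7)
The plan is to combine the techniques developed for Theorem~\ref{prop:SUCP_higher} with the splitting argument referenced from \cite{GRSU18} in order to reduce the variable coefficient problem to a constant coefficient one (plus controllable errors). First I would realise $L^{\gamma}$ through its spectral definition on $\Dom(L^{\gamma})$ and introduce an appropriate generalised Caffarelli--Silvestre type extension $\tilde{u}$ in $\R^{n+1}_+$ associated with $L$: for $\gamma\in(0,1)$ this is the standard degenerate elliptic extension with weight $y^{1-2\gamma}$ where the horizontal operator is now $L$ (acting only in $x$), and for $\gamma>1$ one iterates the construction as in the higher order constant coefficient case. The defining property is that $\tilde{u}\big|_{y=0}=u$ and that the (suitably normalised) Neumann-type data recovers $L^{\gamma}u$, so that the right-hand side bound of \eqref{eq:frac1} becomes a boundary condition for the degenerate elliptic extension.

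Next, exploiting assumption~\hyperref[cond:a3]{(A3)} that $\tilde a^{ij}(0)=\delta^{ij}$ together with the Lipschitz (in fact $C^{\mu,1}_{loc}$) regularity from~\hyperref[cond:a2]{(A2)}, I would write $\tilde a(x)=\Id+R(x)$ with $|R(x)|\lesssim \delta|x|$, and split the bulk operator of the extension into its frozen-coefficient principal part (equal to the extension operator used in Theorems~\ref{prop:SUCP} and~\ref{prop:SUCP_higher}) plus a perturbation whose coefficients vanish at the origin at a rate controlled by $\mu=2\floor{\gamma}$. Under the Carleman weights with radial dependence already employed in Remark~\ref{rmk:small_gamma}, these perturbative terms acquire additional smallness from the spatial weight, and can be absorbed into the good gradient and bulk contributions produced by the Carleman estimate, provided $\delta$ is chosen sufficiently small.

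I would then derive the corresponding Carleman estimate by running the constant coefficient derivation (integration by parts, conjugation with the weight, symmetric/antisymmetric splitting) on the frozen-coefficient operator and tracking the variable coefficient error, which by the splitting above reduces to a term that is dominated by a small multiple of the main bulk term. The boundary contributions are recovered through the boundary-bulk interpolation estimates (Lemma~\ref{lem:bdry_bulk} and Corollary~\ref{cor:boundary_bulk}), which produce the usual quarter-derivative loss in $\tau$ responsible for the range restrictions in the statement. To handle the gradient terms $|q_j||\nabla^j u|$ on the right-hand side, I would iterate the Carleman estimate exactly as in the proof of Theorem~\ref{prop:SUCP_higher}, using interior Caccioppoli-type estimates to trade tangential derivatives for lower order quantities and the hypotheses on $|q_j|$ to absorb each term into the main bulk contribution.

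Finally, infinite order vanishing at $x_0=0$ would let me send a small inner radius to zero in the Carleman inequality applied to a cut-off of $\tilde u$; choosing $\tau$ large forces $\tilde u$, and hence $u$, to vanish in a neighbourhood of the origin, after which global vanishing in $\R^n$ follows from weak unique continuation / antilocality of $L^{\gamma}$ (as developed elsewhere in the paper). The main obstacle I anticipate is the Carleman estimate itself in the variable coefficient setting: one must verify that the Lipschitz-type errors produced by the splitting, both in the bulk operator of the extension and in the boundary-bulk interpolation, are genuinely of lower order relative to the terms controlled by the weight; the requirement $\mu=2\floor{\gamma}$ together with the smallness hypothesis on $[\tilde a^{ij}]$ in~\hyperref[cond:a2]{(A2)} is exactly what is needed for this absorption to work uniformly in the large Carleman parameter $\tau$.
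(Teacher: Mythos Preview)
Your broad architecture---extension, variable-coefficient Carleman via freezing and splitting, boundary-bulk interpolation, iteration for the gradient potentials---matches the paper's. But there is a genuine gap in the final step.

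You write that ``infinite order vanishing at $x_0=0$ would let me send a small inner radius to zero in the Carleman inequality applied to a cut-off of $\tilde u$''. This only works if the \emph{extension} $\tilde u$ (and in fact all the functions $u_j=L_b^j\tilde u$ in the associated system) vanish of infinite order at $0$ in the full ball $B_r^+\subset\R^{n+1}_+$, i.e.\ in the tangential \emph{and} normal directions. The hypothesis of the theorem only gives infinite order vanishing of $u=u_0|_{x_{n+1}=0}$ on the boundary $\R^n$. An interpolation argument (which you do not mention, but is needed) upgrades this to infinite order tangential vanishing of all the traces $u_j(x',0)=c_{n,\gamma,j}L^j u(x')$. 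However, nothing in your argument rules out that some $u_j$ vanishes only to \emph{finite} order in $L^2(B_r^+,x_{n+1}^b)$; in that case the inner cut-off error in the Carleman estimate does \emph{not} go to zero as the inner radius shrinks, and the argument collapses.

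The paper handles this by a dichotomy (Section~\ref{sec:SUCP}). If all $u_j$ vanish of infinite order in $B_r^+$, your Carleman argument applies directly (Proposition~\ref{prop:SUCP_inf_order}). If instead some $u_j$ satisfies \eqref{eq:van_ord}, one first proves a doubling inequality (Proposition~\ref{prop:doubling}), then runs a blow-up along the rescalings \eqref{eq:rescaled}. The limit system \eqref{eq:syst_blow_up} has \emph{constant} coefficients, zero Neumann data, and---crucially, using the infinite order tangential vanishing---zero Dirichlet data on $B_1'$; weak unique continuation for this system (Proposition~\ref{prop:WUCP}) then contradicts the normalisation \eqref{eq:normalized}. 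This compactness/blow-up reduction to WUCP is the missing idea in your proposal.
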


\begin{rmk}
\label{rmk:reg}
As explained in the Appendix (Section \ref{sec:var_coef}), we interpret the variable coefficient fractional Laplacian through its spectral decomposition as directly related to a generalised Caffarelli-Silvestre extension. Relying on spectral theory in order to establish this, we restrict to functions ${u\in \Dom(L^{\gamma})}$. Using ideas as outlined in \cite{CaffStinga16} and \cite{Y13}, it would also have been possible to lower the required regularity of $u$ in this discussion. 
\end{rmk}

\begin{rmk}
\label{rmk:optimal_reg}
Based on counterexamples to the weak unique continuation property with metrics of any $C^{0,\alpha}$ Hölder regularity with $\alpha \in (0,1)$ due to Miller \cite{M74} and Mandache \cite{M98}, it is expected that the coefficient regularity stated in condition \hyperref[cond:a2]{(A2)} in our variable coefficient strong unique continuation result is optimal in the case $\floor{\gamma}=0$. This strengthens the results from Section 7 in \cite{Rue15} and \cite{Yu16}. 

The condition \hyperref[cond:a3]{(A3)} is to be read as a normalisation condition which can be assumed without loss of generality. We remark that condition \hyperref[cond:a2]{(A2)} together with interpolation estimates implies that $[a^{ij}]_{\dot{C}^{\ell,1}(B_4')}\leq \tilde{C} \delta$ for any $\ell \in \{1,\dots,\mu\}$.
\end{rmk}

In both the settings of Theorems ~\ref{prop:SUCP} and ~\ref{prop:SUCP_var} also the \emph{unique continuation property from measurable sets} (MUCP) holds:

\begin{thm}[MUCP] 
\label{prop:MUCP}
Let $\gamma \in \R_+\setminus \N$ and let $u \in \Dom(L^{\gamma})$ be a solution to 
\begin{align}
\label{eq:diff_ineq_frac}
|(-\nabla \cdot \tilde{a} \nabla)^{\gamma} u(x)| \leq |q(x)|| u(x)| \mbox{ in } \R^n,
\end{align}
where $\tilde{a}^{ij}$ satisfies the conditions \hyperref[cond:a1]{(A1)}-\hyperref[cond:a3]{(A3)}  with $\mu = 2\floor{\gamma}$ 
and $q\in L^\infty(\R^n)$.
If there exists a measurable set $E\subset\R^n$ with $|E|>0$ and density one at $x_0=0$  such that $u|_{E}=0$,  then    $u\equiv 0$ in $\R^n$.
\end{thm}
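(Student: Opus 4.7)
The plan is to reduce the MUCP to the SUCP of Theorem \ref{prop:SUCP_var}. Concretely, it suffices to show that any $u$ as in the hypothesis vanishes of infinite order at $x_0=0$ in the $L^2$ sense, for then Theorem \ref{prop:SUCP_var} (which applies since the bounded potential $q$ is a fortiori subcritical in the sense of the theorem) yields $u\equiv 0$. Thus the whole task reduces to upgrading "vanishing on a set of density one" to "vanishing of infinite order."

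To do so, I would pass to the generalised Caffarelli-Silvestre extension $\tilde u$ constructed in the Appendix, defined on $\R^{n+1}_+$, with a Muckenhoupt weight of the form $y^{1-2\{\gamma\}}$ (together with the iterated/polyharmonic corrections required for $\floor{\gamma}\geq 1$), so that the nonlocal problem for $u$ becomes a local degenerate elliptic problem whose boundary trace recovers $u$. Then vanishing of $u$ of infinite order at $0$ is equivalent to vanishing of $\tilde u$ of infinite order at $(0,0)$ in the associated weighted $L^2$ sense, by the bulk-boundary interpolation estimates already used in the proof of the SUCP (cf.\ Lemma \ref{lem:bdry_bulk} and Corollary \ref{cor:boundary_bulk}).

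The mechanism that produces this infinite-order vanishing is a Poincar\'e-type inequality together with a Caccioppoli-type bootstrap on the extended problem. More precisely, I would invoke the classical fact that for any $\theta \in (0,1)$ there is a constant $C_\theta$ such that for every $r>0$ and every $v \in H^1(B_r')$ which vanishes on a measurable subset $F \subset B_r'$ with $|F|\geq \theta |B_r'|$,
\begin{align*}
\|v\|_{L^2(B_r')} \leq C_\theta\, r\, \|\nabla v\|_{L^2(B_r')}.
\end{align*}
Since $E$ has density one at $0$, the density ratio $|E\cap B_r'|/|B_r'|\to 1$ as $r\to 0$, so one can run this inequality with $\theta=\theta_r \uparrow 1$, the constant $C_\theta$ degenerating only mildly. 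Combined with weighted Caccioppoli estimates and the interior bounds for the degenerate extension (which transfer $H^1$-control on $u$ into control of $\nabla u$ by $\tilde u$ on one-sided neighbourhoods), one obtains doubling-type ratios $\|u\|_{L^2(B_r)}/\|u\|_{L^2(B_{2r})}$ that can be iterated to decay faster than any polynomial, hence the claimed infinite-order vanishing.

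The principal obstacle is handling the generalised extension when $\floor{\gamma}\geq 1$: the extension problem is no longer a single Muckenhoupt-weighted second order equation but an iterated/polyharmonic system, and the reduction from "density one for $u$" to "density one for $\tilde u$ at the boundary" passes through \emph{all} the relevant Neumann-type traces. One must therefore ensure that the weighted Poincar\'e--Caccioppoli scheme bootstraps through each of these traces in a scale-invariant way, using the structural regularity provided by assumption \hyperref[cond:a2]{(A2)} with $\mu=2\floor{\gamma}$. Once the infinite order vanishing of $u$ at $0$ is established, the conclusion $u\equiv 0$ is immediate from Theorem \ref{prop:SUCP_var}.
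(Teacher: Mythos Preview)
Your overall strategy---reduce MUCP to SUCP by first showing that $u$ vanishes of infinite order at $0$---is reasonable in principle, but the mechanism you propose to obtain the infinite-order vanishing does not work as written.

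The core gap is in the Poincar\'e--Caccioppoli bootstrap. The Poincar\'e inequality for functions vanishing on a set of relative measure $\theta$ gives
\[
\|u\|_{L^2(B_r')} \leq C(1-\theta_r)^{1/n}\, r\, \|\nabla u\|_{L^2(B_r')},
\]
and indeed $(1-\theta_r)^{1/n}\to 0$ as $r\to 0$. But a Caccioppoli-type estimate returns one factor of $r^{-1}$, so after one pass you obtain at best
\[
\|u\|_{L^2(B_r')} \leq C(1-\theta_r)^{1/n}\cdot(\text{bulk quantity at scale }2r).
\]
This is a \emph{smallness} statement, not a decay rate: density one gives only $(1-\theta_r)=o(1)$ with no quantitative rate, so no matter how you iterate you cannot squeeze out $o(r^k)$ for every $k$. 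The sentence ``doubling-type ratios\ldots\ can be iterated to decay faster than any polynomial'' is precisely where the argument breaks down: doubling yields at most a fixed polynomial rate, and the extra $o(1)$ factor is not summable into arbitrary polynomial decay. A second, related issue is your claim that infinite-order vanishing of $u$ on $\R^n$ is \emph{equivalent} to infinite-order vanishing of the extension at $(0,0)$; the boundary--bulk interpolation of Lemma~\ref{lem:bdry_bulk} only gives the direction bulk $\Rightarrow$ boundary, and the paper in fact treats the two possibilities separately (see the dichotomy around~\eqref{eq:van_ord}).

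The paper avoids this obstruction by \emph{not} trying to prove infinite-order vanishing of $u$. Instead, it proves exactly the smallness estimate your Poincar\'e step would give (Lemma~\ref{lem:small_MUCP}), and uses it inside a blow-up argument (Proposition~\ref{prop:limit_E}): along a sequence $\sigma_\ell\to 0$ the rescaled functions $u_{\sigma_\ell,j}$ converge to a normalised solution of the constant-coefficient system~\eqref{eq:syst_blow_up} whose Dirichlet data vanish (from the smallness lemma) and whose weighted Neumann data vanish (since $q\in L^\infty$ scales away). Then the local weak unique continuation result (Proposition~\ref{prop:WUCP}) forces the blow-up limit to be zero, contradicting the normalisation. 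If instead the bulk already vanishes of infinite order, Proposition~\ref{prop:SUCP_inf_order} applies directly. So the route is MUCP $\to$ blow-up $\to$ WUCP, not MUCP $\to$ infinite-order vanishing $\to$ SUCP.
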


Let us comment on the results of Theorems ~\ref{prop:SUCP}-\ref{prop:MUCP} in the context of the literature on fractional Schrödinger equations: The \emph{weak unique continuation property}, i.e. the question whether for solutions $u$ of \eqref{eq:frac} the condition that $u=0$ on an open set in $\R^n$ already implies that ${u\equiv 0}$ in the whole of $\R^n$ is rather well understood
for constant coefficient fractional Schrödinger equations, even for potentials in very rough, non-$L^2$-based function spaces (see \cite{Seo}). In contrast, the understanding of the \emph{strong unique continuation properties} of solutions to (higher order) fractional Schrödinger equations is still much less developed (for non-fractional higher order Schrödinger operators we refer to \cite{CK10} and the references therein). The main known results are here given in the regime $\gamma \in(0,1)$ and can be summarised as the following statements:
\begin{itemize}
\item \emph{Strong unique continuation for constant coefficient fractional Schrödinger equations with  essentially $L^{\infty}$ potentials.} In the regime $\gamma\in(0,1)$ the articles \cite{FF14, Rue15} deal with the strong unique continuation property for $L^{\infty}$ as well as ``Hardy type" critical and subcritical potentials. Both results crucially exploit the possibility of rephrasing the fractional Schrödinger operator in terms of the Caffarelli-Silvestre extension (see \cite{CS07}), i.e. in terms of a (degenerate) Dirichlet-to-Neumann map associated with a (degenerate) elliptic, local equation in the upper half-plane. Technically, this allowed the authors of \cite{FF14} to rely on frequency function methods for local equations, while, similarly, the key tool in \cite{Rue15} consisted of several Carleman inequalities for the Caffarelli-Silvestre extension.
\item \emph{Unique continuation property from measurable sets.} Relying on the arguments from \cite{FF14, Rue15} also unique continuation results from measureable sets can be proved in the regime $\gamma\in(0,1)$. Indeed, in \cite{FF14} this is formulated as one of the main results. For rougher equations this is deduced in \cite{GRSU18} based on variants of the Carleman estimates from \cite{Rue15}.
\item \emph{Variable coefficient operators.} Using more refined frequency function or Carleman estimates, also the case of variable coefficient fractional Schrödinger equations has been treated in \cite{Rue15} ($C^2$ regular coefficients, see Section 7) and in \cite{Yu16} ($C^{1,\alpha}$ regular coefficients).
\end{itemize}
In contrast, the situation for higher order fractional Schrödinger operators is much less studied. Here the main known properties are:
\begin{itemize}
\item \emph{Representation of the equation through a Caffarelli-Silvestre type extension problem.} In \cite{Y13} and in \cite{CdMG11} and later also in \cite{RonSti16} it was observed that the higher order fractional Laplacian can be realised as a Dirichlet-to-Neumann map of a Caffarelli-Silvestre type extension problem. This can either take the form of a scalar equation (however with a weight which is no longer in the Muckenhoupt class) or a system of Caffarelli-Silvestre extensions.
\item \emph{Strong unique continuation for fractional harmonic functions.} Exploiting the systems characterisation from \cite{Y13}, Yang also sketches the proof of the strong unique continuation property for fractional harmonic functions based on frequency function methods for systems of equations. In the regime $\gamma \in(1,2)$ this was further detailed in \cite{FelliFerrero18a}, where the authors also obtained precise asymptotics of the solutions under consideration.
\item \emph{Strong unique continuation for fractional Schrödinger equations.} In the case $\gamma=\frac{3}{2}$ the strong unique continuation property for Schrödinger equations with Hardy type potentials was recently proved in \cite{FelliFerrero18b}. In this context, the regime $\gamma=\frac{3}{2}$ is special, since the degeneracy of the problem disappears and the result can be viewed as a boundary unique continuation result for the Bilaplacian. The authors again rely on frequency function methods.
\end{itemize}

In contrast, in this article we study Carleman estimates to deduce the desired unique continuation property. Also relying on the systems Caffarelli-Silvestre extension of the higher order fractional Laplacian (which is recalled in the Appendix), we view the various unique continuation properties from above as \emph{boundary unique continuation results}. In order to deduce these, we hence derive Carleman estimates for the associated systems. This is inspired by the work in \cite{CK10} in which unique continuation in the interior is discussed for higher order equations (or equivalently systems). As in \cite{Rue15} we combine these Carleman estimates with careful compactness and blow-up arguments.

We emphasise that the results from Theorems ~\ref{prop:SUCP}-~\ref{prop:MUCP} improve significantly  on the known strong unique continuation results for the fractional Laplacian by for instance including (Hardy type) potentials and variable coefficients of low regularity. The strength of these aspects are even novel for the regime $\gamma\in(0,1)$.

\begin{rmk}
Using a characterisation of the higher order fractional Laplacian through a system and a bootstrap argument, for $a^{ij}=\delta^{ij}$ we here impose $H^{2\gamma}$ regularity on the solutions to the equations at hand. We remark that even in the setting of fractional Schrödinger equations in bounded domains, it would be possible to apply our arguments: Indeed, starting from $H^{\gamma}$ solutions, it would be possible to bootstrap the regularity properties of the solutions by means of the estimates in \cite{G15} (away from the boundary).

We however emphasise that by pseudolocality of the fractional Laplacian our results could also be formulated under only local regularity assumptions: Assuming that we had a weak notion of a generalised Caffarelli-Silvestre extension (which is the case for any $H^{r}(\R^n)$, $r\in \R$, boundary datum, see Section \ref{sec:const_coef}) as well as only local $H^{2\gamma}(B_1')$ regularity for $u$, it would have been possible to invoke our unique continuation arguments. We refer to Proposition \ref{prop:antiloc} and Remark \ref{rmk:low_reg} for more on this.
\end{rmk}

\subsection{Main ideas}

Approaching the problem by means of the generalised systems Caffarelli-Silvestre extension, our arguments for unique continuation rely on several Carleman estimates for the localised equation in combination with a careful blow-up analysis. To this end, we rely on similar ideas as in \cite{Rue15, GRSU18}. 

A crucial technical tool thus is the derivation of higher order Carleman estimates, which we address by iteration of second order estimates. As a consequence, we obtain the following bounds:

\begin{prop}
\label{prop:syst_Carl}
Let $m\in \N\cup \{0\}$ and $b\in (-1,1)$. 
Let $a\in C^{2m,1}(B_4^+,\R^{(n+1)\times(n+1)})\cap C^{0,1}(B_4^+, \R^{(n+1)\times (n+1)})$ be of a block form
\begin{align}\label{eq:block}
a(x) = \begin{pmatrix} \tilde{a}(x') & 0 \\ 0 & 1 \end{pmatrix},
\end{align}
where the metric $\tilde{a}$ satisfies the conditions \hyperref[cond:a1]{(A1)}-\hyperref[cond:a3]{(A3)} from above with $\mu =2m$.
Assume that $\tilde{u}_0,\dots,\tilde{u}_m \in H^{1}(B_4^+, x_{n+1}^b)$ with $\supp(\tilde{u}_j) \subset B_{4}^+\setminus\{0\}$, $j\in \{0,\dots,m\}$, are solutions to the bulk system
\begin{align}
\label{eq:syst_err}
\begin{split}
x_{n+1}^{-b} \nabla \cdot x_{n+1}^{b}a \nabla \tilde{u}_0 & = \tilde{u}_1 + f_0,\\
x_{n+1}^{-b} \nabla \cdot x_{n+1}^{b} a \nabla \tilde{u}_1 & = \tilde{u}_2 + f_1,\\
\vdots\\
x_{n+1}^{-b} \nabla \cdot x_{n+1}^{b} a \nabla \tilde{u}_m & = f_{m},
\end{split}
\end{align}
in $B_4^+$ with $f_{0},\dots, f_m \in H^{1}(B_4^+, x_{n+1}^b)$. Further suppose that on $B_4'$ we have
\begin{align}
\label{eq:syst_err_boundary}
\begin{split}
\lim\limits_{x_{n+1}\rightarrow 0} x_{n+1}^b \p_{n+1} \tilde{u}_j & = 0 \mbox{ for } j \in \{0,\dots,m-1\},
\\
\lim\limits_{x_{n+1} \rightarrow 0} x_{n+1}^b \p_{n+1} \tilde{u}_{m} & = g , \\
\lim\limits_{x_{n+1}\rightarrow 0} \tilde{u}_0 &= u ,
\end{split}
\end{align}
where all limits are considered with respect to the $L^2$ topology.
Then, there exists $\tau_0>1$ such that for all $\tau>\tau_0$ there is a weight $h$ such that 
\begin{align}
\label{eq:Carl_system}
\begin{split}
&\sum\limits_{j=0}^{m} \left( \tau^{m+1-j} \|e^{h(-\ln(|x|))} x_{n+1}^{\frac{b}{2}}(1+\overline{h})^{\frac{m+1-j}{2}} |x|^{-2m-1+2j} \tilde{u}_j \|_{L^2(B_4^+)} \right.\\
&  \qquad +  \left.  \tau^{m-j} \|e^{h(-\ln(|x|))} x_{n+1}^{\frac{b}{2}} (1+ \overline{h})^{\frac{m+1-j}{2}} |x|^{-2m+2j} \nabla \tilde{u}_j \|_{L^2(B_4^+)}  \right)\\
&\leq C\Big( \sum\limits_{j=0}^{m} \tau^{m-j}\|e^{h(-\ln(|x|))}  x_{n+1}^{\frac{b}{2}}(1+\overline{h})^{\frac{m-j}{2}} |x|^{-2m+1+2j}f_j\|_{L^2(B_4^+)}\\
& \qquad \quad+  \tau^{\frac{1+b}{2}} \|e^{h(-\ln(|x|))}|x|^{\frac{1-b}{2}} g\|_{L^2(B_4')}\Big).
\end{split}
\end{align}
Here $\overline{h}(x):=  h''(t)|_{t=-\ln(|x|)}$.
\end{prop}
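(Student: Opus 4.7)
The plan is to derive \eqref{eq:Carl_system} by iterating a base ($m=0$) Carleman estimate for a single degenerate extension equation, carefully tracking the weights in $\tau$, $(1+\overline{h})$ and $|x|$. This is exactly the iteration strategy announced in the introduction and parallels the approach of \cite{CK10} for higher order local operators, here adapted to the degenerate Caffarelli--Silvestre boundary setting.

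First I would prove the base case $m=0$: for any $\tilde u\in H^1(B_4^+, x_{n+1}^b)$ with $\supp \tilde u \subset B_4^+\setminus\{0\}$ satisfying $x_{n+1}^{-b}\nabla\cdot x_{n+1}^b a\nabla \tilde u = f$ in $B_4^+$ and $\lim_{x_{n+1}\to 0} x_{n+1}^b \p_{n+1}\tilde u = g$ on $B_4'$, I would establish
\begin{align*}
&\tau\|e^{h(-\ln|x|)} x_{n+1}^{b/2}(1+\overline{h})^{1/2}|x|^{-1}\tilde u\|_{L^2(B_4^+)} + \|e^{h(-\ln|x|)} x_{n+1}^{b/2}(1+\overline{h})^{1/2}\nabla \tilde u\|_{L^2(B_4^+)}\\
&\qquad \leq C\|e^{h(-\ln|x|)} x_{n+1}^{b/2}|x| f\|_{L^2(B_4^+)} + C\tau^{(1+b)/2}\|e^{h(-\ln|x|)}|x|^{(1-b)/2}g\|_{L^2(B_4')}.
\end{align*}
This is obtained via the conjugation $L_h := e^{h}\bigl(x_{n+1}^{-b}\nabla\cdot x_{n+1}^b a\nabla\bigr)e^{-h}$, a conformal change of variables $t=-\ln|x|$, a symmetric/antisymmetric splitting of $L_h$, and a positive commutator computation with a radial weight $h(t)$ chosen so that $\overline{h}=h''>0$ gives subelliptic gain. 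The block structure \eqref{eq:block} keeps the conormal direction undisturbed; the smallness in \hyperref[cond:a2]{(A2)} together with the normalisation \hyperref[cond:a3]{(A3)} render the variable coefficient contributions perturbative and absorbable into the leading commutator. The $g$-term arises from integration by parts in $x_{n+1}$ followed by a boundary-bulk interpolation, producing the characteristic exponent $\tau^{(1+b)/2}$ (cf. Remark \ref{rmk:small_gamma}).

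For the iteration, for each $j\in\{0,\dots,m\}$ I apply the base estimate to $\tilde u_j$ with the shifted weight $h_j(t):= h(t)+2(m-j)t$, which preserves $h_j''=\overline{h}$ (so the Carleman gain survives) while multiplying every occurrence of $e^{h}$ by $|x|^{-2(m-j)}$ and thereby producing the $|x|$-powers required in \eqref{eq:Carl_system}. Multiplying the resulting level-$j$ inequality by $\tau^{m-j}(1+\overline{h})^{(m-j)/2}$ recovers precisely the $\tilde u_j$-contribution to the left hand side of \eqref{eq:Carl_system}. For $j<m$ the right-hand source is $\tilde u_{j+1}+f_j$ with vanishing boundary data; the crucial algebraic observation is that the $\tilde u_{j+1}$ remainder on the right side of the level-$j$ inequality coincides, up to a multiplicative constant, with the first left-hand term at level $j+1$. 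For $j=m$ the source is $f_m$ with boundary datum $g$. Telescoping downward from $j=m$ to $j=0$ therefore absorbs these cross terms recursively and produces \eqref{eq:Carl_system} with a constant depending on $m$, $b$ and the ellipticity of $a$, but independent of $\tau$.

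The main obstacle is the base Carleman estimate itself, specifically the correct boundary contribution: $L^2$-Carleman estimates for the degenerate operator $x_{n+1}^{-b}\nabla\cdot x_{n+1}^b\nabla$ are subelliptic in $\tau$ in the bulk, and transferring this subellipticity to the boundary via the interpolation step fixes the exponent $(1+b)/2$ and underlies the restriction $\gamma>\tfrac14$ discussed in Remark \ref{rmk:small_gamma}. A secondary difficulty is checking that the logarithmic shift $h\mapsto h_j$ remains harmless: the change in $h'$ produces lower order commutator terms which must be dominated by the leading $\overline{h}$-gain, dictating the required size of $\tau_0$ and relying on the Lipschitz (and higher) regularity of $\tilde a$ granted by \hyperref[cond:a2]{(A2)}.
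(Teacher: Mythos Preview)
Your iteration strategy is exactly the paper's: apply the second-order Carleman estimate (Proposition~\ref{prop:global_est}) at each level $j$ with the weights arranged so that the source term $\tilde u_{j+1}$ appearing on the right at level $j$ coincides with the leading left-hand term at level $j+1$, then telescope from $j=m$ down to $j=0$. The paper, however, implements the weight adjustment differently: rather than shifting $h$ to $h_j=h+2(m-j)t$ and afterward ``multiplying by $(1+\overline h)^{(m-j)/2}$'', it applies the base estimate directly to the product function $\tilde w_j:=(1+\overline h)^{(m-j)/2}|x|^{-2(m-j)}\tilde u_j$ with the \emph{unchanged} weight $h$.

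This matters because your step of multiplying the level-$j$ inequality by $(1+\overline h)^{(m-j)/2}$ is not a legal operation: $(1+\overline h)$ is a non-constant function of $x$ and cannot be pulled through an $L^2$ inequality. To obtain the correct factor $(1+\overline h)^{(m+1-j)/2}$ on the left one must build $(1+\overline h)^{(m-j)/2}$ into the function \emph{before} applying the base estimate, and then the commutators $2\nabla[(1+\overline h)^{(m-j)/2}|x|^{-2(m-j)}]\cdot a\nabla\tilde u_j$ and $\tilde u_j\, L_b[(1+\overline h)^{(m-j)/2}|x|^{-2(m-j)}]$ appear on the right. Showing these are absorbable (using $|h'''|,|h^{(4)}|\leq C(1+h'')$ to get $|\nabla^\alpha\tilde w_j/\tilde u_j|\leq C|x|^{-|\alpha|}\tilde w_j/\tilde u_j$) is precisely Step~1 of the paper's proof; it is not automatic and replaces your informal multiplication. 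A secondary benefit of the paper's formulation is that the weight $h$ is never altered, so the spectral-gap condition on $h'$ built into its construction is preserved throughout --- your shift $h'\mapsto h'+2(m-j)$ does perturb this condition, which would need a separate check.
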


This estimate improves previous results even in the case $m=0$ by allowing for only Lipschitz continuous metrics $a$ and the derivation of bounds which exploit the spectral gap of the fractional Laplacian on the sphere with Neumann (or Dirichlet) conditions (see the Appendix A, Section 8.3 in \cite{KRSIV} for these spectral gap properties). A relevant ingredient in the derivation of this Carleman estimate involves the use of a splitting technique in a similar way as in \cite{GRSU18}.

Estimating the commutators, the bounds from Proposition \ref{prop:syst_Carl} can be further improved:

\begin{prop}
\label{prop:syst_Carl_1}
Let $m\in \N\cup \{0\}$, $b\in (-1,1)$ and $a:\R^{n+1}_+ \rightarrow \R^{(n+1)\times (n+1)}$ as in Proposition ~\ref{prop:syst_Carl}. 
Assume that $\tilde{u}_0,\dots,\tilde{u}_m \in H^{1}(B_4^+, x_{n+1}^b)$ with $\supp(\tilde{u}_j) \subset B_{4}^+\setminus\{0\}$, $j\in \{0,\dots,m\}$, are solutions to the bulk system \eqref{eq:syst_err}
in $B_4^+$ with $f_{j}\in{H^{m-j}(B_4^+, x_{n+1}^b)}$ for $j\in \{0,\dots,m\}$. Further suppose that on $B_4'$ the boundary conditions \eqref{eq:syst_err_boundary} hold,
where all limits are considered with respect to the $L^2$ topology.
Then, there exists $\tau_0>1$ such that for all $\tau>\tau_0$ there is a weight $h$ such that 
\begin{align}
\label{eq:Carl_system_1}
\begin{split}
&\sum\limits_{j=0}^{m} \tau^{m-j+\frac{1-b}{2}} \|e^{h(-\ln(|x|))} x_{n+1}^{\frac{b}{2}}(1+\overline{h})^{\frac{m+1}{2}} |x|^{-2m+j+\frac{b-1}{2}} (\nabla')^j \tilde{u}_0 \|_{L^2(B_4')}\\
& \qquad +\sum\limits_{j=0}^{m}\sum_{k=0}^{j} \left( \tau^{m+1-j} \|e^{h(-\ln(|x|))} x_{n+1}^{\frac{b}{2}}(1+\overline{h})^{\frac{m+1-k}{2}} |x|^{-2m-1+j+k} (\nabla')^{j-k} \tilde{u}_{k} \|_{L^2(B_4^+)} \right.\\
&   \qquad\qquad+  \left.  \tau^{m-j} \|e^{h(-\ln(|x|))} x_{n+1}^{\frac{b}{2}} (1+ \overline{h})^{\frac{m+1-k}{2}} |x|^{-2m+j+k} \nabla (\nabla')^{j-k} \tilde{u}_{k} \|_{L^2(B_4^+)}  \right)\\
&\leq C \Big( \sum\limits_{j=0}^{m} \sum\limits_{k=0}^{j}\tau^{m-j}\|e^{h(-\ln(|x|))}  x_{n+1}^{\frac{b}{2}}(1+\overline{h})^{\frac{m-k}{2}}|x|^{-2m+1+j+k} (\nabla')^{j-k} f_{k}\|_{L^2(B_4^+)}\\
& \quad\qquad +  \tau^{\frac{1+b}{2}} \|e^{h(-\ln(|x|))}|x|^{\frac{1-b}{2}} g\|_{L^2(B_4')}\Big).
\end{split}
\end{align}
Here $\overline{h}(x):= h''(t)|_{t=-\ln(|x|)}$.
\end{prop}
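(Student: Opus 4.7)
The plan is to deduce \eqref{eq:Carl_system_1} by iteratively applying Proposition \ref{prop:syst_Carl} to tangentially differentiated copies of the system \eqref{eq:syst_err}, and then to convert the resulting bulk bounds on $(\nabla')^j \tilde{u}_0$ into a boundary bound via the weighted trace / boundary-bulk interpolation of Lemma \ref{lem:bdry_bulk} and Corollary \ref{cor:boundary_bulk}.

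The first ingredient is the commutator formula: because $a$ has the block form \eqref{eq:block} with $\tilde{a} = \tilde{a}(x')$, tangential differentiation produces only tangential derivatives of $\tilde{a}$,
\begin{equation*}
[(\nabla')^{\alpha'}, x_{n+1}^{-b} \nabla \cdot x_{n+1}^b a \nabla ] w
= \sum_{0 < \beta' \leq \alpha'} c_{\beta'} \, x_{n+1}^{-b} \nabla \cdot x_{n+1}^b \bigl( \partial^{\beta'} \tilde{a} \bigr) \nabla \partial^{\alpha' - \beta'} w,
\end{equation*}
so no normal derivative $\partial_{n+1}$ ever falls on $\tilde{a}$, and every factor $\partial^{\beta'} \tilde{a}$ carries the small prefactor $\delta$ from \hyperref[cond:a2]{(A2)}. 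The tangential derivatives also commute with the weighted normal trace $x_{n+1}^b \partial_{n+1}$, so the vanishing Neumann conditions on $\tilde{u}_0, \dots, \tilde{u}_{m-1}$ from \eqref{eq:syst_err_boundary} are preserved, while the regularity hypothesis $f_j \in H^{m-j}(B_4^+, x_{n+1}^b)$ is exactly what makes the iterated sources $(\nabla')^{j-k} f_k$ well-defined in $L^2$.

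I would then induct on the tangential order $\ell = j - k$. The base case $\ell = 0$ is Proposition \ref{prop:syst_Carl} itself; it supplies the diagonal terms $j = k$ together with the single boundary contribution $\tau^{(1+b)/2} \|e^{h(-\ln|x|)} |x|^{(1-b)/2} g\|_{L^2(B_4')}$. At the inductive step I would apply Proposition \ref{prop:syst_Carl} to the truncated differentiated subsystem $((\nabla')^\ell \tilde{u}_0, \dots, (\nabla')^\ell \tilde{u}_{m-\ell})$, moving $(\nabla')^\ell \tilde{u}_{m-\ell+1}$ into the source of its last equation. Because the top function of this subsystem has vanishing Neumann trace (since $m - \ell < m$), no tangential derivative of $g$ ever enters the right-hand side. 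The genuine sources $(\nabla')^\ell f_k$ yield the desired RHS of \eqref{eq:Carl_system_1}; the commutator remainders and the propagated source $(\nabla')^\ell \tilde{u}_{m-\ell+1}$ (already controlled at step $\ell - 1$) are absorbed into the LHS at strictly lower tangential order for $\tau$ sufficiently large, using the smallness of $\partial^{\beta'} \tilde{a}$ and the gain in the $\tau$-power. Relabelling $j = k + \ell$ yields the bulk part of \eqref{eq:Carl_system_1}, and the boundary contribution $\sum_j \tau^{m-j+(1-b)/2} \| e^{h(-\ln|x|)} x_{n+1}^{b/2} (1+\overline{h})^{(m+1)/2} |x|^{-2m+j+(b-1)/2} (\nabla')^j \tilde{u}_0 \|_{L^2(B_4')}$ follows by applying Lemma \ref{lem:bdry_bulk} and Corollary \ref{cor:boundary_bulk} to $(\nabla')^j \tilde{u}_0$, whose weighted bulk $L^2$ and gradient norms are exactly what the previous step provides.

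The main obstacle is the weight bookkeeping: the three independent weight factors $\tau$, $|x|$, and $(1 + \overline{h})$ must fit together on both sides of \eqref{eq:Carl_system_1} so that every commutator remainder, which mixes different tangential orders and different indices $k$, is genuinely absorbable into the LHS at strictly lower order, and so that the weights produced by Proposition \ref{prop:syst_Carl} at level $m - \ell$ match those claimed in \eqref{eq:Carl_system_1} at the characteristic Carleman scale where $h$ is concentrated. The block structure of $a$ is essential here: if $\tilde{a}$ depended on $x_{n+1}$, extra commutators involving $\partial_{n+1} \tilde{a}$ would appear and could not be absorbed against the degenerate weight $x_{n+1}^b$ through the smallness \hyperref[cond:a2]{(A2)} alone. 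Since $\tilde{a} = \tilde{a}(x')$, the iteration closes and the boundary datum $g$ remains undifferentiated throughout.
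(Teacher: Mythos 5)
Your commutator observation (tangential differentiation only produces tangential derivatives of $\tilde{a}$, carrying the smallness $\delta$, with vanishing Neumann conditions preserved) and your treatment of the boundary contribution via Lemma~\ref{lem:bdry_bulk} and Corollary~\ref{cor:boundary_bulk} both agree with the paper's Step~1 and its final step. The gap is in the iteration structure you propose in between, and it is not merely bookkeeping.

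If you apply Proposition~\ref{prop:syst_Carl} to the truncated subsystem $\bigl((\nabla')^\ell\tilde{u}_0,\dots,(\nabla')^\ell\tilde{u}_{m-\ell}\bigr)$, the weights in \eqref{eq:Carl_system} are parametrized by the \emph{length} of the system. For a system of length $m-\ell+1$ the term carrying $(\nabla')^\ell\tilde{u}_k$ appears on the left-hand side with weight
$\tau^{m-\ell+1-k}(1+\overline h)^{\frac{m-\ell+1-k}{2}}|x|^{-2(m-\ell)-1+2k}$,
whereas \eqref{eq:Carl_system_1} (with $j=k+\ell$) asks for
$\tau^{m+1-j}(1+\overline h)^{\frac{m+1-k}{2}}|x|^{-2m-1+j+k}
=\tau^{m-\ell+1-k}(1+\overline h)^{\frac{m+1-k}{2}}|x|^{-2m-1+2k+\ell}$.
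The $\tau$-power matches, but the desired weight is $(1+\overline h)^{\ell/2}|x|^{-\ell}$ times the one you get, and since $\overline h$ can be as large as $\tau$ and $|x|$ is small, your iteration yields a strictly weaker estimate, not \eqref{eq:Carl_system_1}. Zero-padding the subsystem to length $m+1$ restores the weights on the left, but then the propagated source $(\nabla')^\ell\tilde{u}_{m-\ell+1}$ at the top equation of the $\ell$-subsystem enters the right-hand side with weight $\tau^{\ell}(1+\overline h)^{\ell/2}|x|^{1-2\ell}$, while the only place it can be absorbed — the gradient term at level $\ell-1$ — carries $\tau^{\ell-1}(1+\overline h)^{\ell/2}|x|^{2-2\ell}$, a ratio of $\tau/|x|$ which is large rather than small; there is no $\tau$-gain available to implement the absorption. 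So the proposal as written does not close.

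The paper avoids this by iterating \emph{downward within the original system}: the full-system estimate \eqref{eq:Carl_system} already controls $\nabla\tilde u_j$ with the correct weights $\tau^{m-j}(1+\overline h)^{\frac{m+1-j}{2}}|x|^{-2m+2j}$. Then, using $\tilde u_j = L_b\tilde u_{j-1}-f_{j-1}$, the commutator formula \eqref{eq:commute_a}, and one scalar application of Proposition~\ref{prop:global_est}, the quantity $\|\cdots\nabla\tilde u_j\|$ is shown (after absorbing the commutator remainders via the smallness $\delta$) to dominate $\tau\|\cdots\nabla'\tilde u_{j-1}\|+\|\cdots\nabla\nabla'\tilde u_{j-1}\|$ minus a source term, i.e.\ \eqref{eq:grad_est_a}. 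Iterating this descent from each already-controlled term $\tilde u_j$ pulls in $(\nabla')^{\ell}\tilde u_{j-\ell}$ with precisely the weights in \eqref{eq:Carl_system_1} and introduces no new propagated sources. In short, the right move is not to re-derive a Carleman estimate for differentiated subsystems but to bootstrap downward from the left-hand side of the estimate you already have.
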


It is in this form that we exploit the Carleman estimates to infer our main results.

\subsection{Applications of the unique continuation results}

Motivated by the recent introduction of the fractional Calder\'on problem \cite{GSU16, RS17, GRSU18}, we here discuss applications of our unique continuation results in inverse problems. As a first main property, we deduce the \emph{antilocality} of the higher order fractional Laplacian:

\begin{prop}[Antilocality]
\label{prop:antiloc}
Let $\gamma \in \R_+\setminus \N$ and let $L=-\nabla \cdot \tilde{a} \nabla$, where $\tilde{a}$ satisfies the conditions \hyperref[cond:a1]{(A1)}-\hyperref[cond:a3]{(A3)} from above with $\mu =2\floor{\gamma}$. Let $u\in \Dom(L^{\gamma})$. Assume that for some open set $W\subset \R^n$ containing the unit ball $B'_1$ we have
\begin{align*}
u= 0 \mbox{ and } L^{\gamma} u & = 0 \mbox{ in } W.
\end{align*}
Then, $u\equiv 0$ in $\R^n$.
\end{prop}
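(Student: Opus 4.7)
The plan is to reduce the antilocality statement to a boundary unique continuation question for the systems Caffarelli--Silvestre extension (recalled in the Appendix) and then apply the Carleman machinery of Proposition \ref{prop:syst_Carl_1}. First, I would pass to the generalised extension of $u$: with $m=\floor{\gamma}$ and $b = 2(\gamma-m)-1 \in (-1,1)$, the systems extension produces functions $\tilde{u}_0,\dots,\tilde{u}_m$ on $\R^{n+1}_+$ which solve the homogeneous version of \eqref{eq:syst_err} (i.e. $f_j\equiv 0$), with intermediate vanishing weighted Neumann conditions $\lim_{x_{n+1}\to 0} x_{n+1}^{b}\p_{n+1}\tilde{u}_j = 0$ for $j<m$, Dirichlet trace $\tilde{u}_0|_{x_{n+1}=0}=u$, and $\lim_{x_{n+1}\to 0} x_{n+1}^{b}\p_{n+1}\tilde{u}_m = c_\gamma L^\gamma u$ for a nonzero constant $c_\gamma$. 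Because the recursive definition of the extension couples $\tilde{u}_j|_{x_{n+1}=0}$ with $L^j u$ through local differential operations, the hypothesis $u\equiv 0$ on $W$, combined with the locality of $L$, propagates to $\tilde{u}_j|_W = 0$ for all $j\in\{0,\dots,m\}$, while the hypothesis $L^\gamma u\equiv 0$ on $W$ forces the weighted Neumann trace of $\tilde{u}_m$ to vanish on $W$ as well. Hence the extension system carries zero Cauchy data on $W$.

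Next, since $B_1'\subset W$, I would pick a radial cutoff $\eta$ supported in a small half-ball around $0$ whose flat face is contained in $W$, and apply Proposition \ref{prop:syst_Carl_1} to $\eta\tilde{u}_0,\dots,\eta\tilde{u}_m$. The inhomogeneities $f_j$ that appear on the right-hand side of \eqref{eq:syst_err} are then supported on the annulus where $\eta$ transitions from $1$ to $0$, whereas the boundary term $g$ and the trace of $\tilde{u}_0$ on the flat face both vanish identically thanks to Step~1. The exponential decay of the Carleman weight $e^{h(-\ln|x|)}$ on the cutoff annulus (relative to its value on the smaller half-ball where $\eta\equiv 1$) is used to absorb the $f_j$-contributions into the left-hand side in the limit $\tau\to\infty$, yielding $\tilde{u}_j\equiv 0$ in a neighbourhood of $0$ in $\R^{n+1}_+$. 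This is an analogue of the blow-up/absorption argument used in the proof of Theorem \ref{prop:SUCP_var}, except that the vanishing Cauchy data take over the role played there by the infinite order vanishing of $u$.

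Finally, away from the flat face the weight $x_{n+1}^{b}$ is smooth and positive, so the system \eqref{eq:syst_err} is uniformly elliptic in the interior of $\R^{n+1}_+$. The classical weak unique continuation principle (of Aronszajn type) therefore propagates the vanishing of the $\tilde{u}_j$ from a neighbourhood of $0$ to all of $\R^{n+1}_+$. Taking the Dirichlet trace of $\tilde{u}_0$ yields $u\equiv 0$ in $\R^n$.

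The main obstacle is the second step: Proposition \ref{prop:syst_Carl_1} is stated under the support condition $\supp(\tilde{u}_j)\subset B_4^+\setminus\{0\}$, which is incompatible with the Cauchy-type vanishing we actually have, so one must carefully justify the cutoff procedure and track the commutators $[\eta,L]$ on the annulus; this is the technical heart of the proof and mirrors the corresponding step in Theorem \ref{prop:SUCP_var}. A secondary but more conceptual difficulty lies in the first step, where the precise identification of the intermediate Dirichlet traces $\tilde{u}_j|_{x_{n+1}=0}$ with local expressions in $u$ (so that $u|_W=0$ genuinely implies $\tilde{u}_j|_W=0$) has to be read off from the construction of the systems extension in the Appendix.
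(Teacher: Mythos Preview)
Your first and third steps are correct and match the paper: one passes to the systems extension (Proposition~\ref{prop:H2gamma1a}), observes that $u|_W=0$ forces $\tilde{u}_j|_{B_1'}=c_{n,\gamma,j}L^j u|_{B_1'}=0$ by locality of $L$ while $L^\gamma u|_W=0$ kills the weighted Neumann trace of $\tilde{u}_m$, and at the end one propagates the local vanishing through the interior of $\R^{n+1}_+$ by classical weak unique continuation for uniformly elliptic systems. (Minor aside: the paper's convention is $b=1-2(\gamma-\floor{\gamma})$, not $2(\gamma-\floor{\gamma})-1$.)

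The second step, however, is both more complicated than what the paper does and has a genuine gap as written. Proposition~\ref{prop:syst_Carl_1} requires $\supp(\tilde{u}_j)\subset B_4^+\setminus\{0\}$, and your cutoff $\eta$ is equal to $1$ near the origin, so $\eta\tilde{u}_j$ violates this hypothesis. This is not a technicality: the Carleman weight $e^{h(-\ln|x|)}|x|^{-2m-1}$ blows up at $0$ like $|x|^{-c\tau}$, so the left-hand side of \eqref{eq:Carl_system_1} is generically infinite unless the functions vanish to very high order at $0$. In the SUCP proof (Section~\ref{sec:SUCP}) this is handled either by assumed infinite-order vanishing in the bulk (Proposition~\ref{prop:SUCP_inf_order}, where an \emph{inner} cutoff at scale $\epsilon$ is used and the resulting errors are killed by the infinite-order vanishing as $\epsilon\to 0$) or, in the finite-order case, by the doubling/blow-up machinery of Propositions~\ref{prop:doubling}--\ref{prop:limit}. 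Vanishing Cauchy data on $B_1'$ alone gives you neither of these, so the claim that ``the vanishing Cauchy data take over the role played there by the infinite order vanishing'' is not substantiated.

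The paper avoids this entirely: once the extension has zero Dirichlet and Neumann data on $B_1'$, one is exactly in the setting of Proposition~\ref{prop:WUCP}, whose proof is a simple bootstrap. Namely, $\tilde{u}_m$ satisfies $L_b\tilde{u}_m=0$ with zero Dirichlet and weighted Neumann data on $B_1'$, so by the scalar boundary WUCP for the Caffarelli--Silvestre extension (from \cite{Rue15,FF14}) one gets $\tilde{u}_m\equiv 0$ in $B_1^+$; then $\tilde{u}_{m-1}$ satisfies the same scalar problem, and one iterates down to $\tilde{u}_0$. No Carleman estimate for the full system is needed at this stage.
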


We emphasise that in this result the function $u$ is \emph{not} assumed to satisfy any equation globally.
This result thus provides a strong global rigidity property in which the nonlocality of the equation under consideration plays a major role. Originally, the notion of antilocality appeared in the context of quantum field theory as the Reeh-Schlieder theorem \cite{V93}, but has recently found various applications in inverse problems \cite{GSU16, GRSU18, GLX17, RS17, RS17a} and control theory \cite{AKW18, BHS17}.

\begin{rmk}
\label{rmk:low_reg}
We remark that a result of the form stated in Proposition ~\ref{prop:antiloc} also holds in a large range of less regular spaces. The only ingredient needed is the presence of a Caffarelli-Silvestre extension at the given regularity (but this holds under very weak assumptions, see Proposition ~\ref{prop:H2gamma} in the constant coefficient setting). The pseudolocality of the associated operators then allows one to locally deduce the vanishing of $u$ from which it is possible to propagate the deduced information to an arbitrary point through the upper half plane (in which the extension problem is formulated).
\end{rmk}

As a property dual to the antilocality of the higher order fractional Laplacian, we further obtain approximation properties for these operators:

\begin{prop}[Runge approximation]
\label{prop:Runge}
Let $\Omega \subset \R^n$ be open and bounded  and let $\widetilde{\Omega} \subset \R^n$ be open with $\Omega \subset \tilde\Omega$ and $B'_1\subset\tilde\Omega\setminus\Omega$.
Let  $v\in H^{\gamma}(\Omega)$ with $\gamma \in \R_+ \setminus \N$. Assume that $L=-\nabla \cdot \tilde{a} \nabla$, where $\tilde{a}$ satisfies the conditions \hyperref[cond:a1]{(A1)}-\hyperref[cond:a3]{(A3)} from above with $\mu =2\floor{\gamma}$.  Suppose that $q\in L^{\infty}(\Omega)$ is such that zero is not a Dirichlet eigenvalue of the operator $L^{\gamma} + q$. Then, for any
 $\epsilon>0$ there exists a solution to 
 \begin{align*}
 &L^{\gamma} u +q u= 0 \mbox{ in } \Omega,\ \supp(u)  \subset \widetilde{\Omega},
 \end{align*}
 such that $\|v-u\|_{L^2(\Omega)}\leq \epsilon$.
\end{prop}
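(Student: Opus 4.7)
The plan is a Hahn--Banach duality argument coupling well-posedness of the Dirichlet problem for $L^\gamma + q$ with the antilocality result of Proposition~\ref{prop:antiloc}. Let
\[ \mathcal{R} := \{u|_\Omega \, : \, u \in \Dom(L^\gamma),\ (L^\gamma+q)u = 0 \text{ in } \Omega,\ \supp u \subset \widetilde{\Omega}\}. \]
Since $\mathcal{R}$ is a linear subspace of $L^2(\Omega)$, its density in $L^2(\Omega)$ is equivalent, by Hahn--Banach, to showing that every $f \in L^2(\Omega)$ which annihilates $\mathcal{R}$ in the sense that $\int_\Omega fu \, dx = 0$ for all $u \in \mathcal{R}$ must vanish identically.

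Given such an $f$, I would first invoke the hypothesis that $0$ is not a Dirichlet eigenvalue to obtain the unique solution $\phi$ in the Dirichlet energy space $\widetilde{H}^\gamma(\Omega) := \{w \in H^\gamma(\R^n) : \supp w \subset \overline{\Omega}\}$ of the dual problem
\[ (L^\gamma + q)\phi = f \text{ in } \Omega, \qquad \phi = 0 \text{ in } \R^n \setminus \overline{\Omega}. \]
Next, for each test function $\psi \in C_c^\infty(\widetilde{\Omega} \setminus \overline{\Omega})$ the same well-posedness yields a $w \in \widetilde{H}^\gamma(\Omega)$ solving $(L^\gamma + q) w = - L^\gamma \psi$ in $\Omega$, so that $u := w + \psi$ lies in $\mathcal{R}$ by construction (the equation is satisfied in $\Omega$ since $\psi = 0$ there, and the support condition holds since $\supp w \subset \overline{\Omega}$ and $\supp \psi \subset \widetilde{\Omega}$).

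The core duality computation then reads
\begin{align*}
0 = \int_\Omega f u \, dx = \int_\Omega f w \, dx = \mathcal{B}(\phi, w) = \mathcal{B}(w, \phi) = - \langle L^\gamma \psi, \phi \rangle_{\R^n} = - \langle \psi, L^\gamma \phi \rangle_{\R^n},
\end{align*}
where $\mathcal{B}(u,v) := \langle L^{\gamma/2} u, L^{\gamma/2} v \rangle + \int_\Omega q u v$ is the symmetric bilinear form associated with $L^\gamma + q$, and where I have used the equations defining $\phi$ and $w$ together with the self-adjointness of $L^\gamma$ on $H^\gamma(\R^n)$. Since this identity holds for every $\psi \in C_c^\infty(W)$ with $W := \widetilde{\Omega} \setminus \overline{\Omega}$, I conclude $L^\gamma \phi = 0$ in $W$. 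Combined with $\phi|_W = 0$ (from the support property of $\phi$) and the inclusion $B'_1 \subset W$ granted by the hypothesis, Proposition~\ref{prop:antiloc} then forces $\phi \equiv 0$ on $\R^n$, whence $f = (L^\gamma + q)\phi|_\Omega = 0$, as desired.

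The principal obstacle I anticipate is the rigorous justification of the regularity needed to invoke antilocality: strictly speaking one needs $\phi$ to lie in $\Dom(L^\gamma)$ rather than merely in $\widetilde{H}^\gamma(\Omega)$, and one needs $L^\gamma \psi|_\Omega$ to define a bounded functional on $\widetilde{H}^\gamma(\Omega)$ so that the Dirichlet problem for $w$ is solvable variationally. As indicated in Remark~\ref{rmk:low_reg}, this reduces to verifying that the antilocality statement persists at the natural low-regularity level provided by the (generalised) Caffarelli--Silvestre extension framework; once this point is settled, the computation above closes the argument.
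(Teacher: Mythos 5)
Your proof is correct and follows essentially the same route as the paper's: a Hahn--Banach duality argument in $L^2(\Omega)$ coupled with the well-posedness of the Dirichlet problem for $L^\gamma + q$ and the antilocality of Proposition~\ref{prop:antiloc} applied to the dual solution. The only cosmetic difference is that the paper phrases the argument via the Poisson operator $P_q$ applied to $C^\infty_0(W)$ exterior data, whereas you build the admissible solutions explicitly as $u = w + \psi$; the duality identity $0 = -\langle \psi, L^\gamma \phi\rangle_{\R^n}$ and the regularity caveat on $\phi\in\Dom(L^\gamma)$ (addressed via Remark~\ref{rmk:low_reg}) are the same in both treatments.
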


\begin{rmk}
The assumptions on the sets $W$ and $\tilde\Omega\setminus\Omega$ with respect to the unit ball $B'_1=\{x\in\R^n: |x|<1\}$ are taken without loss of generality after normalising the set-up in such a way that the assumptions \hyperref[cond:a2]{(A2)}, \hyperref[cond:a3]{(A3)} on $\tilde a$ are satisfied.
\end{rmk}

These type of approximation properties again crucially exploit the nonlocality of the operator. They were first observed in \cite{DSV14} and generalised to larger classes of equations in \cite{CDV18, CDV18a, DSV16, Krylov18, RS17a, Rue17}. As first highlighted in \cite{GSU16} in the context of nonlocal inverse problems they play an important role in deducing injectivity.
In \cite{RS17} these properties were strengthend to \emph{quantitative} estimates. These were applied in proving stability of the associated inverse problem.

In addition to the rigidity and flexibility properties from Propositions \ref{prop:antiloc} and \ref{prop:Runge}, it is possible to make use of our unique continuation results in many further contexts. As in \cite{FelliFerrero18a, FelliFerrero18b, Rue17quant} one could for instance study the associated problems more quantitatively and derive vanishing order or nodal domain estimates. Using Carleman estimates, one could here proceed similarly as in \cite{KRS16}. Also control theoretic questions similar to for instance \cite{BHS17} could be addressed with our results.
We postpone such a discussion to future work.

\subsection{Organisation of the article}
The remainder of the article is organised as follows: After first recalling a number of auxiliary results (including the generalised Caffarelli-Silvestre extension) in Section ~\ref{sec:aux}, in Section ~\ref{sec:Carl} we then deduce the Carleman estimates which form the basis of our unique continuation results. In Section ~\ref{sec:SUCP} we derive compactness results for the systems which are associated with the SUCP for fractional Schrödinger operators. Here we reduce the SUCP to the weak unique continuation property (WUCP) for the associated systems. This is complemented by a bootstrap argument to derive the WUCP in Section ~\ref{sec:WUCP}. In Section ~\ref{sec:proofs} we combine all the previous results and deduce the statements of Theorems ~\ref{prop:SUCP}-\ref{prop:MUCP} and Propositions ~\ref{prop:antiloc}, ~\ref{prop:Runge}. Finally, in the Appendix, we present a sketch of the derivation of the generalised Caffarelli-Silvestre extension for the higher order fractional Laplacian which had been introduced in \cite{Y13} and which we here discuss at low regularity.

\section{Auxiliary Results}
\label{sec:aux}

In this section we recall several auxiliary results that will be used frequently throughout the text: On the one hand, we recall a higher order Caffarelli-Silvestre extension result. On the other hand, we discuss appropriate boundary-bulk estimates.

\subsection{Notation}

\label{sec:notation}

We summarise the notation that we will use in the sequel: 

\subsubsection{Sets}
Working in $\R^{n+1}_+:=\{x\in \R^{n+1}: \ x_{n+1}\geq 0\}$, we will always use the convention that $x=(x',x_{n+1})$ with $x'\in \R^n$ and $x_{n+1}\geq 0$. For $x_0 \in \R^n \times \{0\}$ we will denote (half) balls in $\R^{n+1}_+$ and $\R^n \times \{0\}$ by
\begin{align*}
B_{r}^+(x_0) = \{x\in \R^{n+1}_+: \ |x-x_0|\leq r\},\
B_{r}'(x_0) = \{x\in \R^n \times \{0\}: \ |x-x_0|\leq r\}. 
\end{align*}
If $x_0 =0$, we will simply write $B_r^+$ and $B_{r}'$.

\subsubsection{Operators}
We will frequently use the operator
\begin{align}
\label{eq:Lb}
L_b:=x_{n+1}^{-b}(\p_{x_{n+1}}x_{n+1}^b \p_{x_{n+1}} - x_{n+1}^b L) ,
\end{align}
where $L=-\nabla'\cdot\tilde a\nabla'$ and $\tilde a$ satisfies the conditions from \hyperref[cond:a1]{(A1)}-\hyperref[cond:a3]{(A3)} with $\mu=2\floor{\gamma}$. 
Usually, we will be thinking of $b= 1-2\floor{\gamma}+2\gamma$, where $\floor{t}=\min\{k\in \N: \ k\leq t\}$.

We define the variable coefficient fractional Laplacian through functional calculus, see Section ~\ref{sec:var_coef}. The set $\Dom(L^{\gamma})$ is then defined as the space in which this functional calculus can be directly applied.

In analogy to our convention for the space variables, we often use the notation
\begin{align*}
\nabla', \ \p_{\ell}', \ \ell \in \{1,\dots,n\},
\end{align*}
to denote the corresponding tangential gradient and partial derivatives.

\subsubsection{Function spaces}

Studying operators of the form \eqref{eq:Lb}, in the sequel, we will often work in function spaces of the form 
\begin{align*}
H^1(\Omega, x_{n+1}^b):= \{u:\Omega \rightarrow \R: \ \|x_{n+1}^{\frac{b}{2}} u\|_{L^2(\Omega)} + \|x_{n+1}^{\frac{b}{2}} \nabla u\|_{L^2(\Omega)} < \infty \},
\end{align*}
where $\Omega \subset \R^{n+1}_+$ is a relatively open set. Similarly, we also deal with the function spaces
\begin{align*}
&\dot{H}^1(\Omega, x_{n+1}^b):= \{u:\Omega \rightarrow \R: \ \|x_{n+1}^{\frac{b}{2}}\nabla u\|_{L^2(\Omega)} <\infty \},\\
&L^2(\Omega, x_{n+1}^b):= \{u:\Omega \rightarrow \R: \ \|x_{n+1}^{\frac{b}{2}} u\|_{L^2(\Omega)} < \infty \}.
\end{align*}
We denote the homogeneous Hölder norms by $[\cdot]_{\dot{C}^{k,\alpha}(\Omega)}$ for $k\in \N\cup\{0\}$, $\alpha \in (0,1)$ and $\Omega \subset \R^n$ or $\Omega \subset \R^{n+1}_+$.

For a sequence $\{a_{k}\}_{k\in \N} \subset \ell^1$, we set
\begin{align*}
\|a_k\|_{\ell^1}:= \sum\limits_{k=1}^{\infty} |a_k|.
\end{align*}

\subsection{The higher order fractional Laplacian}
\label{sec:higher order fractional Laplacian}

We first recall that the higher order fractional Laplacian can be interpreted by means of a Caffarelli-Silvestre-type extension \cite{CS07,CdMG11, Y13,RonSti16}:

\begin{prop}
\label{prop:H2gamma1a}
Let $\gamma>0$, $f\in \Dom(L^{\gamma})$ and $L:= -\nabla' \cdot \tilde{a} \nabla'$, where $\tilde{a}$ satisfies the conditions  \hyperref[cond:a1]{(A1)}-\hyperref[cond:a3]{(A3)} with $\mu = 2\floor{\gamma}$. Then, there exists an extension operator
\begin{align*}
E_{\gamma}: \Dom(L^{\gamma}) \rightarrow C^{2,1}_{loc}(\R^n \times (0,\infty))\cap H^1_{loc}(\R^{n+1}_+, x_{n+1}^b),\
f \mapsto u:=E_{\gamma}(f),
\end{align*}
such that $u=E_{\gamma}(f)$
is a weak solution to the scalar higher order problem
\begin{align}
\label{eq:higher_order_scalar}
\begin{split}
L_b^{\floor{\gamma}+1} u & = 0 \mbox{ in } \R^{n+1}_+,\\
\lim\limits_{x_{n+1}\rightarrow 0} u &= f \mbox{ on } \R^n \times \{0\}, \\
\lim\limits_{x_{n+1}\rightarrow 0}  L_b^k u &= c_{n,\gamma,k} L^k f  \mbox{ on } \R^n \times \{0\} \mbox{ for } k \in \{1,\dots,\floor{\gamma}\},
\\
\lim\limits_{x_{n+1}\rightarrow 0} x_{n+1}^{1-2\gamma + 2\floor{\gamma}} \p_{x_{n+1}} L^{\floor{\gamma}}_b u &= c_{n,\gamma}L^\gamma f \mbox{ on } \R^n \times \{0\},\\
\lim\limits_{x_{n+1}\rightarrow 0} x_{n+1}^{1-2\gamma + 2\floor{\gamma}} \p_{x_{n+1}} L^k_b u &= 0  \mbox{ on } \R^n \times \{0\} \mbox{ for } k \in \{0, \floor{\gamma}-1\}.
\end{split}
\end{align}
Here $L_b:=x_{n+1}^{-b}(\p_{x_{n+1}} x_{n+1}^b \p_{x_{n+1}} + x_{n+1}^{b} \nabla' \cdot \tilde{a}\nabla' ) $ and $b=1-2\floor{\gamma}+2\gamma$. All boundary conditions in \eqref{eq:higher_order_scalar} are attained as $L^2(\R^n)$ limits.

Moreover, setting $u_0:=u$ and $u_{j+1}=L_b u_{j}$ for $j\in\{0,\dots,\floor{\gamma}-1\}$, the functions $u_j$ are in $C^{2,1}(\R^n \times (0,1))\cap H^{1}_{loc}(\R^{n+1}_+, x_{n+1}^b)$. They are weak solutions of the following system of second order equations 
\begin{align}
\label{eq:WUCP_syst1}
\begin{split}
L_b u_{m} & = 0 \mbox{ in } \R^{n+1}_+,\\
L_b u_{j} & = u_{j+1} \mbox{ in } \R^{n+1}_+ \mbox{ for } j \in \{0,\dots,m-1\},\\
\lim\limits_{x_{n+1}\rightarrow 0} u_j & = c_{n,\gamma,j} L^{j}f \mbox{ on } \R^n \times \{0\} \mbox{ for } j \in \{0,\dots,m\},\\
\lim\limits_{x_{n+1}\rightarrow 0} x_{n+1}^b \p_{x_{n+1}} u_{m} & = c_{n,\gamma}L^{\gamma}f \mbox{ on } \R^n \times \{0\},\\
\lim\limits_{x_{n+1}\rightarrow 0} x_{n+1}^{b}\p_{x_{n+1}} u_j & = 0 \mbox{ on } \R^n \times \{0\} \mbox{ for } j \in \{0,\dots,m-1\},
\end{split}
\end{align}
where $m=\floor{\gamma}$.
All boundary conditions hold in an $L^2(\R^n)$ sense.
\end{prop}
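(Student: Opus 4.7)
The strategy is to exploit the functional calculus of the self-adjoint operator $L$ on $L^2(\R^n)$ (well-defined under \hyperref[cond:a1]{(A1)}-\hyperref[cond:a3]{(A3)}): write the spectral resolution $f = \int_0^\infty dE_\lambda f$ for $f\in\Dom(L^\gamma)$ and seek the extension in the form
\begin{equation*}
u(x',x_{n+1}) \;=\; \int_0^\infty \phi(\lambda, x_{n+1})\, dE_\lambda f(x'),
\end{equation*}
so that the equation $L_b^{\floor{\gamma}+1}u=0$ is reduced to a family of ODEs in $x_{n+1}$, namely $L_{b,\lambda}^{\floor{\gamma}+1}\phi(\lambda,\cdot) = 0$ with $L_{b,\lambda} = \partial_{n+1}^2 + \tfrac{b}{x_{n+1}}\partial_{n+1} - \lambda$. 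This reduces the whole construction to a one-dimensional degenerate Bessel-type ODE parametrised by the spectral variable $\lambda\geq 0$.

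The ODE $L_{b,\lambda}^{\floor{\gamma}+1}\phi = 0$ is a Bessel-type equation of order $2(\floor{\gamma}+1)$ whose two independent solutions at the base of the iteration (the case $m=0$, $\gamma = s\in(0,1)$) are the classical modified Bessel functions $t\mapsto t^{\frac{1-b}{2}}K_{\frac{1-b}{2}}(\sqrt{\lambda}\,t)$ and $t\mapsto t^{\frac{1-b}{2}}I_{\frac{1-b}{2}}(\sqrt{\lambda}\,t)$. Normalising the $K$-solution so that $\phi(\lambda,0)=1$ and using the classical asymptotics $\lim_{t\to 0}t^{b}\partial_t\phi = -c_{n,\gamma}\,\lambda^{\gamma}$ recovers the standard Caffarelli--Silvestre relation $\lim_{x_{n+1}\to 0}x_{n+1}^b\partial_{n+1}u = c_{n,\gamma} L^\gamma f$ after spectral integration. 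For $\floor{\gamma}\geq 1$ I would then build $\phi$ iteratively: first take $\phi_{\floor{\gamma}}(\lambda,\cdot)$ to be the above Bessel profile with $\phi_{\floor{\gamma}}(\lambda,0)=c_{n,\gamma,\floor{\gamma}}\lambda^{\floor{\gamma}}$, and then solve successively
\begin{equation*}
L_{b,\lambda}\phi_{j}(\lambda,\cdot) = \phi_{j+1}(\lambda,\cdot),\qquad \phi_j(\lambda,0) = c_{n,\gamma,j}\lambda^{j},\qquad \lim_{x_{n+1}\to 0}x_{n+1}^b\partial_{n+1}\phi_j = 0,
\end{equation*}
for $j=\floor{\gamma}-1,\dots,0$, using variation of parameters against the two Bessel solutions. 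The boundary condition $\lim x_{n+1}^b\partial_{n+1}\phi_j = 0$ at the intermediate levels selects the particular solution uniquely (it kills the singular Bessel branch) and fixes the constants $c_{n,\gamma,j}$ by matching the inhomogeneity coming from $\phi_{j+1}$.

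With $u$ defined and the functions $u_j := L_b^j u = \int_0^\infty \phi_{\floor{\gamma}-j}(\lambda,\cdot)\,dE_\lambda f$, the system \eqref{eq:WUCP_syst1} is now immediate: the top equation $L_b u_{\floor{\gamma}} = 0$ holds because $\phi_{\floor{\gamma}}$ solves the base ODE, the intermediate relations $L_b u_j = u_{j+1}$ hold by construction of the $\phi_j$, and the boundary traces are obtained by spectrally integrating the pointwise limits of $\phi_j$ and of $x_{n+1}^b\partial_{n+1}\phi_j$. The regularity claim $u\in C^{2,1}_{loc}(\R^n\times(0,\infty))\cap H^1_{loc}(\R^{n+1}_+,x_{n+1}^b)$ follows from the smoothness of the Bessel profiles away from $x_{n+1}=0$, local elliptic regularity for the degenerate operator $L_b$ (using that $x_{n+1}^{|b|}$ lies in the $A_2$ Muckenhoupt class for $b\in(-1,1)$), and the Lipschitz hypothesis \hyperref[cond:a2]{(A2)} on $\tilde a$.

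\textbf{Main obstacles.} Two points require real care. The first is the rigorous application of functional calculus in the variable-coefficient setting: one needs $L$ to be essentially self-adjoint with a well-defined fractional power $L^\gamma$, which under \hyperref[cond:a1]{(A1)}-\hyperref[cond:a3]{(A3)} is standard but has to be set up on the appropriate form domain (this is precisely what motivates the restriction $u\in\Dom(L^\gamma)$, cf.\ Remark~\ref{rmk:reg}). The second is the precise computation of the asymptotics of the iteratively defined $\phi_j$ near $x_{n+1}=0$ and the identification of the constants $c_{n,\gamma,j}$, $c_{n,\gamma}$, which rely on delicate Bessel-function asymptotics and the convergence of the spectral integrals; this is where the distinction between the attained Neumann-type limit (producing $L^\gamma f$) and the vanishing intermediate co-normal traces is encoded. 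Since the argument parallels the construction in \cite{Y13, RonSti16} with the adaptation to variable Lipschitz metrics provided by our spectral framework, we present only a sketch, deferring the details to Section~\ref{sec:var_coef} of the Appendix.
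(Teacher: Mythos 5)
Your construction is broadly correct but takes a genuinely different route from the paper's. You build the extension as a spectral integral $u = \int_0^\infty \phi(\lambda,\cdot)\,dE_\lambda f$ and construct the profiles \emph{top-down}: you fix the Bessel profile $\phi_{\floor{\gamma}}$ solving the homogeneous ODE $L_{b,\lambda}\phi = 0$ and then solve $L_{b,\lambda}\phi_j = \phi_{j+1}$ downward by variation of parameters, selecting the branch with vanishing co-normal trace and decay at infinity. The paper (Proposition \ref{prop:H2gamma1}) instead writes
\[
u(x',x_{n+1}) = c_\gamma\, x_{n+1}^{2\gamma}\int_0^\infty e^{-tL}f(x')\,e^{-x_{n+1}^2/(4t)}\,\frac{dt}{t^{1+\gamma}}
\]
via the heat semigroup, constructs $u$ \emph{first} as the solution of a \emph{single} degenerate second-order equation $\D u + \frac{1-2\gamma}{x_{n+1}}\p_{x_{n+1}}u = 0$, and then derives the iterates $w_k = L_b^k u$ \emph{bottom-up}, reading off their closed-form kernels and boundary traces by dominated convergence. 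The two kernels are in fact the same (the $t$-integral evaluates to $(\sqrt\lambda\,x_{n+1})^\gamma K_\gamma(\sqrt\lambda\,x_{n+1})$), but the heat-kernel form makes the $L^2$-convergence of all Dirichlet and Neumann limits essentially automatic, whereas your route leans on delicate $K$-Bessel asymptotics at every level.

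Two concrete points would need filling in if you carried your scheme through. First, your sketch misses the structural identity behind the paper's Lemma \ref{lem:bulk_higher}: if $u$ solves $\D u + \frac{1-2\gamma}{x_{n+1}}\p_{x_{n+1}}u = 0$, then $w_k = L_b^k u$ solves the \emph{homogeneous} second-order equation with \emph{shifted} exponent, $\D w_k + \frac{1-2\gamma+2k}{x_{n+1}}\p_{x_{n+1}}w_k = 0$, so each iterate is again a scaled Bessel profile of shifted index rather than the opaque output of a variation-of-parameters integral; this is what makes the whole chain computable. Second, the claimed scaling $\phi_j(\lambda,0) = c_{n,\gamma,j}\lambda^j$ is not free in your top-down scheme: it follows from uniqueness of the decaying, Neumann-vanishing solution combined with the substitution $\sigma = \sqrt\lambda\,s$, which forces $\phi_j(\lambda,s) = c\lambda^j\Psi_j(\sqrt\lambda\,s)$ for a $\lambda$-independent $\Psi_j$, but you state the scaling without argument. (There is also a small indexing slip at the end: with $L_{b,\lambda}\phi_j=\phi_{j+1}$ you should have $u_j = \int_0^\infty\phi_j(\lambda,\cdot)\,dE_\lambda f$, not $\phi_{\floor{\gamma}-j}$.)
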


In order to keep our presentation self-contained, we provide a short sketch of the proof of this statement  in the Appendix.

Compared to the original nonlocal equations \eqref{eq:frac}, \eqref{eq:frac1}, the equations \eqref{eq:higher_order_scalar}, \eqref{eq:WUCP_syst1} arising from the generalised Cafferelli-Silvestre extension have the advantage that they can be approached with tools from the analysis of unique continuation properties of \emph{local} elliptic equations. As in \cite{Rue15}, \cite{RW18} the price to pay for this localisation is the introduction of the additional dimension in which the solutions $u$ have to be controlled through the corresponding equations. This gives our problem and our argument the flavour of boundary unique continuation results (see for instance \cite{AEK95} and the references therein).

\subsection{Boundary-bulk interpolation estimates}
\label{sec:aux1}

We recall the boundary-bulk interpolation inequality from \cite{Rue15} for fractional Sobolev spaces on the sphere:

\begin{lem}
\label{lem:bdry_bulk}
Let $u:S^{n}_+ \rightarrow \R$ and let $b\in (-1,1)$. Then, identifying $\partial S^{n}_+$ with $S^{n-1}$, there exists a constant $C>0$ such that for any $\tau>1$ 
\begin{align*}
\|u\|_{L^2(S^{n-1})} \leq C(\tau^{\frac{1+b}{2}} \|\theta_n^{\frac b 2} u\|_{L^2(S^n_+)} 
+ \tau^{\frac{b-1}{2}}\|\theta_n^{\frac b 2} \nabla_{S^n} u \|_{L^2(S^n_+)}).
\end{align*}
\end{lem}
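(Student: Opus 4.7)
The plan is to reduce the inequality to a one-dimensional weighted trace estimate via normal coordinates near $\partial S^n_+$, and then produce the $\tau$-weighted form by a rescaling argument. Since the conclusion is invariant under multiplying the right-hand side by a constant, I may assume $\tau > \tau_0$ for some $\tau_0 \geq 1$; the case of $\tau \in [1,\tau_0]$ follows from the classical (non-quantitative) weighted trace inequality for the $A_2$-weight $\theta_n^b$ with $b \in (-1,1)$, absorbed into the $\tau^{\frac{b-1}{2}}$ bulk term.

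First I would introduce geodesic normal coordinates $(t,\omega) \in [0,\pi/2] \times S^{n-1}$ near the boundary, where $t$ is the geodesic distance to $\partial S^n_+$ and $\omega$ parametrizes $S^{n-1}$. In these coordinates $\theta_n = \sin t$, so $\theta_n \sim t$ in the collar $\{t \leq 1\}$, and the Riemannian volume element factorizes as $\cos^{n-1}(t)\,dt\,d\sigma(\omega)$, which is comparable to $dt\,d\sigma(\omega)$ on bounded intervals of $t$. This reduces the inequality, up to universal constants, to the corresponding one-dimensional weighted trace statement integrated over $\omega$.

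Next I would establish the one-dimensional rescaled weighted trace: for $v:[0,1]\to\R$ smooth and $b\in(-1,1)$, the standard (un-rescaled) weighted trace inequality
\begin{equation*}
|v(0)|^2 \leq C\left(\int_0^1 s^b |v(s)|^2\,ds + \int_0^1 s^b |v'(s)|^2\,ds\right)
\end{equation*}
holds since $s \mapsto s^b$ is an $A_2$-Muckenhoupt weight for $b\in(-1,1)$. Given $v$ supported in $[0,1/\tau]$, applying this to the rescaled function $\tilde{v}(s):= v(s/\tau)$ and changing variables via $s=\tau t$ yields
\begin{equation*}
|v(0)|^2 \leq C\tau^{b+1}\int_0^{1/\tau} t^b |v(t)|^2\,dt + C\tau^{b-1}\int_0^{1/\tau} t^b |v'(t)|^2\,dt,
\end{equation*}
where the powers $\tau^{b+1}$ and $\tau^{b-1}$ come, respectively, from the single Jacobian factor $\tau$ and the combined Jacobian $\tau$ with the two factors of $\tau^{-1}$ from $\tilde{v}'(s) = \tau^{-1}v'(s/\tau)$.

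To transfer to the sphere, I would fix a smooth cutoff $\chi:[0,\infty)\to[0,1]$ with $\chi(0)=1$, supported in $[0,1]$, and write $u = \chi(\tau t)u + (1-\chi(\tau t))u$. The second summand vanishes in a neighbourhood of $\partial S^n_+$, so contributes nothing to the trace. For the first summand, I apply the 1D estimate pointwise in $\omega$ to $v(t):= (\chi(\tau t)u)(t,\omega)$, integrate in $\omega \in S^{n-1}$, and use $\theta_n \sim t$ in the collar plus $|\partial_t u| \leq |\nabla_{S^n} u|$ together with the product-rule contribution $|\chi'(\tau t)|\tau \cdot |u|$, which is controlled by the zeroth-order term with the same power $\tau^{b+1}$. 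Extending the domains of integration back to $S^n_+$ and taking square roots yields the claimed inequality.

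The main technical point is the rescaling bookkeeping: one must verify that the two distinct $\tau$-powers $\tau^{b+1}$ and $\tau^{b-1}$ arise correctly from the Jacobians under $s=\tau t$, and that the cutoff commutator $\chi'(\tau t)\tau$ only reproduces the lower-order $\tau^{b+1}$ term, not a worse power. Once this is checked, the passage from the flat collar to the curved sphere is routine.
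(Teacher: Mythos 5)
Your argument is correct, though it follows a different route from the paper's. You reduce to a one-dimensional weighted trace via Fermi (geodesic normal) coordinates on a collar of $\partial S^n_+$, cut off at scale $1/\tau$, and extract the two $\tau$-powers by rescaling the one-dimensional estimate. The paper instead proves the flat weighted trace estimate
\[
\|w\|_{L^2(\R^n)}\leq C\bigl(\|x_{n+1}^{b/2}w\|_{L^2(\R^{n+1}_+)}+\|x_{n+1}^{b/2}\nabla w\|_{L^2(\R^{n+1}_+)}\bigr)
\]
by precisely the fundamental-theorem-of-calculus argument you invoke in one dimension, rescales the entire half-space by $\tau$ to produce the powers $\tau^{(1+b)/2}$ and $\tau^{(b-1)/2}$, and then applies the rescaled inequality to the $0$-homogeneous extension $w(x)=\eta(|x|)u(x/|x|)$ with $\eta$ a fixed unit-scale radial cutoff supported in $B_2^+\setminus B_{1/2}^+$; conversion to polar coordinates turns the half-space norms into the spherical norms and the trace on $\{x_{n+1}=0\}$ reproduces $\|u\|_{L^2(S^{n-1})}$. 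Both routes rest on the same elementary weighted trace, but the paper's version avoids any explicit computation with the spherical metric (no Fermi coordinates, no $\cos^{n-1}t$ volume factor, no $\tau$-dependent cutoff) because the homogeneous extension and the fixed-scale cutoff do that bookkeeping automatically, whereas your version isolates transparently where each $\tau$-power comes from (the single Jacobian versus the two derivative factors of the rescaling), at the price of checking $\theta_n=\sin t\sim t$ and $|\partial_t u|\le|\nabla_{S^n}u|$ on the collar. One small remark: the one-dimensional estimate $|v(0)|^2\lesssim\int_0^1 s^b(|v|^2+|v'|^2)\,ds$ does not literally follow from $s^b$ being an $A_2$ weight -- that only identifies the correct range of $b$ -- but it is the same fundamental-theorem-of-calculus computation as in the paper, so this is a matter of phrasing rather than a gap.
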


\begin{proof}
The proof follows as in \cite{RW18} by using the trace inequality in the associated fractional weighted Sobolev spaces. We discuss the details:

First, by the trace inequality, any function $w\in H^{1}(\R^{n+1}_+, x_{n+1}^{b})$ with $b\in (-1,1)$ satisfies
\begin{align}
\label{eq:trace}
\|w\|_{L^2(\R^n)}
\leq C (\|x_{n+1}^{\frac{b}{2}} w\|_{L^2(\R^{n+1}_+)}  + \|x_{n+1}^{\frac{b}{2}} \nabla w \|_{L^2(\R^{n+1}_+)} ).
\end{align}
Indeed, for $w \in C^{\infty}(\R^{n+1}_+)$ with compact support in the tangential directions this follows from the fundamental theorem of calculus: For $t\in (0,1)$
\begin{align*}
|w(x',0)-w(x',t)| 
&= \left| \int\limits_{0}^{t} \p_{z}w(x',z)dz \right|
\leq \int\limits_{0}^{1} |\p_{z}w(x',z)|dz 
 \leq \int\limits_{0}^{1} z^{-\frac{b}{2}} z^{\frac{b}{2}} |\p_{z}w(x',z)|dz \\
& \leq C_b \left( \int\limits_{0}^{1} z^{b} |\p_{z}w(x',z)|^2 dz \right)^{\frac{1}{2}},
\end{align*}
where we used that $b\in (-1,1)$.
Thus, applying the triangle inequality, integrating in $t\in [0,1]$ and applying the Cauchy-Schartz inequality, we infer
\begin{align*}
|w(x' ,0)|
\leq C_b  \|x_{n+1}^{\frac{b}{2}}\p_{x_{n+1}} w(x',\cdot)\|_{L^2((0,1))} + \|x_{n+1}^{-\frac{b}{2}}x_{n+1}^{\frac{b}{2}} w(x', \cdot)\|_{L^1((0,1))}\\
\leq C_b  \left(\|x_{n+1}^{\frac{b}{2}}\p_{x_{n+1}} w(x', \cdot)\|_{L^2((0,1))} + \|x_{n+1}^{\frac{b}{2}} w(x', \cdot)\|_{L^2((0,1))} \right).
\end{align*}
Taking squares and integrating in $x\in \R^n$ then yields
\begin{align*}
\|w\|_{L^2(\R^n)}^2
&\leq C_b \left( \|x_{n+1}^{\frac{b}{2}} \p_{x_{n+1}}w\|_{L^2(\R^n \times (0,1))}^2 + \|x_{n+1}^{\frac{b}{2}} w\|_{L^2(\R^n \times (0,1))}^2 \right)\\
&\leq C_b \left( \|x_{n+1}^{\frac{b}{2}} \p_{x_{n+1}}w\|_{L^2(\R^{n+1}_+)}^2 + \|x_{n+1}^{\frac{b}{2}} w\|_{L^2(\R^{n+1}_+)}^2 \right).
\end{align*}
By density considerations, this concludes the proof of the trace estimate \eqref{eq:trace}.

Rescaling \eqref{eq:trace}, we then infer 
\begin{align*}
\|w\|_{L^2(\R^n)}
\leq C (\tau^{\frac{1+b}{2}}\|x_{n+1}^{\frac{b}{2}} w\|_{L^2(\R^{n+1}_+)} + \tau^{\frac{b-1}{2}} \|x_{n+1}^{\frac{b}{2}} \nabla w \|_{L^2(\R^{n+1}_+)} ).
\end{align*}
Finally, we apply this to $w(x) = \eta(|x|)u(\frac{x}{|x|})$, where $\eta$ is a smooth, positive cut-off function supported on $B_{2}^+ \setminus B_{1/2}^+$ and which is identically one on $B_{3/2}^+ \setminus B_{3/4}^+$. This yields the desired claim.
\end{proof}

As a direct consequence of Lemma ~\ref{lem:bdry_bulk} we can augment the estimate \eqref{eq:Carl_system_1} in Proposition ~\ref{prop:syst_Carl_1} by corresponding boundary estimates:

\begin{cor}
\label{cor:boundary_bulk}
Assume that the conditions of Proposition ~\ref{prop:syst_Carl_1} hold. Then, we have
\begin{align*}&\sum\limits_{j=0}^{m} \tau^{m-j+\frac{1-b}{2}} \|e^{h(-\ln(|x|))} x_{n+1}^{\frac{b}{2}}(1+\overline{h})^{\frac{m+1}{2}} |x|^{-2m+j+\frac{b-1}{2}} (\nabla')^j \tilde{u}_0 \|_{L^2(B_4')}\\
& \leq C\sum\limits_{j=0}^{m}\sum_{k=0}^{j} \left( \tau^{m+1-j} \|e^{h(-\ln(|x|))} x_{n+1}^{\frac{b}{2}}(1+\overline{h})^{\frac{m+1-k}{2}} |x|^{-2m-1+j+k} (\nabla')^{j-k} \tilde{u}_{k} \|_{L^2(B_4^+)} \right.\\
&   \qquad\qquad\qquad+  \left.  \tau^{m-j} \|e^{h(-\ln(|x|))} x_{n+1}^{\frac{b}{2}} (1+ \overline{h})^{\frac{m+1-k}{2}} |x|^{-2m+j+k} \nabla (\nabla')^{j-k} \tilde{u}_{k} \|_{L^2(B_4^+)}  \right)\end{align*}
Here $\overline{h}(x):= h''(t)|_{t=-\ln(|x|)}$.
\end{cor}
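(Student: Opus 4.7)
The strategy is to obtain each $j$-th term of the boundary sum on the left hand side of the corollary by applying Lemma~\ref{lem:bdry_bulk} to $(\nabla')^j \tilde u_0$ on each sphere of radius $r$, then integrating radially against a carefully chosen weight. The whole proof reduces to a scaling and bookkeeping calculation; the only non-trivial input is the scalar boundary-bulk lemma itself, and the key feature that makes the argument clean is that all the exponential, polynomial and $\overline{h}$-weights appearing in the corollary are \emph{radial}, so they pass through the spherical integration as constants.

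Concretely, I would fix $j \in \{0,\dots,m\}$ and set $f := (\nabla')^j \tilde u_0$. For each $r \in (0,4)$, apply Lemma~\ref{lem:bdry_bulk} to $g_r(\theta) := f(r\theta)$, bound the tangential derivative by $|\nabla_{S^n} g_r(\theta)| \le r |\nabla f(r\theta)|$, substitute $\theta_n^b = x_{n+1}^b |x|^{-b}$ and $dx = r^n\, dr\, d\theta$ to rewrite the spherical integrals as Cartesian ones, and finally integrate in $r$ against the measure $W_j(r)^2 \, dr$ where
\begin{equation*}
W_j(r) := e^{h(-\ln r)} (1+\overline{h}(r))^{(m+1)/2} r^{-2m+j+(b-1)/2}.
\end{equation*}
After taking square roots and multiplying by $\tau^{m-j+(1-b)/2}$, the exponents of $|x|$ collapse via the identities $-2m + j + (b-1)/2 - (1+b)/2 = -2m + j - 1$ and $-2m + j + (b-1)/2 + (1-b)/2 = -2m + j$, so that the resulting inequality reads
\begin{align*}
&\tau^{m-j+(1-b)/2}\|e^{h(-\ln|x|)} (1+\overline{h})^{(m+1)/2} |x|^{-2m+j+(b-1)/2} (\nabla')^j \tilde u_0\|_{L^2(B_4')} \\
&\le C\tau^{m+1-j}\|e^{h(-\ln|x|)} x_{n+1}^{b/2}(1+\overline{h})^{(m+1)/2} |x|^{-2m-1+j} (\nabla')^j \tilde u_0\|_{L^2(B_4^+)} \\
&\quad + C\tau^{m-j}\|e^{h(-\ln|x|)} x_{n+1}^{b/2} (1+\overline{h})^{(m+1)/2} |x|^{-2m+j} \nabla (\nabla')^j \tilde u_0\|_{L^2(B_4^+)}.
\end{align*}

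The two terms on the right are precisely the $k=0$ contributions in the inner bulk sum of the corollary, with outer index $j$. Summing over $j \in \{0,\dots,m\}$ and noting that the omitted $k \ge 1$ terms in the corollary's double bulk sum are nonnegative and only enlarge the right hand side then yields the claim. The only real obstacle is the exponent bookkeeping: one needs to pick the radial weight $W_j$ so that the $L^2(B_4')$ factor on the left and the two $L^2(B_4^+)$ factors on the right are exactly those mandated by the corollary, after which all powers of $\tau$, $|x|$ and $x_{n+1}$ telescope.
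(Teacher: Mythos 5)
Your proof is correct and follows essentially the same approach as the paper: both apply Lemma~\ref{lem:bdry_bulk} sphere by sphere, use that the weights $e^h$, $(1+\overline h)^{(m+1)/2}$ and powers of $|x|$ are purely radial (hence constants on each sphere), and then integrate over $r$, with the exponent identities you cite producing exactly the $k=0$ terms of the inner bulk sum. Your explicit treatment of the Jacobian factor and the change $\theta_n^b = x_{n+1}^b|x|^{-b}$ is slightly more detailed than the paper's one-line sketch, and your observation that the omitted $k\geq 1$ terms only enlarge the right-hand side closes the argument cleanly.
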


\begin{proof}
Proposition ~\ref{prop:syst_Carl} provides exponentially weighted bulk estimates. In order to deduce the claimed boundary estimates, we apply Lemma ~\ref{lem:bdry_bulk} to the functions 
$$e^{h(-\ln(|x|))}(1+\overline{h})^{\frac{m+1-j}{2}}|x|^{-2m+j+\frac{b-1}{2}}(\nabla')^j\tilde{u}_0(x)$$ on each sphere $|x|=r$. Recalling that $h$ and $\overline{h}$ are independent of the spherical variables and integrating with respect to the radial directions, then implies the claimed boundary estimates. 
\end{proof}

\subsection{Caccioppoli inequality}  
We derive a Caccioppoli type inequality for tangential derivatives of a solution to a variable coefficient equation associated with the operator $L_b$ from above.

\begin{lem}\label{lem:Cacciop}
Let $b\in(-1,1)$ and $a:B_4^+ \rightarrow \R^{(n+1)\times(n+1)}$ be of a block form \eqref{eq:block} with $\tilde a$ satisfying the conditions \hyperref[cond:a1]{(A1)}-\hyperref[cond:a3]{(A3)} with $\mu = 2m$.
Let $u\in H^1(B^+_4, x_{n+1}^b)$ be a solution to 
\begin{align*}
x_{n+1}^{-b}\nabla \cdot a x_{n+1}^b\nabla u&=f \mbox{ in } B^+_4,\\
\lim_{x_{n+1}\to 0} x_{n+1}^b \p_{x_{n+1}}u&= g \mbox{ on } B'_4,
\end{align*}
with $f, |\nabla' f|, \cdots, |(\nabla')^k f| \in L^2(B_4^+, x_{n+1}^b)$ and $g\in H^k(B_4')$.
Then there exists a constant $C>0$ such that for any $J\in \N\cup \{0\}$ with $ J\leq\min\{2m, k\}$
\begin{align}
\label{eq:Cacciop}
\begin{split}
\sum_{j=0}^J r^{j} \| x_{n+1}^{\frac{b}{2}} \nabla(\nabla')^{j} {u}\|_{L^2(B^+_{r/2})}
&\leq C\sum\limits_{j=0}^{J} \Big(r^{j-1} \| x_{n+1}^{\frac{b}{2}} (\nabla')^{j} {u} \|_{L^2(B^+_{r})}+ r^{j+1} \| x_{n+1}^{\frac{b}{2}} (\nabla')^{j} {f} \|_{L^2(B^+_{r})}\\
&\qquad \qquad +r^j\Big(\int\limits_{B'_r} | (\nabla')^jg||(\nabla')^ju| dx'\Big)^{\frac 1 2}\Big).
\end{split}
\end{align}
\end{lem}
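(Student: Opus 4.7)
The plan is to prove \eqref{eq:Cacciop} by induction on $J\in\{0,\dots,\min\{2m,k\}\}$. The base case $J=0$ is the standard weighted Caccioppoli inequality. I would fix a cutoff $\eta\in C^\infty_c(B^+_r)$ with $\eta\equiv 1$ on $B^+_{r/2}$ and $|\nabla\eta|\leq C/r$, and test the equation against $\eta^2 u$ in the $x_{n+1}^b$-weighted $L^2$ pairing. Integrating the bulk term by parts produces a boundary contribution of the form $-\int_{B'_r}\eta^2 u\,x_{n+1}^b\p_{x_{n+1}}u\,dx'$, which, after taking $x_{n+1}\to 0$, becomes $-\int_{B'_r}\eta^2 u g\,dx'$ thanks to the prescribed Neumann-type boundary condition. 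Expanding $\nabla(\eta^2 u)=\eta^2\nabla u+2\eta u\nabla\eta$, using the uniform ellipticity of $a$ coming from \hyperref[cond:a1]{(A1)}, and invoking Young's inequality twice (to absorb the cross term $\eta u\, a\nabla u\cdot\nabla\eta$ and the source-term product) yields \eqref{eq:Cacciop} at $j=0$.

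For the inductive step, I set $v_\alpha:=\p^\alpha_{x'}u$ for a tangential multi-index $\alpha$ with $|\alpha|=j\leq J$. Since $a$ has the block form \eqref{eq:block} with $\tilde{a}$ independent of $x_{n+1}$ and with bottom-right entry equal to $1$, applying $\p^\alpha_{x'}$ to the equation and expanding by Leibniz gives
\begin{align*}
x_{n+1}^{-b}\nabla\cdot a\,x_{n+1}^b\nabla v_\alpha = \p^\alpha_{x'}f + \sum_{\substack{\beta\leq\alpha\\ |\beta|\geq 1}}\binom{\alpha}{\beta}\mathcal{L}_{\alpha,\beta}[\tilde a,u],
\end{align*}
with boundary condition $\lim_{x_{n+1}\to 0}x_{n+1}^b\p_{x_{n+1}}v_\alpha=\p^\alpha_{x'}g$. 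Each commutator $\mathcal{L}_{\alpha,\beta}[\tilde a,u]$ is a product of a tangential derivative of $\tilde a$ of order at most $j+1\leq 2m+1$ with a tangential derivative of $u$ of order at most $j+1$; hypothesis \hyperref[cond:a2]{(A2)} with $\mu=2m\geq J$ ensures these coefficient derivatives are uniformly bounded. Applying the base-case argument to $v_\alpha$ with a nested cutoff $\eta_j$ supported in $B^+_{r_j}\subset B^+_r$, $r/2\leq r_j\leq r$, equal to $1$ on a ball slightly larger than the one used at order $j-1$, the contributions beyond those already appearing in \eqref{eq:Cacciop} come from the commutators; the top-order piece, of the form $(\nabla'\tilde a)\cdot(\nabla')^{j+1}u$, can be reabsorbed into the left-hand side via Young's inequality, while the remaining commutator terms involve $(\nabla')^s u$ with $s\leq j$ on $B^+_{r_j}$ and are thus controlled by the induction hypothesis.

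Multiplying the resulting estimate at order $j$ by $r^j$, summing over $j\in\{0,\dots,J\}$ and all multi-indices with $|\alpha|=j$, and carrying out a standard telescoping over the nested cutoffs to close the absorption produces \eqref{eq:Cacciop}. The main obstacle is the bookkeeping of the Leibniz commutators: one must ensure that the top-order piece $(\nabla'\tilde a)\cdot(\nabla')^{j+1}u$, which is not a priori of lower order, can be reabsorbed into $\|x_{n+1}^{b/2}\nabla v_\alpha\|_{L^2}$. This rests on the uniform $L^\infty$ bound for the derivatives of $\tilde a$ up to order $2m+1$ granted by \hyperref[cond:a2]{(A2)}, together with a suitable telescoping choice of cutoff radii between consecutive inductive steps.
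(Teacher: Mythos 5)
Your proof follows essentially the same strategy as the paper: differentiate the equation tangentially, identify the commutators coming from the variable coefficient $\tilde a$, test the weak form against $\eta^2(\nabla')^j u$, and absorb via Young's inequality. The differences are bookkeeping, and the paper's version is somewhat leaner. First, the paper does not induct: it derives the per-order estimate for each $j\in\{0,\dots,J\}$ with a \emph{single} radial cutoff and then simply sums, which works because the commutator contributions $\|x_{n+1}^{b/2}(\nabla')^{k+1}u\|_{L^2(B_r^+)}$ for $k\le j-1$ already appear on the right-hand side of \eqref{eq:Cacciop} — so the nested cutoff radii and the telescoping you introduce are not needed. Second, the paper records the commutator in divergence form via the exact identity $L_b(\nabla')^j u=(\nabla')^j L_b u-\sum_{k=0}^{j-1}\nabla'\cdot\bigl((\nabla')^{j-k}\tilde a\,\nabla'(\nabla')^k u\bigr)$, so that after testing and integrating by parts the only order-$(j+1)$ factor is the $\nabla'$ falling on the test function, which is absorbed into $\|\eta\,x_{n+1}^{b/2}\nabla(\nabla')^j u\|_{L^2}$ by Young with a small $\epsilon$; your expanded-Leibniz variant produces the same order-$(j+1)$ term via the commutator instead, and your absorption step is correct provided you re-derive (rather than black-box) the energy estimate for $v_\alpha$, which your writeup effectively does. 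Note also that, contrary to the closing remark in your proposal, this absorption does \emph{not} require smallness of $\nabla'\tilde a$: Young's $\epsilon$ is at your disposal, so a finite $L^\infty$ bound on the coefficient derivatives suffices, which is what the paper uses. Both routes yield \eqref{eq:Cacciop}.
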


\begin{proof}
First of all we note that by scaling it suffices to prove the estimate for $r=1$.
Next, we observe that by the block structure of the metric $a$ for any $j\in\{1,\dots,2m\}$
\begin{align*}
x_{n+1}^{-b}\nabla \cdot a x_{n+1}^b\nabla (\nabla')^ju= (\nabla')^j \big(x_{n+1}^{-b}\nabla \cdot a x_{n+1}^b\nabla  u\big)-\sum_{k=0}^{j-1}\nabla'\cdot((\nabla')^{j-k}\tilde a)\nabla' (\nabla')^k u,
\end{align*}
and hence  $(\nabla')^j u$ is a weak solution to
\begin{align*}
L_b (\nabla')^ju&= (\nabla')^j f-\sum_{k=0}^{j-1}\nabla'\cdot((\nabla')^{j-k}\tilde a)\nabla' (\nabla')^k u
\mbox{ in } B^+_4,\\
\lim_{x_{n+1}\to 0} x_{n+1}^b \p_{x_{n+1}}(\nabla')^ju&=(\nabla')^j g \mbox{ on } B'_4.
\end{align*}
This means that for any test function $\varphi$ it holds
\begin{align*}
\int_{B_1^+} x_{n+1}^b  \nabla \varphi \cdot a \nabla (\nabla')^j u dx=&-\int_{B_1^+} x_{n+1}^b \varphi (\nabla')^j fdx -\sum_{k=0}^{j-1}\int_{B_1^+} x_{n+1}^b \nabla'\varphi\cdot((\nabla')^{j-k}\tilde a)\nabla' (\nabla')^k u dx\\
&+\int_{B'_1} \varphi (\nabla')^j g dx'.
\end{align*}

Let $\eta$ be a radial cut-off function equal to one on $B^+_{1/2}$ which vanishes outside of $B^+_{1}$ and satisfies $|\nabla \eta|\leq C$. 
We remark that the function $\varphi=\eta^2 (\nabla')^j u$ is an admissible test function, as the equation may be differentiated in tangential directions and leads to corresponding tangential regularity estimates.
Using that by condition \hyperref[cond:a2]{(A2)} it holds $|(\nabla')^\alpha\tilde a(x)| \leq C_{\alpha} $ for $x\in B_{4}^+$ and $|\alpha|\leq 2m$, we obtain the following estimate
\begin{align*}
\|x_{n+1}^{\frac b 2} \eta \nabla(\nabla')^ju\|^2_{L^2(B_1^+)}\leq C \Big(&
\int_{B_1^+} x_{n+1}^b \eta |\nabla (\nabla')^j u||\nabla \eta|  |(\nabla')^j u| dx +\int_{B_1^+} x_{n+1}^b \eta^2  |(\nabla')^j u||(\nabla')^j f|dx\\
&+\sum_{k=0}^{j-1}\int_{B_1^+} x_{n+1}^b \big(\eta^2 |\nabla (\nabla')^j u|+\eta|\nabla \eta||(\nabla')^ju|\big)  | (\nabla')^{k+1} u|dx\\
&+\int_{B'_1} \eta^2 |(\nabla')^j g| |(\nabla')^ju| dx'\Big).
\end{align*}
By virtue of Young's inequality and absorbing terms into the left hand side we infer
\begin{align*}
\|x_{n+1}^{\frac b 2} \eta \nabla(\nabla')^ju\|_{L^2(B_1^+)}
\leq
 C\Big(& \|x_{n+1}^{\frac b 2} |\nabla \eta| (\nabla')^ju\|_{L^2(B_1^+)}+\|x_{n+1}^{\frac b 2} \eta (\nabla')^ju\|_{L^2(B_1^+)}\\
 &+ \|x_{n+1}^{\frac b 2} \eta (\nabla')^jf\| _{L^2(B_1^+)}+ \sum_{k=0}^{j-1}\|x_{n+1}^{\frac b 2} \eta (\nabla')^{k+1}u\|_{L^2(B_1^+)}
\\
&+\int_{B'_1} \eta^2 |(\nabla')^j g| |(\nabla')^ju| dx'\Big).
\end{align*}
Lastly, we use the bounds of $\eta$ to obtain
\begin{align*}
\|x_{n+1}^{\frac b 2}\nabla(\nabla')^ju\|_{L^2(B_{1/2}^+)}\leq
 C\Big(
 &\sum_{k=1}^j\|x_{n+1}^{\frac b 2} (\nabla')^ku\|_{L^2(B_1^+)}+ \|x_{n+1}^{\frac b 2}  (\nabla')^jf\|_{L^2(B_1^+)}\\
&+\Big(\int_{B'_1}  |(\nabla')^j g| |(\nabla')^ju| dx'\Big)^{\frac 1 2}
\Big).
\end{align*}

The estimate for $j=0$ is straightforward.
Summing over $j\in \{0,\dots, J\}$ with the corresponding factors, yields the desired estimate.
\end{proof}

\section{Carleman Estimates for Systems in the Upper Half-Plane}
\label{sec:Carl}

In order to deduce the Carleman estimate from Propositions ~\ref{prop:syst_Carl} and ~\ref{prop:syst_Carl_1}, we first prove Carleman estimates for second order (degenerate elliptic) equations. 
Here we proceed in two steps: First, we discuss the situation for constant coefficient metrics but in the presence of divergence form right hand side contributions, and then, in a second step, we deduce the estimates for variable coefficient metrics.

\subsection{Constant coefficient Carleman estimates}

As a main ingredient in our argument we make use of the following (constant coefficient) Carleman estimate:

\begin{prop}
\label{prop:Carl}
Let $b \in (-1,1)$ and let $u\in H^{1}(B_4^+, x_{n+1}^b)$ with $\supp(u)\subset B_4^+ \setminus \{0\}$ be a solution to 
\begin{align*}
\nabla \cdot x_{n+1}^{b} \nabla u & = f + \sum\limits_{j=1}^n \p_j x_{n+1}^{b} F^j \mbox{ in } B_4^+,\\
\lim\limits_{x_{n+1}\rightarrow 0} x_{n+1}^{b} \p_{n+1} u & = g \mbox{ on } B_4',
\end{align*}
where $f \in L^{2}(B_4^+, x_{n+1}^{-b})$, $g\in L^2(B_4')$ and $F = (F^1,\dots, F^n) \in L^2(B_4^+, x_{n+1}^b)^n$ with $\supp(f)$,
$\supp(F) \subset B_4^+ \setminus \{0\}$ and $\supp(g)\subset B_4' \setminus \{0\}$.
Then, for each $\tau>\tau_0\gg 1$ there is a weight function $h(-\ln(|x|))$ such that there exists a constant $C>0$ which is independent of $\tau$ such that it holds
\begin{align*}
&\tau
 \|e^{h(-\ln(|x|))}  x_{n+1}^{\frac{b}{2}} (1+\overline{h})^{\frac{1}{2}} |x|^{-1} u\|_{L^2(\R^{n+1}_+)}
+  \|e^{h(-\ln(|x|))}x_{n+1}^{\frac{b}{2}} (1+\overline{h})^{\frac{1}{2}}  \nabla u\|_{L^2(\R^{n+1}_+)}\\
& +\tau^{\frac{1-b}{2}}\|e^{h(-\ln(|x|))}(1+\overline{h})^{\frac 1 2}|x|^{\frac{b-1}2}u\|_{L^2(\R^n\times\{0\})}
\\
&\leq C \left( \| e^{h(-\ln(|x|))} |x| x_{n+1}^{-\frac{b}{2}} f\|_{L^2(\R^{n+1}_+)} 
+ \tau \| e^{h(-\ln(|x|))}  x_{n+1}^{\frac{b}{2}} F\|_{L^2(\R^{n+1}_+)} 
\right.\\
&\qquad \quad \left. + \tau^{\frac{1+b}{2}} \|e^{h(-\ln(|x|))} |x|^{\frac{1-b}{2}} g\|_{L^2(\R^n \times \{0\})} \right).
\end{align*}
Here $\overline{h}(x):=  h''(t)|_{t=-\ln(|x|)}$.
\end{prop}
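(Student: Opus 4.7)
The plan is to prove this degenerate elliptic Carleman estimate via a conformal change of variables combined with a symmetric/antisymmetric splitting of the conjugated operator. First I would introduce conformal polar coordinates $t = -\ln|x|$, $\theta = x/|x| \in S^n_+$, and rewrite $u(x) = |x|^{\alpha}w(t,\theta)$ with $\alpha = (1-n-b)/2$ so that the operator $|x|^{2-\alpha}x_{n+1}^{-b}\nabla\cdot x_{n+1}^{b}\nabla(|x|^{\alpha}\,\cdot)$ becomes translation invariant in $t$, taking the form $\partial_t^2 + c\,\partial_t + \Delta_{S^n_+,b} + \text{const}$, where $\Delta_{S^n_+,b}$ is the Neumann Laplace–Beltrami operator on the half-sphere with weight $\theta_n^b$. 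In these coordinates the equation becomes $\partial_t^2 w + c\,\partial_t w + \Delta_{S^n_+,b} w - \kappa w = \tilde f + \nabla'\cdot \tilde F$ on $\R\times S^n_+$, with the boundary condition $\lim_{\theta_n\to 0}\theta_n^b\partial_{\theta_n}w = \tilde g$.

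Next I would conjugate by the weight: set $v = e^{h(t)}w$ and compute the conjugated operator $L_h v := e^{h}L e^{-h}v$, which I would decompose as $L_h = S + A$ with $S$ symmetric and $A$ antisymmetric (with respect to the measure $\theta_n^b \,dt\,d\theta$). The antisymmetric part absorbs the first-order $\partial_t$ terms coming from differentiating the weight, giving $A = -2h'(t)\partial_t - h''(t) + (\text{lower order})$. Standard Carleman bookkeeping gives
\begin{equation*}
\|L_h v\|^2 = \|Sv\|^2 + \|Av\|^2 + \langle [S,A]v,v\rangle + \text{boundary terms}.
\end{equation*}
Choosing $h(t) = \tau\phi(t)$ with $\phi$ strictly convex (e.g.\ a slight perturbation of the linear profile so that $\phi''>0$) makes the commutator $[S,A]$ pointwise positive: it produces, modulo boundary contributions, terms of size $\tau h''(t)\|\partial_t v\|^2 + \tau h''(t)\|\nabla_{S^n_+} v\|^2 + \tau^3 h''(t)\|v\|^2$. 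Translated back to $x$ coordinates via the change $\overline{h}(x) = h''(-\ln|x|)$, these are exactly the bulk terms on the left-hand side.

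The boundary on $\theta_n = 0$ generates two classes of contributions. First, the integration by parts in $\theta_n$ in computing $\langle Sv, Av\rangle$ produces terms on $\partial S^n_+ = S^{n-1}$ which, after a spherical Neumann-Laplacian eigenfunction decomposition, carry positive sign and contain the missing radial-derivative power needed to control the trace norm. I would combine this with Lemma~\ref{lem:bdry_bulk} to extract the $\tau^{(1-b)/2}$ weighted boundary contribution on the left-hand side. Second, the inhomogeneous Neumann data $g$ enters as a coupling of $v$ against the boundary integral on $\{\theta_n = 0\}$; this is absorbed by Young's inequality into the bulk positive terms, leaving the weighted $g$-norm on the right-hand side. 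The divergence form right-hand side $\sum \partial_j x_{n+1}^b F^j$ is handled prior to the conjugation, by integrating by parts once against $e^{2h}w$; this transfers one derivative onto $w$, producing terms bounded by $\tau\|e^h x_{n+1}^{b/2}F\|_{L^2}$ after using the bulk $\tau\|e^h x_{n+1}^{b/2}|x|^{-1}u\|_{L^2}$ on the left.

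The main obstacle will be the simultaneous positivity of the commutator and of the spherical boundary terms at the correct $\tau$-power. Because the Carleman weight depends only on the radial variable, the $\theta_n$-direction integration by parts does not on its own produce coercive boundary terms; the positivity there is recovered only after using the spectral gap for the weighted Neumann Laplacian on $S^n_+$ and pairing it with Lemma~\ref{lem:bdry_bulk}. This forces the loss of a fractional $\tau$-power on the boundary — exactly the $\tau^{(1-b)/2}$ factor in the statement — and is the reason the subsequent iterated estimates (Propositions~\ref{prop:syst_Carl} and~\ref{prop:syst_Carl_1}) are restricted to $\gamma > 1/4$. Once the bulk positivity and the boundary positivity are secured, closing the estimate and absorbing the remaining cross-terms into the main positive contributions via Cauchy–Schwarz and Young's inequality is straightforward, yielding the stated bound after taking $\tau$ sufficiently large.
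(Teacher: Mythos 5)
Your proposal captures the broad strokes of the paper's approach --- conformal polar coordinates, a symmetric/antisymmetric decomposition, a Koch--Tataru style weight, a spherical spectral decomposition, and the boundary--bulk interpolation of Lemma~\ref{lem:bdry_bulk} --- but it misses the central device of the paper's proof, and without it two of the steps you describe do not close.

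The paper does \emph{not} run the $S+A$ Carleman computation on the full conjugated function $v = e^h w$. It first splits $\tilde u = u_1 + u_2$, where $u_1$ solves the elliptic problem obtained by subtracting a coercive term $-K^2\tau^2\theta_n^b u_1$ and carries \emph{all} the data ($\tilde f$, the divergence part $\tilde F$, and the Neumann datum $\tilde g$), while $u_2$ solves the homogeneous boundary problem with source $-K^2\tau^2\theta_n^b u_1$. The $u_1$-estimate is a pure Lax--Milgram energy estimate (test against $\tau^2 e^{2h}u_1$), which handles divergence-form data and inhomogeneous Neumann data in the natural weak formulation. The $S,A$ Carleman computation is then run mode-by-mode (on $\alpha_\lambda = (u_2,\theta_n^b\psi_\lambda)$) only for $u_2$, which has \emph{zero} Neumann data. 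Your proposal instead runs the $S,A$ identity directly on $v$ with $g\neq 0$ and $F$ present, and this is where it breaks:

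\textbf{(1) Divergence-form data.} You cannot place $e^h\nabla'\cdot \tilde F$ inside $\|L_h v\|^2$ when $F$ is only $L^2$: the identity $\|L_h v\|^2 = \|Sv\|^2+\|Av\|^2+\langle[S,A]v,v\rangle$ presupposes that $L_h v$ is a function, not a distribution. Your fix --- ``integrate by parts once against $e^{2h}w$ prior to conjugation'' --- is exactly the Lax--Milgram move, i.e., the elliptic energy estimate; but that estimate does not produce the subelliptic $\tau(1+\overline h)^{1/2}$-weighted bulk gain unless combined with the Carleman estimate for a companion piece. The splitting is what lets you do both cleanly.

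\textbf{(2) Boundary terms and the Lopatinskii issue.} With $g\neq 0$, the cross term $2\langle Sv, Av\rangle$ produces, after the $\theta$-integration by parts in $\langle\Delta_\theta v, -2h'\partial_t v\rangle$, a boundary integral of the schematic form $\int_{\partial S^n_+} \tilde g\, h'\,\partial_t v$. This is \emph{not} sign-definite, and absorbing it by Young requires controlling $\|\partial_t v\|_{L^2(\partial S^n_+\times\R)}$ --- a second-order trace that the left-hand side does not dominate (Lemma~\ref{lem:bdry_bulk} applied to $\partial_t v$ would ask for $\|\nabla_{S^n}\partial_t v\|$). This is precisely the Lopatinskii obstruction the paper flags in Remark~\ref{rmk:small_gamma} for purely radial weights. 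The paper avoids it entirely by ensuring that the function on which the $S,A$ identity is run, $u_2$, satisfies the \emph{homogeneous} Neumann condition, so these dangerous boundary integrals vanish.

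In short, the conformal change, the Koch--Tataru weight, the spectral gap input, and Lemma~\ref{lem:bdry_bulk} are all present in your sketch, but without the elliptic/subelliptic splitting (with the auxiliary $K^2\tau^2$ coercive term) the argument does not close: you would be left with an uncontrolled boundary cross term and no rigorous way to feed the divergence-form source into the $S,A$ machinery.
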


The proof of Proposition ~\ref{prop:Carl} relies on a splitting strategy, in which all inhomogeneities (be they bulk or boundary contributions) are dealt with in an elliptic estimate. As a consequence, the subelliptic part, i.e. the actual Carleman estimate, becomes rather clean. 
In particular, as shown in the following section, the estimates are strong enough to deal with only Lipschitz regular (small) metrics in a perturbative way.

\begin{proof}
We proceed in three main steps: First, we construct an appropriate Carleman weight. Then, we deduce the desired Carleman estimate by a splitting argument in conformal polar coordinates. In a final step, we concatenate the obtained information.\\

\emph{Step 1: Construction of the weight.} We begin by constructing a family of Carleman weights $h(t):\R \rightarrow \R$. Anticipating the use of polar conformal coordinates, we require it to satisfy
\begin{align*}
&h' \in (C^{-1} \tau, C\tau) \mbox{ for all } \tau \gg 1, \\
& \frac{1}{4} \leq h'' + \dist(h', \spec(\nabla_{S^n}\cdot \theta_n^{a} \nabla_{S^n})),\\
&|h'''|, |h^{(4)}| \leq C (1+h'') \leq C \tau.
\end{align*}
Here the constant $C>1$ is independent of $\tau$ and we recall that by the results of \cite{KRSIV} the operator $\nabla_{S^n} \cdot \theta_n^{a} \nabla_{S^n}$ with $\theta_n:= \frac{x_{n+1}}{|x|}$ has a spectral gap if it is considered with Neumann (or Dirichlet) data (see Section 8.3 in the Appendix A in \cite{KRSIV}).
We follow the argument from \cite{KT01} and \cite{CK10} to obtain the desired properties for the Carleman weight. To this end, we consider a sequence $\{c_j \}_{j\in \N} \in \ell^1$, $\|c_j\|_{\ell^1}< \delta$ and define the sequence $\{a_j\}_{j\in \N}$ as the convolution of $c_j$ with $2^{-\nu j}$ for some $\nu>0$ small.
Then, the sequence $a_j$ is slowly varying (i.e., $2^{-\nu} a_{j+1}\leq a_j \leq 2^{\nu} a_{j+1}$) and obeys the bound $c_j \leq a_j$. With this preparation, we define $h(0)=0$, $h(-\infty) = \floor{\tau} + \frac{5}{4}$ and 
\begin{align*}
h'' = \sum\limits_{j} b_j \chi_{[j,j+1]},
\end{align*}
with 
\begin{align*}
b_j 
\left\{
\begin{array}{ll}
= 0 &\mbox{ if } a_j \leq \nu \tau^{-1},\\
\in [\tau a_j, 2 \tau a_j] &\mbox{ if } a_j \geq \nu \tau^{-1}.
\end{array} \right.
\end{align*}
In order to also obtain the desired regularity properties for $h$, we regularise this by convolution (on the scale one).\\

\emph{Step 2: Conformal polar coordinates and splitting argument.}
We proceed by a splitting argument. In order to obtain more transparent expressions, we pass to conformal coordinates by setting $t = -\ln(|x|)$, $\theta = \frac{x}{|x|}$. We further pass from the function $u$ to the function $\tilde{u}(t,\theta) = e^{\frac{1-b-n}{2}t} u(e^{-t}\theta)$. In these coordinates
we consider (the weak form) of the equation
\begin{align}
\label{eq:conf_coord}
\begin{split}
(\theta_n^{b}\p_t^2 + \nabla_{S^n} \cdot \theta_n^{b} \nabla_{S^n} + c_{n,b} \theta_n^{b}) \tilde{u} & = \tilde{f} - \theta_n^b \p_t \tilde{F}^t  + \di_{S^n_+} \theta_n^{b} \tilde{F}' \mbox{ in } S^n_+ \times \R,\\
\lim\limits_{\theta_n \rightarrow 0} \theta_n^{b} \nu \cdot \nabla_{S^n} \tilde{u} & = \tilde{g} \mbox{ on } \partial S^n_+ \times \R,
\end{split}
\end{align}
where 
\begin{align*}
\tilde{f}(t,\theta) &= e^{-\frac{n+3-b}{2}t}f(e^{-t}\theta) + \frac{n+b-1}{2}\theta_n^b \tilde F^t(t,\theta),\\
 \tilde{g}(t,\theta)&=e^{(-1+b)t} e^{\frac{1-b-n}{2}t} g(e^{-t} \theta),\\
 \tilde{F}^t(t, \theta) & =  e^{-\frac{n+1+b}{2}t}\left(\sum\limits_{i=2}^{n} \theta_{i-1}F^i (e^{-t}\theta)  \pm \sqrt{\left(1- \sum\limits_{i=1}^{n}\theta_i^2\right)} F^1(e^{-t}\theta) \right), \\  
 \tilde{F}^j(t,\theta) &= e^{-\frac{n+1+b}{2}t}\sum\limits_{i=1}^{n}\delta_{i,j+1}F^i(e^{-t}\theta)-\theta_{j}  \tilde F^t(t,\theta), \ j \in\{1,\dots,n\}, 
 \\ \tilde{F}'(t,\theta) &= (\tilde{F}^1(t,\theta),\dots,\tilde{F}^n(t,\theta)),\\
 c_{n,b}&=-\left(\frac{n+b-1}{2}\right)^2, \ \theta_j = \frac{x_{j+1}}{|x|}, \ j\in\{1,\dots,n\}. 
\end{align*}
Here $\di_{S^n_+}$ denotes the divergence with respect to the standard metric on the sphere and the choice of the sign in the expression for $\tilde{F}^t(t,\theta)$ depends on the specific chart.

We split the problem into two parts $\tilde{u}=u_1+u_2$, where $u_1$ is a solution to the following elliptic problem
\begin{align}
\label{eq:u1}
\begin{split}
(\theta_n^{b}\p_t^2 + \nabla_{S^n}\cdot \theta_n^{b} \nabla_{S^n} + \theta_n^{b} c_{n,b}-\theta_n^{b} \tau^2 K^2)u_1 & = \tilde{f} + \theta_n^b \p_t \tilde{F}^1 +\di_{S^n_+} \theta_n^{b} \tilde{F}^i \mbox{ in } S^n_+ \times \R,\\
\lim\limits_{\theta_n \rightarrow 0} \theta_n^{b} \nu \cdot \nabla_{S^n} u_1 & = \tilde{g} \mbox{ on } \partial S^n \times \R.
\end{split}
\end{align}
Here $K\gg 1$ is a sufficiently large parameter (to be specified later).
The function $u_2$ thus solves a corresponding problem. In order to derive the desired estimate, we discuss the bounds for $u_1$ and $u_2$ separately.\\

\emph{Step 2a: Bounds for $u_1$.} By virtue of the positivity of $K\gg 1$, the estimates for $u_1$ are elliptic energy estimates. Indeed, by the Lax-Milgram theorem, we obtain that a solution to \eqref{eq:u1} exists in the energy space $H^{1}(S^n_+ \times \R, \theta_n^{b})$. We test the weak form of \eqref{eq:u1} with the test function $\tau^2 e^{2h_{M,\delta}(t)} u_1$, where
\begin{align*}
h_{M,\delta}(t):= \min\{M, \max\{h(t),-M \} \}\ast \eta_{\delta}(t),
\end{align*}
for $M \in \N$ and with $\eta_{\delta}$ denoting a standard mollifier.

This leads to the following identity
\begin{align}
\label{eq:test}
\begin{split}
&\tau^2(\theta_n^{b} \p_t u_1, e^{2 h_{M,\delta}(t)}\p_t u_1) + 2\tau^2 (\theta_n^{b} \p_t u_1 , h_{M,\delta}' e^{2 h_{M,\delta}(t)} u_1)
+ \tau^2(\theta_n^{b} e^{2 h_{M,\delta}(t)}  \nabla_{S^n}u_1, \nabla_{S^n} u_1) \\
& - (c_{n,a} \tau^2-K^2\tau^4) (\theta_n^{b}e^{2h_{M,\delta}(t)} u_1,u_1)\\
&= -\tau^2 (\tilde{f}, e^{2h_{M,\delta}(t)}u_1) -\tau^2 (\p_i \theta_n^{b} \tilde{F}^i, e^{2h_{M,\delta}(t)}u_1) + \tau^2 (\tilde{g}, e^{2 h_{M,\delta}(t)}u_1)_0.
\end{split}
\end{align}
Here $(\cdot,\cdot):=(\cdot,\cdot)_{L^2(S^n_+ \times \R)}$ and $(\cdot,\cdot)_0:=(\cdot,\cdot)_{L^2(\partial S^n_+ \times \R)}$.
Estimating (by using the properties of $h_{M,\delta}$)
\begin{align*}
2\tau^2 |(\theta_n^{b} \p_t u_1 , h_{M,\delta}' e^{2 h_{M,\delta}(t)} u_1)|
&\leq \frac{1}{4}\tau^2 |(\theta_n^{b} \p_t u_1, e^{2 h_{M,\delta}(t)} \p_t u_1)| + 16 \tau^4 |(\theta_n^{b}u_1, e^{2 h_{M,\delta}(t)}u_1)|,\\
\tau^2 |(\tilde{f}, e^{2 h_{M,\delta}(t)} u_1)|
&\leq \frac{1}{4}\tau^4|(\theta_n^{b} u_1, e^{2 h_{M,\delta}(t)} u_1)| + C |(\theta_n^{-b} \tilde{f}, e^{2 h_{M,\delta}(t)}\tilde{f})|,\\
\tau^2 |(\p_i \theta_n^{b} \tilde{F}^i, e^{2 h_{M,\delta}(t)} u_1)|
&\leq \frac{1}{4} \tau^2|(e^{2h_{M,\delta}(t)} \theta_n^{b} \nabla u_1, \nabla u_1)| + C \tau^2|(\theta_n^{b} \tilde{F}^i, e^{2h_{M,\delta}(t)}\tilde{F}^i)|,
\end{align*}
and absorbing the contributions involving $u_1$ as well as the other non-positive terms from \eqref{eq:test} into either the positive derivative contributions or (for $K^2\gg 1$ sufficiently large) into the coercive term involving $K^2$ in \eqref{eq:test}, then yields
\begin{align*}
&\tau \|\theta_n^{\frac{b}{2}} e^{h_{M,\delta}(t)}\p_t u_1\| + \tau \|\theta_n^{\frac{b}{2}} e^{h_{M,\delta}(t)} \nabla_{S^n} u_1\|
+ \frac{K}{2} \tau^2 \|\theta_n^{\frac{b}{2}} e^{h_{M,\delta}(t)} u_1\|\\
&\leq C \left( \|\theta_n^{-\frac{b}{2}} e^{h_{M,\delta}(t)} \tilde{f}\| + \tau \|\theta_n^{\frac{b}{2}}e^{h_{M,\delta}(t)}\tilde{F}\| + \epsilon \tau^{\frac{3-b}{2}} \|e^{h_{M,\delta}(t)} u_1\|_0 + C_{\epsilon} \tau^{\frac{1+b}{2}} \|e^{h_{M,\delta}(t)} \tilde{g}\|_0 \right).
\end{align*}
As above, here and in the sequel, we use the notation $\|\cdot \|:= \|\cdot \|_{L^2(S^n_+ \times \R)}$ and $\|\cdot \|_0 := \|\cdot \|_{L^2(\partial S^n_+ \times \R)}$.
Applying the boundary-bulk interpolation estimate from Lemma ~\ref{lem:bdry_bulk}, allows us to absorb the first boundary contribution into the left hand side, which then results in
\begin{align}
\label{eq:u_1_est_a}
\begin{split}
&\tau \|\theta_n^{\frac{b}{2}} e^{h_{M,\delta}(t)}\p_t u_1\| + \tau \|\theta_n^{\frac{b}{2}} e^{h_{M,\delta}(t)} \nabla_{S^n} u_1\|
+ \frac{K}{2} \tau^2 \|\theta_n^{\frac{b}{2}} e^{h_{M,\delta}(t)} u_1\|\\
&\leq C \left( \|\theta_n^{-\frac{b}{2}} e^{h_{M,\delta}(t)} \tilde{f}\| +\tau \|\theta_n^{\frac{b}{2}} e^{h_{M,\delta}(t)} \tilde{F}\| + C_{\epsilon} \tau^{\frac{1+b}{2}} \|e^{h_{M,\delta}(t)} \tilde{g}\|_0 \right).
\end{split}
\end{align}
Using the compact support of $\tilde{f}$, $\tilde{F}$ and $\tilde{g}$, by dominated convergence, we may pass to the limits $M \rightarrow \infty$ and $\delta \rightarrow 0$ which leads to 
\begin{align}
\label{eq:u_1_est}
\begin{split}
&\tau \|\theta_n^{\frac{b}{2}} e^{h(t)}\p_t u_1\| + \tau \|\theta_n^{\frac{b}{2}} e^{h(t)} \nabla_{S^n} u_1\|
+ \frac{K}{2} \tau^2 \|\theta_n^{\frac{b}{2}} e^{h(t)} u_1\|\\
&\leq C \left( \|\theta_n^{-\frac{b}{2}} e^{h(t)} \tilde{f}\| +\tau \|\theta_n^{\frac{b}{2}} e^{h(t)} \tilde{F}\| + C_{\epsilon} \tau^{\frac{1+b}{2}} \|e^{h(t)} \tilde{g}\|_0 \right).
\end{split}
\end{align}
We remark that this estimate not only contains the right weighted bounds for $u_1$ but also implies that $u_1$ has (quantitative) fast decay as $t \rightarrow \infty$ (which corresponds to $|x|\rightarrow 0$). By the compact support assumption on $\tilde{u}$ a similarly fast decay then also holds for $u_2$. \\

\emph{Step 2b: Bounds for $u_2$.}
The estimates for $u_2$ will be sub-elliptic (in $\tau$) Carleman estimates. We recall that by construction, $u_2$ is a weak solution of 
\begin{align}
\label{eq:u_2}
\begin{split}
(\theta_n^{b}\p_t^2 + \nabla_{S^n}\cdot \theta_n^{b} \nabla_{S^n} + \theta_n^{b} c_{n,b}) u_2 
& = - K^2 \tau^2 \theta_n^{b} u_1 \mbox{ in } S^n \times \R,\\
\lim\limits_{\theta_n \rightarrow 0} \theta_n^{b} \nu \cdot \nabla_{S^n}u_2 & = 0 \mbox{ on } \partial S^n \times \R.
\end{split}
\end{align}
We test this with a Neumann eigenfunction to the spherical operator, i.e. with a function $\psi_{\lambda}$ which satisfies
\begin{align*}
\nabla_{S^n} \cdot \theta_n^{b} \nabla_{S^n} \psi_{\lambda} & = - \lambda^2 \theta_n^{b} \psi_{\lambda} \mbox{ in } S^n_+,\\
\lim\limits_{\theta_n \rightarrow 0} \theta_n^{b} \nu \cdot \nabla_{S^n} \psi_{\lambda} & = 0 \mbox{ on } \partial S^n_+ \times \R.
\end{align*}
Since the set $\{\psi_{\lambda}\}$ forms an orthonormal set in $H^{1}(S^n_+, \theta_n^b)$, we may expand the function $u_2$ into this basis. As a result, we obtain an equation for each individual mode, i.e. for $\alpha_{\lambda}(t):=(u_2, \theta_n^{b} \psi_{\lambda})$ and $\beta_{\lambda}(t) = (u_1,\theta_n^{b}\psi_{\lambda})$, we obtain the mode-wise equation
\begin{align}
\label{eq:modes}
\begin{split}
\alpha_{\lambda}'' - \lambda^2 \alpha_{\lambda} + c_{n,b} \alpha_{\lambda} & = -K^2 \tau^2 \beta_{\lambda}.
\end{split}
\end{align}
Conjugating this with the weight $e^{h(t)}$ yields the equation
\begin{align*}
\tilde{\alpha}_{\lambda}'' - \lambda^2 \tilde{\alpha}_{\lambda}
+  |h'|^2 \tilde{\alpha}_{\lambda}
+c_{n,b}\tilde\alpha_\lambda
- 2 h' \tilde{\alpha}_{\lambda}' -  h'' \tilde{\alpha}_{\lambda} = K^2 \tau^2 \tilde{\beta}_{\lambda},
\end{align*}
where $\tilde{\alpha}_{\lambda} = e^{h(t)} \alpha_{\lambda}$ and 
$\tilde{\beta}_{\lambda} = e^{h(t)} \beta_{\lambda}$. Noting that the symmetric and antisymmetric parts of the conjugated operator turn into
\begin{align*}
S \tilde{\alpha}_\lambda &= \tilde{\alpha}_{\lambda}'' - \lambda^2 \tilde{\alpha}_{\lambda} +  |h'|^2 \tilde{\alpha}_{\lambda}  + c_{n,b} \tilde{\alpha}_{\lambda}, \\
A \tilde{\alpha}_\lambda&= -2  h' \tilde{\alpha}_{\lambda}' - h'' \tilde{\alpha}_{\lambda},
\end{align*}
we expand the conjugated operator to infer
\begin{align}
\label{eq:bound_u2}
\begin{split}
K^4 \tau^4 \|\tilde{\beta}_{\lambda}\|_{L^2(\R)}^2 = \|S \tilde{\alpha}_{\lambda}\|_{L^2(\R)}^2 + \|A \tilde{\alpha}_{\lambda}\|_{L^2(\R)}^2 + ([S,A] \tilde{\alpha}_{\lambda}, \tilde{\alpha}_{\lambda})_{L^2(\R)},
\end{split}
\end{align}
where 
\begin{align*}
([S,A]\tilde{\alpha}_{\lambda},\tilde{\alpha}_{\lambda}) = 4 \int\limits_{\R} h' h'' h' \tilde{\alpha}_{\lambda}^2 + h'' (\tilde{\alpha}_{\lambda}')^2 dt - \int\limits_{\R} h^{(4)} \tilde{\alpha}_{\lambda}^2 dt.
\end{align*}
We observe that the first two contributions in the expansion of the commutator are non-negative. The last term which does not necessarily carry a sign can be absorbed into these positive contributions and can hence be neglected for $\tau \geq \tau_0>1$ sufficiently large. Noting that the spectral gap of the Neumann data version of the operator $\nabla_{S^n}\cdot \theta_n^{b} \nabla_{S^n}$ (see \cite{KRSIV}) yields that 
\begin{align*}
\|S \tilde{\alpha}_{\lambda}\|_{L^2(\R)}^2
\geq c\dist(h', \spec(\nabla_{S^n}\cdot \theta_n^{b} \nabla_{S^n})) \|\max\{|h'|, |\lambda|\} \tilde{\alpha}_{\lambda}\|_{L^2(\R)}^2,
\end{align*}
and using the antisymmetric part of the operator to deduce a bound on the gradient, then turns \eqref{eq:bound_u2} into
\begin{align}
\label{eq:bound_u2_2a}
\begin{split}
K^4 \tau^4 \|\tilde{\beta}_{\lambda}\|_{L^2(\R)}^2 
&= \|S \tilde{\alpha}_{\lambda}\|_{L^2(\R)}^2 + \|A \tilde{\alpha}_{\lambda}\|_{L^2(\R)}^2  + ([S,A]\tilde{\alpha}_{\lambda}, \tilde{\alpha}_{\lambda})_{L^2(\R)}\\
&\geq c \|\max\{|h'|, |\lambda|\} \tilde{\alpha}_{\lambda}\|_{L^2(\R)}^2 + \|h' (h'')^{1/2} \tilde{\alpha}_{\lambda}\|_{L^2(\R)}^2 + \|(h'')^{1/2} \tilde{\alpha}_{\lambda}'\|_{L^2(\R)}^2\\
& \quad + \tau^{-2}\|h' \tilde{\alpha}_{\lambda}'\|_{L^2(\R)}^2.
\end{split}
\end{align}
We remark that we have given up a factor $\tau^2$ in the antisymmetric estimate. This is due to the fact, that in undoing the conjugation with the weight $e^{h(t)}$, we obtain a term originating from the $t$ derivative falling onto the weight. Without the loss of the factor $\tau^2$ this would carry a weight $\tau^4$. We would not be able to absorb this into the $L^2$ contributions on the left hand side of the estimates.

We further complement the estimate \eqref{eq:bound_u2_2a} by a bound on the spherical part of the gradient. To this end, we make use of the symmetric part of the operator. Indeed, we have
\begin{align*}
(S \tilde{\alpha}_{\lambda}, h'' \tilde{\alpha}_{\lambda})
&= - (\tilde{\alpha}_{\lambda}', h'' \tilde{\alpha}_{\lambda}')
+ \frac{1}{2}(\tilde{\alpha}_{\lambda}, h''' \tilde{\alpha}_{\lambda})
- \lambda^2 (\tilde{\alpha}_{\lambda}, h'' \tilde{\alpha}_{\lambda}) \\
& \quad + (|h'|^2 h'' \tilde{\alpha}_{\lambda}, \tilde{\alpha}_{\lambda})
+ c_{n,b}(\tilde{\alpha}_{\lambda}, h'' \tilde{\alpha}_{\lambda}).
\end{align*}
As a consequence, if $c_0>0$ is sufficiently small,
\begin{align*}
c_0 \lambda^2 (\tilde{\alpha}_{\lambda}, h'' \tilde{\alpha}_{\lambda})
&\leq c_0 |(S \tilde{\alpha}_{\lambda}, h'' \tilde{\alpha}_{\lambda})|
+ c_0|(\tilde{\alpha}_{\lambda}', h'' \tilde{\alpha}_{\lambda}')| + \frac{c_0}{2}|(\tilde{\alpha}_{\lambda}, h''' \tilde{\alpha}_{\lambda})| + c_0|(|h'|^2 h'' \tilde{\alpha}_{\lambda}, \tilde{\alpha}_{\lambda})| \\
& \quad + c_0 c_{n,b}|(\tilde{\alpha}_{\lambda},h'' \tilde{\alpha}_{\lambda})|\\
& \leq \frac{c_0^2}{2} \|S \alpha_{\lambda}\|^2 + \frac{c_0^2}{2} \|h'' \tilde{\alpha}_{\lambda}\|^2 + c_0|(\tilde{\alpha}_{\lambda}', h'' \tilde{\alpha}_{\lambda}')| 
+ \frac{c_0}{2} |(\tilde{\alpha}_{\lambda}, h''' \tilde{\alpha}_{\lambda})| \\
& \quad + c_0 C \tau^2 \|(h'')^{\frac{1}{2}} \tilde{\alpha}_{\lambda}\|^2\\
& \leq K^4 \tau^4 \|\tilde{\beta}_{\lambda}\|^2.
\end{align*}
Here the last estimate follows from the previously deduced bounds from \eqref{eq:bound_u2_2a}. Hence, we conclude
\begin{align}
\label{eq:bound_u2_2}
\begin{split}
K^4 \tau^4 \|\tilde{\beta}_{\lambda}\|_{L^2(\R)}^2 
&= \|S \tilde{\alpha}_{\lambda}\|_{L^2(\R)}^2 + \|A \tilde{\alpha}_{\lambda}\|_{L^2(\R)}^2  + ([S,A]\tilde{\alpha}_{\lambda}, \tilde{\alpha}_{\lambda})_{L^2(\R)}\\
&\geq c\|\max\{|h'|, |\lambda|\} \tilde{\alpha}_{\lambda}\|_{L^2(\R)}^2 + \|h' (h'')^{1/2} \tilde{\alpha}_{\lambda}\|_{L^2(\R)}^2 + \|(h'')^{1/2} \tilde{\alpha}_{\lambda}'\|_{L^2(\R)}^2\\
& \quad + \tau^{-2}\|h' \tilde{\alpha}_{\lambda}'\|_{L^2(\R)}^2 + c_0 \lambda^2 \|(h'')^{1/2}\tilde{\alpha}_{\lambda}\|_{L^2(\R)}^2.
\end{split}
\end{align}

By orthogonality, summing the estimate \eqref{eq:bound_u2_2} over $\lambda$, integrating over $S^n_+$, using the properties of $h$ and undoing the conjugation, we thus obtain
\begin{align}
\label{eq:bound_u2_3}
\tau \|e^{h} (1+h'')^{1/2} \theta_n^{\frac{b}{2}} u_{2}\|
+  \|e^{h} (1+h'')^{1/2} \theta_n^{\frac{b}{2}} \nabla u_2 \|
\leq K^2 \tau^2 \|\theta_n^{\frac{b}{2}} e^{h} u_1\|.
\end{align}

\emph{Step 3: Conclusion.} Last but not least, we combine the estimates from Steps 1 and 2 and deduce the Carleman estimate from Proposition ~\ref{prop:Carl} from this. We obtain
\begin{align*}
&\tau \|e^{h} (1+ h'')^{\frac{1}{2}} \theta_n^{\frac{b}{2}}\tilde{u}\| 
+ \|e^{h} (1+ h'')^{\frac{1}{2}} \theta_n^{\frac{b}{2}}\nabla \tilde{u}\|\\
&\leq 
\tau \|e^{h} (1+ h'')^{\frac{1}{2}} \theta_n^{\frac{b}{2}}u_1\| 
+ \|e^{h} (1+ h'')^{\frac{1}{2}} \theta_n^{\frac{b}{2}}\nabla u_1\|
+ \tau \|e^{h} (1+ h'')^{\frac{1}{2}} \theta_n^{\frac{b}{2}}u_2\| 
+ \|e^{h} (1+ h'')^{\frac{1}{2}} \theta_n^{\frac{b}{2}}\nabla u_2\|\\
& \leq C(\|e^{h}\theta_n^{-\frac{b}{2}}  \tilde{f}\| + \|e^{h}\theta_n^{\frac{b}{2}}  \tilde{F}\| + \tau^{\frac{1+b}{2}} \|e^{h} \tilde{g}\|_0).
\end{align*}
Using the bulk-boundary interpolation estimate from Lemma ~\ref{lem:bdry_bulk}, this can further be strengthened by a boundary contribution on the left hand side:
\begin{align*}
&\tau^{\frac{1-b}{2}}\|e^{h} (1+h'')^{\frac{1}{2}} \tilde u\|_0+ 
\tau \|e^{h} \theta_n^{\frac{b}{2}}(1+h'')^{\frac{1}{2}}\tilde{u}\| + \|e^{h} \theta_n^{\frac{b}{2}}(1+h'')^{\frac{1}{2}} \nabla \tilde{u}\|\\
& \leq C(\|e^{h}\theta_n^{-\frac{b}{2}}  \tilde{f}\| + \|e^{h}\theta_n^{\frac{b}{2}}  \tilde{F}\|+ \tau^{\frac{1+b}{2}} \|e^{h} \tilde{g}\|_0).
\end{align*}
Transforming back into Cartesian coordinates yields the desired estimate.
\end{proof}

\subsection{Variable coefficient metrics}
Considering second order equations of the form
\begin{align}
\label{eq:var_coef}
\begin{split}
\nabla\cdot x_{n+1}^ba\nabla u & = f \mbox{ in } B_4^+,\\
\lim\limits_{x_{n+1}\rightarrow 0} x_{n+1}^{b} \p_{n+1} u & = g \mbox{ on } B_4',
\end{split}
\end{align}
in the sequel, we seek to introduce \emph{variable coefficients} in the Carleman estimate of Proposition ~\ref{prop:Carl}.
Throughout this section, the metric $a$ is assumed to be of a block form \eqref{eq:block}
where the metric $\tilde{a}$ satisfies the conditions \hyperref[cond:a1]{(A1)}-\hyperref[cond:a2]{(A3)} from the introduction with $\mu =0$.

We first note that the estimate in Proposition ~\ref{prop:Carl} remains valid for a \emph{constant} coefficient metric in block form \eqref{eq:block}. This follows immediately from a change of coordinates (only involving the tangential variables). 
In order to extend Proposition \ref{prop:Carl} to \emph{variable} coefficient problems, we exploit the presence of the divergence contribution and in conformal coordinates localise the problem to scales of the size $C(a_j \tau)^{-\frac{1}{2}}$ or of size one, respectively (depending on the size of the metric). Here $\{a_j\}_{j\in \N}$ denotes the sequence that was used in the definition of the Carleman weight $h$ (see Step 1 in the proof of Proposition ~\ref{prop:Carl}).

We follow the argument in \cite{KT01} and argue in two steps: First, in the regime in which $h$ is convex, we  localise to very small scales (Lemma ~\ref{lem:local_est}). In the regime in which no convexity is present anymore, we localise to scales of order one in conformal coordinates (Lemma ~\ref{lem:local_est_1}). Finally, we patch these estimates together to derive the desired global bound of Proposition ~\ref{prop:syst_Carl}.

\begin{lem}
\label{lem:local_est}
Let $\tau \geq 1$ and $\epsilon>0$. Assume that $h$ is convex and
\begin{align*}
h' \in [\tau, 2 \tau], \ h'' \in [\epsilon\tau, 2\epsilon\tau], \ |h'''|\leq \tau.
\end{align*}
Assume that 
\begin{align*}
|x||\nabla a^{ij}(x)|\leq \delta \epsilon \mbox{ in } I_\ell:=\{x\in \R^{n+1}_+: |x|\in [e^{-\ell-1},e^{-\ell}]\}
\end{align*}
for some sufficiently small constant $\delta>0$.
Then, for all $u$ with $\supp (u) \subset I_\ell$ and all $\tau \geq \tau_0\geq 1$ we have
\begin{align*}
&\tau
 \|e^{h(-\ln(|x|))}  x_{n+1}^{\frac{b}{2}} (1+\epsilon \tau)^{\frac{1}{2}} |x|^{-1} u\|_{L^2(\R^{n+1}_+)}
+  \|e^{h(-\ln(|x|))}x_{n+1}^{\frac{b}{2}} (1+\epsilon \tau)^{\frac{1}{2}}  \nabla u\|_{L^2(\R^{n+1}_+)}\\
& +\tau^{\frac{1-b}{2}}\|e^{h(-\ln(|x|))}(1+\epsilon \tau)^{\frac 1 2}|x|^{\frac{b-1}2}u\|_{L^2(\R^n\times\{0\})}
\\
&\leq C \left( \| e^{h(-\ln(|x|))} |x| x_{n+1}^{-\frac{b}{2}} \p_i x_{n+1}^{b}a^{ij} \p_j u\|_{L^2(\R^{n+1}_+)} 
\right.\\
& \quad \left. + \tau^{\frac{1+b}{2}} \|e^{h(-\ln(|x|))} |x|^{\frac{1-b}{2}} \lim\limits_{x_{n+1}\rightarrow 0} x_{n+1}^{b} \p_{n+1} u\|_{L^2(\R^n \times \{0\})} \right).
\end{align*}
\end{lem}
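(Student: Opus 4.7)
The plan is a frozen-coefficient perturbation argument combined with a partition of unity in the conformal polar coordinates $(t,\theta)=(-\ln|x|,x/|x|)\in\R\times S^n_+$ at scale $L=K(\epsilon\tau)^{-1/2}$, where $K\gg 1$ will be chosen so that cutoff-commutator and metric-oscillation errors each become $O(\delta^{1/2})$-small on each localization patch. First, Proposition~\ref{prop:Carl} extends verbatim to any constant block metric $a_0=\text{diag}(\tilde a_0,1)$ with $\tilde a_0$ uniformly elliptic, via the tangential linear change of variables $y'=Ax'$ with $A^TA=\tilde a_0^{-1}$ that reduces the tangential block to the identity while leaving $h(-\ln|x|)$, $x_{n+1}^b$, and the Neumann flux $x_{n+1}^b\p_{n+1}$ invariant.

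Next, choose a tensor-product smooth partition $\{\chi_{k,j}(t,\theta)=\eta_k(t)\omega_j(\theta)\}$ of $[\ell,\ell+1]\times S^n_+$ with $\sum\chi_{k,j}^2=1$ (so $\sum\chi_{k,j}\nabla\chi_{k,j}=0$), each factor supported in an interval/geodesic ball of radius $L$, with finite overlap and $|\p_t^m\eta_k|,|\nabla_\theta^m\omega_j|\lesssim L^{-m}$. As functions of $x$, $|\nabla\chi_{k,j}|\lesssim K^{-1}(\epsilon\tau)^{1/2}|x|^{-1}$. Fix reference points $x_{k,j}\in\supp\chi_{k,j}$; the hypothesis $|x||\nabla a|\leq\delta\epsilon$ with $|x-x_{k,j}|\lesssim L|x|$ gives $|a(x)-a(x_{k,j})|\lesssim K\delta(\epsilon/\tau)^{1/2}$ on $\supp\chi_{k,j}$. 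Setting $u_{k,j}:=\chi_{k,j}u$ and applying the frozen-metric estimate to $u_{k,j}$ with metric $a(x_{k,j})$, I would expand
\begin{align*}
\p_i x_{n+1}^b a^{ij}(x_{k,j})\p_j u_{k,j} = \chi_{k,j}\,\p_i x_{n+1}^b a^{ij}(x)\p_j u + \text{(I)} + \text{(II)} + \text{(III)},
\end{align*}
placing $\text{(I)}=-x_{n+1}^b a^{ij}(x)\p_i\chi_{k,j}\,\p_j u$ into the $f$-slot of Proposition~\ref{prop:Carl} and the divergence-form pieces $\text{(II)}=\p_i(x_{n+1}^b a^{ij}(x)u\p_j\chi_{k,j})$, $\text{(III)}=\p_i(x_{n+1}^b(a(x_{k,j})-a(x))^{ij}\p_j u_{k,j})$ into the $F$-slot (which, by the radial-and-spherical divergence structure in the conformal-coordinate proof of Proposition~\ref{prop:Carl}, accommodates all components of $\nabla\chi_{k,j}$). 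The boundary flux of $u_{k,j}$ is $\chi_{k,j}\lim x_{n+1}^b\p_{n+1}u$ by the block form.

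Since $(1+\bar h)^{1/2}\sim(1+\epsilon\tau)^{1/2}$ on $\supp\chi_{k,j}$, direct estimation will give
\begin{align*}
\|e^h|x|x_{n+1}^{-b/2}\text{(I)}\| &\lesssim K^{-1}(\epsilon\tau)^{1/2}\|e^hx_{n+1}^{b/2}\nabla u\|_{\supp\chi_{k,j}},\\
\tau\|e^hx_{n+1}^{b/2}a(x)u\nabla\chi_{k,j}\| &\lesssim K^{-1}\tau(\epsilon\tau)^{1/2}\|e^hx_{n+1}^{b/2}|x|^{-1}u\|_{\supp\chi_{k,j}},\\
\tau\|e^hx_{n+1}^{b/2}(a(x_{k,j})-a(x))\nabla u_{k,j}\| &\lesssim K\delta(\epsilon\tau)^{1/2}\|e^hx_{n+1}^{b/2}\nabla u_{k,j}\|,
\end{align*}
each a $(K^{-1}+K\delta)$-multiple of a corresponding LHS term of Lemma~\ref{lem:local_est}. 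Squaring each local estimate and summing over $(k,j)$, the identities $\sum\chi_{k,j}^2=1$ and $\sum\chi_{k,j}\nabla\chi_{k,j}=0$ produce the LHS quantities $\|\cdot u\|,\|\cdot\nabla u\|$ of the lemma (up to an absorbable $O(K^{-2}(\epsilon\tau)\||x|^{-1}u\|^2)$ cross-error from $\sum|\nabla\chi_{k,j}|^2$), while the true source $\chi_{k,j}\p_i x_{n+1}^b a^{ij}\p_j u$ and boundary flux $\chi_{k,j}\lim x_{n+1}^b\p_{n+1}u$ collapse by finite overlap to the stated global RHS norms. Choosing $K=\delta^{-1/2}$ renders $K^{-1}+K\delta\sim\delta^{1/2}\ll 1$, permitting absorption; the LHS boundary term with weight $\tau^{(1-b)/2}$ then follows by applying Lemma~\ref{lem:bdry_bulk} to the resulting bulk estimate. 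The main obstacle is the tension between the two cutoff-scale requirements --- a finer cutoff (small $K$) enlarges the cutoff commutators $\text{(I)},\text{(II)}$, while a coarser cutoff (large $K$) enlarges the metric oscillation $\text{(III)}$ --- and the resolution critically relies on placing $\text{(III)}$ in the divergence $F$-slot of Proposition~\ref{prop:Carl} (the ``divergence contribution'' flagged in the introduction to this subsection), so that its scaling $\tau\cdot L\delta\epsilon=K\delta(\epsilon\tau)^{1/2}$ meets the cutoff-error scaling $K^{-1}(\epsilon\tau)^{1/2}$ at the balancing point $K=\delta^{-1/2}$, relying only on the smallness of $\delta$ and not of $\tau^{-1}$.
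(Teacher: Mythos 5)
Your proposal is essentially the same approach as the paper's proof: extend Proposition~\ref{prop:Carl} to constant block metrics by a tangential linear change of variables, localize by a partition of unity at Cartesian scale $\sim |x|(\epsilon\tau)^{-1/2}$ (your $K$ playing the role of the paper's $C_0$, balanced against $\delta$), freeze the coefficients on each patch, and crucially place the metric-oscillation divergence into the $F$-slot where the factor $\tau$ turns $\delta(\epsilon/\tau)^{1/2}$ into $\delta(\epsilon\tau)^{1/2}$, absorbable uniformly in $\tau$. One small imprecision: you route the zeroth-order cutoff commutator $(\mathrm{II})=\p_i(x_{n+1}^ba^{ij}u\p_j\chi)$ entirely through the $F$-slot, but Proposition~\ref{prop:Carl}'s $F$-slot is $\sum_{j=1}^n\p_jx_{n+1}^bF^j$ (tangential divergence only), so the $i=j=n{+}1$ piece does not directly fit; the paper instead expands $(\mathrm{II})$ into the $f$-slot, which then needs $\p_{n+1}\psi_k=O(x_{n+1})$ near the boundary (arranged by choosing cutoffs even in $x_{n+1}$, suppressed in both treatments). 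This is a bookkeeping detail, not a gap in the method.
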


\begin{proof}
\emph{Step 1: Restricted support.} We first assume that $u$ is supported on a ball $B_{C_0|x_0|(\epsilon \tau)^{-\frac{1}{2}}}(x_0)$ for some $x_0 \in I_\ell$ or in some half ball $B_{C_0|x_0|(\epsilon \tau)^{-\frac{1}{2}}}^+(x_0)$ for some $x_0 \in I_\ell\cap (\R^n \times \{0\})$ and where the constant $C_0>0$ is still to be determined (see Step 2). As the arguments are similar in both cases, we only discuss the case of the full ball in detail in the sequel.
We note that for $x\in B_{C_0|x_0|(\epsilon \tau)^{-\frac{1}{2}}}(x_0)\cap I_\ell$ we have  
\begin{align}
\label{eq:freeze}
\begin{split}
|a^{ij}(x)-a^{ij}(x_0)|
&\leq \sup\limits_{x \in B_{C_0 |x_0|(\epsilon \tau)^{-\frac{1}{2}}}(x_0)}|\nabla a^{ij}(x)||x-x_0|\\
&\leq C\delta \epsilon \frac{C_0}{|x_0|}(\epsilon \tau)^{-\frac{1}{2}}|x_0|
\leq C C_0 \delta (\epsilon \tau^{-1})^{\frac{1}{2}}.
\end{split}
\end{align}

Next, we apply the Carleman estimate from Proposition ~\ref{prop:Carl} to the equation
\begin{align*}
\p_i x_{n+1}^b a^{ij}(x_0) \p_j u & = f + \p_i (a^{ij}(x_0)-a^{ij}(x))\p_j u \mbox{ in } B_{C_0 |x_0|(\epsilon \tau)^{-\frac{1}{2}}}(x_0),\\
\lim\limits_{x_{n+1} \rightarrow 0} x_{n+1}^{b} \p_{n+1} u & = g,
\end{align*}
where $f = \p_i x_{n+1}^b a^{ij} \p_j u$ and where we recall the block structure \eqref{eq:block} of $a^{ij}$. By virtue of Proposition ~\ref{prop:Carl} we obtain
\begin{align}
\label{eq:Carl_freeze}
\begin{split}
&\tau
 \|e^{h(-\ln(|x|))}  x_{n+1}^{\frac{b}{2}} (1+\epsilon \tau)^{\frac{1}{2}} |x|^{-1} u\|_{L^2(\R^{n+1}_+)}
+  \|e^{h(-\ln(|x|))}x_{n+1}^{\frac{b}{2}} (1+\epsilon \tau)^{\frac{1}{2}}  \nabla u\|_{L^2(\R^{n+1}_+)}\\
& +\tau^{\frac{1-b}{2}}\|e^{h(-\ln(|x|))}(1+\epsilon \tau)^{\frac 1 2}|x|^{\frac{b-1}2}u\|_{L^2(\R^n\times\{0\})}
\\
&\leq C \left( \| e^{h(-\ln(|x|))} |x| x_{n+1}^{-\frac{b}{2}} f\|_{L^2(\R^{n+1}_+)} 
+ \tau \| e^{h(-\ln(|x|))} x_{n+1}^{\frac{b}{2}} (a^{ij}-a^{ij}(x_0))\p_j u\|_{L^2(\R^{n+1}_+)}
\right.\\
& \quad \left. + \tau^{\frac{1+b}{2}} \|e^{h(-\ln(|x|))} |x|^{\frac{1-b}{2}} \lim\limits_{x_{n+1}\rightarrow 0} x_{n+1}^{b} \p_{n+1} u\|_{L^2(\R^n \times \{0\})} \right).
\end{split}
\end{align}
In order to deduce the desired estimate under the support constraint, it suffices to bound the second bulk term in the above inequality. To this end, we invoke \eqref{eq:freeze} and estimate
\begin{align*}
\| e^{h(-\ln(|x|))} x_{n+1}^{\frac{b}{2}} (a^{ij}-a^{ij}(x_0))\p_j u\|_{L^2(\R^{n+1}_+)}
\leq C C_0 \delta (\epsilon \tau^{-1})^{\frac{1}{2}} \|e^{h(-\ln(|x|))} x_{n+1}^{\frac{b}{2}} \nabla u \|_{L^2(\R^{n+1}_+)}.
\end{align*}
For $\delta>0$ sufficiently small (but independent of $u$), it is possible to absorb this contribution into the left hand side of \eqref{eq:Carl_freeze}.\\

\emph{Step 2: Localisation.}
We seek to apply the previous argument by localising a general solution $u$ with $\supp (u) \subset I_\ell$ by a partition of unity. Here commutator estimates play a crucial role and provide a natural limitation to the possible localisation scale.

We consider a partition of unity $\{\psi_k\}_{k\in \{1,\dots,K\}}$ associated with the half annulus $I_\ell$ and a finite collection $\{x_{k}\}_{k\in \{1,\dots, K\}}$ of points in $I_\ell$ such that $\supp(\psi_k) \subset B_{C_0|x_k|(\epsilon \tau)^{-\frac{1}{2}}}(x_k)$ or $\supp(\psi_m) \subset B_{C_0|x_{m}|(\epsilon \tau)^{-\frac{1}{2}}}^+(x_{m})$ (where in the latter case $x_m \in I_{\ell}\cap (\R^n \times \{0\})$) and such that the balls and half-balls $B_{C_0|x_k|(\epsilon \tau)^{-\frac{1}{2}}}(x_k)$ and $B_{C_0|x_{m}|(\epsilon \tau)^{-\frac{1}{2}}}^+(x_{m})$ cover the interval $I_\ell $ (with controlled overlap). Without loss of generality, we choose the partition of unity and the points $x_k$ such that the following estimate hold:
\begin{align}
\label{eq:bounds_psi_k}
|\nabla^{\alpha} \psi_k| \leq C^{-|\alpha|}_1|x_k|^{-|\alpha|}(\epsilon \tau)^{\frac{|\alpha|}{2}} \mbox{ for } |\alpha| \in \{0,1,2\}, 
\end{align}
and $C_1=C_1(C_0)>0$.
Further, without loss of generality, we may assume that  $${\lim\limits_{x_{n+1}\rightarrow 0} x_{n+1}^{b}\p_{n+1}\psi_k(x) = 0}.$$
We then write $u= \sum\limits_{k=1}^K \psi_k u$. 
With this in hand, we apply the triangle inequality as well as the Carleman estimate from Step 1:
\begin{align}
\label{eq:Carl_split}
\begin{split}
&\tau
 \|e^{h(-\ln(|x|))}  x_{n+1}^{\frac{b}{2}} (1+\epsilon \tau)^{\frac{1}{2}} |x|^{-1} u\|_{L^2(\R^{n+1}_+)}
+  \|e^{h(-\ln(|x|))}x_{n+1}^{\frac{b}{2}} (1+\epsilon \tau)^{\frac{1}{2}}  \nabla u\|_{L^2(\R^{n+1}_+)}\\
& \quad +\tau^{\frac{1-b}{2}}\|e^{h(-\ln(|x|))}(1+\epsilon \tau)^{\frac 1 2}|x|^{\frac{b-1}2}u\|_{L^2(\R^n\times\{0\})}
\\
&\leq \sum\limits_{k=1}^K \left(\tau
 \|e^{h(-\ln(|x|))}  x_{n+1}^{\frac{b}{2}} (1+\epsilon \tau)^{\frac{1}{2}} |x|^{-1} \psi_k u\|_{L^2(\R^{n+1}_+)}
+  \|e^{h(-\ln(|x|))}x_{n+1}^{\frac{b}{2}} (1+\epsilon \tau)^{\frac{1}{2}}  \nabla (\psi_k u)\|_{L^2(\R^{n+1}_+)}\right)\\
& \quad +\sum\limits_{k=1}^K \tau^{\frac{1-b}{2}}\|e^{h(-\ln(|x|))}(1+\epsilon \tau)^{\frac 1 2}|x|^{\frac{b-1}2}\psi_k u \|_{L^2(\R^n\times\{0\})}\\
&\leq C \sum\limits_{k=1}^K \Big( \| e^{h(-\ln(|x|))}  x_{n+1}^{-\frac{b}{2}} |x|f_k\|_{L^2(\R^{n+1}_+)} 
\\
& \qquad \quad  \qquad  + \tau^{\frac{1+b}{2}} \|e^{h(-\ln(|x|))} |x|^{\frac{1-b}{2}} \lim\limits_{x_{n+1}\rightarrow 0} x_{n+1}^{b} \p_{n+1} (\psi_k u)\|_{L^2(\R^n \times \{0\})} \Big).
\end{split}
\end{align}
We first consider the bulk contribution on the right hand side for which
\begin{align*}
f_k = \p_i x_{n+1}^b a^{ij}\p_j (u\psi_k)
= \psi_k f + 2 a^{ij}x_{n+1}^{b} \p_i \psi_k \p_j u + u a^{ij} \p_{i }(x_{n+1}^b \p_j\psi_k) + u x_{n+1}^b (\p_i a^{ij})(\p_j \psi_k).
\end{align*}
Using the bounds for $\psi_k$ from \eqref{eq:bounds_psi_k} as well as the estimate for $|\nabla a^{ij}|$, we obtain 
\begin{align}
\label{eq:inhom_Carl}
\begin{split}
\| e^{h(-\ln(|x|))} |x| x_{n+1}^{-\frac{b}{2}} f_k\|_{L^2(\R^{n+1}_+)} 
&\leq 
\| e^{h(-\ln(|x|))}|x| x_{n+1}^{-\frac{b}{2}}  f \psi_k\|_{L^2(\R^{n+1}_+)} \\
& \quad + C_1^{-1}(1+ (\epsilon\tau)^{\frac{1}{2}})\| e^{h(-\ln(|x|))} x_{n+1}^{\frac{b}{2}} \psi_k \nabla u\|_{L^2(\R^{n+1}_+)} \\
& \quad + C_1^{-1}(1+ \epsilon\tau)\| e^{h(-\ln(|x|))} x_{n+1}^{\frac{b}{2}} |x|^{-1} \psi_k u\|_{L^2(\R^{n+1}_+)} .
\end{split}
\end{align}
Choosing $C_1=C_1(C_0)>1$ sufficiently large (by choosing $C_0>0$ appropriately), then allows us to absorb the second and third contribution from \eqref{eq:inhom_Carl} into the left hand side of \eqref{eq:Carl_split}. 

For the boundary term in \eqref{eq:Carl_split}, we use the fact that by construction of the partition of unity $\lim\limits_{x_{n+1}\rightarrow 0} x_{n+1}^{b}\p_{n+1} (\psi_k u) = \psi_k \lim\limits_{x_{n+1}\rightarrow 0} x_{n+1}^{b}\p_{x_{n+1}} u$.

Using this and the finite overlap of the supports of the functions $\psi_k$, allows us to turn \eqref{eq:Carl_split} into
\begin{align}
\label{eq:Carl_split2}
\begin{split}
&\tau
 \|e^{h(-\ln(|x|))}  x_{n+1}^{\frac{b}{2}} (1+\epsilon \tau)^{\frac{1}{2}} |x|^{-1} u\|_{L^2(\R^{n+1}_+)}
+  \|e^{h(-\ln(|x|))}x_{n+1}^{\frac{b}{2}} (1+\epsilon \tau)^{\frac{1}{2}}  \nabla u\|_{L^2(\R^{n+1}_+)}\\
& +\tau^{\frac{1-b}{2}}\|e^{h(-\ln(|x|))}(1+\epsilon \tau)^{\frac 1 2}|x|^{\frac{b-1}2}u\|_{L^2(\R^n\times\{0\})}
\\
&\leq C \sum\limits_{k=1}^K \left( \| e^{h(-\ln(|x|))} |x| x_{n+1}^{-\frac{b}{2}} \psi_k f\|_{L^2(\R^{n+1}_+)} 
\right.\\
& \qquad \qquad\left. + \tau^{\frac{1+b}{2}} \|e^{h(-\ln(|x|))} |x|^{\frac{1-b}{2}} \psi_k \lim\limits_{x_{n+1}\rightarrow 0} x_{n+1}^{b} \p_{n+1} u\|_{L^2(\R^n \times \{0\})} \right)\\
&\leq C \left( \| e^{h(-\ln(|x|))} x_{n+1}^{-\frac{b}{2}} |x|  f\|_{L^2(\R^{n+1}_+)} + \tau^{\frac{1+b}{2}} \|e^{h(-\ln(|x|))} |x|^{\frac{1-b}{2}} \lim\limits_{x_{n+1}\rightarrow 0} x_{n+1}^{b} \p_{n+1} u\|_{L^2(\R^n \times \{0\})} \right),
\end{split}
\end{align}
which concludes the proof of the argument.
\end{proof}

Similarly as in Lemma ~\ref{lem:local_est}, it is also possible to deal with the situation of even smaller perturbations of the metric without invoking the convexity of the weight:

\begin{lem}
\label{lem:local_est_1}
Let $\tau \geq 1$. 
Assume that  
\begin{align*}
|x||\nabla a^{ij}|\leq \delta \tau^{-1} \mbox{ in } I_\ell:=\{x\in \R^{n+1}_+: |x|\in [e^{-\ell-1},e^{-\ell}]\}
\end{align*}
for some sufficiently small constant $\delta>0$.
Then, for all $u$ with $\supp (u) \subset I_\ell$ and all $\tau \geq \tau_0\geq 1$ we have
\begin{align*}
&\tau
 \|e^{h(-\ln(|x|))}  x_{n+1}^{\frac{b}{2}}  |x|^{-1} u\|_{L^2(\R^{n+1}_+)}
+  \|e^{h(-\ln(|x|))}x_{n+1}^{\frac{b}{2}}   \nabla u\|_{L^2(\R^{n+1}_+)}\\
& +\tau^{\frac{1-b}{2}}\|e^{h(-\ln(|x|))}|x|^{\frac{b-1}2}u\|_{L^2(\R^n\times\{0\})}
\\
&\leq C \Big( \| e^{h(-\ln(|x|))} |x| x_{n+1}^{-\frac{b}{2}} \p_i x_{n+1}^{b}a^{ij} \p_j u\|_{L^2(\R^{n+1}_+)} 
\\
& \quad \qquad + \tau^{\frac{1+b}{2}} \|e^{h(-\ln(|x|))} |x|^{\frac{1-b}{2}} \lim\limits_{x_{n+1}\rightarrow 0} x_{n+1}^{b} \p_{n+1} u\|_{L^2(\R^n \times \{0\})} \Big).
\end{align*}
\end{lem}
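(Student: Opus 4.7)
The plan is to mimic the two-step structure of the proof of Lemma \ref{lem:local_est}: first freeze the metric on sufficiently small balls and apply Proposition \ref{prop:Carl}, then patch the localised estimates together via a partition of unity. The conceptual difference here is that, since we lack convexity of the weight (so no $h''\sim \epsilon\tau$ gain), we must work on a coarser localisation scale, namely on balls of \emph{Cartesian} radius of order $|x_0|$ (i.e.\ scale one in conformal coordinates). The stronger smallness assumption $|x||\nabla a^{ij}|\leq \delta\tau^{-1}$ is what compensates for this loss and allows the metric to be treated perturbatively.

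For the first step I would fix $x_0\in I_\ell$, assume $\supp(u)\subset B_{C_0|x_0|}(x_0)\cap I_\ell$, and observe that by the assumption
\[
|a^{ij}(x)-a^{ij}(x_0)|\leq \sup_{B_{C_0|x_0|}(x_0)}|\nabla a^{ij}|\cdot C_0|x_0| \leq C C_0 \delta\tau^{-1}
\]
on the support of $u$. I would then rewrite the equation $\nabla\cdot x_{n+1}^b a\nabla u = f$ with $f=\partial_i x_{n+1}^b a^{ij}\partial_j u$ in the form
\[
\partial_i x_{n+1}^b a^{ij}(x_0)\partial_j u = f + \partial_i\bigl(x_{n+1}^b F^i\bigr),\qquad F^i = (a^{ij}(x_0)-a^{ij}(x))\partial_j u,
\]
and apply Proposition \ref{prop:Carl} (valid for any constant block-form metric via a tangential change of coordinates). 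The extra contribution on the right becomes
\[
\tau\|e^{h(-\ln|x|)}x_{n+1}^{b/2}F\|_{L^2(\R^{n+1}_+)}
\leq C C_0\delta\,\|e^{h(-\ln|x|)}x_{n+1}^{b/2}\nabla u\|_{L^2(\R^{n+1}_+)},
\]
which for $\delta>0$ small enough (independently of $u$) is absorbed into the gradient term on the left-hand side.

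For the second step I would introduce a partition of unity $\{\psi_k\}$ on $I_\ell$ whose (half-)supports are contained in balls $B_{C_0|x_k|}(x_k)$ with finite overlap, satisfying
\[
|\nabla^{\alpha}\psi_k|\leq C_1^{-|\alpha|}|x_k|^{-|\alpha|},\quad |\alpha|\leq 2,\qquad \lim_{x_{n+1}\to 0}x_{n+1}^b\partial_{n+1}\psi_k = 0,
\]
where $C_1=C_1(C_0)$ can be made large by enlarging $C_0$. Applying the frozen-metric Carleman estimate of Step~1 to $\psi_k u$, the forcing is
\[
\nabla\cdot x_{n+1}^b a\nabla(\psi_k u) = \psi_k f + 2a^{ij}x_{n+1}^b(\partial_i\psi_k)(\partial_j u) + u\,a^{ij}\partial_i(x_{n+1}^b\partial_j\psi_k) + u\,x_{n+1}^b(\partial_i a^{ij})(\partial_j\psi_k),
\]
and the commutator contributions produce precisely the terms
\[
C_1^{-1}\|e^{h(-\ln|x|)}x_{n+1}^{b/2}\psi_k\nabla u\|_{L^2} + C_1^{-1}\|e^{h(-\ln|x|)}x_{n+1}^{b/2}|x|^{-1}\psi_k u\|_{L^2},
\]
since each derivative of $\psi_k$ costs only $C_1^{-1}|x|^{-1}$. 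For $C_1$ sufficiently large these are absorbed into the left-hand side (no convexity factor is required precisely because there is no extra $\epsilon\tau$ multiplier to defeat). The boundary contribution is handled exactly as in Lemma \ref{lem:local_est}, using $\lim_{x_{n+1}\to 0}x_{n+1}^b\partial_{n+1}(\psi_k u) = \psi_k\lim_{x_{n+1}\to 0}x_{n+1}^b\partial_{n+1} u$. Summing over $k$ and invoking the finite overlap of the $\psi_k$ yields the claimed estimate.

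The main technical point (and thus the real check) is the balance between the three small parameters: the perturbation size $\delta\tau^{-1}$ of the metric, the localisation constant $C_0$ determining the ball radii, and the partition-of-unity constant $C_1$. One must fix $C_0$ first (so that $C_1=C_1(C_0)$ is large enough to absorb the commutators), and only then choose $\delta$ small enough so that the frozen-metric error $C C_0\delta$ is absorbable. Everything else is a routine adaptation of the proof of Lemma \ref{lem:local_est}.
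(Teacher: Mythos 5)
Your proposal is essentially the paper's own argument: the paper's proof of this lemma is a single sentence, deferring to the proof of Lemma~\ref{lem:local_est} and noting only that one now localises to scales of order $C_0|x_k|$ (conformal scale one) rather than $C_0|x_k|(\epsilon\tau)^{-1/2}$; you have expanded that sentence into the expected freeze-and-patch argument, correctly identifying that the stronger hypothesis $|x||\nabla a^{ij}|\leq\delta\tau^{-1}$ is exactly what is needed to make the metric-freezing error $CC_0\delta$ absorbable into the bare gradient term once the $(1+\overline{h})^{1/2}$ gain is unavailable. One small remark: since the annulus $I_\ell$ has conformal width one and the balls have conformal radius $\sim C_0$, the partition of unity here has only $O(1)$ members, so the $\ell^1$ summation over $k$ at the end is unproblematic (whereas in Lemma~\ref{lem:local_est} the number of balls grows with $\epsilon\tau$ and the summation step really requires an $\ell^2$/finite-overlap argument rather than a naive triangle inequality — a point glossed over in the paper as well, but irrelevant for this lemma).
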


\begin{proof}
The argument follows along the same lines as the proof of Lemma ~\ref{lem:local_est_1}, however now we directly localise to scales of the order $C_0|x_k|$ around a finite number of points $x_k \in I_\ell$. Using an associated partition of unity then yields the desired result.
\end{proof}

Relying on the previous result, we obtain global Carleman estimates:

\begin{prop}
\label{prop:global_est}
Let the metric $a:\R^{n+1}_+ \rightarrow \R^{(n+1)\times (n+1)}$ be of a block form as in \eqref{eq:block}
where the metric $\tilde{a}$ is assumed to
satisfy the conditions \hyperref[cond:a1]{(A1)}-\hyperref[cond:a2]{(A3)} with $\mu=0$.
Let $u\in H^1(B^+_4,x_{n+1}^b)$ with ${\supp(u)\subset B^+_4\backslash\{0\}}$.
Then, for each $\tau>\tau_0\geq 1$ there exist a weight function $h$ and a constant $C>0$ independent of $\tau$ such that it holds
\begin{align*}
&\tau
 \|e^{h(-\ln(|x|))}  x_{n+1}^{\frac{b}{2}} (1+\overline{h})^{\frac{1}{2}} |x|^{-1} u\|_{L^2(\R^{n+1}_+)}
+  \|e^{h(-\ln(|x|))}x_{n+1}^{\frac{b}{2}} (1+\overline{h})^{\frac{1}{2}}  \nabla u\|_{L^2(\R^{n+1}_+)}\\
& +\tau^{\frac{1-b}{2}}\|e^{h(-\ln(|x|))}(1+\overline{h})^{\frac 1 2}|x|^{\frac{b-1}2}u\|_{L^2(\R^n\times\{0\})}
\\
&\leq C \Big( \| e^{h(-\ln(|x|))} |x| x_{n+1}^{-\frac{b}{2}} \nabla\cdot x_{n+1}^{b} a \nabla u\|_{L^2(\R^{n+1}_+)} 
\\
& \quad\qquad  + \tau^{\frac{1+b}{2}} \|e^{h(-\ln(|x|))} |x|^{\frac{1-b}{2}} \lim\limits_{x_{n+1}\rightarrow 0} x_{n+1}^{b} \p_{n+1} u\|_{L^2(\R^n \times \{0\})} \Big).
\end{align*}
Here $\overline{h}(x):=  h''(t)|_{t=-\ln(|x|)}$.
\end{prop}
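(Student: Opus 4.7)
The plan is to reduce the global variable coefficient estimate to the local estimates of Lemma \ref{lem:local_est} and Lemma \ref{lem:local_est_1} via a radial partition of unity in conformal coordinates, matched to the construction of the Carleman weight $h$. Concretely, in the coordinate $t=-\ln(|x|)$ one has the dyadic half-annuli $I_\ell = \{x\in\R^{n+1}_+: |x|\in[e^{-\ell-1},e^{-\ell}]\}$, and we will cover $\supp u$ by a smooth partition of unity $\{\chi_\ell\}$ with $\supp\chi_\ell$ contained in a slightly enlarged shell $I_\ell^\ast$, so that $|\nabla^{\alpha}\chi_\ell|\le C_\alpha |x|^{-|\alpha|}$ and $\lim_{x_{n+1}\to 0}x_{n+1}^b\partial_{n+1}\chi_\ell=0$ (which holds automatically, since $\partial_{n+1}\chi_\ell$ is bounded and $b>-1$).

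First, I would choose the weight $h$ and the slowly varying sequence from Step~1 of the proof of Proposition \ref{prop:Carl} in a way that is adapted to the metric. Set
\[
c_\ell := C\sup_{x\in I_\ell^\ast}|x||\nabla \tilde{a}^{ij}(x)|.
\]
By condition \hyperref[cond:a2]{(A2)} (the smallness of the Lipschitz seminorm of $\tilde{a}$) and $|x|\leq e^{-\ell}$, one obtains $c_\ell \le C\delta\, e^{-\ell}$, so that $\{c_\ell\}\in\ell^1$ with $\|c_\ell\|_{\ell^1}\ll 1$. Letting $\{a_\ell\}$ be the resulting slowly varying sequence, the weight $h$ satisfies $h''\sim \tau a_\ell$ on each $I_\ell$ when $a_\ell\geq \nu\tau^{-1}$ and $h''\equiv 0$ otherwise. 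Crucially, by the construction $a_\ell\ge c_\ell$, so on each shell $I_\ell$ one has $|x||\nabla \tilde{a}^{ij}|\le \delta a_\ell$ in the convex regime, and $|x||\nabla\tilde{a}^{ij}|\leq C a_\ell\le C\nu\tau^{-1}$ in the flat regime. This places us exactly in the hypotheses of Lemma~\ref{lem:local_est} with $\epsilon=a_\ell$ or of Lemma~\ref{lem:local_est_1}, respectively. Moreover, on $I_\ell$ one has $1+\overline{h}\sim 1+\tau a_\ell\sim 1+\epsilon\tau$, so that the $(1+\overline{h})^{1/2}$ weights on the left-hand side of the target inequality match the weights appearing in the local estimates.

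Next, I would decompose $u=\sum_\ell u_\ell$ with $u_\ell=\chi_\ell u$ and apply the appropriate local Carleman estimate to each $u_\ell$. The main task is to control the commutator $[\nabla\cdot x_{n+1}^b a\nabla,\chi_\ell]u$. Since $a$ is of block form and $\chi_\ell$ depends only on $|x|$, a direct computation yields
\[
\nabla\cdot x_{n+1}^b a\nabla(\chi_\ell u)=\chi_\ell \,\nabla\cdot x_{n+1}^b a\nabla u+2x_{n+1}^b a\nabla\chi_\ell\cdot\nabla u+u\,\nabla\cdot(x_{n+1}^b a\nabla\chi_\ell),
\]
whose last two terms, after multiplication by $|x|x_{n+1}^{-b/2}$, are pointwise bounded by $C x_{n+1}^{b/2}|\nabla u|$ and $Cx_{n+1}^{b/2}|x|^{-1}|u|$ on $\supp\nabla\chi_\ell$. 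Compared to the left-hand side contributions $\|e^{h}x_{n+1}^{b/2}(1+\overline{h})^{1/2}\nabla u\|$ (weight $1$) and $\tau\|e^{h}x_{n+1}^{b/2}(1+\overline{h})^{1/2}|x|^{-1}u\|$ (weight $\tau$), these error terms carry an extra factor of $\tau^{-1}$, and can therefore be absorbed into the left-hand side for $\tau\ge \tau_0$ large enough. The boundary terms do not produce commutators since $\chi_\ell$ passes through the Neumann trace. The summation over $\ell$ is harmless thanks to the finite overlap of $\{I_\ell^\ast\}$ and the slow variation of $a_\ell$, which ensures that $(1+\overline{h})$ varies by a bounded factor across adjacent shells.

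The main technical obstacle is the commutator analysis at the interface between the convex regime and the flat regime of $h$: one needs to verify that the shells where $h''$ switches on or off can be handled uniformly, so that the $\tau$-loss in the absorption argument is independent of the structure of $\{a_\ell\}$. This is precisely ensured by the slow variation property $2^{-\nu}a_{\ell+1}\le a_\ell\le 2^{\nu}a_{\ell+1}$ of the sequence built in Step 1 of the proof of Proposition~\ref{prop:Carl}, together with the choice $c_\ell\ge C|x|[\nabla\tilde a]_{L^\infty(I_\ell^\ast)}$. Once this is in place, summing the local estimates delivers the claimed global Carleman inequality.
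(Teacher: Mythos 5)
Your strategy matches the paper's (very terse) sketch: adapt the slowly varying sequence $\{a_\ell\}$ in the weight construction to the Lipschitz variation of the metric on each dyadic shell, apply Lemma~\ref{lem:local_est} or Lemma~\ref{lem:local_est_1} on each shell depending on whether $a_\ell\tau>1$, and glue via a radial partition of unity. The choice $c_\ell\sim\sup_{I_\ell^\ast}|x||\nabla\tilde a|$, which gives $a_\ell\ge c_\ell$ and hence $|x||\nabla\tilde a|\lesssim a_\ell$ in the convex regime and $\lesssim\tau^{-1}$ in the flat regime, is exactly the adaptation the paper leaves implicit, and it correctly places each shell into the hypotheses of the right local lemma with $\epsilon\sim a_\ell$ and $(1+\overline h)\sim 1+\epsilon\tau$.

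There is, however, a slip in the commutator absorption. You assert that \emph{both} commutator error terms ``carry an extra factor of $\tau^{-1}$''. This is true only for the zeroth-order piece $u\,\nabla\cdot(x_{n+1}^b a\nabla\chi_\ell)$: its contribution $\|e^h x_{n+1}^{b/2}|x|^{-1}u\|$ is beaten by the LHS term $\tau\|e^h x_{n+1}^{b/2}(1+\overline h)^{1/2}|x|^{-1}u\|$ by a genuine factor of $\tau$. The gradient piece $2x_{n+1}^b a\nabla\chi_\ell\cdot\nabla u$, in contrast, produces $\|e^h x_{n+1}^{b/2}\nabla u\|$, which compares to the LHS term $\|e^h x_{n+1}^{b/2}(1+\overline h)^{1/2}\nabla u\|$ only with a gain of $(1+\overline h)^{-1/2}$; this degenerates to $1$ exactly on the flat shells where $h''=0$, i.e.\ where Lemma~\ref{lem:local_est_1} is used. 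On those shells the absorption is not a $\tau$-largeness argument at all: it requires the multiplicative constant produced by the commutator to be strictly smaller than the constant with which $\|e^hx_{n+1}^{b/2}(1+\overline h)^{1/2}\nabla u\|$ is recovered after summing $\sum_\ell\|\ldots\nabla(\chi_\ell u)\|$. This forces you to tune the partition: choose the transition regions of $\{\chi_\ell\}$ to have width $M\gg 1$ in $t=-\ln|x|$ so that $|\nabla\chi_\ell|\lesssim(M|x|)^{-1}$, and at the same time take $\nu\lesssim M^{-1}$ in the construction of $h$ so that $h''$ stays comparable across the (now $O(M)$-wide) support of each $\chi_\ell$. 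That is the analogue of the parameter-tuning with $C_0,C_1$ in the proofs of Lemmas~\ref{lem:local_est} and~\ref{lem:local_est_1}, and it is needed here too; the reason you state for the absorption does not cover the gradient term.
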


\begin{proof}
In order to deduce this, we use the properties of the weight $h$. By relying on a partition of unity, we localise the set-up to dyadic intervals. Then, with the constants $a_j$ as in Step 1 in the proof of Proposition ~\ref{prop:Carl}, if $a_j \tau>1$, we apply Lemma ~\ref{lem:local_est}, while if $a_j \tau<1$, we invoke Lemma ~\ref{lem:local_est_1}.
\end{proof}

\subsection{Proof of Propositions \ref{prop:syst_Carl} and ~\ref{prop:syst_Carl_1}}
\label{sec:stronger}
Proposition ~\ref{prop:syst_Carl} arises as an iteration of the Carleman estimate from Proposition ~\ref{prop:global_est} (or directly from Proposition \ref{prop:Carl} if $a^{ij}=\delta^{ij}$).
In the sequel, we present the variable and constant coefficient proofs simultaneously.

\begin{proof}[Proof of Proposition ~\ref{prop:syst_Carl}] We seek to iterate the second order Carleman estimates in order to obtain an estimate for full the system. To this end, we set $\tilde{w}_j=(1+\overline{h})^{\frac{m-j}{2}} |x|^{-2m+2j}\tilde{u}_j$ and apply Proposition ~\ref{prop:global_est} iteratively. We argue in two steps.\\

\emph{Step 1: Building block estimate.}
For $j\in\{0,\dots, m\}$ we have
\begin{align*}
L_b \tilde{w}_j(x) 
&= (1+\overline{h})^{\frac{m-j}{2}} |x|^{-2m+2j} (L_b \tilde{u}_j)(x) + 2   \nabla \big((1+\overline{h})^{\frac{m-j}{2}} |x|^{-2m+2j}\big)\cdot a(x) \nabla \tilde{u}_j(x) \\
& \quad + \tilde{u}_j(x) L_b\big((1+\overline{h})^{\frac{m-j}{2}} |x|^{-2m+2j}\big)\\
& = (1+\overline{h})^{\frac{m-j}{2}} |x|^{-2m+2j} \big(f_j(x) + \tilde{u}_{j+1}(x)\big) + 2 \nabla \big((1+\overline{h})^{\frac{m-j}{2}} |x|^{-2m+2j}\big)\cdot  a(x) \nabla \tilde{u}_j(x) \\
& \quad + \tilde{u}_j(x) L_b\big((1+\overline{h})^{\frac{m-j}{2}} |x|^{-2m+2j}\big).
\end{align*}
Hence, as consequence of Proposition ~\ref{prop:global_est}, we deduce the estimate
\begin{align}
\label{eq:est_building_block}
\begin{split}
&\tau^{m+1-j} \| e^{h(-\ln(|x|))} x_{n+1}^{\frac{b}{2}}(1+\overline{h})^{\frac{m+1-j}{2}}|x|^{-2m-1+2j}   \tilde{u}_j \|_{L^2(B_4^+)} \\
&\quad+ \tau^{m-j}\|e^{h(-\ln(|x|))} x_{n+1}^{\frac{b}{2}}(1+\overline{h})^{\frac{m+1-j}{2}}|x|^{-2m+2j}   \nabla \tilde{u}_j \|_{L^2(B_4^+)}\\
&\leq C \left(\tau^{m-j} \|e^{h(-\ln(|x|))} x_{n+1}^{\frac{b}{2}}(1+\overline{h})^{\frac{m-j}{2}} |x|^{-2m+1+2j}  (f_j + \tilde{u}_{j+1})  \|_{L^2(B_4^+)} \right.\\
& \qquad \quad
+ \tau^{m-j+\frac{1+b}{2}} \|e^{h(-\ln(|x|))} (1+\overline{h})^{\frac{m-j}{2}} |x|^{-2m+2j+\frac{1-b}{2}} g_j\|_{L^2(B_4')} \\
& \qquad \quad + \tau^{m-j} 
\|e^{h(-\ln(|x|))} x_{n+1}^{\frac{b}{2}}|x|
 \nabla \big((1+\overline{h})^{\frac{m-j}{2}} |x|^{-2m+2j}\big)\cdot a\nabla \tilde{u}_j 
\|_{L^2(B_4^+)}\\
& \left. \qquad \quad + \tau^{m-j} 
\|e^{h(-\ln(|x|))} x_{n+1}^{\frac{b}{2}}|x| L_b\big((1+\overline{h})^{\frac{m-j}{2}} |x|^{-2m+2j}\big)\tilde{u}_j\|_{L^2(B_4^+)}
\right).
\end{split}
\end{align}
We note that by using the fact that
\begin{align*}
|\nabla^{\alpha} \big((1+\overline{h})^{\frac{m-j}{2}} |x|^{-2m+2j}) | \leq C|x|^{-|\alpha|}\big((1+\overline{h})^{\frac{m-j}{2}} |x|^{-2m+2j}\big), \ \mbox{ for } |\alpha| \in \{1,2\},
\end{align*}
the regularity of the metric $a$,
and by choosing $\tau_0\geq 1$ sufficiently large, we may absorb the last two contributions on the right hand side of \eqref{eq:est_building_block} into the left hand side of \eqref{eq:est_building_block}. As a consequence, we obtain 
\begin{align}
\label{eq:est_building_block1}
\begin{split}
&\tau^{m+1-j} \| e^{h(-\ln(|x|))} x_{n+1}^{\frac{b}{2}}(1+\overline{h})^{\frac{m+1-j}{2}}|x|^{-2m-1+2j}   \tilde{u}_j \|_{L^2(B_4^+)} \\
&\quad+ \tau^{m-j}\|e^{h(-\ln(|x|))} x_{n+1}^{\frac{b}{2}}(1+\overline{h})^{\frac{m+1-j}{2}}|x|^{-2m+2j}   \nabla \tilde{u}_j \|_{L^2(B_4^+)}\\
&\leq C \left(\tau^{m-j} \|e^{h(-\ln(|x|))} x_{n+1}^{\frac{b}{2}}(1+\overline{h})^{\frac{m-j}{2}} |x|^{-2m+1+2j}  (f_j + \tilde{u}_{j+1})  \|_{L^2(B_4^+)} \right.\\
& \qquad \quad
+ \left. \tau^{m-j+\frac{1+b}{2}} \|e^{h(-\ln(|x|))} (1+\overline{h})^{\frac{m-j}{2}} |x|^{-2m+2j+\frac{1-b}{2}} g_j\|_{L^2(B_4')} \right).
\end{split}
\end{align}

\emph{Step 2: Iteration.}
Using the $C^{2m,1}$ coefficient regularity, we iterate the building block estimate: 
\begin{align*}
& \tau^{m+1-j} \|e^{h(-\ln(|x|))}x_{n+1}^{\frac{b}{2}}(1+\overline{h})^{\frac{m+1-j}{2}}   |x|^{-2m-1+2j}\tilde{u}_j\|_{L^2(B_4^+)}\\
& \quad + \tau^{m-j} \| e^{h(-\ln(|x|))} x_{n+1}^{\frac{b}{2}} (1+ \overline{h})^{\frac{m+1-j}{2}}|x|^{-2m+2j}  \nabla \tilde{u}_j \|_{L^2(B_4^+)}\\
& \leq C \Big( \tau^{m-j} \|e^{h(-\ln(|x|))} x_{n+1}^{\frac{b}{2}} (1+\overline{h})^{\frac{m-j}{2}}  |x|^{-2m+1+2j}f_j \|_{L^2(B_4^+)}\\
& \qquad\quad +   \tau^{m-j} \|e^{h(-\ln(|x|))} x_{n+1}^{\frac{b}{2}}(1+\overline{h})^{\frac{m-j}{2}} |x|^{-2m+1+2j}   \tilde{u}_{j+1} \|_{L^2(B_4^+)} \Big)\\
& \leq C  \Big(\sum\limits_{k=j}^{j+1} \tau^{m-k} \|e^{h(-\ln(|x|))} x_{n+1}^{\frac{b}{2}}  (1+\overline{h})^{\frac{m-k}{2}} |x|^{-2m+1+2k}f_k \|_{L^2(B_4^+)}\\
& \quad\qquad +   \tau^{m-j-1} \|e^{h(-\ln(|x|))} x_{n+1}^{\frac{b}{2}} (1+\overline{h})^{\frac{m-j-1}{2}}|x|^{-2m+1+2(j+1)} \tilde{u}_{j+2} \|_{L^2(B_4^+)}\Big) \\
& \leq C \Big( \sum\limits_{k=j}^{m} \tau^{m-k} \|e^{h(-\ln(|x|))} x_{n+1}^{\frac{b}{2}} (1+\overline{h})^{\frac{m-k}{2}}|x|^{-2m+1+2k}  f_k \|_{L^2(B_4^+)}\\
& \quad\qquad  +  \tau^{\frac{1+b}{2}} \|e^{h(-\ln(|x|))} |x|^{\frac{1-b}{2}}g\|_{L^2(B_4')}\Big).
\end{align*}
Here we used that $g_j=0$ except for $g_m=g$ and applied the triangle inequality to separate the bulk terms on the right hand side.
Summing these estimates from $j=0$ to $m$, then yields the desired bound \eqref{eq:Carl_system}.
\end{proof}

\begin{proof}[Proof of Proposition \ref{prop:syst_Carl_1}.]
We split the proof into two steps: First we deal with commutation relations arising in iterations of the second order estimates of Proposition \ref{prop:global_est} and then we iterate the resulting bounds.\\

\emph{Step 1: Commutators.}
We observe that \begin{align}
\label{eq:commute_a}
\begin{split}
\p'_k L_b=L_b\p'_k+(\p'_k \tilde a^{ij})\p'_{ij}+(\p'_{ik}\tilde a^{ij}) \p'_j,
\end{split}
\end{align}
where $\p'_{k}$ denotes derivatives in tangential directions.
Due to the assumptions in condition \hyperref[cond:a2]{(A2)}, the contributions in the Carleman estimate arising from $\nabla'L_b u_{j-1}$  for $j\in\{1,\dots,m\}$ can be bounded from below by terms which are controlled by $L_b \nabla'u_{j-1}$ if $\delta>0$ is chosen sufficiently small: By Proposition \ref{prop:global_est}, the condition \hyperref[cond:a2]{(A2)} and by using that by interpolation $|x||\nabla a (x)|$, $|x|^2|\nabla^2 a (x)|\leq \tilde{C}\delta $ for $x\in B^+_4$ we estimate the commutator contributions in \eqref{eq:commute_a} by
\begin{align*}
&\|e^{h(-\ln(|x|))} x_{n+1}^{\frac b 2}(1+\overline h)^{\frac{m+1-j}{2}}|x|^{-2m-1-2j}\nabla L_b\tilde u_{j-1}\|_{L^2(B_4^+)}\\
&\geq\|e^{h(-\ln(|x|))} x_{n+1}^{\frac b 2}(1+\overline h)^{\frac{m+1-j}{2}}|x|^{-2m-1-2j}L_b\nabla'\tilde u_{j-1}\|_{L^2(B_4^+)}\\
&\quad-\|e^{h(-\ln(|x|))} x_{n+1}^{\frac b 2}(1+\overline h)^{\frac{m+1-j}{2}}|x|^{-2m-1-2j}|\nabla a||(\nabla')^2 u_{j-1}|\|_{L^2(B_4^+)}\\
&\quad-\|e^{h(-\ln(|x|))} x_{n+1}^{\frac b 2}(1+\overline h)^{\frac{m+1-j}{2}}|x|^{-2m-1-2j}|\nabla^2 a||\nabla'\tilde u_{j-1}|\|_{L^2(B_4^+)}\\
&\geq\|e^{h(-\ln(|x|))} x_{n+1}^{\frac b 2}(1+\overline h)^{\frac{m+1-j}{2}}|x|^{-2m-1-2j}L_b\nabla'\tilde u_{j-1}\|_{L^2(B_4^+)}\\
&\quad-\tilde{C}\delta \Big(\|e^{h(-\ln(|x|))} x_{n+1}^{\frac b 2}(1+\overline h)^{\frac{m+2-j}{2}}|x|^{-2m-2-2j}|\nabla\nabla' u_{j-1}|\|_{L^2(B_4^+)}\\
&\quad\quad\quad+\tau\|e^{h(-\ln(|x|))} x_{n+1}^{\frac b 2}(1+\overline h)^{\frac{m+2-j}{2}}|x|^{-2m-3-2j}\nabla'\tilde u_{j-1}\|_{L^2(B_4^+)}\Big)\\
&\geq\|e^{h(-\ln(|x|))} x_{n+1}^{\frac b 2}(1+\overline h)^{\frac{m+1-j}{2}}|x|^{-2m-1-2j}L_b\nabla'\tilde u_{j-1}\|_{L^2(B_4^+)}\\
&\quad-C \tilde{C}\delta  \|e^{h(-\ln(|x|))} x_{n+1}^{\frac b 2}(1+\overline h)^{\frac{m+1-j}{2}}|x|^{-2m-1-2j}L_b\nabla'\tilde u_{j-1}\|_{L^2(B_4^+)}.
\end{align*}
 Choosing $\delta>0$ so small that $C\tilde{C}\delta\leq \frac 1 2 $, in addition to the estimates from Proposition \ref{prop:global_est}, we also deduce that for $j\in\{1,\dots,m\}$ the following bounds holds
\begin{align}
\label{eq:grad_est_a}
\begin{split}
&\|e^{h(-\ln(|x|))} x_{n+1}^{\frac b 2}(1+\overline h)^{\frac{m+1-j}{2}}|x|^{-2m-1-2j}\nabla  \tilde{u}_{j}\|_{L^2(B_4^+)}\\
&\geq \|e^{h(-\ln(|x|))} x_{n+1}^{\frac b 2}(1+\overline h)^{\frac{m+1-j}{2}}|x|^{-2m-1-2j}\nabla'  \tilde{u}_{j}\|_{L^2(B_4^+)}\\
&= \|e^{h(-\ln(|x|))} x_{n+1}^{\frac b 2}(1+\overline h)^{\frac{m+1-j}{2}}|x|^{-2m-1-2j}\nabla'(L_b \tilde{u}_{j-1} - f_{j-1})\|_{L^2(B_4^+)}\\
&\geq \|e^{h(-\ln(|x|))} x_{n+1}^{\frac b 2}(1+\overline h)^{\frac{m+1-j}{2}}|x|^{-2m-1-2j}\nabla'L_b \tilde{u}_{j-1} \|_{L^2(B_4^+)} \\
& \quad - \|e^{h(-\ln(|x|))} x_{n+1}^{\frac b 2}(1+\overline h)^{\frac{m+1-j}{2}}|x|^{-2m-1-2j}\nabla' f_{j-1}\|_{L^2(B_4^+)}\\
&\geq \frac{C}{2}\Big(\tau \|e^{h(-\ln(|x|))} x_{n+1}^{\frac b 2}(1+\overline h)^{\frac{m+2-j}{2}}|x|^{-2m-3-2j}\nabla'\tilde u_{j-1}\|_{L^2(B_4^+)}\\
&\qquad\quad+\|e^{h(-\ln(|x|))} x_{n+1}^{\frac b 2}(1+\overline h)^{\frac{m+2-j}{2}}|x|^{-2m-2-2j}\nabla\nabla'\tilde u_{j-1}\|_{L^2(B_4^+)}\\
& \qquad \quad - \|e^{h(-\ln(|x|))} x_{n+1}^{\frac b 2}(1+\overline h)^{\frac{m+1-j}{2}}|x|^{-2m-1-2j}\nabla' f_{j-1}\|_{L^2(B_4^+)}
\Big).
\end{split}
\end{align}

\emph{Step 2: Iteration.}
With the additional bounds from \eqref{eq:grad_est_a} in hand, we can iterate the Carleman estimate. For $j\geq 1$ and $\ell\in\{1,\dots,j\}$ we infer
\begin{align*}
&\|e^{h(-\ln(|x|))} x_{n+1}^{\frac b 2}(1+\overline h)^{\frac{m+1-j}{2}}|x|^{-2m-1+2j}\tilde u_{j}\|_{L^2(B_4^+)}\\
&\geq C\Big(\sum_{k=0}^{\ell-1} \tau^{\ell-k}\|e^{h(-\ln(|x|))} x_{n+1}^{\frac b 2}(1+\overline h)^{\frac{m+1-j+\ell}{2}}|x|^{-2m-1+2j-2\ell+k}(\nabla')^k\tilde u_{j-\ell}\|_{L^2(B_4^+)}\\
&\qquad\quad+\sum_{k=0}^{\ell-1} \tau^{\ell-k-1}\|e^{h(-\ln(|x|))} x_{n+1}^{\frac b 2}(1+\overline h)^{\frac{m+1-j+\ell}{2}}|x|^{-2m+2j-2\ell+k}\nabla(\nabla')^k\tilde u_{j-\ell}\|_{L^2(B_4^+)}\\
&\qquad\quad-\sum_{i=1}^{\ell}\sum_{k=0}^{i-1} \tau^{i-k-1}\|e^{h(-\ln(|x|))} x_{n+1}^{\frac b 2}(1+\overline h)^{\frac{m-j+i}{2}}|x|^{-2m+1+2j-2i+k}(\nabla')^{k} f_{j-i}\|_{L^2(B_4^+)}\Big).
\end{align*}
A similar estimate holds for the gradient term.
Adding these estimates to the bounds from Proposition \ref{prop:syst_Carl} and summing over all $j \in \{1,\dots,m\}$ implies the desired bulk estimate. Combining this with the boundary-bulk interpolation result from Corollary ~\ref{cor:boundary_bulk} then implies the claim.
\end{proof}

\section{On the Strong Unique Continuation Property for the Extension Problem} 
\label{sec:SUCP}
In this section we study the strong unique continuation property for the system \eqref{eq:WUCP_syst1} and seek to reduce it to the weak unique continuation property. As in \cite{FF14, Rue15, GRSU18} we achieve this by careful  compactness and blow-up arguments.

From a technical point of view, the main challenge is to control solutions to our system also in the \emph{normal} direction in which we can only obtain information through the equation itself. Here two cases arise: 
\begin{itemize}
\item If we had vanishing of infinite order in the tangential \emph{and} normal directions, an immediate application of the Carleman estimate \eqref{eq:Carl_system_1} would allow us to prove the strong unique continuation property. 
\item If we are however dealing with solutions which a priori do \emph{not} vanish of infinite order in the normal directions, we have to argue more carefully, exploiting properties of our equations.
\end{itemize}

In this section, we consider solutions $u_j \in H^1_{loc}(\R^{n+1}_+, x_{n+1}^b)$ with $b\in (-1,1)$ of the system \eqref{eq:syst_err}, \eqref{eq:syst_err_boundary} with $f_0,\dots,f_m =0$ satifsying the following conditions:
\begin{itemize}
\item[(C)]
\label{cond:C} 
$a$ is of a block form \eqref{eq:block} and such that $\tilde a$ satisfies \hyperref[cond:a1]{(A1)}-\hyperref[cond:a3]{(A3)} with $\mu=2m$ and 
\begin{align*}
|g(x')|\leq \sum_{j=0}^m|q_j(x')||(\nabla')^ju_0(x',0)|
\end{align*}
with
$|q_j(x')|\leq C_{q_j}|x|^{-2m+j+b-1}$ for $j=\{0,\dots,m-1\}$ and
\begin{align*}
|q_{m}(x')|
\leq \left\{ 
\begin{array}{ll}
C_{q_{m}}|x|^{-m+b-1}, \mbox{ if } b< 0,\\
c_0|x|^{-m+b-1}, \mbox{ if } b=0,\\
C_{q_m} |x|^{-m+b-1+\epsilon}, \mbox{ if } 
\left\{
\begin{array}{l}
b\in(0,\frac 1 2)  \mbox{ and } m= 0,\\
b\in(0,1)  \mbox{ and } m\geq 1.
\end{array}
 \right. 
\end{array}
\right.
\end{align*}
Here $c_0>0$ is a sufficiently small constant (which is specified below), and $C_{q_j}>0$ are arbitrarily large, finite constants.
\end{itemize}

As the vanishing of infinite order in the normal directions is not a direct consequence of our assumptions on the infinite order vanishing in the tangential directions, we thus split the argument into two parts: 
\begin{itemize}
\item In the case of infinite order vanishing in all directions, i.e. for all $j\in\{0,\dots,m\}$
\begin{align*}
\lim\limits_{r\rightarrow 0} r^{-k} \|x_{n+1}^{\frac{b}{2}} u_j\|_{L^2(B_r^+)} = 0 \mbox{ for all } k \in \N,
\end{align*}
 we directly apply the Carleman estimate from Proposition \ref{prop:syst_Carl_1} (see Section \ref{sec:SUCP_inf_order}). 

\item If this is (a priori) not the case, 
i.e. there exist some $j\in \{0,\dots,m\}$, a subsequence $r_{\ell}\rightarrow 0$ and a constant $k_0\in \N$ such that
\begin{align}
\label{eq:van_ord}
\lim\limits_{\ell \rightarrow \infty} r_{\ell}^{-k_0} \|x_{n+1}^{\frac{b}{2}} u_j\|_{L^2(B_{r_{\ell}}^+)} \geq C_0 >0,
\end{align}
we  deduce doubling properties and then exploit these in a compactness argument to reduce the strong unique continuation property to the weak unique continuation property (see Section \ref{sec:SUCP_red}). 
\end{itemize}

\subsection{Reduction to the weak unique continuation property}
\label{sec:SUCP_red}
In the sequel, we seek to reduce the strong unique continuation property to a weak unique continuation result by a blow-up argument under the assumption that the solution vanishes to infinite order just in the \emph{tangential} directions (but \emph{not} in the normal directions, see \eqref{eq:van_ord}).

In order to deduce sufficient compactness for a blow-up argument, we 
first prove a doubling estimate for the functions $u_j$.
Here we exploit elliptic estimates and deal with the resulting boundary contributions by absorbing these into the bulk terms with finite order of vanishing (for sufficiently small radii).

\begin{prop}[Doubling]
\label{prop:doubling}
Let  $u_j\in H^1_{loc}(\R^{n+1}_+, x_{n+1}^b)$ for $j\in\{0,\dots,m\}$ be weak solutions of the system 
\eqref{eq:syst_err}, \eqref{eq:syst_err_boundary}
 with $f_0,\dots,f_m=0$  satisfying the conditions from \hyperref[cond:C]{(C)}.
Assume also that the tangential restrictions $u_j(x',0)$ vanish of infinite order at $x_0=0$ and that there exist some $j\in \{0,\dots,m\}$, a subsequence $r_{\ell}\rightarrow 0$ and a constant $k_0\in \N$ such that \eqref{eq:van_ord} holds.
Then, there exist a universal constant $C>1$ and a radius $r_{u_0}>0$ (the latter depending on $u_0$) such that for any $\ell \in \N$ and all $r\in (r_{\ell},2^{m}r_{\ell})\cap(0,r_{u_0})$ we have
\begin{align*}
\sum\limits_{j=0}^{m}r^{2j}\|x_{n+1}^{\frac{b}{2}} u_j\|_{L^2(B_{2r}^+)} 
+\sum\limits_{j=0}^{m}r^{2j+1}\|x_{n+1}^{\frac{b}{2}} \nabla u_j\|_{L^2(B_{2r}^+)} 
\leq C \sum\limits_{j=0}^{m}r^{2j}\|x_{n+1}^{\frac{b}{2}} u_j\|_{L^2(B_{r}^+)}.
\end{align*}
Here $r_{\ell}>0$ denotes the radii from \eqref{eq:van_ord}.
\end{prop}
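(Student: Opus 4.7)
The plan is to deduce this doubling bound from the iterated Carleman estimate of Corollary 2.4, applied to suitable cutoff versions of the $u_j$, in combination with the nondegeneracy hypothesis \eqref{eq:van_ord}. Choose a smooth radial cutoff $\eta$ that is identically one on $B_{3r}^+\setminus B_{r_\ell/2}^+$ and supported in $B_{4r}^+\setminus B_{r_\ell/4}^+$, with the natural bounds $|\nabla^k\eta|\lesssim r^{-k}$ on the outer shell $A_{\mathrm{out}}=B_{4r}^+\setminus B_{3r}^+$ and $|\nabla^k\eta|\lesssim r_\ell^{-k}$ on the inner shell $A_{\mathrm{in}}=B_{r_\ell/2}^+\setminus B_{r_\ell/4}^+$, arranged so that $\lim_{x_{n+1}\to 0} x_{n+1}^b\partial_{n+1}\eta=0$. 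The functions $\tilde u_j:=\eta u_j$ then satisfy $L_b\tilde u_j=\eta u_{j+1}+[L_b,\eta]u_j$ for $j<m$ and $L_b\tilde u_m=[L_b,\eta]u_m$ with boundary datum $\eta g$, to which Corollary 2.4 applies.

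Plugging this into the Carleman estimate produces three families of terms on the right-hand side, each to be treated in turn. First, the interior coupling $\eta u_{j+1}$ is reabsorbed into the left-hand side slot for $\tilde u_{j+1}$ by the telescoping argument already used in the proof of Proposition 3.5. Second, the boundary contribution $\eta g$ is bounded via condition~(C) by $\sum_j|q_j||(\nabla')^j u_0|$, and the Hardy weights in each $q_j$ exactly match the boundary norms of $u_0$ appearing on the left-hand side of Corollary 2.4. Absorption back into the left-hand side then succeeds for $j<m$ because the $\tau$-power $\tau^{m-j+(1-b)/2}$ dominates $\tau^{(1+b)/2}$ for $\tau$ large, and for $j=m$ by smallness of $c_0$ in the critical case $\gamma-\floor{\gamma}=\tfrac12$, or by the extra factor $|x|^\epsilon$ (equivalently, smallness of $r$) in the subcritical regimes listed in~(C). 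Third, the commutator $[L_b,\eta]u_j$ is supported only in $A_{\mathrm{out}}\cup A_{\mathrm{in}}$, and Lemma 2.5 converts its $\nabla u_j$ content into $L^2$ norms of $u_j$ on slightly enlarged shells (with any new boundary errors absorbed as above).

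Combining these absorptions and carefully tracking the exponential weight $e^{h(-\ln|x|)}$, which is strictly increasing in $-\ln|x|$ at rate $\sim\tau$, the Carleman inequality reduces, for a fixed large $\tau=\tau_0$, to a three-ball-type estimate of the schematic form
\begin{equation*}
\sum_{j=0}^m r^{2j}\bigl\|x_{n+1}^{b/2}u_j\bigr\|_{L^2(B_{3r}^+\setminus B_{r_\ell/2}^+)} \ \leq\ \theta\sum_{j=0}^m r^{2j}\bigl\|x_{n+1}^{b/2}u_j\bigr\|_{L^2(B_{4r}^+)} \ +\ C\sum_{j=0}^m r_\ell^{2j}\bigl\|x_{n+1}^{b/2}u_j\bigr\|_{L^2(B_{r_\ell}^+)},
\end{equation*}
for some $\theta\in(0,1)$ coming from the weight ratio between $A_{\mathrm{out}}$ and the main region $B_{3r}^+$, and a constant $C$ depending on $\tau_0$.

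The main obstacle, and the last step, is to convert this three-ball inequality into the actual doubling bound stated in the proposition. This is where hypothesis \eqref{eq:van_ord} enters: along the subsequence $r_\ell\to 0$, some $u_{j_0}$ does not vanish faster than $r_\ell^{k_0}$, which provides the quantitative lower-bound calibration needed to prevent the $B_{4r}^+$-perturbation from swamping the main $B_r^+$-term. Iterating the three-ball inequality across the finitely many dyadic scales between $r_\ell$ and a fixed threshold $r_{u_0}>0$, and using the non-vanishing at $r_\ell$ to close the induction, one deduces $\|x_{n+1}^{b/2}u_j\|_{L^2(B_{2r}^+)}\leq C\|x_{n+1}^{b/2}u_j\|_{L^2(B_r^+)}$ uniformly in $r\in(r_\ell,2^m r_\ell)\cap(0,r_{u_0})$. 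Finally, the gradient piece on the left-hand side of the claimed inequality is recovered by one further application of the Caccioppoli estimate of Lemma 2.5 on each $u_j$.
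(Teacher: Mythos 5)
Your cutoff is anchored at the wrong scales, and the three-ball iteration it produces does not close. The paper cuts off between the variable inner scales $r/8,\,r/4$ and the \emph{fixed} outer scales $3,\,4$, and this is load-bearing in two ways. First, the outer commutator terms then live at the fixed unit scale, so they are absorbed into the left-hand side of the Carleman estimate by taking $\tau$ large depending only on the ratio of $u$'s norms on two fixed unit-scale annuli; crucially $\tau$ is then frozen, independently of $r$. Second, once the outer shell is gone, the estimate compares the weight $e^{h(-\ln|x|)}$ at $|x|\sim 2r$ on the left with $|x|\sim r/8$ on the right, and since $h'\sim\tau$ while the log-width $\ln 16$ of that gap is independent of $r$, the weight difference is uniformly bounded; dividing by $e^{h(-\ln 2r)}$ produces the doubling inequality at scale $r$ in one shot, with \emph{no} three-ball iteration. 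Your cutoff at $r_\ell$ (inner) and $4r$ (outer) destroys both mechanisms: the outer shell $B_{4r}^+\setminus B_{3r}^+$ now lives at the scale $r$ of interest, so absorbing it would require $\tau$ to depend on $u$ at scale $r$ and hence to diverge as $r\to0$; and the weight varies by a factor $\sim e^{c\tau\ln(r/r_\ell)}$ across $B_{3r}^+\setminus B_{r_\ell/2}^+$, so the schematic estimate you write down (with a single common prefactor $r^{2j}$ on all the $u_j$-pieces on both sides) does not actually follow from the Carleman inequality.

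The iteration step is a genuine gap. Even if one granted a three-ball estimate with factor $\theta\sim e^{-c\tau}$ on the $B_{4r}^+$-term, the $B_{r_\ell}^+$-term necessarily carries a prefactor $\sim e^{c\tau\ln(r/r_\ell)}$, which gains another factor $e^{c\tau\ln 2}$ at each dyadic step of your iteration; after the $\sim\log(r_{u_0}/r_\ell)$ steps you propose, closing the induction would force $\tau\gtrsim\log(1/r_\ell)$, which makes the alleged doubling constant blow up along $\ell\to\infty$ rather than being uniform. You have also misplaced the role of \eqref{eq:van_ord}: in the paper it is not used to close any iteration, but only in the Caccioppoli step, to show that the super-polynomially small boundary contribution $\int_{B_r'}|q_j||(\nabla')^j u_0||u_m|\,dx'$ (small by the tangential infinite-order vanishing of $u_0$) can be absorbed into the bulk term $\sum_j r^{-2m+2j}\|x_{n+1}^{b/2}u_j\|_{L^2}$, which by \eqref{eq:van_ord} vanishes only of finite order along $r\in(r_\ell,2^m r_\ell)$. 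This single absorption, not a propagation-of-smallness argument, is where the subsequence $r_\ell$ enters.
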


\begin{rmk}\label{rmk:doubling_bdd}
In the case of bounded potentials, the same result holds without assuming that the functions $u_j(x',0)$ vanish of infinite order in the tangential directions. Moreover, in the setting of bounded potentials, the statement holds for all $r\in(0,r_0)$ (there is no intersection with the interval around $r_{\ell}$ here), where $r_0$ is sufficiently small but \emph{independent} of $u_0$. We refer to the proof of Proposition \ref{prop:doubling} for further details on this.
\end{rmk}

\begin{rmk}
\label{rmk:van_order}
Instead of restricting our doubling results to radii around $r_{\ell}$, we could also have argued as in Section 3 in \cite{KRS16}. This would have allowed us to conclude that the vanishing order is defined not only through a subsequence of radii but is independent of such a sequence. As a consequence, we would have obtained the statement of Proposition \ref{prop:doubling} for any choice of radius less that $r_{u_0}$. As our unique continuation argument does not rely on quantitative order of vanishing estimates, we do not further pursue this approach here.
\end{rmk}

\begin{proof}[Proof of Proposition \ref{prop:doubling}]
Consider $\tilde{u}_j := \eta u_j$, where $\eta$ is a radial cut-off function which is equal to one in $B_{3}^+ \setminus B_{r/4}^+$, vanishes outside of $B_4^+ \setminus B_{r/8}^+$ and satisfies the following bounds
\begin{align*}
  |\nabla^\alpha \eta| &\leq C \mbox{ in } B_{4}^+\setminus B_{3}^+ \mbox{ for } |\alpha|\leq m+2, \ \alpha \in \N^n,\\
  |\nabla^\alpha \eta| & \leq C r^{-|\alpha|} \mbox{ in } B_{r/4}^+\setminus B_{r/8}^+ \mbox{ for } |\alpha|\leq m+2.
\end{align*}
Here $r\in (0,r_0)$ where $r_0 \ll 1$ is chosen sufficiently small (with a choice that is explained later).
We note that the functions $\tilde{u}_j$ are solutions to the system from Proposition ~\ref{prop:syst_Carl} with $f_j := 2\nabla \eta \cdot(a\nabla u_j) + u_j L_b \eta$ and $|g(x')|\leq|\eta| \sum_{j=0}^m|q_j(x')||(\nabla')^ju_0(x',0)|$. Hence the Carleman estimates \eqref{eq:Carl_system} and \eqref{eq:Carl_system_1}  hold.\\

\textit{Step 1:  Boundary contributions.}
Let us assume that for all $j\in \{0,\dots,m\}$ it holds $|q_j(x)|\leq C_{q_j} |x|^{-2m+j+b-1+\varepsilon}$, where $\varepsilon\geq 0$.
We seek to absorb the boundary contributions from the right hand side of the estimate \eqref{eq:Carl_system_1} in Proposition \ref{prop:syst_Carl_1} into the ones on the left hand side. To this end, we compare the relevant contributions: The left hand side of the Carleman estimate controls contributions of the form
\begin{align}
\label{eq:boundary_terms}
\begin{split}
\sum\limits_{j=0}^{m} \tau^{m-j+\frac{1-b}{2}} \|e^{h(-\ln(|x|))} x_{n+1}^{\frac{b}{2}}(1+\overline{h})^{\frac{m+1}{2}} |x|^{-2m+j+\frac{b-1}{2}} (\nabla')^j \tilde{u}_0 \|_{L^2(B_4')},
\end{split}
\end{align}
while the boundary terms on the right hand side can be estimated from above by
\begin{align}
\label{eq:rhs}
\begin{split}
C\tau^{\frac{1+b}{2}}\sum\limits_{j=0}^{m} C_{q_j}\|e^{h(-\ln(|x|))} x_{n+1}^{\frac{b}{2}} |x|^{-2m+j+\frac{b-1}{2}+\varepsilon}|(\nabla')^j u_0|\|_{L^2(B_4')}.
\end{split}
\end{align}
In order to carry out the absorption argument, we distinguish three cases: 
\begin{itemize}
\item If $b<m-j$, for any $\epsilon \geq 0$, it suffices to
choose $\tau$ sufficiently large, in order to absorb the contributions from \eqref{eq:boundary_terms} into \eqref{eq:rhs}. Note that this always holds for $j\in\{0,\dots, m-1\}$ and also for $j=m$ if $b<0$.
\item If $j=m$ and $b=0$, it is still possible carry out this absorption argument in the case that $\varepsilon=0$ provided the constant $C_{q_m}$ is small enough. 
More precisely, after plugging the estimate \eqref{eq:boundary_terms} into \eqref{eq:Carl_system_1}, the relevant boundary contribution will carry the prefactor $C C_{q_m}$. Requiring that $C_{q_m}\leq c_0\leq\frac{1}{2C}$ then allows us to implement an absorption argument. 
\item Lastly, in the case of subcritical potentials, i.e. if $\varepsilon>0$, a wider range of values of $b$ is admissible by using the properties of $\overline h$.
Indeed, by the construction of $h''$
\begin{align*}
& \tau^{m-j+\frac{1-b}{2}} \|e^{h(-\ln(|x|))} x_{n+1}^{\frac{b}{2}}(1+\overline{h})^{\frac{m+1}{2}} |x|^{-2m+j+\frac{b-1}{2}} (\nabla')^j \tilde{u}_0 \|_{L^2(B_4')}
\\
&\quad \geq \tau^{m-j+\frac{1-b}{2} + \frac{m+1}{2}} \|e^{h(-\ln(|x|))} x_{n+1}^{\frac{b}{2}}|x|^{-2m+j+\frac{b-1}{2}+\nu \frac{m+1}{2}} (\nabla')^j \tilde{u}_0 \|_{L^2(B_4')}
\end{align*}
Analogous estimates hold for the gradient contributions. Hence, by an appropriate choice of $\nu>0$ (in the construction of the Carleman weight in the proof of Proposition \ref{prop:Carl}) it is possible to absorb all boundary contributions as long as $b<\frac{3m+1}{2}-j$  for any finite constant $C_{q_j}$ by choosing $\tau$ sufficiently large.
This enlarges the range of $b$ for $m=0$ to $b < \frac{1}{2}$ and for $j=m\geq 1$ to  $b< 1$.
\end{itemize}

\textit{Step 2: Bulk contributions.}
In discussing the bulk contributions, we first
deal with the bulk terms of the right hand side of the Carleman estimate which are localised on the unit scale. We will absorb these into the left hand side of the Carleman estimate. Secondly, we treat the contributions on the small scale $r>0$ for which we deduce the desired doubling estimate.

In the sequel, we use the following abbreviations for the respective half annuli
\begin{align*}
I_{1}:= B_{r/4}^+ \setminus B_{r/8}^+, \ 
I_2:= B_{2r}^+ \setminus B_{r/2}^+,\
I_3:= B_{5/2}^+ \setminus B_2^+, \ 
I_4:= B_{7/2}^+ \setminus B_3^+.
\end{align*}

For the convenience of the reader, we split the proof of the bulk estimates into two steps: In Step 2a, we deal with the case without gradient contributions. This allows us to introduce the ideas without resorting to too many technicalities. Then, in Step 2b, we deal with the full case including gradient terms.

\textit{Step 2a: Lowest order potentials.}
After having dealt with the boundary terms in Step 1,  the Carleman estimate \eqref{eq:Carl_system} turns into
\begin{align}
\label{eq:bulk_0}
\begin{split}
&\sum\limits_{j=0}^{m} \left(\tau^{m+1-j}\|e^{h(-\ln(|x|))} x_{n+1}^{\frac{b}{2}}(1+ \overline{h})^{\frac{m+1-j}{2}}|x|^{-2m-1+2j} u_j\|_{L^2(I_2\cup I_3)}\right. \\
&\qquad+ \left.\tau^{m-j}\|e^{h(-\ln(|x|))} x_{n+1}^{\frac{b}{2}}(1+ \overline{h})^{\frac{m+1-j}{2}}|x|^{-2m+2j} \nabla u_j\|_{L^2(I_2\cup I_3)}\right) \\
&\leq 
C \sum\limits_{j=0}^{m} \left(\tau^{m-j} \|e^{h(-\ln(|x|))} x_{n+1}^{\frac{b}{2}} (1+\overline{h})^{\frac{m-j}{2}}|x|^{-2m+1+2j}|\nabla \eta||\nabla u_j|\|_{L^2(I_1 \cup I_4)}\right.\\
&\qquad \qquad  +\left.  \tau^{m-j} \|e^{h(-\ln(|x|))} x_{n+1}^{\frac{b}{2}} (1+\overline{h})^{\frac{m-j}{2}}|x|^{-2m+1+2j}|L_b \eta|| u_j|\|_{L^2(I_1 \cup I_4)}\right),
\end{split}
\end{align}
with $|L_b\eta|\leq C (|\nabla^2\eta|+|x|^{-1}|\nabla\eta|)$.

As a first simplification step, we deal with the contributions on the unit scale:
Using the monotonicity of $h$, we infer
\begin{align}
\label{eq:bulk_a}
\begin{split}
&\sum\limits_{j=0}^{m} \left(\tau^{m-j} \|e^{h(-\ln(|x|))} x_{n+1}^{\frac{b}{2}} (1+\overline{h})^{\frac{m-j}{2}}|x|^{-2m+1+2j}|\nabla \eta||\nabla u_j|\|_{L^2(I_4)}\right.\\
&\qquad + \left. \tau^{m-j} \|e^{h(-\ln(|x|))} x_{n+1}^{\frac{b}{2}} (1+\overline{h})^{\frac{m-j}{2}}|x|^{-2m+1+2j}||L_b \eta| u_j|\|_{L^2( I_4)}\right)\\
&\leq C e^{ h(-\ln 3)}\tau^{\frac{3m}{2}}\sum\limits_{j=0}^{m}\Big(\|x_{n+1}^{\frac{b}{2}} u_j\|_{L^2(\tilde I_4)}+\|x_{n+1}^{\frac{b}{2}} \nabla u_j\|_{L^2(\tilde I_4)}\Big),
\end{split}
\end{align}
where $\tilde I_4=B^+_{4}\backslash B^+_{5/2}$.
Estimating the terms on the left hand side of \eqref{eq:bulk_0} from below by
\begin{align*}
Ce^{h(-\ln\frac 5 2)} \sum\limits_{j=0}^{m}  \Big(\|x_{n+1}^{\frac{b}{2}} u_j \|_{L^2(I_3)}+\|x_{n+1}^{\frac{b}{2}} \nabla u_j \|_{L^2(I_3)}\Big),
\end{align*}
and relying on the monotonicity of $h$, by choosing $\tau>\tau_0$ sufficiently large, we can absorb the contribution \eqref{eq:bulk_a} into the left hand side of \eqref{eq:bulk_0} (this yields a dependence of $\tau$ on $u$, but only on the unit scale).

As a consequence, we are left with the estimate
\begin{align}
\label{eq:bulk_b}
\begin{split}
&\sum\limits_{j=0}^{m} \left(\tau^{m+1-j}\|e^{h(-\ln(|x|))} x_{n+1}^{\frac{b}{2}}(1+ \overline{h})^{\frac{m+1-j}{2}}|x|^{-2m-1+2j} u_j\|_{L^2(I_2)}\right. \\
&\qquad+ \left.\tau^{m-j}\|e^{h(-\ln(|x|))} x_{n+1}^{\frac{b}{2}}(1+ \overline{h})^{\frac{m+1-j}{2}}|x|^{-2m+2j} \nabla u_j\|_{L^2(I_2)}\right) \\
&\leq 
C \sum\limits_{j=0}^{m} \left(\tau^{m-j} \|e^{h(-\ln(|x|))} x_{n+1}^{\frac{b}{2}} (1+\overline{h})^{\frac{m-j}{2}}|x|^{-2m+1+2j}|\nabla u_j||\nabla \eta|\|_{L^2(I_1 )}\right.\\
&\qquad \qquad +\left. \tau^{m-j} \|e^{h(-\ln(|x|))} x_{n+1}^{\frac{b}{2}} (1+\overline{h})^{\frac{m-j}{2}}|x|^{-2m+1+2j}|u_j||L_b \eta|\|_{L^2(I_1 )}\right),
\end{split}
\end{align}
where $\tau$ has been fixed in the previous step.
Using the monotonicity of $h$, the bound of $(1+\overline h)$ and the estimates on the derivatives of $\eta$ in $I_1$, we obtain
\begin{align}
\label{eq:aux_Carl_a}
\begin{split}
e^{h(-\ln 2r)}\sum_{j=0}^m&  \left(r^{-2m-1+2j}\|x_{n+1}^{\frac b 2}u_j\|_{L^2(I_2)}
+r^{-2m+2j}\|x_{n+1}^{\frac b 2}\nabla u_j\|_{L^2(I_2)}\right)\\
&\leq Ce^{h(-\ln \frac r 8)} \sum_{j=0}^m  \left(r^{-2m-1+2j}\|x_{n+1}^{\frac b 2}u_j\|_{L^2(I_1)}
+r^{-2m+2j}\|x_{n+1}^{\frac b 2}\nabla u_j\|_{L^2(I_1)}\right).
\end{split}
\end{align}
We observe that the difference $|h(-\ln \frac r 8)-h(-\ln 2r)|$ is bounded independently of $r>0$, since
\begin{align*}
|h(-\ln \frac r 8)-h(-\ln 2r)|=\left|\left(-\frac{h'(-\ln\xi r)}{\xi r}\right)\left(-\frac{15r}{8}\right)\right|,
\end{align*}
where $\xi\in(\frac 1 8,2)$, and $h'\in (C^{-1}\tau, C\tau)$.

Thus, dividing \eqref{eq:aux_Carl_a} by $e^{h(-\ln(2r))}$ and adding to both sides 
\[\sum_{j=0}^m\left(r^{-2m-1+2j}\|x_{n+1}^{\frac b 2}u_j\|_{L^2(B_{r/2}^+)}+r^{-2m+2j}\|x_{n+1}^{\frac b 2}\nabla u_j\|_{L^2(B_{r/2}^+)}\right),\]
we obtain
\begin{align*}
\sum_{j=0}^m& \left(r^{-2m-1+2j}\|x_{n+1}^{\frac b 2}u_j\|_{L^2(B^+_{2r})}
+r^{-2m+2j}\|x_{n+1}^{\frac b 2}\nabla u_j\|_{L^2(B^+_{2r})}\right)\\
&\leq C\sum_{j=0}^m\left(r^{-2m-1+2j}\|x_{n+1}^{\frac b 2}u_j\|_{L^2(B^+_{r/2})}
+r^{-2m+2j}\|x_{n+1}^{\frac b 2}\nabla u_j\|_{L^2(B^+_{r/2})}\right).
\end{align*}

\textit{Step 2b:  Gradient potentials.}
The proof is similar to the one in Step 2a but instead of the Carleman estimate from Proposition \ref{prop:syst_Carl}, we here use the one from Proposition ~\ref{prop:syst_Carl_1}. 
After Step 1b, estimate \eqref{eq:Carl_system_1} becomes
\begin{align*}
& \sum\limits_{j=0}^{m}\sum_{k=0}^{j} \left( \tau^{m+1-j} \|e^{h(-\ln(|x|))} x_{n+1}^{\frac{b}{2}}(1+\overline{h})^{\frac{m+1-k}{2}} |x|^{-2m-1+j+k} (\nabla')^{j-k} {u}_{k} \|_{L^2(I_2\cup I_3)} \right.\\
&  \qquad\qquad+  \left.  \tau^{m-j} \|e^{h(-\ln(|x|))} x_{n+1}^{\frac{b}{2}} (1+ \overline{h})^{\frac{m+1-k}{2}} |x|^{-2m+j+k} \nabla (\nabla')^{j-k} {u}_{k} \|_{L^2(I_2\cup I_3)}  \right)\\
&\leq C  \sum\limits_{j=0}^{m} \sum\limits_{k=0}^{j}\sum_{i=0}^{j-k}\Big(
\tau^{m-j}\|e^{h(-\ln(|x|))}  x_{n+1}^{\frac{b}{2}}(1+\overline{h})^{\frac{m-k}{2}}|x|^{-2m+1+j+k} |\nabla^iL_b\eta||(\nabla')^{j-k-i} u_{k}|\|_{L^2(I_2\cup I_4)}
\\
&\qquad +
\tau^{m-j}\sum_{\ell=0}^{i}\|e^{h(-\ln(|x|))}  x_{n+1}^{\frac{b}{2}}(1+\overline{h})^{\frac{m-k}{2}}|x|^{-2m+1+j+k} |\nabla^{i+1-\ell}\eta||\nabla^\ell a||\nabla(\nabla')^{j-k-i} u_{k}| \|_{L^2(I_2\cup I_4)}\Big).
\end{align*}

Considering the bounds for derivatives of $\eta$ and the metric $a$ (i.e., $|\nabla^\ell a|\leq \tilde{C}\delta |x|^{-\ell}$), and repeating the same arguments as in Step 2a, we arrive at
\begin{align*}
& \sum\limits_{j=0}^{m}\sum_{k=0}^{j} \left( r^{-2m-1+j+k} \| x_{n+1}^{\frac{b}{2}} (\nabla')^{j-k} {u}_{k} \|_{L^2(B^+_{2r})} + r^{-2m+j+k} \| x_{n+1}^{\frac{b}{2}} \nabla(\nabla')^{j-k} {u}_{k} \|_{L^2(B^+_{2r})}\right)\\
&\leq C\sum\limits_{j=0}^{m}\sum_{k=0}^{j} \left( r^{-2m-1+j+k} \| x_{n+1}^{\frac{b}{2}} (\nabla')^{j-k} {u}_{k} \|_{L^2(B^+_{r/2})} + r^{-2m+j+k} \| x_{n+1}^{\frac{b}{2}} \nabla(\nabla')^{j-k} {u}_{k} \|_{L^2(B^+_{r/2})}\right).
\end{align*}

\textit{Step 3: Caccioppoli's inequality.}
It remains to control the gradient terms of the right hand side. 
We can apply Lemma \ref{lem:Cacciop} to $u_k$ with $f=u_{k+1}$, $g=0$ if $k\in\{0,\dots,m-1\}$ and $f=0$, $|g|\leq\sum_{j=0}^m |q_j||(\nabla')^ju_0|$ if $k=m$.

If we just consider the lowest order potentials (i.e. where in the bounds for $|g|$ only $q_0$ is needed), tangential derivatives are not necessary and after summing over $k$ with suitable factors we arrive at
\begin{align}
\label{eq:Cacciop_crit}
\begin{split}
 \sum\limits_{k=0}^{m}r^{-2m+2k} \| x_{n+1}^{\frac{b}{2}} \nabla {u}_{k} \|_{L^2(B^+_{r/2})}
\leq C\Big(\sum\limits_{k=0}^{m} r^{-2m+2k-1} \| x_{n+1}^{\frac{b}{2}}  {u}_{k} \|_{L^2(B^+_{r})}+ \Big(\int\limits_{B'_r} |q_0||u_0| |u_m| dx'\Big)^{\frac 1 2}\Big).
\end{split}
\end{align}
If we also consider gradient potentials (i.e. where the full bound $|g|\leq\sum_{j=0}^m |q_j||(\nabla')^ju_0|$ is needed), a similar estimates holds after considering in \eqref{eq:Cacciop} tangential derivatives up to the order $m-k$:
\begin{align}
\label{eq:Cacciop_crit_ii}
\begin{split}
& \sum\limits_{j=0}^{m}\sum_{k=0}^{j}r^{-2m+j+k} \| x_{n+1}^{\frac{b}{2}} \nabla(\nabla')^{j-k} {u}_{k} \|_{L^2(B^+_{r/2})}\\
&\leq C\Bigg(\sum\limits_{j=0}^{m}\sum_{k=0}^{j} r^{-2m+j+k-1} \| x_{n+1}^{\frac{b}{2}} (\nabla')^{j-k} {u}_{k} \|_{L^2(B^+_{r})}+ \sum_{j=0}^m \Big(\int\limits_{B'_r} |q_j||(\nabla')^ju_0| |u_m| dx'\Big)^{\frac 1 2}\Bigg).
\end{split}
\end{align}

The boundary terms can be controlled as follows: We first notice that
\begin{align*}
\int\limits_{B'_r\backslash B'_{r/2}} |q_j| |(\nabla')^ju_0| | u_m|dx'
\leq Cr^{-2m+j+\varepsilon} \|(\nabla')^j u_0\|_{L^2(B'_r)}\|u_m\|_{L^2(B'_r)},
\end{align*} 
and
\begin{align*}
\|(\nabla')^j u_0\|_{L^2(B'_r)}
&\leq \|u_0\|_{H^{j}(B_{2r}')}
\leq C \|u_0\chi \|_{H^{j}(\R^n)}
\leq C \|u_0\chi\|_{L^2(\R^n)}^{1-\frac{j}{2\gamma}}\|u_0\chi\|_{H^{2\gamma}(\R^n)}^{\frac{j}{2\gamma}}\\
&\leq \|u_0\|_{L^2(B_{4r}')}^{1-\frac{j}{2\gamma}} \|u_0\|_{H^{2\gamma}(B_{4r}')}^{\frac{j}{2\gamma}},
\end{align*}
with $\chi$ a suitable cut-off function and $\gamma=\frac{1+2m-b}{2}$. 
Since $\lim\limits_{r\rightarrow 0} r^{-\ell} \|u_0\|_{L^2(B_{4r}')} = 0$  for any $\ell \in \N$, 
given any $\epsilon>0$ and $\ell>\ell_0$ there is a radius $r_{u_0}>0$ such that if $r\in(0,r_{u_0})$
\begin{align*}
\int\limits_{B'_r\backslash B'_{r/2}} |q_j| |(\nabla')^ju_0| |u_m|dx'
\leq C \epsilon r^\ell.
\end{align*} 
Therefore
\begin{align*}
\int\limits_{B'_r} |q_j| |(\nabla')^ju_0| | u_m|dx'&
\leq \sum_{k=0}^\infty \int\limits_{B'_{r/2^k}\backslash B'_{r/2^{k+1}}} |q_j| |(\nabla')^ju_0| |u_m|dx'\leq \sum_{k=0}^\infty C \epsilon \Big(\frac{r}{2^k}\Big)^\ell\leq C\epsilon r^\ell.
\end{align*} 
On the other hand, $\sum\limits_{j=0}^{m}r^{-2m+2j}\|x_{n+1}^{\frac b 2}u_j\|_{L^2(I_1)}$ only vanishes of finite order, so choosing $\epsilon$ sufficiently small, the boundary term can be absorbed  into the bulk terms provided $r\in (0,r_{u_0})\cap (r_{\ell}, 2^{m}r_{\ell})$ for some $\ell\in\N$.
Observe that if $q_0\in L^\infty$ and $q_j=0 $ for $j\geq 1$, then the boundary term can be absorbed directly by Young's inequality and 
the estimate is valid for $r\in(0,r_0)$ with $r_0$ independent of $u_0$ (we refer to Lemma 5.1 in \cite{RW18} for the details on this).

Hence, \eqref{eq:Cacciop_crit} becomes
\begin{align*}
\sum\limits_{j=0}^{m}r^{-2m+2j}\|x_{n+1}^{\frac b 2}\nabla u_j\|_{L^2(B^+_{r/2})}
\leq \sum\limits_{j=0}^{m}r^{-2m+2j-1}\|x_{n+1}^{\frac b 2} u_j\|_{L^2(B^+_r)}
\end{align*} 
and \eqref{eq:Cacciop_crit_ii} turns into
\begin{align}
\label{eq:grad_pot_iter}
& \sum\limits_{j=0}^{m}\sum_{k=0}^{j}r^{-2m+j+k} \| x_{n+1}^{\frac{b}{2}} \nabla(\nabla')^{j-k} {u}_{k} \|_{L^2(B^+_{r/2})}\leq \sum\limits_{j=0}^{m}\sum_{k=0}^{j} r^{-2m+j+k-1} \| x_{n+1}^{\frac{b}{2}} (\nabla')^{j-k} {u}_{k} \|_{L^2(B^+_{r})}.
\end{align}
In order to deal with the remaining derivatives on the left hand side in \eqref{eq:grad_pot_iter}, we notice that
\begin{align*}
&\sum\limits_{j=0}^{m}\sum_{k=0}^{j} r^{-2m+j+k-1} \| x_{n+1}^{\frac{b}{2}} (\nabla')^{j-k} \tilde{u}_{k} \|_{L^2(B^+_{r})}\\
&\leq \sum\limits_{j=0}^{m} r^{-2m+2j-1} \| x_{n+1}^{\frac{b}{2}}  \tilde{u}_{k} \|_{L^2(B^+_{r})}
+\sum\limits_{j=0}^{m-1}\sum_{k=0}^{j} r^{-2m+j+k} \| x_{n+1}^{\frac{b}{2}}\nabla (\nabla')^{j-k} \tilde{u}_{k} \|_{L^2(B^+_{r})}
\end{align*}
and iterate the Caccioppoli estimate
with starting radius $r=2^{-m}\tilde r$. 
\end{proof}

With the doubling property in hand, we apply a blow-up argument reducing the strong unique continuation property to the weak unique continuation property. To this end, we introduce the following rescaled functions:

\begin{align}
\label{eq:rescaled}
u_{\sigma,j}(x):= \frac{\sigma^{-2(m-j)}u_j(\sigma x)}{\sum\limits_{k=0}^{m} \sigma^{-\frac{n+1}{2}-\frac{b}{2}-2(m-k)}\|x_{n+1}^{\frac{b}{2}} u_k\|_{L^2(B_{\sigma}^+)}}.
\end{align}

We exploit the previous compactness arguments to pass to the blow-up limit $\sigma \rightarrow 0$ which leads to a boundary weak unique continuation formulation of the blown-up system:

\begin{prop}
\label{prop:limit}
Let $u_j\in H^1_{loc}(\R^{n+1}_+, x_{n+1}^b)$ for $j\in\{0,\dots,m\}$ be weak solutions of the system \eqref{eq:syst_err}, \eqref{eq:syst_err_boundary} with $f_0,\dots,f_m=0$ satisfying the conditions from \hyperref[cond:C]{(C)}.
Assume also that the tangential restrictions $u_j(x',0)$ vanish of infinite order at $x_0=0$ and that there exist some $j\in \{0,\dots,m\}$, a subsequence $r_{\ell}\rightarrow 0$ and a constant $k_0\in \N$ such that
\begin{align*}
\lim\limits_{\ell \rightarrow \infty} r_{\ell}^{-k_0} \|x_{n+1}^{\frac{b}{2}} u_j\|_{L^2(B_{r_{\ell}}^+)} \geq C_0 >0.
\end{align*}
Let $u_{\sigma,j}$ be the rescaled functions defined by \eqref{eq:rescaled} and let $\{r_{\ell}\}$ denote the sequence of radii from \eqref{eq:van_ord}. 
Then, along a subsequence $\{\sigma_{\ell}\}_{\ell \in \N} \subset \{2r_{\ell}\}_{\ell \in \N}$ with $\sigma_l \rightarrow 0$ we have $u_{\sigma_l,j} \rightarrow u_{0,j}$ strongly in $L^2(B_4^+, x_{n+1}^b)$, where the functions $u_{0,j}$ are weak solutions to the following elliptic system:
\begin{align}
\label{eq:syst_blow_up}
\begin{split}
\D_b u_{0,m} & = 0 \mbox{ in } B_1^+,\\
\D_b u_{0,j} & = u_{0,j+1} \mbox{ in } B_1^+ \mbox{ for all } j \in\{0,\dots,m-1\},\\
\lim\limits_{x_{n+1}\rightarrow 0} x_{n+1}^b \p_{x_{n+1}} u_{0,j} & = 0 \mbox{ on } B_1' \mbox{ for all } j \in\{0,\dots,m\},
\end{split}
\end{align}
with $\Delta_b=x_{n+1}^{-b}\nabla \cdot x_{n+1}^b\nabla$.
Moreover, for all $j\in\{0,\dots,m\}$ we have
\begin{align*}
u_{0,j} = 0 \mbox{ on } B_1',
\end{align*}
and
\begin{align}
\label{eq:normalized}
\sum\limits_{j=0}^{m}\|x_{n+1}^{\frac{b}{2}} u_{0,j}\|_{L^2(B_1^+)} = 1.
\end{align}
\end{prop}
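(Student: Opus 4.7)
My plan is a blow-up compactness argument. First, a direct change of variables $y=\sigma x$ in \eqref{eq:syst_err}, \eqref{eq:syst_err_boundary} combined with the specific powers of $\sigma$ built into \eqref{eq:rescaled} shows that the rescaled functions $u_{\sigma,j}$ solve the analogous bulk system (with $f_j=0$) but with the metric $\tilde{a}$ replaced by $\tilde{a}_\sigma(x):=\tilde{a}(\sigma x)$. The boundary conditions become homogeneous, $\lim_{x_{n+1}\to 0}x_{n+1}^{b}\p_{n+1}u_{\sigma,j}=0$ for $j<m$, and for $j=m$ they read $\lim_{x_{n+1}\to 0}x_{n+1}^{b}\p_{n+1}u_{\sigma,m}=g_\sigma$ with $g_\sigma(x')=\sigma^{1-b}g(\sigma x')/N(\sigma)$, where $N(\sigma)$ denotes the denominator in \eqref{eq:rescaled}. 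By definition of $N(\sigma)$, the normalization $\sum_j\|x_{n+1}^{b/2}u_{\sigma,j}\|_{L^2(B_1^+)}=1$ holds automatically for every $\sigma>0$.

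Next, I would iterate the doubling estimate of Proposition~\ref{prop:doubling} on the chain $\sigma_\ell,\,2\sigma_\ell,\ldots,\,2^{k}\sigma_\ell$ (with $\sigma_\ell=2r_\ell$, so all these radii lie in the interval of validity of doubling after further iteration) and divide by $N(\sigma_\ell)$ to obtain, for any fixed $R=2^{k}>1$, the uniform bound $\sum_j\|x_{n+1}^{b/2}u_{\sigma_\ell,j}\|_{L^2(B_R^+)}\leq C_R$. Applying the Caccioppoli inequality of Lemma~\ref{lem:Cacciop} iteratively from $j=m$ down to $j=0$ in the rescaled system then upgrades this to uniform $H^1(B_{R/2}^+,x_{n+1}^{b})$ bounds. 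Weak $H^1$--strong $L^2$ compactness (Rellich in the weighted setting) together with a diagonal extraction yields a subsequence with $u_{\sigma_\ell,j}\to u_{0,j}$ strongly in $L^2(B_4^+,x_{n+1}^{b})$ and weakly in $H^1_{loc}(\R^{n+1}_+,x_{n+1}^{b})$; strong $L^2$ convergence in $B_1^+$ preserves the normalization \eqref{eq:normalized}. Since (A3) yields $\tilde{a}_{\sigma_\ell}(x)=\tilde{a}(\sigma_\ell x)\to\Id$ locally uniformly, the weak formulations of the bulk equations pass to the limit and produce the system in \eqref{eq:syst_blow_up}. For the tangential vanishing $u_{0,j}|_{B_1'}=0$, I would combine the hypothesized infinite order vanishing of $u_j(\cdot,0)$ with a polynomial lower bound $N(\sigma_\ell)\geq c\sigma_\ell^{M}$ that comes from propagating the finite order assumption \eqref{eq:van_ord} to all $\sigma\in(r_\ell,2^{m}r_\ell)$ via iterated doubling; then $\|u_{\sigma_\ell,j}(\cdot,0)\|_{L^2(B_1')}=\sigma_\ell^{-2(m-j)-n/2}\|u_j(\cdot,0)\|_{L^2(B_{\sigma_\ell}')}/N(\sigma_\ell)\to 0$ since the numerator decays faster than any polynomial, and by continuity of the trace $u_{0,j}|_{B_1'}=0$.

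The main obstacle is verifying the homogeneous Neumann condition $\lim_{x_{n+1}\to 0}x_{n+1}^{b}\p_{n+1}u_{0,m}=0$, because a direct calculation shows that $g_\sigma$ scales exactly at the critical rate: using $(\nabla')^{j}u_0(\sigma x',0)=\sigma^{2m-j}N(\sigma)(\nabla')^{j}u_{\sigma,0}(x',0)$ and the bounds on $q_j$ from (C), one finds $|g_\sigma(x')|\leq\sum_{j}C_{q_j}|x'|^{-2m+j+b-1}|(\nabla')^{j}u_{\sigma,0}(x',0)|$, with no gain in $\sigma$ in the critical cases. In the subcritical regime one picks up an extra factor $\sigma^{\varepsilon}$ and the argument is immediate. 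In the critical cases, I would bootstrap the Caccioppoli inequality in tangential directions (which is precisely where the $C^{\mu,1}$ regularity with $\mu=2m$ enters) to obtain uniform $L^2(B_1')$ bounds on $(\nabla')^{j}u_{\sigma_\ell,0}(\cdot,0)$, combine these with the strong $L^2$ vanishing of the rescaled traces established above, and pass to the limit via dominated convergence in the weak formulation; in the exactly critical case $b=0$, the smallness of $c_0$ absorbs any residual contribution. This supplies the remaining boundary condition of \eqref{eq:syst_blow_up}.
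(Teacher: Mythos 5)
Your overall architecture matches the paper's: use Proposition~\ref{prop:doubling} rescaled to get uniform $H^1(B_4^+,x_{n+1}^b)$ bounds on $\{u_{\sigma,j}\}$, extract a subsequence by Rellich-type weighted compactness, pass to the limit in the weak formulations exploiting (A3) (so that $\tilde a(\sigma_\ell x)\to\mathrm{Id}$), and deduce $u_{0,j}|_{B_1'}=0$ by comparing the infinite-order vanishing of the numerators $\|u_j\|_{L^2(B'_{\sigma_\ell})}$ with the finite (polynomial) vanishing of the normalisation $N(\sigma_\ell)$ via \eqref{eq:van_ord}. Note that the extra Caccioppoli iteration you invoke to upgrade $L^2$ bounds to $H^1$ bounds is unnecessary here: the doubling inequality in Proposition~\ref{prop:doubling} already controls $\|x_{n+1}^{b/2}\nabla u_j\|_{L^2(B_{2r}^+)}$ on its left-hand side.

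The genuine gap is in your treatment of the Neumann boundary term in the critical cases. You correctly observe that $|g_\sigma(x')|\leq\sum_j C_{q_j}|x'|^{-2m+j+b-1}|(\nabla')^j u_{\sigma,0}(x',0)|$ scales exactly critically, so that no factor of $\sigma$ is gained. But your proposed remedy -- a uniform $L^2(B_1')$ bound on $(\nabla')^m u_{\sigma_\ell,0}$ combined with "dominated convergence" and "smallness of $c_0$" -- does not close the argument. A uniform bound on the top-order tangential derivative, together with $\|u_{\sigma_\ell,0}\|_{L^2(B_1')}\to0$, gives by interpolation only that $(\nabla')^j u_{\sigma_\ell,0}\to0$ in $L^2(B_1')$ for $j<m$; for $j=m$ it gives nothing beyond boundedness. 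Dominated convergence is unavailable because you have no pointwise control of $(\nabla')^m u_{\sigma_\ell,0}$, and $g_\sigma$ is only controlled in absolute value, not explicitly. And smallness of $c_0$ absorbs terms within a single estimate -- it cannot convert a bounded residual into a vanishing one in the limit. In particular your argument would only show that $\lim_{x_{n+1}\to0}x_{n+1}^b\partial_{n+1}u_{0,m}$ is bounded by a small quantity, not that it is zero.

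The paper's resolution is simpler and is the same trick you already used for the Dirichlet traces: rescale the boundary integral back to the original variables, writing
\[
\int_{B'_2}g_{\sigma_\ell}\varphi\,dx' = \frac{\sigma_\ell^{1-b-n}}{N(\sigma_\ell)}\int_{B'_{2\sigma_\ell}}g(y')\,\varphi(y'/\sigma_\ell)\,dy',
\]
and then estimate the unscaled integral $\int_{B'_{2\sigma_\ell}}|g|$ exactly as in Step 3 of the proof of Proposition~\ref{prop:doubling}: split over dyadic annuli, use $|q_j(y')|\leq C_{q_j}|y'|^{-2m+j+b-1+\varepsilon}$, Cauchy--Schwarz, and the interpolation $\|(\nabla')^j u_0\|_{L^2(B'_r)}\leq\|u_0\|_{L^2(B'_{4r})}^{1-j/(2\gamma)}\|u_0\|_{H^{2\gamma}(B'_{4r})}^{j/(2\gamma)}$. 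This gives that $\int_{B'_{2\sigma_\ell}}|g|$ vanishes of infinite order in $\sigma_\ell$ -- crucially using the fixed $H^{2\gamma}$ regularity of the original $u_0$, which is not uniformly available for the rescaled family $u_{\sigma,0}$. Since $N(\sigma_\ell)$ vanishes at worst polynomially by \eqref{eq:van_ord}, the boundary term tends to zero, in the critical cases as well as the subcritical ones. Your approach misses this "rescale back" step, which is precisely what lets the paper avoid the need for uniform $(\nabla')^m$ control in $\sigma$.
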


\begin{proof}
We first note  that the functions $u_{\sigma,j}$ are constructed in such a way that
\begin{align*}
\sum_{j=0}^m\|x_{n+1}^{\frac b 2}u_{\sigma,j}\|_{L^2(B^+_1)}=1.
\end{align*}
From Proposition ~\ref{prop:doubling}, after rescaling and for
$\sigma\in\{2r_{\ell}\}_{\ell \in \N}$, we obtain
\begin{align*}
\sum_{j=0}^m\big(
\|x_{n+1}^{\frac b 2} u_{\sigma,j}\|_{L^2(B_4^+)}+
\|x_{n+1}^{\frac b 2} \nabla u_{\sigma,j}\|_{L^2(B_4^+)}\big)
\leq C\sum_{j=0}^m \|x_{n+1}^{\frac b 2} u_{\sigma,j}\|_{L^2(B_1^+)}=C.
\end{align*}
We stress that there is no problem with the dependence of the radius in the doubling estimate from Proposition \ref{prop:doubling} on $u_0$, as we first apply this to the fixed functions $u_j$ and then rescale (which implies that the result holds uniformly in $\sigma$ for the whole family $u_{\sigma, j}$).
By Rellich's compactness theorem, there exist a subsequence $\{\sigma_\ell\}_{\ell \in \N} \subset \{2r_{\ell}\}_{\ell \in \N}$ with $\sigma_\ell\to 0$  and functions $u_{0,j}\in H^1(x_{n+1}^b, B^+_4)$ such that $u_{\sigma_\ell,j}\to u_{0, j}$ strongly in $L^2(B_4^+, x_{n+1}^b)$ and weakly in $H^1(B_4^+, x_{n+1}^b)$ and the normalization \eqref{eq:normalized} holds.
In addition, since the embedding $H^1(B^+_4, x_{n+1}^b)\hookrightarrow L^2(B'_3)$ is compact and $H^{\frac{1-b}{2}}(B'_3)$ is compactly embedded in $L^2(B'_3)$, up to a redefinition of the subsequence, $u_{\sigma_\ell,j}\to u_{0,j}$ strongly in $L^2(B'_{3})$ and weakly in $H^{\frac{1-b}{2}}(B'_3)$.

The functions $u_{\sigma,j}$ satisfy weakly the same system \eqref{eq:syst_err}, \eqref{eq:syst_err_boundary} as the original functions $u_j$ (again with $f_0, \dots, f_m=0$) however with a rescaled metric and potentials $a_\sigma(x)=a(\sigma x)$ and
\[|g_\sigma(x)|=\sigma^{1-b}|g(\sigma x)|\leq \sum_{j=0}^m \sigma^{1-b+2m-j}|q_j(\sigma x)||(\nabla')^ju_{\sigma, 0}(x',0)|.\]
Hence,
\begin{align*}
&\int_{B^+_2}x_{n+1}^b\nabla \varphi \cdot a(\sigma x)\nabla u_{\sigma, j} =-\int_{B^+_2}x_{n+1}^b u_{\sigma, j+1}\varphi, 
 \qquad j\in\{0,\dots,m-1\},
\\&\int_{B^+_2}x_{n+1}^b\nabla \varphi\cdot a(\sigma x)\nabla u_{\sigma, m}=\int_{B'_2}g_\sigma \varphi,
\end{align*}
for any $\varphi\in C^1_c(B^+_3)\cap C_c(B'_3)$.
As a result, in the limit  $\sigma_\ell\to 0$
\begin{align*}
&\int_{B^+_2}x_{n+1}^b\nabla u_{0, j}\cdot\nabla \varphi=-\int_{B^+_2}x_{n+1}^b u_{0, j+1}\varphi, \qquad j\in\{0,\dots,m-1\},
\\
&\int_{B^+_2}x_{n+1}^b\nabla u_{0, m}\cdot\nabla \varphi=0.
\end{align*}
Here we have used that by the normalising condition \hyperref[cond:a3]{(A3)} the metric satisfies $a^{ij}(\sigma x)\to \delta^{ij}$ as $\sigma \rightarrow 0$ and that the boundary integrals vanish of infinite order as we proved in Step 3 of the proof of Proposition ~\ref{prop:doubling}.
This shows that the functions $u_{0,j}$ are indeed weak solutions to the system in the statement.

Finally, we prove that the functions $u_{0,j}$, $j\in\{0,\dots,m\}$, vanish on $B'_1$.  Indeed, 
\begin{align*}
\|u_{\sigma,j}\|_{L^2(B'_1)}=\frac{\sigma^{-2(m-j)-\frac n 2}\|u_j\|_{L^2(B'_\sigma)}}{\sum\limits_{k=0}^{m} \sigma^{-\frac{n+1}{2}-\frac{b}{2}-2(m-k)}\|x_{n+1}^{\frac{b}{2}} u_k\|_{L^2(B_{\sigma}^+)}},
\end{align*}
and whereas the numerator vanishes of infinite order, by our assumption \eqref{eq:van_ord}, the denominator vanishes of only finite order. 
\end{proof}

\subsection{Strong unique continuation} \label{sec:SUCP_inf_order}
Here we deduce the strong unique continuation property for solutions which vanish of infinite order in tangential \emph{and} normal directions.
The proof relies on  the associated Carleman estimates from Proposition ~\ref{prop:syst_Carl_1}.

\begin{prop}
\label{prop:SUCP_inf_order}
Let $u_j\in H^1_{loc}(\R^{n+1}, x_{n+1}^b)$ for $j\in\{0,\dots,m\}$ be weak solutions of the system \eqref{eq:syst_err}, \eqref{eq:syst_err_boundary} with $f_0,\dots,f_m=0$ satisfying the conditions from \hyperref[cond:C]{(C)}. Assume further that for all $j\in \{0,\dots,m\}$ and all $k\in \N$
\begin{align*}
\lim\limits_{r\rightarrow 0} r^{-k}\|x_{n+1}^{\frac{b}{2}} u_j\|_{L^2(B_r^+)} = 0.
\end{align*}
Then, $u\equiv 0$ in $B_1^+$.
\end{prop}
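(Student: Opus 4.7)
The plan is to apply the system Carleman estimate of Proposition~\ref{prop:syst_Carl_1}, augmented by its boundary companion Corollary~\ref{cor:boundary_bulk}, to cut-off versions of the $u_j$ and then to send first $r \to 0$ and then $\tau \to \infty$. First I would fix $r\in(0,1/8)$ and let $\eta_r$ be a radial cut-off equal to one on $B_{3/2}^+\setminus B_{r}^+$, vanishing outside $B_{2}^+\setminus B_{r/2}^+$, with $|\nabla^\alpha \eta_r|\leq C r^{-|\alpha|}$ on $B_{r}^+\setminus B_{r/2}^+$ and $|\nabla^\alpha \eta_r|\leq C$ on $B_{2}^+\setminus B_{3/2}^+$ for $|\alpha|\leq m+2$. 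The functions $\tilde{u}_j:=\eta_r u_j$ then satisfy the bulk system \eqref{eq:syst_err} with inhomogeneities $f_j := 2\nabla \eta_r \cdot a\nabla u_j + u_j L_b\eta_r$ supported only on the two annuli $A_r := B_{r}^+\setminus B_{r/2}^+$ and $A_1 := B_{2}^+\setminus B_{3/2}^+$, together with the same boundary datum $g$ controlled as in condition~\hyperref[cond:C]{(C)}.

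The first key step is to absorb the boundary contribution coming from $g$ into the boundary norms on the left-hand side of \eqref{eq:Carl_system_1}. This is performed exactly as in Step~1 of the proof of Proposition~\ref{prop:doubling}, playing the prefactor $\tau^{m-j+(1-b)/2}$ and the additional $(1+\overline{h})^{(m+1)/2}$ weight generated by $h''$ against the three regimes of bounds on $q_j$: subcritical cases are handled by taking $\tau$ large, the critical case relies on the smallness of $c_0$, and the intermediate regime exploits the factor $\overline{h}$. After this absorption, the bulk right-hand side splits naturally into contributions from $A_r$ and $A_1$. On $A_1$ every weight is bounded by a constant, so this part is of the form $C(\tau)\,e^{h(\ln(2/3))}\sum_{j}\|x_{n+1}^{b/2} u_j\|_{H^1(A_1, x_{n+1}^b)}$. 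On $A_r$, the cut-off bounds $|\nabla^\alpha \eta_r|\leq C r^{-|\alpha|}$ together with Caccioppoli's inequality (Lemma~\ref{lem:Cacciop}) applied iteratively to the $u_j$ produce a bound of the shape
\[
C(\tau)\, e^{h(-\ln(r/2))} \sum_{j=0}^{m} r^{-2m-2+2j}\, \|x_{n+1}^{b/2}u_j\|_{L^2(B_{2r}^+)} .
\]
Because $h'\in (C^{-1}\tau, C\tau)$, the prefactor grows only polynomially in $1/r$, i.e.\ like $r^{-\tilde C\tau}$, whereas the infinite-order vanishing hypothesis forces $\|x_{n+1}^{b/2} u_j\|_{L^2(B_{2r}^+)}=o(r^N)$ for every $N\in\N$. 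Hence for every fixed $\tau$ the $A_r$-contribution vanishes as $r\to 0$.

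Restricting the left-hand side of \eqref{eq:Carl_system_1} to the fixed annulus $B_{1}^+\setminus B_{1/2}^+$ (where $\eta_r\equiv 1$ once $r$ is small enough, and where the weight is at least $e^{h(\ln 2)}$), and letting $r\to 0$, I obtain
\[
e^{h(\ln 2)} \sum_{j=0}^{m}\tau^{m+1-j}\,\|x_{n+1}^{b/2}u_j\|_{L^2(B_{1}^+\setminus B_{1/2}^+)} \leq C(\tau)\, e^{h(\ln(2/3))} \sum_{j=0}^{m}\|x_{n+1}^{b/2}u_j\|_{H^1(A_1, x_{n+1}^b)} .
\]
Since $h$ is strictly increasing with $h'\geq c\tau$, we have $h(\ln(2/3))-h(\ln 2)\leq -c\tau \ln 3\to -\infty$ as $\tau\to\infty$, so the right-hand side vanishes in this limit while the left-hand side is non-negative. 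This forces $u_j\equiv 0$ on $B_{1}^+\setminus B_{1/2}^+$ for every $j$. The conclusion $u_j\equiv 0$ on the full half-ball $B_1^+$ is then obtained by the weak unique continuation property of Section~\ref{sec:WUCP}, applied to the elliptic system \eqref{eq:syst_err} with $f_j=0$ and with $g\equiv 0$ on $B_1'$ (which follows from~\hyperref[cond:C]{(C)} because $u_0$ and its tangential derivatives vanish of infinite order on $B_1'$).

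The hard part is the boundary absorption in Step~1: it is precisely there that the critical regime $\gamma-\lfloor\gamma\rfloor=1/2$ is pushed through by the smallness of $c_0$, and the intermediate regimes $\gamma\in(1/4,1/2)$ and $\gamma-\lfloor\gamma\rfloor\in(0,1/2)$ require the extra factor $(1+\overline h)^{(m+1)/2}$ on the boundary term of \eqref{eq:Carl_system_1}; without both of these refinements, the argument would not close across all three cases of condition~\hyperref[cond:C]{(C)}.
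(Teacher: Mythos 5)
Your overall strategy — cutting off by $\eta_r$, applying Proposition~\ref{prop:syst_Carl_1}, killing the inner-annulus contribution via infinite-order vanishing and Caccioppoli as $r\to 0$, then sending $\tau\to\infty$ — is exactly the paper's route, and all the intermediate steps are sound. However, the final step contains a genuine gap, and in fact your last move is both unnecessary and, as written, unjustified.

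The gap: after concluding $u_j\equiv 0$ on $B_1^+\setminus B_{1/2}^+$, you invoke Proposition~\ref{prop:WUCP} to extend the vanishing to $B_1^+$. But Proposition~\ref{prop:WUCP} requires that the tangential traces $u_j(x',0)$ vanish on \emph{all} of $B_1'$ and that $g\equiv 0$ on $B_1'$. Neither is available: the hypothesis of Proposition~\ref{prop:SUCP_inf_order} is infinite-order vanishing at the single point $x_0=0$, not vanishing on $B_1'$. Your justification ``$u_0$ and its tangential derivatives vanish of infinite order on $B_1'$'' misreads the hypothesis, and from vanishing on $B_1^+\setminus B_{1/2}^+$ alone you only get $u_0|_{B_1'\setminus B_{1/2}'}=0$ and hence $g|_{B_1'\setminus B_{1/2}'}=0$ — not the full-boundary vanishing Proposition~\ref{prop:WUCP} demands. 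So this step does not close.

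The fix is simpler than what you attempt and is what the paper does: there is no need to restrict to a fixed annulus and then propagate. Let $K$ be any compact subset of $B_1^+$ with $\sup_{x\in K}|x|\leq 1-\delta$ for some $\delta>0$. On $K$ one has $-\ln|x|\geq -\ln(1-\delta)>0$, so by monotonicity of $h$ and $h'\geq c\tau$ the weight satisfies $e^{h(-\ln|x|)}\geq e^{c\tau\,|\ln(1-\delta)|}$; on the outer support annulus $A_1$ the weight is at most $e^{h(\ln(2/3))}\leq 1$. Since the right-hand side grows at most polynomially in $\tau$ while the weight ratio grows exponentially, sending $\tau\to\infty$ forces $\|x_{n+1}^{b/2}u_j\|_{L^2(K)}=0$. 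As $K\subset B_1^+$ was arbitrary (away from $\{|x|=1\}$), $u_j\equiv 0$ on $B_1^+$ and no unique continuation step is needed. As a small separate correction, your weight comparison on $B_1^+\setminus B_{1/2}^+$ should read $e^{h(-\ln|x|)}\geq e^{h(0)}=1$ (not $\geq e^{h(\ln 2)}$), since $-\ln|x|\in(0,\ln 2)$ and $h$ is increasing; this does not affect the sign of the exponent $h(\ln(2/3))-h(0)\leq -c\tau\ln(3/2)$ which drives the conclusion, but the displayed inequality is wrong as stated.
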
 

\begin{proof}
Consider the functions $\tilde{u}_j:= \eta_{\epsilon} u_j$, where $\eta_{\epsilon}$ is a smooth, radial cut-off function which satisfies the following bounds:
\begin{align}
\label{eq:eta_bounds}
\begin{split}
&\eta_{\epsilon}(x) = 1 \mbox{ for } |x|\in (2\epsilon,1),\
\supp(\eta_{\epsilon}) \subset \{x\in \R^{n+1}_+: \ |x|\in(\epsilon,2)\},\
|\eta_{\epsilon}(x)| \leq 1, \\
&|\nabla^\alpha \eta_{\epsilon}(x)| \leq C\epsilon^{-|\alpha|}\mbox{ for } |x|\in (\epsilon, 2\epsilon) \mbox{ and } |\alpha|\leq m+2,\ \alpha \in \N^{n},\\
& 
|\nabla^\alpha \eta_{\epsilon}(x)| \leq C\mbox{ for } |x|\in (1, 2) \mbox{ and } |\alpha|\leq m+2,
\end{split}
\end{align} 
where $\epsilon\ll1$.
The functions $\tilde u_j$ satisfy the system from Proposition ~\ref{prop:syst_Carl} with $f_j=2\nabla \eta_\epsilon\cdot a\nabla u_j+u_jL_b\eta_\epsilon$.

We insert the functions $\tilde{u}_j$ into the Carleman estimate \eqref{eq:Carl_system_1}.
Notice that we can pass to the limit $\epsilon \rightarrow 0$ 
by virtue of the infinite rate of vanishing of $u_j$.
Arguing as in Step 1 of the proof of Proposition ~\ref{prop:doubling}, we can drop the boundary contributions.
Therefore  \eqref{eq:Carl_system_1} turns into
\begin{align*}
& \sum\limits_{j=0}^{m}\sum_{k=0}^{j} \tau^{m+1-j} \|e^{h(-\ln(|x|))} x_{n+1}^{\frac{b}{2}}(1+\overline{h})^{\frac{m+1-k}{2}} |x|^{-2m-1+j+k} (\nabla')^{j-k} {u}_{k} \|_{L^2(B_1^+)} \\
&\leq C  \sum\limits_{j=0}^{m} \sum\limits_{k=0}^{j}\sum_{i=0}^{j-k}\Big(
\tau^{m-j}\|e^{h(-\ln(|x|))}  x_{n+1}^{\frac{b}{2}}(1+\overline{h})^{\frac{m-k}{2}}|x|^{-2m+1+j+k} |\nabla^iL_b\eta_0||(\nabla')^{j-k-i} u_{k}|\|_{L^2(B^+_2)}
\\
&\qquad +
\tau^{m-j}\sum_{\ell=0}^{i}\|e^{h(-\ln(|x|))}  x_{n+1}^{\frac{b}{2}}(1+\overline{h})^{\frac{m-k}{2}}|x|^{-2m+1+j+k} |\nabla^{i+1-\ell}\eta_0||\nabla^\ell a||\nabla(\nabla')^{j-k-i} u_{k}| \|_{L^2(B^+_2)}\Big).\end{align*}
Finally, passing to the limit $\tau \rightarrow \infty$, we obtain $u_j=0$ in $B^+_1$.

\end{proof}

\section{Weak Unique Continuation}
\label{sec:WUCP}

In this section we consider the weak unique continuation property for the equations at hand. In spite of weak unique continuation results for the fractional Laplacian already existing in the literature (see in particular \cite{Seo}), both our argument and our result contain novel aspects: In contrast to the weak unique continuation results from Seo \cite{Seo} our result is a \emph{localised} unique continuation result (as we do not need the validity of the equation $L^{\gamma} u = q u$ in $\R^n$), and hence in particular it is formulated for a local equation (instead of working with the global fractional Laplacian).

\begin{prop}
\label{prop:WUCP}
Let $u_j\in H^1_{loc}(B_1^+, x_{n+1}^b)$ for $j\in\{0,\dots,m\}$  be weak solutions of the system \eqref{eq:syst_err}, \eqref{eq:syst_err_boundary} in $B^+_1$ with $f_0,\dots,f_m=0$, $g=0$ and the metric $a$ of a block form \eqref{eq:block} where $\tilde a$ satisfies the conditions \hyperref[cond:a1]{(A1)}-\hyperref[cond:a3]{(A3)}.
Assume also that for all $j\in\{0,\dots,m\}$ the tangential restrictions $u_j(x',0)$ vanish on $B'_1$. Then, $u_j \equiv 0$ in $B_1^+$.
\end{prop}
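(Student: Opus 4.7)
The plan is to reduce this boundary-type weak unique continuation statement to an interior one by extending $u_j$ by zero across the flat face $B_1'$. The double vanishing hypothesis -- both the Dirichlet condition $u_j|_{B_1'} = 0$ provided by assumption and the weighted Neumann condition $\lim_{x_{n+1}\to 0} x_{n+1}^{b} \p_{n+1} u_j = 0$ coming from \eqref{eq:syst_err_boundary} (with $g\equiv 0$ for $j=m$) -- is precisely what is needed for the zero extension to define an admissible weak solution of the symmetrised equation, which in turn puts us in the standard $A_2$-Muckenhoupt weighted elliptic framework where interior weak unique continuation is available.

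\textbf{Step 1: Zero extension across $B_1'$.} Define $\bar u_j : B_1 \to \R$ by setting $\bar u_j(x',x_{n+1}) := u_j(x',x_{n+1})$ for $x_{n+1} \geq 0$ and $\bar u_j(x',x_{n+1}) := 0$ for $x_{n+1} < 0$. The Dirichlet vanishing gives $\bar u_j \in H^1_{loc}(B_1, |x_{n+1}|^{b})$ with zero weak trace on $B_1'$. Since the metric $a$ is of block form \eqref{eq:block} and depends only on the tangential variables, its trivial extension $\bar a$ to $B_1$ satisfies the analogue of conditions \hyperref[cond:a1]{(A1)}--\hyperref[cond:a3]{(A3)}. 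A direct testing argument, in which the boundary contribution from $B_1'$ vanishes due to $\lim_{x_{n+1}\to 0} x_{n+1}^{b} \p_{n+1} u_j = 0$, shows that the $\bar u_j$ are weak solutions on $B_1$ of the symmetrised triangular system
\begin{align*}
|x_{n+1}|^{-b} \nabla \cdot \bigl(|x_{n+1}|^{b} \bar a \nabla \bar u_j\bigr) &= \bar u_{j+1}, \quad j \in \{0, \ldots, m-1\},\\
|x_{n+1}|^{-b} \nabla \cdot \bigl(|x_{n+1}|^{b} \bar a \nabla \bar u_m\bigr) &= 0.
\end{align*}

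\textbf{Step 2: Interior weak unique continuation through the system.} Since $b \in (-1,1)$, the weight $|x_{n+1}|^{b}$ is an $A_2$-Muckenhoupt weight and $\bar a$ is Lipschitz. By construction, $\bar u_j \equiv 0$ on the open set $B_1 \cap \{x_{n+1} < 0\}$. Apply weak unique continuation for $A_2$-weighted elliptic equations with Lipschitz coefficients sequentially, using the triangular structure: the function $\bar u_m$ solves the homogeneous weighted equation and vanishes on an open set, hence $\bar u_m \equiv 0$ on $B_1$; then $\bar u_{m-1}$ solves the same homogeneous equation (since the right-hand side $\bar u_m$ is now zero) and vanishes on an open set, so $\bar u_{m-1} \equiv 0$ on $B_1$; iterating downward yields $\bar u_j \equiv 0$ on $B_1$ for every $j$. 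Restricting to $B_1^+$ gives the claim.

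\textbf{Main obstacle.} The technical heart is Step 2: weak unique continuation for $A_2$-weighted elliptic equations with Lipschitz coefficients. This can either be invoked from the literature on Muckenhoupt-weighted elliptic PDEs, or supplied directly by a symmetrised variant of the Carleman estimate of Proposition \ref{prop:global_est}. Indeed, the splitting argument underlying that estimate goes through without change when the one-sided weight $x_{n+1}^{b}$ is replaced by the even weight $|x_{n+1}|^{b}$ and the base point of the radial Carleman weight $e^{h(-\ln|\cdot|)}$ is taken to be a generic interior point of $B_1$; a standard propagation-of-smallness iteration then upgrades the local vanishing on $B_1 \cap \{x_{n+1} < 0\}$ to global vanishing on the connected ball $B_1$. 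The remaining iteration through the triangular system is then immediate.
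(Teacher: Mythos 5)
Your proof is correct and, once unfolded, lands in essentially the same place as the paper's, but it makes explicit a mechanism the paper handles by citation. The paper's proof of this proposition is very short: it bootstraps from $u_m$ down to $u_0$ exactly as you do, and at each stage invokes the weak unique continuation property for the scalar Caffarelli--Silvestre extension with vanishing Dirichlet and weighted Neumann data, citing \cite{Rue15} and \cite{FF14}. Your zero-extension argument in Step~1 (which works because the Dirichlet vanishing makes the extension lie in $H^1_{loc}(B_1,|x_{n+1}|^b)$ and the vanishing of the weighted Neumann data ensures no distributional contribution survives on $B_1'$) reproduces the internal mechanism of those cited results, turning the boundary WUCP into an interior WUCP for an $A_2$-Muckenhoupt-weighted divergence-form equation; this is indeed how the scalar statement is established in the references. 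The advantage of your route is self-containedness; the cost is that you must supply the interior WUCP for the even-weight operator $|x_{n+1}|^{-b}\nabla\cdot(|x_{n+1}|^b\bar a\nabla\cdot)$ with Lipschitz $\bar a$, which you correctly flag as the remaining technical step. Your fallback via a symmetrised version of Proposition~\ref{prop:global_est} is the right instinct, though two small points deserve care there: the Carleman weight in that proposition is centred at the origin where the normalisation $\tilde a(0)=\delta^{ij}$ from \hyperref[cond:a3]{(A3)} holds, so propagating across the full interface requires recentring and renormalising by a tangential linear change of variables (which preserves the block form \eqref{eq:block} precisely because the $(n+1)$-th row and column are fixed), and away from the degeneracy set $\{x_{n+1}=0\}$ the operator is uniformly elliptic so the standard Aronszajn result suffices to close the propagation of smallness. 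With those observations, your argument is sound and offers a useful unpacking of the paper's terse citation.
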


\begin{proof}
We bootstrap the system by applying the weak unique continuation property for scalar equations: Indeed, by the weak unique continuation property of solutions of the fractional Laplacian (see \cite{Rue15} and \cite{FF14}) and regularity results from \cite{KRSIV}, we first infer that $u_m \equiv 0$ in $B_1^+$. Iteratively, this then also entails that $u_{j} \equiv 0$ in $B_1^+$ since, once $u_{j+1} \equiv 0$ in $B_1^+$, then $u_j$ satisfies the Caffarelli-Silvestre  equation with zero Dirichlet and (weighted) Neumann data. We iterate this until we reach $u_0$.
\end{proof}

\begin{rmk}
We remark that an argument for the WUCP had already been given by Riesz \cite{R38} (relying on certain regularity conditions, see the discussion in Remark 4.2 in \cite{GSU16}). Using a Kelvin transform he reduced it to the situation with data vanishing in the exterior of a domain. An argument of a related flavour  for a much larger class of pseudodifferential operators was also used in \cite{RS17} (see also \cite{Isakov}).
\end{rmk}

\begin{rmk}
\label{rmk:lopatinski}
We remark that the (weak) unique continuation property requires the Lopatinskii condition to hold. If this is violated even if ``formally'' there are sufficiently many boundary conditions prescribed, one will in general not be able to infer the vanishing of $u$. 
This is for instance the case for problem
\begin{align*}
&\D^2 u = 0 \mbox{ in } B_1^+ \subset \R^2,\\
&u = 0 , \p_{y}u = 0, \ \p_{x} u = 0, \p_{xx}u=0 \mbox{ on } B_1'.
\end{align*}
By simply invoking counting arguments these boundary conditions should yield an overdetermined system. They however do not (the function $w(x,y)=y^2 x$ is a non-trivial solution), as the Lopatinskii condition is not satisfied.
\end{rmk}

As a consequence of the localised formulation of our weak unique continuation property, it for instance applies to settings which arise in inverse problems \cite{RS17, GSU16, GRSU18}.
This allows us to prove the antilocality of the fractional Laplacian for any order $\gamma>0$ with $\gamma \notin \N$, i.e. it allows us to prove Proposition ~\ref{prop:antiloc}, which we postpone to the next section.

\section{Proofs of the Unique Continuation Results for the Fractional Laplacian}

\label{sec:proofs}

In this section, we rely on the connection between the systems representations for the higher order fractional Laplacian (see Proposition ~\ref{prop:H2gamma1a} as well as Propositions ~\ref{prop:H2gamma}, ~\ref{prop:H2gamma1} in the Appendix) and -- building on the previous compactness results -- present the proofs of Theorems ~\ref{prop:SUCP}-\ref{prop:MUCP} and of Propositions ~\ref{prop:antiloc}, ~\ref{prop:Runge}.

\subsection{Proofs of strong unique continuation properties for the fractional Laplacian}
We begin by proving Theorems \ref{prop:SUCP}-\ref{prop:SUCP_var}:

\begin{proof}[Proofs of Theorems ~\ref{prop:SUCP}, ~\ref{prop:SUCP_higher} and ~\ref{prop:SUCP_var}]
We seek to reduce the strong unique continuation properties  for the fractional Laplacian to the previously deduced results on the systems case.  We invoke Proposition ~\ref{prop:H2gamma1a} and rewrite the problem as a system of the form \eqref{eq:WUCP_syst1}, where $f=u$, $m=\floor{\gamma}$ and $b=1-2\gamma+2\floor{\gamma}$.
We seek to apply a combination of Propositions ~\ref{prop:limit}, \ref{prop:SUCP_inf_order} and ~\ref{prop:WUCP}. To this end, we have to show that the functions $u_j(x',0)=L^j u(x')$ 
with $j\in\{1,\dots, \floor{\gamma}\}$ vanish of infinite order in the tangential directions on the boundary. By assumption, we have that  the function $u_0(x',0)=u(x')$ vanishes of infinite order at $x'_0=0$ in the  tangential directions. In order to obtain the desired infinite order of vanishing of $u_j$ in the tangential directions on the boundary, we use an interpolation argument: Let $\eta$ be a smooth cut-off function which is equal to one on $B_{r}'$ and which is supported in $B_{4r}'$. Then,
\begin{align*}
\|L^j u\|_{L^2(B_{r}')}
&\leq \|u\|_{H^{2j}(B_{r}')}
\leq C \|u\eta \|_{H^{2j}(\R^n)}
\leq C \|u\eta\|_{L^2(\R^n)}^{1-\frac{j}{\gamma}}\|u\eta\|_{H^{2\gamma}(\R^n)}^{\frac{j}{\gamma}}\\
&\leq \|u\|_{L^2(B_{4r}')}^{1-\frac{j}{\gamma}} \|u\|_{H^{2\gamma}(B_{4r}')}^{\frac{j}{\gamma}}.
\end{align*}
Since $\lim\limits_{r\rightarrow 0} r^{-\ell} \|u\|_{L^2(B_{4r}')} = 0$ for any $\ell \in \N$ and as $u\in H^{2\gamma}(B_1')$, this implies that the same holds for $\|L^j u\|_{L^2(B_{r}')}$ and thus for $\|u_j(\cdot,0)\|_{L^2(B_{r}')}$ with $j\in\{0,\dots, \floor{\gamma}\}$.
Moreover, due to the previous identification of $b$ and $m$ in terms of $\gamma$ and $\floor{\gamma}$, the conditions from \hyperref[cond:C]{(C)} are satisfied. 
Hence, if \eqref{eq:van_ord} holds, the blow-up argument from Proposition ~\ref{prop:limit} and subsequently the weak unique continuation result from Proposition ~\ref{prop:WUCP} are applicable. Alternatively, we invoke Proposition \ref{prop:SUCP_inf_order}.

As a consequence, the functions $u_j$ for $j=0,\dots,m$, (and thus in particular also the function $u$) vanish in $B_1^+$. Using that the equation for the generalised Caffarelli-Silvestre extension holds globally, the vanishing of $u$ on $B_1^+$ propagates through the upper half plane $\R^{n+1}_+$: Indeed, by the weak unique continuation property for uniformly elliptic equation and by \eqref{eq:WUCP_syst1} we infer $u_m\equiv 0$ in $\R^{n+1}_+$. Plugging this into the equation for $u_{m-1}$ and again using the weak unique continuation property for solutions to uniformly elliptic equations in the upper half plane implies also $u_{m-1} \equiv 0$ in $\R^{n+1}_+$. Iterating this further leads to $u_j \equiv 0$ in $\R^{n+1}_+$, whence $u \equiv 0$ in $\R^n$. This concludes the argument.
\end{proof}

\subsection{Proof of unique continuation from measurable sets}
In this section we prove Theorem ~\ref{prop:MUCP}  by reducing it to the weak unique continuation property for the generalised Caffarelli-Silvestre extension.

By the representations from Proposition ~\ref{prop:H2gamma1a} (see also Propositions ~\ref{prop:H2gamma}, ~\ref{prop:H2gamma1}) we rewrite the problem of Theorem ~\ref{prop:MUCP} as a system of the form \eqref{eq:WUCP_syst1}, where $f=u$, $m=\floor{\gamma}$ and $b=1-2\gamma+2\floor{\gamma}$. Notice that by Proposition ~\ref{prop:H2gamma1a} and the assumptions in Theorem \ref{prop:MUCP} we  have that   $|L^\gamma f(x')|\leq |q(x')||u_0(x',0)|$ with $q\in L^\infty(\R^n)$ and $u_0(\cdot,0)\in H^{2\gamma}(\R^n)$.
By assumption, $u_0(x',0)=u(x')$ vanishes on a measurable set $E\subset\R^n\times\{0\}$ of density one at $x_0=0$.

Under these assumptions and supposing that \eqref{eq:van_ord} holds, we prove an analogous blow-up result as in Proposition ~\ref{prop:limit}:

\begin{prop}
\label{prop:limit_E}
Let $u_j$  with $j\in \{0,\dots,m\}$ be the functions from above and
let $u_{\sigma,j}$ with $j\in \{0,\dots,m\}$ be the associated rescaled functions defined in \eqref{eq:rescaled}. Suppose further that \eqref{eq:van_ord} holds.
Then, along a subsequence $\{\sigma_l\}_{l \in \N}\subset\{2r_\ell\}_{l \in \N}$ with $\sigma_l \rightarrow 0$ we have $u_{\sigma_l,j} \rightarrow u_{0,j}$ strongly in $L^2(B_4^+, x_{n+1}^b)$, where $u_{0,j}$ is a weak solution to the following elliptic system
\begin{align*}
\begin{split}
\D_b u_{0,m} & = 0 \mbox{ in } B_1^+,\\
\D_b u_{0,j} & = u_{0,j+1} \mbox{ in } B_1^+ \mbox{ for all } j \in\{0,\dots,m-1\},\\
\lim\limits_{x_{n+1}\rightarrow 0} x_{n+1}^b \p_{x_{n+1}} u_{0,j} & = 0 \mbox{ on } B_1' \mbox{ for all } j \in\{0,\dots,m\}.
\end{split}
\end{align*}
Moreover, for all $j\in\{0,\dots,m\}$ we have
\begin{align*}
u_{0,j} = 0 \mbox{ on } B_1',
\end{align*}
and
\begin{align*}
\sum\limits_{j=0}^{m}\|x_{n+1}^{\frac{b}{2}} u_{0,j}\|_{L^2(B_1^+)} = 1.
\end{align*}
\end{prop}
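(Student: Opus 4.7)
The plan is to adapt the blow-up analysis of Proposition \ref{prop:limit} to the MUCP setting, replacing the infinite-order tangential vanishing assumption by the density-one condition on $E$.

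First, since $q\in L^\infty(\R^n)$, Proposition \ref{prop:doubling} applies (via Remark \ref{rmk:doubling_bdd}) without any infinite-order vanishing hypothesis on the tangential traces, which, after rescaling, furnishes the uniform bound
\[
\sum_{j=0}^m \Big(\|x_{n+1}^{\frac{b}{2}} u_{\sigma, j}\|_{L^2(B_4^+)} + \|x_{n+1}^{\frac{b}{2}}\nabla u_{\sigma, j}\|_{L^2(B_4^+)}\Big)\le C
\]
along $\sigma\in\{2r_\ell\}$. Rellich--Kondrachov in the weighted space $H^1(B_4^+, x_{n+1}^b)$, together with the compact trace embedding into $L^2(B_3')$, then produce a subsequence $\{\sigma_l\}\subset\{2r_\ell\}$ and limits $u_{0, j}\in H^1(B_4^+, x_{n+1}^b)$ to which $u_{\sigma_l, j}$ converges strongly in $L^2(B_4^+, x_{n+1}^b)$ (weakly in $H^1$) and strongly in $L^2(B_3')$. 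The identity $\sum_j \|x_{n+1}^{\frac{b}{2}} u_{0, j}\|_{L^2(B_1^+)}=1$ is then inherited in the strong $L^2$ limit from the defining normalization of the $u_{\sigma, j}$.

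Next, I would pass to the limit in the weak form of the rescaled system. The rescaled metric $a(\sigma_l \cdot)\to a(0)=\Id$ uniformly on $B_4^+$ by condition (A3), the bulk right-hand sides remain zero since $f_0=\dots=f_m=0$, and for the Neumann datum of $u_{\sigma_l, m}$ the scaling gives, using $2\gamma=1-b+2m$,
\[
|g_{\sigma_l}(x')|\le \sigma_l^{2\gamma}\|q\|_{L^\infty} |u_{\sigma_l, 0}(x', 0)|,
\]
so that $g_{\sigma_l}\to 0$ in $L^2(B_2')$ by the uniform trace bound on $u_{\sigma_l, 0}(\cdot,0)$. This delivers the limit system \eqref{eq:syst_blow_up} with zero Neumann data on $B_1'$.

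The key step is the boundary vanishing $u_{0, j}=0$ on $B_1'$. For $j=0$, the rescaled trace $u_{\sigma_l, 0}(\cdot, 0)$ vanishes on $\widetilde E_{\sigma_l}:=(\sigma_l^{-1} E)\cap B_1'$, and the density-one hypothesis yields $|B_1'\setminus \widetilde E_{\sigma_l}|\to 0$. For any $\varphi\in C_c^\infty(B_1')$, Cauchy--Schwarz combined with the uniform $L^2(B_1')$ bound on $u_{\sigma_l, 0}(\cdot, 0)$ then gives
\[
\Big|\int_{B_1'} u_{0, 0}\,\varphi\, dx'\Big|=\lim_{l\to\infty}\Big|\int_{B_1'\setminus \widetilde E_{\sigma_l}} u_{\sigma_l, 0}\,\varphi\, dx'\Big|\le \|\varphi\|_{L^\infty}\limsup_l \|u_{\sigma_l, 0}\|_{L^2(B_1')}|B_1'\setminus \widetilde E_{\sigma_l}|^{1/2}=0,
\]
so $u_{0, 0}=0$ on $B_1'$. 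The main obstacle is the case $j\ge 1$: since $L^j u$ need not vanish on $E$, the direct density-one argument fails. Here I would use the identity $u_j(\cdot, 0)=c_{n,\gamma, j} L^j u$ together with the interpolation estimate
\[
\|L^j u\|_{L^2(B_r')}\le C \|u\|_{L^2(B_{4r}')}^{1-\frac{j}{\gamma}} \|u\|_{H^{2\gamma}(\R^n)}^{\frac{j}{\gamma}},
\]
which transfers the density-induced decay of $\|u\|_{L^2(B_r')}$ into polynomial decay of $\|L^j u\|_{L^2(B_r')}$; balancing this rate against the finite lower bound on the normalizer $N_{\sigma_l}$ coming from the finite-order hypothesis \eqref{eq:van_ord} then yields $\|u_{\sigma_l, j}\|_{L^2(B_1')}\to 0$, possibly after refining the subsequence, and hence $u_{0, j}=0$ on $B_1'$.
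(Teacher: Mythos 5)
Your compactness set-up (uniform bounds from Proposition~\ref{prop:doubling} via Remark~\ref{rmk:doubling_bdd}, Rellich, identification of the limit system, inheritance of the normalisation, and the vanishing of $g_{\sigma_l}$) coincides with the paper's. The genuinely different part is your proof that $u_{0,0}=0$ on $B_1'$: whereas the paper routes this through Lemma~\ref{lem:small_MUCP} (a quantitative H\"older--Sobolev trace estimate combined with doubling, giving $\|u_{\sigma,0}\|_{L^2(B_1')}\le\epsilon$ after rescaling), you argue directly that $u_{\sigma_l,0}(\cdot,0)$ vanishes on $\widetilde E_{\sigma_l}$, that $|B_1'\setminus\widetilde E_{\sigma_l}|=\sigma_l^{-n}|B_{\sigma_l}'\setminus E|\to 0$ by density one, and that Cauchy--Schwarz with the uniform $L^2(B_1')$ trace bound then forces $\int_{B_1'}u_{0,0}\varphi=0$. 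This is a valid and more elementary alternative for the purposes of this proposition; what Lemma~\ref{lem:small_MUCP} buys is a quantitative bound that sits more naturally inside the paper's doubling-based proof architecture.

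For $j\ge 1$ your outline has a gap. You have the right tool (the identity $u_j(\cdot,0)=c_{n,\gamma,j}L^ju$ and the Gagliardo--Nirenberg interpolation, exactly as in the paper), but the mechanism you describe is not the right one. The density-one hypothesis does \emph{not} yield any polynomial rate of decay of $\|u\|_{L^2(B_r')}$, so there is no rate to ``balance'' against the finite-order lower bound on the normaliser $N_{\sigma_l}$; and ``possibly after refining the subsequence'' is unavailable, since once $\{\sigma_l\}$ and the limits $u_{0,j}$ are fixed the strong $L^2(B_1')$ convergence determines $\|u_{0,j}\|_{L^2(B_1')}=\lim_l\|u_{\sigma_l,j}\|_{L^2(B_1')}$ along the full chosen subsequence. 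What the argument actually needs is purely qualitative: from your $j=0$ step, $\|u_{\sigma_l,0}\|_{L^2(B_3')}\to 0$; then, provided $\|u_{\sigma_l,0}\|_{H^{2\gamma}(B_3')}$ (or any $H^s$ with $s>2j$) stays uniformly bounded, the rescaled interpolation inequality $\|u_{\sigma_l,j}\|_{L^2(B_1')}\le C\|u_{\sigma_l,0}\|_{L^2(B_3')}^{1-j/\gamma}\|u_{\sigma_l,0}\|_{H^{2\gamma}(B_3')}^{j/\gamma}$ already gives $\|u_{\sigma_l,j}\|_{L^2(B_1')}\to 0$ with no rate information at all. The uniform higher-order Sobolev bound on $u_{\sigma_l,0}$ is the genuine point one must verify (using the system structure and tangential elliptic regularity from the already-established uniform bound on $u_{\sigma_l,m}$ in $H^{\frac{1-b}{2}}(B_3')$); this is also left implicit in the paper, but it is the missing ingredient in your sketch, not a rate comparison with $N_{\sigma_l}$.
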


In order to obtain the properties of the blow-up limit, we deduce a smallness condition for the (not yet blown-up) function $u_0$ in tangential directions on the boundary. By virtue of an interpolation inequality, this will be inherited to all the (not yet blown-up) functions $u_j$ with $j\in\{0,\dots,m\}$ on the boundary.

\begin{lem}
\label{lem:small_MUCP}
Let $u_j$ with $j\in\{0,\dots,m\}$ be as in Proposition ~\ref{prop:limit_E}.
For any $\epsilon>0$, there exists a radius $r_0>0$ such that if $r\in (0,r_0)$
\begin{align*}
\|u_0\|_{L^2(B'_r)}\leq \epsilon \sum_{j=0}^m r^{2j-\frac{1+b}{2}}\|x_{n+1}^{\frac b 2}u_j\|_{L^2(B^+_r)}.
\end{align*}
\end{lem}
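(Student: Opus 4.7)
The plan is to extract the smallness factor $\epsilon$ from the density-one property of $E$ at the origin via a weighted fractional boundary trace and Sobolev embedding, and then to remove the resulting gradient bulk term by means of Caccioppoli's inequality together with the doubling estimate of Proposition~\ref{prop:doubling} in its uniform-in-$u_0$ form from Remark~\ref{rmk:doubling_bdd}.

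First I will apply the weighted trace inclusion $H^1(B_r^+, x_{n+1}^b) \hookrightarrow H^{(1-b)/2}(B_r')$ composed with the Sobolev embedding $H^{(1-b)/2}(B_r') \hookrightarrow L^p(B_r')$ for the exponent $p = \tfrac{2n}{n-1+b} > 2$. A standard scaling argument then produces
\[
\|u_0\|_{L^p(B_r')} \leq C\bigl(r^{-1}\|x_{n+1}^{b/2} u_0\|_{L^2(B_r^+)} + \|x_{n+1}^{b/2}\nabla u_0\|_{L^2(B_r^+)}\bigr).
\]
Since $u_0|_E = 0$, Hölder's inequality on $B_r' \setminus E$ with exponents $p/2$ and $p/(p-2)$ gives $\|u_0\|_{L^2(B_r')} \leq |B_r' \setminus E|^{(1-b)/(2n)} \|u_0\|_{L^p(B_r')}$, where the exponent comes from $\tfrac{1}{2}-\tfrac{1}{p} = \tfrac{1-b}{2n}$. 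The density-one hypothesis yields $|B_r'\setminus E|/r^n \to 0$ as $r \to 0$, hence $|B_r'\setminus E|^{(1-b)/(2n)} r^{-(1-b)/2} \to 0$. Rewriting the prefactors as $r^{-(1+b)/2}$ and $r^{(1-b)/2}$, for any $\tilde\epsilon > 0$ there is $r_0 > 0$ such that for $r \in (0, r_0)$
\[
\|u_0\|_{L^2(B_r')} \leq \tilde\epsilon\bigl(r^{-(1+b)/2}\|x_{n+1}^{b/2}u_0\|_{L^2(B_r^+)} + r^{(1-b)/2}\|x_{n+1}^{b/2}\nabla u_0\|_{L^2(B_r^+)}\bigr).
\]

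I will then remove the gradient term by Caccioppoli (Lemma~\ref{lem:Cacciop}, case $J=0$) applied to $u_0$, which satisfies $L_b u_0 = u_1$ with vanishing weighted conormal data; this yields $\|x_{n+1}^{b/2}\nabla u_0\|_{L^2(B_r^+)} \leq C(r^{-1}\|x_{n+1}^{b/2} u_0\|_{L^2(B_{2r}^+)} + r\|x_{n+1}^{b/2} u_1\|_{L^2(B_{2r}^+)})$, which has the target scaling $r^{-(1+b)/2}$ for $u_0$ and $r^{2-(1+b)/2}$ for $u_1$. The resulting bulk norms live on $B_{2r}^+$, so a final application of the doubling estimate of Proposition~\ref{prop:doubling}---which in the $L^\infty$-potential setting of Theorem~\ref{prop:MUCP} holds uniformly in $u_0$ for all small $r$ by Remark~\ref{rmk:doubling_bdd}---transfers these to norms on $B_r^+$; absorbing the (universal) doubling constant into $\tilde\epsilon$ and bounding the sum by the full sum over $j \in \{0, \ldots, m\}$ yields the claim.

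The main obstacle is the requirement that the smallness parameter $\epsilon$ be chosen before $u_0$ is known, which in turn forces both the threshold $r_0$ and the doubling constant to be uniform in $u_0$. This uniformity is exactly what Remark~\ref{rmk:doubling_bdd} provides under the $L^\infty$ hypothesis on $q$; without it, doubling would be available only on a dyadic sequence of radii depending on $u_0$, which would be insufficient to produce the smallness needed at every scale in the subsequent blow-up argument of Proposition~\ref{prop:limit_E}.
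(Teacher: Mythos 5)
Your overall strategy --- weighted trace into $H^{(1-b)/2}(B_r')$, Sobolev embedding with the (correct) exponent $p=\tfrac{2n}{n-1+b}$, H\"older on $B_r'\setminus E$ to harvest smallness from the density-one hypothesis, and the uniform doubling estimate of Proposition~\ref{prop:doubling} via Remark~\ref{rmk:doubling_bdd} --- is exactly the paper's. The Caccioppoli detour you insert is where a gap appears. You assert that $u_0$ ``satisfies $L_b u_0 = u_1$ with vanishing weighted conormal data,'' but this is only true when $m\geq 1$. When $m=0$ (i.e.\ $\gamma\in(0,1)$, which Theorem~\ref{prop:MUCP} covers), one has $u_0 = u_m$, the bulk right-hand side in Lemma~\ref{lem:Cacciop} is $f=0$, and the conormal data is $\lim_{x_{n+1}\to 0}x_{n+1}^b\partial_{n+1}u_0 = g$ with $|g|\leq\|q\|_{L^\infty}|u_0|$. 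The Caccioppoli inequality then produces an additional boundary term $\bigl(\int_{B_r'}|g||u_0|\,dx'\bigr)^{1/2}\lesssim\|u_0\|_{L^2(B_r')}$ on the right-hand side, which your argument does not mention. It can be absorbed (after the $\tilde\epsilon\, r^{(1-b)/2}$ prefactor it carries a harmless factor $r^{(1-b)/2}\to 0$), but that absorption needs to be carried out explicitly; as written the $m=0$ case fails.

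The paper avoids the issue entirely by not invoking Caccioppoli at this stage: the left-hand side of the doubling estimate in Proposition~\ref{prop:doubling} already controls the gradient sum $\sum_{j} r^{2j+1}\|x_{n+1}^{b/2}\nabla u_j\|_{L^2(B_{2r}^+)}$, so after the trace/Sobolev step one simply enlarges the right-hand side by the nonnegative terms with $j\geq 1$ and applies doubling directly, obtaining $\|u_0\|_{L^p(B_r')}\leq C\sum_j r^{2j-1}\|x_{n+1}^{b/2}u_j\|_{L^2(B_r^+)}$ in one stroke. The boundary-term bookkeeping for the bottom of the chain is already internalized in the proof of the doubling estimate (Step 3 there); the separate Caccioppoli step you add merely duplicates that work while, as written, dropping a term.
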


\begin{proof}
Since $x_0=0$ is a point of density one in $E\cap B'_4$, given $\delta>0$, there exists  a radius $r_\delta>0$ such that if $r\in(0, r_\delta)$
\begin{align*}
|B'_r\cap E^c|\leq \delta |B'_r|.
\end{align*}

On the other hand, using H\"older's inequality ($n>1-b$)
\begin{align}
\label{eq:small_thin}
\|u_0\|_{L^2(B'_r)}&=\|u_0\|_{L^2(B'_r\cap E^c)}
\leq |B'_r\cap E^c|^{\frac{1-b}{2n}}\|u_0\|_{L^\frac{2n}{n+1-b}(B'_r)}.
\end{align}

By Sobolev and trace inequalities
\begin{align*}
\|u_0\|_{L^\frac{2n}{n+1-b}(B'_r)}
&\leq C\|u_0\|_{H^{\frac{1-b}{2}}(B'_r)}\\
&\leq C (r^{-1}\|x_{n+1}^{\frac b 2} u_0\|_{L^2(B^+_{2r})}+\|x_{n+1}^{\frac b 2} \nabla u_0\|_{L^2(B^+_{2r})})\\
&\leq \sum_{j=0}^m r^{2j-1} \|x_{n+1}^{\frac b 2}u_j\|_{L^2(B^+_{2r})}+\sum_{j=0}^m r^{2j} \|x_{n+1}^{\frac b 2}\nabla u_j\|_{L^2(B^+_{2r})}.
\end{align*}
Now we use the estimate from Proposition ~\ref{prop:doubling}, where according with Remark ~\ref{rmk:doubling_bdd} no assumptions on the vanishing order of $u_j$ are necessary and it holds for $r\in(0,r_0)$ with $r_0$ independent of $u_0$:
\begin{align*}
\|u_0\|_{L^\frac{2n}{n+1-b}(B'_r)}
&\leq C \sum_{j=0}^m r^{2j-1} \|x_{n+1}^{\frac b 2}u_j\|_{L^2(B^+_{r})}.
\end{align*}
Therefore, by combining this with \eqref{eq:small_thin} and recalling the definition of $\delta>0$ from above, we obtain
\begin{align*}
\|u_0\|_{L^2(B'_r)}
\leq C\delta^{\frac{1-b}{2n}}\sum_{j=0}^m r^{2j-\frac{1+b}{2}} \|x_{n+1}^{\frac b 2}u_j\|_{L^2(B^+_{r})}.
\end{align*}
Choosing $\delta$ such that $C\delta^{\frac{1-b}{2n}}=\epsilon$, the result holds.

\end{proof}

\begin{proof}[Proof of Proposition ~\ref{prop:limit_E}]
The proof of Proposition \ref{prop:limit_E} follows along the same lines as the one of Proposition ~\ref{prop:limit} until the moment of proving $u_{0,j}|_{B'_1}=0$.
Here we use Lemma \ref{lem:small_MUCP} to obtain the same result:
Indeed, after rescaling it implies
\begin{align*}
\|u_{\sigma,0}\|_{L^2(B'_1)}\leq \epsilon \sum_{j=0}^m \|x_{n+1}^{\frac b 2}u_{\sigma,j}\|_{L^2(B^+_1)}=\epsilon.
\end{align*}
Therefore, in the limit $\sigma_\ell\to 0$,
\begin{align*}
\|u_{0,0}\|_{L^2(B'_1)}\leq \epsilon.
\end{align*}
Since this holds for any $\epsilon>0$, in particular for any sequence $\epsilon_k\to 0$, we infer $u_{0,0}|_{B'_1}=0$.

The proof of $u_{0,j}|_{B'_1}=0$ for $j=1,\dots,m$ relies on an interpolation result together with the previous bound: Considering a smooth cut-off function $\eta$ with $\eta = 1$ in $B_{r}'$ and $\supp(\eta) \subset B_{4r}'$, we obtain
\begin{align*}
\|u_j\|_{L^2(B'_r)}&\leq \|L^j u_0\|_{L^2(B_{r}')}
\leq C\|u_0\|_{H^{2j}(B_{r}')}
\leq C \|u_0\eta \|_{H^{2j}(\R^n)}\\
&\leq C \|u_0\eta\|_{L^2(\R^n)}^{1-\frac{j}{\gamma}}\|u_0\eta\|_{H^{2\gamma}(\R^n)}^{\frac{j}{\gamma}}
\leq C\|u_0\|_{L^2(B'_{4r})}^{1-\frac{j}{\gamma}} \|u_0\|_{H^{2\gamma}(B_{4r}')}^{\frac{j}{\gamma}}.
\end{align*}
By rescaling, we then also infer 
\begin{align*}
\|u_{\sigma,j}\|_{L^2(B'_r)}
\leq C\|u_{\sigma,0}\|_{L^2(B'_{4r})}^{1-\frac{j}{\gamma}} \|u_{\sigma,0}\|_{H^{2\gamma}(B_{4r}')}^{\frac{j}{\gamma}}.
\end{align*}

Thus, the smallness of $u_0$ and $u_{\sigma,0}$ also entails the smallness of $u_j$ and $u_{\sigma,j}$ on the boundary. The remainder of the argument follows analogously as in the proof of Proposition ~\ref{prop:limit}.
\end{proof}

\begin{proof}[Proof of Theorem ~\ref{prop:MUCP}]
Assuming that $\eqref{eq:van_ord}$ holds, the representations from Proposition ~\ref{prop:H2gamma1a} lead to Proposition ~\ref{prop:limit_E}  which reduces the problem to the weak unique continuation statement from Proposition ~\ref{prop:WUCP} from which we infer the desired result. If \eqref{eq:van_ord} fails, we directly apply Proposition \ref{prop:SUCP_inf_order}.
\end{proof}

\subsection{Applications of the unique continuation results}
We turn to the proof of the antilocality result. As above we emphasise that in this case, we do not assume the validity of an equation on the whole space $\R^n$. Nevertheless the antilocality of the fractional Laplacian entails the claimed strong rigidity property.

\begin{proof}[Proof of Proposition ~\ref{prop:antiloc}]
By Proposition ~\ref{prop:H2gamma1a} we  can consider the extension $u$ and  the functions $u_j=L_b^j u$ for $j\in\{0,\dots,\floor{\gamma}\}$, which solve a system of the form \eqref{eq:WUCP_syst1}.
Thus, if $f=0$ and $L^\gamma f=0$ on $B_1'$, \eqref{eq:WUCP_syst1} reduces to the setting in Section ~\ref{sec:WUCP}, whence we conclude that $u_j=0$ on $B_1^+$. 
 Since the nonlocal equation is assumed to hold in $\R^n$, the vanishing of $u_j$ can be propagated to the whole upper half space $\R^{n+1}_+$, whence we conclude that $u\equiv 0$ in $\R^n$.
\end{proof}

With the global unique continuation result of Proposition ~\ref{prop:antiloc} in hand, the proof of Proposition ~\ref{prop:Runge} follows by a duality argument and the Fredholm property of the fractional Schrödinger equation (see \cite{G15}). In particular, this is of similar flavour as a number of approximation properties which had been used to treat inverse problems for nonlocal equation in \cite{GSU16,GLX17,GRSU18}. 

We consider the fractional Schrödinger equation

\begin{align}
\label{eq:weak_form}
\begin{split}
L^{\gamma} u + qu & = 0 \mbox{ in } \Omega,\\
u & = f \mbox{ in } \R^n \setminus \Omega.
\end{split}
\end{align}
where $L$ is as in Proposition ~\ref{prop:Runge}.

Considering the bilinear form
\begin{align*}
B_{q}(w,v):=(L^{\gamma/2}w, L^{\gamma/2} v)_{\R^n} + (q w,v)_{\Omega},
\end{align*}
it is possible to show the well-posedness of the problem, provided zero is not a Dirichlet eigenvalue of the problem. This follows similarly as explained  for instance in \cite{GSU16}. 
In this setting, we define the associated Poisson operator as
\begin{align}
\label{eq:Poiss}
P_q : H^{\gamma}(\R^n \setminus \overline{\Omega})
&\rightarrow H^{\gamma}(\Omega),\ f \mapsto u_f,
\end{align}
where $u_f $ is a weak solution to \eqref{eq:weak_form}.
With this preparation, we address the proof of Proposition ~\ref{prop:Runge}:

\begin{proof}[Proof of Proposition ~\ref{prop:Runge}]
It suffices to prove that the set  
\begin{align*}
\mathcal{R}:=\{u = P_q f, \ f \in C^{\infty}_0(W)\}
\end{align*}
is dense in $L^2(\Omega)$, where $W\subset\tilde{\Omega}\setminus \Omega$ is any open subset. We can suppose without loss of generality that $B_1'\subset W$ by assupmtion. As in \cite{GSU16} we rely on the Hahn-Banach theorem: Assuming that $v\in L^2(\Omega)$ is such that $(P_q f, v)_{\Omega} = 0$ for all $f \in C_0^{\infty}(W)$, it suffices to show that $v=0$. In order to infer this, we note that by the well-posedness results for the inhomogeneous fractional Schrödinger equation, we may define $w$ to be a solution to 
\begin{align*}
(L^{\gamma} + q) w &= v \mbox{ in } \Omega,\\
w & = 0 \mbox{ in } \R^n \setminus \overline{\Omega}.
\end{align*}
Then, as in \cite{GSU16},
\begin{align*}
0 &=(P_q f, v)_{\Omega} 
= (P_q f, L^{\gamma}w + qw)_{\Omega}
= -(f,L^{\gamma} w)_{\R^n}. 
\end{align*}
As a consequence, $L^{\gamma} w = 0 = w$ in $W$. Thus, by Proposition \ref{prop:antiloc}, the function $w$ vanishes identically in $\R^n$, whence also $v\equiv 0$. This concludes the argument.
\end{proof}

\appendix
\section{The Higher Order Fractional Laplacian and Degenerate Elliptic Systems}
\label{sec:appendix}

\label{sec:frac_Lapl}

In this section, in order to keep our presentation self-contained, we connect the previous discussion on systems with certain boundary conditions to the properties of the higher order fractional Laplacian. Here we mainly recall several known results from the literature and rely heavily on the observations from \cite{Y13, CdMG11,RonSti16} but also refer to \cite{StingaTorrea10, GS18, KM18} and the references therein.

We split the section into two parts: First, we derive the representation of the constant coefficient higher order fractional Laplacian operators through a generalised Caffarelli-Silvestre extension. Next, we deduce analogous results for operators with non-constant coefficients. 

\subsection{The constant coefficient operator -- characterisation through a system of degenerate elliptic equations}
\label{sec:const_coef}

The starting point of our discussion is the definition of the fractional Laplacian as a Fourier multiplier:
\begin{align*}
(-\D')^{\gamma} u (x') = \F^{-1}(|\xi|^{2\gamma} \F u)(x'), \ \gamma >0,
\end{align*}
where $\F$ denotes the Fourier transform. 
Since we seek to study the unique continuation property of the higher order fractional Laplacian by techniques which are available for \emph{local} (possibly weighted) equations, we are particularly interested in Caffarelli-Silvestre type extension properties for the higher order fractional Laplacian. These exist in different generalities, we only recall two of these and refer to the literature for more general results. As we aim at applying these characterisations of the (higher order) fractional Laplacian for our study of the unique continuation property, we limit ourselves here to showing that starting from the fractional Laplacian of a function $f$, it is possible to find a suitable and sufficiently regular extension $u$ of $f$ which obeys a corresponding equation/ a corresponding system of equations. We however do not address the full equivalence (in that we do not show that any solution to the system at hand is related to the fractional Laplacian of a suitable function). For this we refer to the literature cited above.

We begin by recalling that also the higher order fractional Laplacian can be realised as the solution to a degenerate elliptic, second order boundary value problem \cite{CdMG11, Y13}:

For $\gamma \in \R_+$ we consider the equation
\begin{align}
\label{eq:higher_frac}
\begin{split}
\left( \D + \frac{1-2\gamma}{x_{n+1}} \p_{x_{n+1}} \right) u & = 0 \mbox{ in } \R^{n+1}_+,\\
u & = f \mbox{ on } \R^{n}.
\end{split}
\end{align}
Here we are interested in solutions
\begin{itemize} 
\item[(i)] which (by elliptic regularity) are classical solutions in $\R^n \times (0,\infty)$, 
\item[(ii)] which attain the boundary data $f\in H^{\mu}(\R^n)$ with $\mu \in \R$ in an $H^{\mu}(\R^n)$ sense as $x_{n+1}\to 0$, 
\item[(iii)] and which have some decay at infinity in the sense that $u \in \dot{H}^{1}(\R^{n+1}_+, x_{n+1}^{1-2(\gamma-\floor{\gamma})})$, where for $t\in \R$ we define $\floor{t}:=\min\{k\in \N: \ k\leq t\}$.
\end{itemize}
Solutions to degenerate elliptic equations of this form have been investigated in the literature, even in the context of fully nonlinear equations \cite{LW98}. Working with extensions of a problem from $\R^n$ into $\R^{n+1}_+$, in this section, we use the notational convention that
\begin{align*}
x=(x',x_{n+1})\in \R^{n+1}_+, \ x'\in \R^n, \ x_{n+1}>0.
\end{align*}

As we view the strong unique continuation property for the fractional Laplacian as a strong boundary unique continuation property of the associated degenerate extension problem, it is one of our main goals to identify the associated boundary values for the generalised Caffarelli-Silvestre extension. In particular, we aim at showing that, as in the original Caffarelli-Silvestre characterisation of the fractional Laplacian, the formulation \eqref{eq:higher_frac} also allows one to compute the fractional Laplacian $(-\D')^{\gamma} f(x')$ as an ``iterated Neumann" map from the knowledge of its generalised Caffarelli-Silvestre extension $u(x',x_{n+1})$. 

\begin{lem}
\label{lem:frac_higher}
Let $\gamma>0$, $\mu \in \R $
and assume that $f\in H^{\mu}(\R^n)$. Let $\F_{x'}$ denote the tangential Fourier transform.
Then there exists an extension operator 
\begin{align*}
E_{\gamma}:  H^{\mu}(\R^n) & \rightarrow C^{\infty}_{loc}(\R^n \times (0,\infty)),\\
 f & \mapsto E_{\gamma}f = u = c_{\gamma,n}\F^{-1}_{x'}( \F_{x'}(f)(\xi) \phi_{\gamma}(|\xi| x_{n+1}))(x) = c_{\gamma,n} f\ast G_{\gamma}(x), \ c_{\gamma,n}\neq 0,
\end{align*}
such that $E_{\gamma}f$ is a solution to 
\begin{align*}
\D u + \frac{1-2\gamma}{x_{n+1}} \p_{x_{n+1}}u & = 0 \mbox{ in } \R^{n+1}_+,
\end{align*}
and
\begin{align}
\label{eq:boundary_data}
E_{\gamma} f(x',x_{n+1}) \rightarrow f(x') \mbox{ in } H^{\mu}(\R^n) \mbox{ as } x_{n+1}\rightarrow 0.
\end{align}
Here $\phi_{\gamma}(t) = t^{\gamma}K_{\gamma}(t)$ and $K_{\gamma}(t)$ denotes a modified Bessel function of the second kind.

Also, there exists a constant $\tilde{c}_{\gamma,n} \neq 0$ such that
\begin{align}
\label{eq:frac_higher}
\tilde{c}_{\gamma,n}  x_{n+1}^{1-2\gamma + 2\floor{\gamma}} \p_{x_{n+1}} ((x_{n+1}^{-1}\p_{x_{n+1}})^{\floor{\gamma}} u(x',x_{n+1})) \rightarrow (-\D')^{\gamma} f(x') \mbox{ in } H^{\mu-2\gamma}(\R^n) \mbox{ as } x_{n+1}\rightarrow 0.
\end{align}
\end{lem}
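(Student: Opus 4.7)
The plan is to build $u=E_\gamma f$ explicitly via the tangential Fourier transform. Applying $\F_{x'}$ to the PDE yields, for each fixed $\xi\in\R^n$, the ODE $\p_{x_{n+1}}^2 \hat u + \tfrac{1-2\gamma}{x_{n+1}}\p_{x_{n+1}}\hat u - |\xi|^2 \hat u = 0$. Setting $t:=|\xi| x_{n+1}$ and writing $\hat u(\xi,x_{n+1})=\hat f(\xi)\,t^\gamma W(t)$, a routine computation reduces this to the modified Bessel equation of order $\gamma$, namely $t^2 W'' + tW' - (t^2+\gamma^2) W = 0$. Among the two linearly independent solutions, decay as $t\to\infty$ (required by assumption (iii)) selects $W=K_\gamma$ up to a multiplicative constant, giving the extension formula in the statement with $\phi_\gamma(t)=t^\gamma K_\gamma(t)$. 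Classical smoothness of $u$ on $\R^n\times(0,\infty)$ is then immediate from $\phi_\gamma(t)\lesssim \sqrt{t}\,e^{-t}$ as $t\to\infty$, which makes $\hat f(\xi)\phi_\gamma(|\xi|x_{n+1})$ a Schwartz multiplier for each $x_{n+1}>0$, even when $f$ is merely a distribution.

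For the boundary trace \eqref{eq:boundary_data} I would rely on the Frobenius expansion
\begin{align*}
\phi_\gamma(t)\;=\;\frac{\pi}{2\sin(\pi\gamma)}\bigl(t^\gamma I_{-\gamma}(t)-t^\gamma I_\gamma(t)\bigr)\;=\;\sum_{k\geq 0} a_k t^{2k}\;+\;t^{2\gamma}\sum_{k\geq 0} b_k t^{2k},
\end{align*}
valid for $\gamma\in\R_+\setminus\N$, with $a_0=2^{\gamma-1}\Gamma(\gamma)$ and $b_0\neq 0$. Normalising $c_{\gamma,n}$ so that $c_{\gamma,n} a_0 = 1$, the Fourier symbol $m(\xi,x_{n+1}):=c_{\gamma,n}\phi_\gamma(|\xi| x_{n+1})$ is uniformly bounded on $\R^n\times[0,\infty)$ and tends pointwise to $1$ as $x_{n+1}\to 0$. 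Convergence in $H^\mu(\R^n)$ is then obtained by dominated convergence in Fourier space applied to $(1+|\xi|^2)^{\mu/2}(m(\xi,x_{n+1})-1)\hat f(\xi)$.

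For the iterated Neumann identity \eqref{eq:frac_higher}, the decisive arithmetic fact is $(x_{n+1}^{-1}\p_{x_{n+1}})(|\xi|x_{n+1})^\alpha=\alpha |\xi|^2(|\xi|x_{n+1})^{\alpha-2}$. Acting $\floor{\gamma}$ times on the smooth part $\sum a_k t^{2k}$ of $\phi_\gamma$ annihilates every contribution with $k<\floor{\gamma}$, leaves the $k=\floor{\gamma}$ term as a $\xi$-dependent constant (subsequently killed by the outer $\p_{x_{n+1}}$), and turns each $k>\floor{\gamma}$ term into a constant multiple of $|\xi|^{2k}x_{n+1}^{2k-2\gamma}$, which vanishes as $x_{n+1}\to 0$ since $k\geq\floor{\gamma}+1>\gamma$. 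The same rule applied to the singular part $t^{2\gamma}\sum b_k t^{2k}$ produces, after multiplication by $x_{n+1}^{1-2\gamma+2\floor{\gamma}}$ and one further $\p_{x_{n+1}}$, contributions proportional to $|\xi|^{2\gamma+2k}x_{n+1}^{2k}$, so only the $k=0$ term survives in the limit. This yields $C_\gamma |\xi|^{2\gamma}\hat f(\xi)$, where $C_\gamma$ is the non-zero constant $b_0\cdot\prod_{j=0}^{\floor{\gamma}}(2\gamma-2j)$; choosing $\tilde c_{\gamma,n}:=C_\gamma^{-1}$ and invoking dominated convergence on the Fourier side, as in the trace step, delivers the stated $H^{\mu-2\gamma}$ convergence.

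The only genuine difficulty in the above outline is converting the pointwise-in-$\xi$ asymptotics into norm convergence in weighted $L^2$ spaces for distributional data $f$. I would handle this by splitting $\phi_\gamma = P + t^{2\gamma} Q + R$, where $P$ is the even Taylor polynomial of degree $2\floor{\gamma}$, $Q$ is smooth near $0$ with $Q(0)=b_0$, and $R(t)=O(t^{2\floor{\gamma}+2})$. The composite operator $x_{n+1}^{1-2\gamma+2\floor{\gamma}}\p_{x_{n+1}}(x_{n+1}^{-1}\p_{x_{n+1}})^{\floor{\gamma}}$ annihilates $P$, extracts the main $C_\gamma|\xi|^{2\gamma}$ term from $t^{2\gamma}Q$ plus an $O(x_{n+1}^{2})$ error, and turns $R$ into a symbol dominated by $(1+|\xi|^2)^{\gamma}$ with amplitude $o(1)$ as $x_{n+1}\to 0$. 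This is precisely the point where $\gamma\notin\N$ is essential: it both enforces the Frobenius separation $b_0\neq 0$ between the two series and guarantees that $2\gamma-2j\neq 0$ for $j=0,\dots,\floor{\gamma}$, keeping $C_\gamma$ nonzero.
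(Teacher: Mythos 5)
The paper and your proposal share the same first two-thirds of the argument: Fourier transform in $x'$, reduce to a modified Bessel ODE, select the decaying solution $K_\gamma$, and deduce the extension formula with $\phi_\gamma(t)=t^\gamma K_\gamma(t)$. The two accounts diverge in how the iterated Neumann limit \eqref{eq:frac_higher} is extracted. The paper uses the Bessel recurrence $\frac{d}{dt}\bigl(t^s K_s(t)\bigr)=c_s\,t^s K_{s-1}(t)$ together with $K_{-s}=K_s$ to obtain, in one stroke, the \emph{closed-form} multiplier
\begin{align*}
\F_{x'}\Bigl(x_{n+1}^{1-2\gamma+2\floor{\gamma}}\p_{x_{n+1}}\bigl((x_{n+1}^{-1}\p_{x_{n+1}})^{\floor{\gamma}}u\bigr)\Bigr)(\xi,x_{n+1})
= C\,|\xi|^{2\gamma}\hat f(\xi)\,\psi\bigl(|\xi|x_{n+1}\bigr),
\end{align*}
with $\psi(z)=z^{1-(\gamma-\floor{\gamma})}K_{1-(\gamma-\floor{\gamma})}(z)$ a function that is globally bounded on $[0,\infty)$, has a nonzero finite limit at $0$, and decays exponentially at infinity. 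The $H^{\mu-2\gamma}$ convergence then follows from a split of the Fourier integral at the scale $|\xi|\sim\epsilon/x_{n+1}$. You instead expand $K_\gamma$ via the Frobenius series $K_\gamma=\frac{\pi}{2\sin\pi\gamma}(I_{-\gamma}-I_\gamma)$ and track which powers of $t$ survive the composite operator. This is a perfectly valid computation of the pointwise (small-$t$) asymptotics, but it comes at two costs relative to the paper's route. First, the Frobenius representation fails for integer $\gamma$ (the $I_{\pm\gamma}$ formula degenerates and logarithms appear), so your argument establishes the lemma only for $\gamma\in\R_+\setminus\N$; the paper's recurrence argument works verbatim for all $\gamma>0$, which is what the statement asserts.

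Second, and more substantively: your final step reduces the $H^{\mu-2\gamma}$ convergence to a dominated-convergence argument, for which you need a bound on the Neumann multiplier that is \emph{uniform in $|\xi|x_{n+1}\in(0,\infty)$}, not just for small $|\xi|x_{n+1}$. Your decomposition $\phi_\gamma=P+t^{2\gamma}Q+R$ with $R(t)=O(t^{2\floor{\gamma}+2})$ is a small-$t$ statement; since $P$ and $t^{2\gamma}Q$ are power laws while $\phi_\gamma$ decays exponentially, the remainder $R$ in fact grows polynomially at infinity, so the claim that the composite operator turns $R$ into a uniformly bounded symbol of size $o(1)$ does not follow from the decomposition alone. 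To close this you would need to invoke exactly the global information the paper encodes in the identity with $\psi$ (exponential decay of $K$ at infinity plus a split of the $\xi$-integral at scale $x_{n+1}^{-1}$), at which point you have essentially rederived the paper's argument. In short: your outline is on the right track and the arithmetic tracking of surviving powers is correct, but the uniform-boundedness step needs to be made explicit, and the recurrence-relation route is both simpler and valid for a larger range of $\gamma$.
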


\begin{proof}
We first derive the desired representation of the extension operator:
To this end, we solve \eqref{eq:higher_frac} by means of a tangential Fourier transform. Fourier transforming in the tangential directions, one obtains for the partial Fourier transform $\hat{u}(\xi,x_{n+1}):= \F_{x'}u(\xi, x_{n+1})$ the following ODE
\begin{align*}
\hat{u}'' + \frac{1-2\gamma}{ x_{n+1}} \hat{u}' - |\xi|^2 \hat{u} & = 0 \mbox{ in } (0,\infty),\\
\hat{u} & = \hat{f} \mbox{ on } \{x_{n+1}=0\}.
\end{align*}
We rewrite $\hat{u}(\xi,x_{n+1}) = v(\xi,|\xi|x_{n+1})$ and deduce a similar ODE for this function, but where we can scale out the $|\xi|$ contribution:
\begin{align*}
v'' + \frac{1-2\gamma}{ x_{n+1}} v' - v & = 0 \mbox{ in } (0,\infty),\\
v & = \hat{f} \mbox{ on } \{x_{n+1}=0\}.
\end{align*}
Further, setting $v(\xi,x_{n+1})=x_{n+1}^{\gamma} g(\xi, x_{n+1})$ for some function $g$, we are lead to a modified Bessel equation for $g$ (as a function of $x_{n+1}$):
\begin{align*}
x_{n+1}^2 g'' + x_{n+1} g' - (\gamma^2 + x_{n+1}^2) g & = 0 \mbox{ in } (0,\infty),
\end{align*}
with corresponding initial conditions. Since we are looking for a function with decay at infinity, by the asymptotics of modified Bessel functions (see \cite{NIST}) we infer that $g(x_{n+1},\xi) = C(\xi) K_{\gamma}(x_{n+1})$, where $K_{\gamma}(t)$ denotes a modified Bessel function of the second kind.

Returning to our original variables and using the asymptotics of $K_{\gamma}(t)$ as $t \rightarrow 0$, we thus obtain that $\hat{u}(\xi, x_{n+1}) = c_{\gamma,n} \hat{f}(\xi) \phi_{\gamma}(|\xi| x_{n+1})$, where $\phi_{\gamma}(t) = t^{\gamma} K_{\gamma}(t)$ and $c_{\gamma,n}\neq 0$.

By the regularity of $\phi_{\gamma}(t)$ for $t>0$ the function $u(x):= \F^{-1}_{x'}(c_{\gamma,n} \hat{f}(\xi) \phi_{\gamma}(|\xi| x_{n+1}))$ is $C^{\infty}_{loc}(\R^n \times (0,\infty))$. 

In order to observe the $H^{\mu}(\R^n)$ convergence from \eqref{eq:boundary_data} we note that 
\begin{align*}
\|E_{\gamma}f(\cdot, x_{n+1}) -f(\cdot)\|_{H^{\mu}(\R^n)}
&= \|(1+|\xi|^2)^{\frac{\mu}{2}} \left( c_{\gamma,n}\F_{x'}(f)(\xi)\phi_{\gamma}(|\xi|x_{n+1}) - \F_{x'}(f)(\xi) \right)\|_{L^2(\R^n)}\\
&\leq  \|(1+|\xi|^2)^{\frac{\mu}{2}}  \F_{x'}(f)(\xi)\left(c_{\gamma,n}\phi_{\gamma}(|\xi|x_{n+1})- 1 \right)\|_{L^2(\{|\xi|<\epsilon x_{n+1}^{-1}\})}\\
& \quad + \|(1+|\xi|^2)^{\frac{\mu}{2}}\F_{x'}(f)(\xi)( c_{\gamma,n} \phi_{\gamma}(|\xi|x_{n+1})-1)\|_{L^2(\{|\xi|\geq \epsilon x_{n+1}^{-1}\})}\\
& \leq \|(1+|\xi|^2)^{\frac{\mu}{2}}\F_{x'}(f)(\xi)\|_{L^2(\R^n)} \sup\limits_{|z|\leq \epsilon} |c_{\gamma,n}\phi_{\gamma}(z)-1|\\
& \quad + \|(1+|\xi|^2)^{\frac{\mu}{2}} \F_{x'}(f)(\xi)\|_{L^2(\{|\xi|\geq \epsilon x_{n+1}^{-1}\})} \sup\limits_{|z|> \epsilon} |c_{\gamma,n}\phi_{\gamma}(z)-1|.
\end{align*}
Using that $c_{n,\gamma}\phi_{\gamma}(\epsilon) \rightarrow 1$ as $\epsilon \rightarrow 0$, the boundedness of $\phi_{\gamma}(t)$ as $t\rightarrow \infty$ and the fact that for $f\in H^{\mu}(\R^n)$, $\|(1+|\xi|^2)^{\frac{\mu}{2}} \F(f)(\xi)\|_{L^2(\{|\xi|\geq \epsilon x_{n+1}^{-1}\})} \rightarrow 0$ as $x_{n+1}\rightarrow 0$, this implies the desired limiting behaviour.

Finally, in order to obtain \eqref{eq:frac_higher}, we now use the asymptotics and recurrence relations of the modified Bessel functions \cite{NIST}. We have
\begin{align}
\begin{split}
\label{eq:Bessel}
\frac{d}{dt}(t^{s}K_s(t))& = c_s t^{s} K_{s-1}(t) ,\\
K_{s}(t) &= c_s t^{-s} \mbox{ as } t \rightarrow 0, \ s > 0,\
K_{-s}(t) = K_{s}(t) \mbox{ for } s \geq 0.
\end{split}
\end{align}
Recalling the expression for $u$ (or rather $\hat{u}$), we thus obtain
\begin{align*}
x_{n+1}^{-1}\p_{x_{n+1}} \hat{u}(\xi,x_{n+1})
& = c_{\gamma,n} x_{n+1}^{-1} \p_{x_{n+1}} (\hat{f}(\xi) \phi_{\gamma}(|\xi|x_{n+1}))
 = c_{\gamma,n} x_{n+1}^{-1} \hat{f}(\xi)|\xi| \phi'_\gamma|_{|\xi| x_{n+1}}\\
&\stackrel{\eqref{eq:Bessel}}{=} \tilde{c}_{\gamma,n} |\xi|^2 \hat{f}(\xi) \phi_{\gamma-1}(|\xi|x_{n+1}).
\end{align*}
Abbreviating $N_{\gamma}(f)(x', x_{n+1}):= x_{n+1}^{1-2\gamma + 2\floor{\gamma}} \p_{x_{n+1}} ((x_{n+1}^{-1}\p_{x_{n+1}})^{\floor{\gamma}} u(x',x_{n+1}))$, we thus infer
\begin{align*}
\F_{x'}N(f)(\xi,x_{n+1})
& = C_{\gamma,n} |\xi|^{2 \floor{\gamma}} \hat{f}(\xi)|\xi| K_{\gamma-\floor{\gamma}-1}(|\xi|x_{n+1})x_{n+1}^{1-2\gamma+2\floor{\gamma}}(|\xi| x_{n+1})^{\gamma-\floor{\gamma}}\\
& = |\xi|^{2\gamma} \hat{f}(\xi) \left( C_{\gamma,n} K_{1-\gamma-\floor{\gamma}}(|\xi| x_{n+1}) (|\xi| x_{n+1})^{1-\gamma+\floor{\gamma}} \right) .
\end{align*}
As a consequence, for $\overline{C}_{\gamma,n}:=\tilde c_{\gamma,n} C_{\gamma,n}$,
\begin{align*}
&\|\tilde{c}_{\gamma,n}N_{\gamma}f(\cdot, x_{n+1}) -(-\D')^\gamma f(\cdot)\|_{H^{\mu-2\gamma}(\R^n)}\\
&= \|(1+|\xi|^2)^{\frac{\mu-2\gamma}{2}} \left( |\xi|^{2\gamma} \F_{x'}(f)(\xi) \left( \overline{C}_{\gamma,n} K_{1-\gamma-\floor{\gamma}}(|\xi| x_{n+1}) (|\xi| x_{n+1})^{1-\gamma+\floor{\gamma}} \right) - |\xi|^{2\gamma}\F_{x'}(f)(\xi) \right)\|_{L^2(\R^n)}\\
&\leq  \|(1+|\xi|^2)^{\frac{\mu-2\gamma}{2}} |\xi|^{2\gamma} \F_{x'}(f)(\xi) \left( \overline{C}_{\gamma,n} K_{1-\gamma-\floor{\gamma}}(|\xi| x_{n+1}) (|\xi| x_{n+1})^{1-\gamma+\floor{\gamma}}-1 \right)\|_{L^2(\{|\xi|<\epsilon x_{n+1}^{-1}\})}\\
& \quad + \|(1+|\xi|^2)^{\frac{\mu-2\gamma}{2}}|\xi|^{2\gamma} \F_{x'}(f)(\xi)\left( \overline{C}_{\gamma,n} K_{1-\gamma-\floor{\gamma}}(|\xi| x_{n+1}) (|\xi| x_{n+1})^{1-\gamma+\floor{\gamma}}-1 \right)\|_{L^2(\{|\xi|\geq \epsilon x_{n+1}^{-1}\})}\\
& \leq \|(1+|\xi|^2)^{\frac{\mu}{2}}\F_{x'}(f)(\xi)\|_{L^2(\R^n)} \sup\limits_{|z|\leq \epsilon} \left|\overline{C}_{\gamma,n} K_{1-\gamma-\floor{\gamma}}(z) z^{1-\gamma+\floor{\gamma}}-1 \right|\\
& \quad + \|(1+|\xi|^2)^{\frac{\mu}{2}} \F_{x'}(f)(\xi)\|_{L^2(\{|\xi|\geq \epsilon x_{n+1}^{-1}\})} \sup\limits_{|z|> \epsilon} \left|\overline{C}_{\gamma,n} K_{1-\gamma-\floor{\gamma}}(z) z^{1-\gamma+\floor{\gamma}}-1 \right|.
\end{align*}
Due to the asymptotics of the modified Bessel functions (see \eqref{eq:Bessel}) and the regularity of $f$ in the limit $x_{n+1}\rightarrow 0$ this implies the desired result for a proper choice of $\tilde c_{\gamma,n}$.
\end{proof}

\begin{cor}
\label{cor:eq_sing}
Let $f\in H^{\gamma}(\R^n)$ and let $u=E_{\gamma}f$ be the extension from Lemma ~\ref{lem:frac_higher}. Then we also have the following bulk estimates:
\begin{align}
\label{eq:bulk_CS}
\begin{split}
\| \F^{-1}_{x'}(|\xi|^{\gamma + \frac{1}{2}}\F_{x'} u)\|_{L^2(\R^{n+1}_+)}
& \leq C \|f\|_{H^{\gamma}(\R^n)},\\
\|x_{n+1}^{\frac{1-2(\gamma - \floor{\gamma})}{2}} \nabla u \|_{L^2(\R^{n+1}_+)}
& \leq C \|f\|_{H^{\gamma-\floor{\gamma}}(\R^n)},
 \end{split}
\end{align}
where $\F_{x'}$ denotes the tangential Fourier transform.
\end{cor}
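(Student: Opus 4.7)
Both bounds follow from the explicit Fourier representation
$\hat u(\xi,x_{n+1}) = c_{\gamma,n}\hat f(\xi)\phi_\gamma(|\xi|x_{n+1})$, with $\phi_\gamma(t) = t^\gamma K_\gamma(t)$,
established in Lemma~\ref{lem:frac_higher}, combined with Plancherel in $x'$ and the rescaling $t = |\xi|x_{n+1}$ in the normal variable. The only non-bookkeeping ingredients are the small-$t$ asymptotics $K_\nu(t)\sim c_\nu|t|^{-|\nu|}$ from \eqref{eq:Bessel} and the exponential decay of $K_\nu$ at infinity.

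For the first inequality, Plancherel and Fubini give
\[
\bigl\|\F^{-1}_{x'}(|\xi|^{\gamma+\frac12}\F_{x'}u)\bigr\|^2_{L^2(\R^{n+1}_+)} = c_{\gamma,n}^2 \int_{\R^n}|\xi|^{2\gamma+1}|\hat f(\xi)|^2 \int_0^\infty |\phi_\gamma(|\xi|x_{n+1})|^2 \,dx_{n+1}\,d\xi.
\]
Setting $t = |\xi|x_{n+1}$ in the inner integral reduces it to $|\xi|^{-1}\int_0^\infty|\phi_\gamma(t)|^2\,dt$; this $t$-integral is a finite constant because $\phi_\gamma(t)$ tends to a constant as $t\to0$ (by the $K_\gamma(t)\sim ct^{-\gamma}$ asymptotics) and decays exponentially at infinity. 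One is left with $C\int|\xi|^{2\gamma}|\hat f|^2\,d\xi\leq C\|f\|_{H^\gamma(\R^n)}^2$.

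For the second inequality, set $s=\gamma-\floor{\gamma}\in(0,1)$ and split $\nabla u$ into tangential and normal parts. For $\partial_j u$ with $j\leq n$ the identity $\widehat{\partial_j u}=i\xi_j\hat u$ and the same rescaling produce the inner integral $\int_0^\infty t^{1-2s}|\phi_\gamma(t)|^2\,dt$, which is finite precisely because $s<1$ (ensuring integrability at $t=0$). Summing $\sum_j|\xi_j|^2=|\xi|^2$ and gathering the $|\xi|$ factors from the change of variables gives a contribution bounded by $C\int|\xi|^{2s}|\hat f|^2\,d\xi\leq C\|f\|_{H^s(\R^n)}^2$. For the normal derivative, the recurrence in \eqref{eq:Bessel} yields $\phi_\gamma'(t)=-t^\gamma K_{\gamma-1}(t)$, so that $\widehat{\partial_{x_{n+1}} u}(\xi,x_{n+1})=-c_{\gamma,n}|\xi|\,\hat f(\xi)\,(|\xi|x_{n+1})^\gamma K_{\gamma-1}(|\xi|x_{n+1})$; rescaling converts the weighted norm to
\[
C\int_{\R^n}|\xi|^{2s}|\hat f(\xi)|^2\,d\xi\cdot\int_0^\infty t^{1+2\floor{\gamma}}|K_{\gamma-1}(t)|^2\,dt,
\]
where the exponent was simplified via $1-2s+2\gamma=1+2\floor{\gamma}$.

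The only step requiring attention is the convergence of the last Bessel integral near the origin: using $K_{\gamma-1}(t)\sim c|t|^{-|\gamma-1|}$, the integrand behaves like $t^{1+2\floor{\gamma}-2|\gamma-1|}$, whose exponent equals $2\gamma-1>-1$ in the regime $\gamma\in(0,1)$ (so $\floor{\gamma}=0$) and $3-2s>-1$ in the regime $\gamma>1$. In each case the integral converges, and this is precisely where the non-integrality of $\gamma$ (hence $s<1$) is used. The remainder of the argument is the routine Plancherel bookkeeping sketched above.
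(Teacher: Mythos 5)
Your argument is correct and follows the same route as the paper: use the explicit Fourier representation $\hat u(\xi,x_{n+1}) = c_{\gamma,n}\hat f(\xi)\phi_\gamma(|\xi|x_{n+1})$ from Lemma~\ref{lem:frac_higher}, apply Plancherel, rescale via $z=|\xi|x_{n+1}$ to decouple the $\xi$- and $z$-integrals, and use the Bessel recurrence for the normal derivative. The only difference is cosmetic: you explicitly verify convergence of the weighted $L^2((0,\infty))$ Bessel integrals using the asymptotics from \eqref{eq:Bessel}, whereas the paper simply records them as finite constants.
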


\begin{proof}
In order to deduce the bulk estimates from \eqref{eq:bulk_CS} we note that
\begin{align*}
\|\F^{-1}_{x'}(|\xi|^{\gamma + \frac{1}{2}}\F_{x'} u)\|_{L^2(\R^{n+1}_+)}
&= \||\xi|^{\gamma + \frac{1}{2}} \F_{x'} u\|_{L^2(\R^{n+1}_+)}
= \||\xi|^{\gamma + \frac{1}{2}}c_{\gamma,n} \hat{f}(\xi) \phi_{\gamma}(|\xi| x_{n+1})\|_{L^2(\R^{n+1}_+)}\\
&= c_{\gamma,n}\| \phi_{\gamma}(z)\|_{L^2((0,\infty))}\||\xi|^{\gamma} \F_{x'}f\|_{L^2(\R^n)},
\end{align*}
where we used the change of coordinates $z=|\xi| x_{n+1}$.
Using an analogous change of coordinates, we also obtain
\begin{align*}
\|x_{n+1}^{\frac{1-2(\gamma-\floor{\gamma})}{2}}\nabla' u\|_{L^2(\R^{n+1}_+)}
&= c_{\gamma,n}\|(|\xi|x_{n+1})^{\frac{1-2(\gamma-\floor{\gamma})}{2}}|\xi|^{1-\frac{1-2(\gamma-\floor{\gamma})}{2}} (\F_{x'}f(\xi))\phi_{\gamma}(|\xi|x_{n+1})\|_{L^2(\R^{n+1}_+)}\\
& = c_{\gamma,n}\|z^{\frac{1-2(\gamma-\floor{\gamma})}{2}}\phi_{\gamma}(z)\|_{L^2((0,\infty))}\||\xi|^{\gamma-\floor{\gamma}} \F_{x'}f\|_{L^2(\R^n)},
\end{align*}
and
\begin{align*}
&\|x_{n+1}^{\frac{1-2(\gamma-\floor{\gamma})}{2}}\p_{x_{n+1}} u\|_{L^2(\R^{n+1}_+)}
= c_{\gamma,n}\|x_{n+1}^{\frac{1-2(\gamma-\floor{\gamma})}{2}}|\xi|(\F_{x'} f(\xi)) \phi_{\gamma}'|_{|\xi|x_{n+1}}\|_{L^2(\R^{n+1}_+)} \\
& = C_{\gamma,n}\|x_{n+1}^{\frac{1-2(\gamma-\floor{\gamma})}{2}}|\xi|(\F_{x'} f(\xi)) (|\xi|x_{n+1})\phi_{\gamma-1}(|\xi|x_{n+1})\|_{L^2(\R^{n+1}_+)} \\
&  = C_{\gamma,n}\|(|\xi| x_{n+1})^{\frac{1-2(\gamma-\floor{\gamma})}{2}}|\xi|^{1-\frac{1-2(\gamma-\floor{\gamma})}{2}}(\F_{x'} f(\xi)) (|\xi|x_{n+1})\phi_{\gamma-1}(|\xi|x_{n+1})\|_{L^2(\R^{n+1}_+)} \\
& = C_{\gamma,n}
\|z^{\frac{3-2(\gamma-\floor{\gamma})}{2}}\phi_{\gamma-1}(z)\|_{L^2((0,\infty))} \||\xi|^{\gamma-\floor{\gamma}} \F_{x'} f\|_{L^2(\R^n)}.
\end{align*} 
Here, in the passage from $\phi'_{\gamma}(t)$ to $t \phi_{\gamma-1}(t)$, we used the recurrence relations \eqref{eq:Bessel}.
This concludes the discussion of the mapping properties of $E_{\gamma}$ and provides the estimates from \eqref{eq:bulk_CS}.
\end{proof}

While the formulation \eqref{eq:frac_higher} already provides a convenient alternative \emph{local} characterisation of the fractional Laplacian as an iterated and weigthed Dirichlet-to-Neumann map for a second order equation in the upper half-plane, if $\gamma \notin (0,1)$ it is not immediately associated in a natural way with a \emph{finite} energy (the quantity $\|x_{n+1}^{\frac{1-2\gamma}{2}} \nabla u\|_{L^2(\R^{n+1}_+)}$ diverges in general). 

In order to remedy this, in the sequel, we recall that the fractional Laplacian is also related to a Dirichlet-to-Neumann map for a system (or, equivalently, a higher order equation) which can naturally be associated with a finite energy \cite{Y13}. This provides the natural functional analytic framework for our discussion of the unique continuation properties of the higher order fractional Laplacian and explains our focus on unique continuation properties for systems with Muckenhoupt weights in the earlier sections.

In order to derive the desired higher order equation for 
$u$, we begin by discussing the bulk equation:

\begin{lem}[Lemma 4.2 in \cite{Y13}]
\label{lem:bulk_higher}
Let $\gamma>0$ and let $u$ be a solution to the bulk equation in \eqref{eq:higher_frac}. Then, for $k\in\{0,\dots,\floor{\gamma}\}$ the function $w_k =(\D_b)^k u$ with $\D_b : = x_{n+1}^{-b} \nabla \cdot x_{n+1}^{b} \nabla$ and $b= 1-2\gamma + 2\floor{\gamma} \in (-1,1) $ satisfies
\begin{align}
\label{eq:k_equation}
\D w_k + \frac{1-2\gamma+2k}{x_{n+1}} \p_{x_{n+1}} w_k & = 0 \mbox{ in } \R^{n+1}_+.
\end{align}
In particular, 
\begin{align}
\label{eq:bulk}
(\D_b)^{\floor{\gamma}+1} u = 0 \mbox{ in } \R^{n+1}_+.
\end{align}
\end{lem}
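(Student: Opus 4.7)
The plan is to prove the claim by induction on $k$, with the main work being a short pointwise identity relating solutions of $L_a u := \D u + \frac{a}{x_{n+1}}\p_{x_{n+1}} u = 0$ for different values of $a$.

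\textbf{Step 1 (key identity).} I would first establish the following: if $L_a u = 0$ in $\R^{n+1}_+$, then the function $v := x_{n+1}^{-1}\p_{x_{n+1}} u$ satisfies $L_{a+2} v = 0$. This is a direct computation. Differentiating $L_a u = 0$ in $x_{n+1}$ and writing $\phi := \p_{x_{n+1}} u$, one obtains
\[
\D \phi + \frac{a}{x_{n+1}} \p_{x_{n+1}} \phi - \frac{a}{x_{n+1}^2} \phi = 0.
\]
Substituting $\phi = x_{n+1} v$ (which gives $\p_{x_{n+1}}\phi = v + x_{n+1}\p_{x_{n+1}}v$, $\p_{x_{n+1}}^2 \phi = 2 \p_{x_{n+1}} v + x_{n+1}\p_{x_{n+1}}^2 v$ and $\D \phi = x_{n+1}\D v + 2 \p_{x_{n+1}} v$), the singular $\frac{a}{x_{n+1}}v$ terms cancel and one is left with $\D v + \frac{a+2}{x_{n+1}} \p_{x_{n+1}} v = 0$.

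\textbf{Step 2 (reduction of $\D_b$ on a solution of $L_a$).} Writing $\D_b = L_b = \D + \frac{b}{x_{n+1}}\p_{x_{n+1}}$ and subtracting $L_a u = 0$, any solution of $L_a u = 0$ satisfies
\[
\D_b u = L_b u = \frac{b-a}{x_{n+1}}\, \p_{x_{n+1}} u.
\]
In particular $\D_b$ acts on such $u$ as the first-order operator $(b-a)x_{n+1}^{-1}\p_{x_{n+1}}$.

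\textbf{Step 3 (induction).} Set $a_k := 1 - 2\gamma + 2k$. I claim $L_{a_k} w_k = 0$ for $k = 0, \ldots, \floor{\gamma}$. The base case $k = 0$ is precisely the hypothesis $L_{1-2\gamma} u = 0$. Assuming the statement at level $k$, Step 2 (with $a = a_k$ and $b = 1-2\gamma+2\floor{\gamma}$, so $b - a = 2(\floor{\gamma} - k)$) gives
\[
w_{k+1} = \D_b w_k = 2(\floor{\gamma} - k)\, x_{n+1}^{-1}\p_{x_{n+1}} w_k,
\]
and by Step 1 the function $x_{n+1}^{-1}\p_{x_{n+1}} w_k$ satisfies $L_{a_k + 2} = L_{a_{k+1}}$, so the same holds for $w_{k+1}$ (which differs by a constant factor). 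This completes the induction, proving \eqref{eq:k_equation}.

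\textbf{Step 4 (the final identity).} Specialising the induction to $k = \floor{\gamma}$ one has $L_{a_{\floor{\gamma}}} w_{\floor{\gamma}} = L_b w_{\floor{\gamma}} = \D_b w_{\floor{\gamma}} = 0$, which is exactly $(\D_b)^{\floor{\gamma}+1} u = 0$, establishing \eqref{eq:bulk}. The only real obstacle is keeping careful track of the shift $a \mapsto a+2$ produced by Step 1 versus the shift $b-a$ produced by Step 2 and verifying that they align so that one induction step advances the parameter by exactly $2$; the calculation itself is elementary.
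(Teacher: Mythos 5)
Your proof is correct and takes essentially the same route as the paper: the paper establishes the inductive step by a single computation showing $\D(\D_b w)=-\frac{a+2}{x_{n+1}}\p_{x_{n+1}}(\D_b w)$ for solutions of $L_a w=0$, implicitly using both that $\D_b w=\frac{b-a}{x_{n+1}}\p_{x_{n+1}}w$ and that $x_{n+1}^{-1}\p_{x_{n+1}}$ shifts $a\mapsto a+2$. You merely factor that single computation into your Steps 1 and 2, a presentational reorganisation rather than a different argument.
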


The equation \eqref{eq:bulk} provides us with the bulk equation which we are working with in the sequel. For self-containedness, we recall the argument for Lemma ~\ref{lem:bulk_higher} from \cite{Y13}.

\begin{proof}
We show that if a function $w$ solves
\begin{align}
\label{eq:a}
\D w + \frac{a}{x_{n+1}} \p_{x_{n+1}} w & = 0 \mbox{ in } \R^{n+1}_+
\end{align}
with $a\in \R$,  then $w_1:=\D_b w$ solves
\begin{align*}
\D w_1 + \frac{a+2}{x_{n+1}} \p_{x_{n+1}} w_1 & = 0 \mbox{ in } \R^{n+1}_+.
\end{align*}
To this end, we observe that
\begin{align*}
\D w_1 
&= \D \D_b w
= \D \left(\D w + \frac{b}{x_{n+1}}\p_{x_{n+1}}w \right)\\
&\stackrel{\eqref{eq:a}}{=} 
\D \left( \frac{b-a}{x_{n+1}} \p_{x_{n+1}}w \right)
= \frac{b-a}{x_{n+1}} \D \p_{x_{n+1}} w + \frac{2(a-b)}{x_{n+1}^2} \p_{x_{n+1}}^2 w + \frac{2(b-a)}{x_{n+1}^3} \p_{x_{n+1}} w\\
&= (2+a) \frac{b-a}{x_{n+1}^2}\left( \frac{\p_{x_{n+1}}w}{x_{n+1}} - \p_{x_{n+1}}^2 w \right)
= -\frac{2+a}{x_{n+1}}\p_{x_{n+1}}\left(\frac{b-a}{x_{n+1}} \p_{x_{n+1}} w \right)\\
&\stackrel{\eqref{eq:a}}{=} -\frac{2+a}{x_{n+1}} \p_{x_{n+1}} w_1.
\end{align*}
This concludes the proof.
\end{proof}

Next we seek to complement \eqref{eq:bulk} with suitable boundary conditions.
To this end, we use the explicit form of $u$ which was deduced in the proof of Lemma ~\ref{lem:frac_higher}. It entails the validity of certain weighted Neumann conditions and provides tangential limits for the associated higher order Dirichlet data:

\begin{lem}
\label{lem:Neumann_higher}
Let $\gamma>0$, $\mu \in \R$, let $f\in H^{\mu}(\R^n)$ and let $u =E_{\gamma}f= G_{\gamma}\ast f$ be the solution to \eqref{eq:higher_frac} from Lemma ~\ref{lem:frac_higher}. For $x_{n+1}>0$ and $k\in\{0,\dots,\floor{\gamma}\}$, define $w_k:=(\D_b)^k u$, where $\D_b := x_{n+1}^{-b}\nabla \cdot x_{n+1}^b \nabla$ and $b= 1-2\gamma+2\floor{\gamma}\in(-1,1)$. Then we have that for all $k\in\{0,\dots,\floor{\gamma}\}$ it holds
\begin{align}
\label{eq:boundary_higher_frac}
 w_k(x',x_{n+1}) \rightarrow c_{n,\gamma,k}(-\D')^k u(x',0)
= c_{n,\gamma,k}(-\D')^k f(x') \mbox{ in } H^{\mu-2k}(\R^n) \mbox{ as } x_{n+1}\rightarrow 0,
\end{align}
for some constant $c_{n,\gamma,k}\neq 0$.
The functions $w_k$ are in $C^{\infty}_{loc}(\R^n \times (0,\infty))$ and for any $\nu \in \R$ they obey the bounds
\begin{align}
\label{eq:bulk_CS_1}
\begin{split}
\| \F^{-1}_{x'}(|\xi|^{\nu + \frac{1}{2}}\F_{x'} w_k)\|_{L^2(\R^{n+1}_+)} &\leq \|f\|_{\dot{H}^{\nu+2k}(\R^n)}, \\ 
\||\nabla'|^{\nu} x_{n+1}^{\frac{1-2(\gamma - \floor{\gamma})}{2}} \nabla w_k \|_{L^2(\R^{n+1}_+)}
& \leq C \|f\|_{\dot{H}^{2k+\gamma-\floor{\gamma}+\nu}(\R^n)}.
 \end{split}
\end{align}
 Moreover, for any $k\in\{0,\dots,\floor{\gamma}-1\}$
\begin{align}
\label{eq:boundary_higher}
 x_{n+1}^{1-2\gamma + 2\floor{\gamma}} \p_{x_{n+1}} w_k(x',x_{n+1}) \rightarrow 0 \mbox{ in } H^{\mu-2\gamma + 2\floor{\gamma}-2k}(\R^n) \mbox{ as } x_{n+1}\rightarrow 0,
\end{align}
and for some constant $c_{n,\gamma} \neq 0$
\begin{align}
\label{eq:boundary_higher_m}
 x_{n+1}^{1-2\gamma + 2\floor{\gamma}} \p_{x_{n+1}} w_{\floor{\gamma}}(x',x_{n+1}) \rightarrow c_{n,\gamma} (-\D')^{\gamma} f(x') \mbox{ in } H^{\mu-2\gamma}(\R^n) \mbox{ as } x_{n+1}\rightarrow 0.
\end{align}

\end{lem}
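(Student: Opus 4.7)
The plan is to identify each $w_k$ with a nonzero scalar multiple of the generalised Caffarelli--Silvestre extension of $(-\D')^k f$ at fractional order $\gamma-k$ from Lemma \ref{lem:frac_higher}, and then to read off all four assertions from Lemma \ref{lem:frac_higher} and Corollary \ref{cor:eq_sing} with $\gamma$ replaced by $\gamma-k$.

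The first step is to prove, by induction on $k \in \{0,\dots,\floor{\gamma}\}$, that
\[
\widehat{w_k}(\xi, x_{n+1}) = c_{n,\gamma,k}\, |\xi|^{2k}\, \hat f(\xi)\, \phi_{\gamma-k}(|\xi| x_{n+1}), \qquad c_{n,\gamma,k}\neq 0.
\]
For the inductive step, Lemma \ref{lem:bulk_higher} gives that $w_k$ solves the equation with parameter $1-2(\gamma-k)$ in place of $1-2\gamma$, and rewriting $\Delta$ in $\D_b = \Delta + \frac{b}{x_{n+1}}\p_{x_{n+1}}$ by means of this equation yields
\[
\D_b w_k = \frac{2(\floor{\gamma}-k)}{x_{n+1}}\p_{x_{n+1}} w_k.
\]
The Bessel recurrence $\frac{d}{dt}(t^s K_s(t)) = c_s t^s K_{s-1}(t)$ from \eqref{eq:Bessel}, applied to the Fourier representation of $w_k$, then promotes the Bessel index from $\gamma-k$ to $\gamma-(k+1)$ and the polynomial factor from $|\xi|^{2k}$ to $|\xi|^{2(k+1)}$, with a non-zero constant as long as $k<\floor{\gamma}$. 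With this identification, the interior smoothness, the boundary convergence \eqref{eq:boundary_higher_frac}, and the bulk bounds \eqref{eq:bulk_CS_1} all follow directly from the corresponding statements in Lemma \ref{lem:frac_higher} and Corollary \ref{cor:eq_sing} applied to the datum $c_{n,\gamma,k}(-\D')^k f \in H^{\mu - 2k}(\R^n)$ at order $\gamma-k$ (for \eqref{eq:bulk_CS_1}, the $|\xi|^{\nu}$ factor is absorbed into the tangential exponent after the substitution $z=|\xi|x_{n+1}$). The limit \eqref{eq:boundary_higher_m} is a direct instance of \eqref{eq:frac_higher} applied to $w_{\floor{\gamma}}$, since $\gamma-\floor{\gamma}\in(0,1)$, $\floor{\gamma-\floor{\gamma}}=0$, and $1-2\gamma+2\floor{\gamma}=1-2(\gamma-\floor{\gamma})$.

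The only point that does not reduce directly to Lemma \ref{lem:frac_higher} is the vanishing \eqref{eq:boundary_higher} for $k<\floor{\gamma}$, since the weight $x_{n+1}^{1-2\gamma+2\floor{\gamma}}$ carries a larger exponent than the natural Neumann weight $x_{n+1}^{1-2(\gamma-k)}$ for $w_k$. From the explicit Fourier form of Step 1 and the Bessel recurrence,
\[
x_{n+1}^{1-2\gamma+2\floor{\gamma}}\p_{x_{n+1}} \widehat{w_k}(\xi,x_{n+1}) = c\, |\xi|^{2k+2}\, \hat f(\xi)\, x_{n+1}^{2-2\gamma+2\floor{\gamma}}\, \phi_{\gamma-k-1}(|\xi| x_{n+1}).
\]
Because $k<\floor{\gamma}$ and $\gamma\notin \N$, the index $\gamma-k-1\geq \gamma-\floor{\gamma}\in(0,1)$ is strictly positive, so $\phi_{\gamma-k-1}$ is bounded on $[0,\infty)$ (bounded near $0$ by the Bessel asymptotics in \eqref{eq:Bessel} and exponentially decaying at infinity). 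The strictly positive power $x_{n+1}^{2-2\gamma+2\floor{\gamma}}$ then forces convergence to zero in $H^{\mu-2\gamma+2\floor{\gamma}-2k}(\R^n)$ by the same low/high frequency splitting argument used for \eqref{eq:boundary_data}. I expect the only mild technical point to be tracking Bessel indices in the induction and in uniformly bounding $z\mapsto z^{2-2\gamma+2\floor{\gamma}}\phi_{\gamma-k-1}(z)$ in the high-frequency part of the final argument; beyond that, the proof amounts to bookkeeping on top of Lemma \ref{lem:frac_higher} and Corollary \ref{cor:eq_sing}.
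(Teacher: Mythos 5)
Your proof follows essentially the same route as the paper's: an induction using the identity $\D_b w_k = \tfrac{2(\floor{\gamma}-k)}{x_{n+1}}\p_{x_{n+1}}w_k$ together with the Bessel recurrence to obtain $\hat{w}_k(\xi,x_{n+1}) = c_{n,\gamma,k}|\xi|^{2k}\hat{f}(\xi)\phi_{\gamma-k}(|\xi|x_{n+1})$, then the boundary limits via the low/high-frequency splitting of Lemma~\ref{lem:frac_higher} and the bulk bounds via the substitution $z=|\xi|x_{n+1}$ as in Corollary~\ref{cor:eq_sing}. The paper simply carries out the Fourier computations directly each time rather than packaging $w_k$ as a scalar multiple of $E_{\gamma-k}\bigl((-\D')^k f\bigr)$, but the substance is identical.
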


\begin{proof}
First, by induction, we show that 
\begin{align}
\label{eq:induct}
&\hat{w}_{k}(\xi, x_{n+1}) = c_{n,\gamma, k}|\xi|^{2k} \hat{f}(\xi)\phi_{\gamma-k}(|\xi| x_{n+1}).
\end{align}
For $k=0$ this is true by Lemma ~\ref{lem:frac_higher}. It thus suffices to prove the induction step. By Lemma ~\ref{lem:bulk_higher} and the equation for $w_k$ we have 
\begin{align*}
w_{k+1}= \D_b w_k = \frac{2\floor{\gamma}-2k}{x_{n+1}} \p_{x_{n+1}} w_k \mbox{ in } \R^{n+1}_+.
\end{align*} 
Using the claimed representation for $w_k$, i.e.  $\hat{w}_k(\xi, x_{n+1}) = c_{n,\gamma, k} |\xi|^{2k} \hat{f}(\xi)\phi_{\gamma-k}(|\xi| x_{n+1})$ and the asymptotic recurrence relations for modified Bessel functions ~\eqref{eq:Bessel}, this directly implies
\begin{align*}
\hat{w}_{k+1}(\xi,x_{n+1}) &= \frac{2\floor{\gamma}-2k}{x_{n+1}} \p_{x_{n+1}} \hat{w}_k(\xi,x_{n+1})\\
&= (2\floor{\gamma}-2k)c_{n,\gamma,k} |\xi|^{2k} \hat{f}(\xi)|\xi|^2 C_{\gamma,k}\phi_{\gamma-k-1}(|\xi|x_{n+1})\\
&= c_{n,\gamma,k+1}|\xi|^{2k+2} \hat{f}(\xi)\phi_{\gamma-k-1}(|\xi|x_{n+1}).
\end{align*}

The  representation from \eqref{eq:induct} together with the asymptotics from \eqref{eq:Bessel} then  directly entails \eqref{eq:boundary_higher_frac}, \eqref{eq:boundary_higher} and \eqref{eq:boundary_higher_m}. 
Here as in the proof of Lemma ~\ref{lem:frac_higher} all limits should be understood in the corresponding Sobolev spaces.
Indeed, for instance, if $k\in\{0,\dots,\floor{\gamma}-1\}$, by invoking \eqref{eq:Bessel}, we have
\begin{align*}
\p_{x_{n+1}} \hat{w}_k(\xi,x_{n+1})
& = c_{n,\gamma,k}|\xi|^{2k} \hat{f}(\xi) |\xi| \phi'_{\gamma-k}|_{|\xi|x_{n+1}}
= \tilde{c}_{n,\gamma,k}|\xi|^{2k+1} \hat{f}(\xi) (|\xi|x_{n+1})\phi_{\gamma-k-1}(|\xi|x_{n+1}).
\end{align*}
Thus, similarly as in the proof of Lemma ~\ref{lem:frac_higher}, we infer
\begin{align*}
&\tilde{c}_{n,\gamma,k}^{-1}\|x_{n+1}^{1-2\gamma+2\floor{\gamma}}\p_{x_{n+1}} \hat{w}_k(\xi,x_{n+1})\|_{H^{\mu-2\gamma+2\floor{\gamma}-2k}(\R^n)}\\
&= \|(1+|\xi|^2)^{\frac{\mu-2\gamma+2\floor{\gamma}-2k}{2}} |\xi|^{2k+2\gamma-2\floor{\gamma}} \F(f)(\xi) (|\xi|x_{n+1})^{2-2\gamma+2\floor{\gamma}}\phi_{\gamma-k-1}(|\xi| x_{n+1}) \|_{L^2(\R^n)}\\
& \leq \|(1+|\xi|^2)^{\frac{\mu}{2}}\F(f)(\xi)\|_{L^2(\R^n)} \sup\limits_{|z|\leq \epsilon} |z^{2-2\gamma-\floor{2\gamma}}\phi_{\gamma-k-1}(z)|\\
& \quad + \|(1+|\xi|^2)^{\frac{\mu}{2}} \F(f)(\xi)\|_{L^2(\{|\xi|\geq \epsilon x_{n+1}^{-1}\})} \sup\limits_{|z|> \epsilon} |z^{2-2\gamma-\floor{2\gamma}}\phi_{\gamma-k-1}(z)|.
\end{align*}
Relying on the asymptotics of the Bessel functions and  passing to the limit $x_{n+1}\rightarrow 0$ then leads to \eqref{eq:boundary_higher}. The remaining limits are obtained analogously.

Finally, the bounds from \eqref{eq:bulk_CS_1} are deduced as those from \eqref{eq:bulk_CS} in Corollary ~\ref{cor:eq_sing}.
\end{proof}

In analogy to the notation from Lemma ~\ref{lem:frac_higher}, we introduce the notation $E_{\gamma,k}f:=w_k$.

We next show that it is possible to obtain localised regularity estimates for the functions $w_k$ from Lemma ~\ref{lem:bulk_higher}. This is helpful in the discussion of the global unique continuation properties for the fractional Laplacian. These are particularly relevant in the analysis of associated fractional inverse problems.

\begin{lem}
\label{lem:localized}
Let $\nu \in \R$ and assume that $f\in H^{2k+\gamma-\floor{\gamma}+2N}(B_1')$ for some $N\in \N$ and ${k\in\{0,\dots,\floor{\gamma}\}}$. Then for all $r\in (0,1)$ we have that
\begin{align*}
\||\nabla'|^{2N}x_{n+1}^{\frac{1-2(\gamma-\floor{\gamma})}{2}} \nabla w_k\|_{L^2(B_{r}^+)} \leq C_{r} < \infty.
\end{align*}
\end{lem}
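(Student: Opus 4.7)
The plan is to prove the bound via a standard pseudolocality decomposition, separating the behaviour of $f$ near $B_r'$ from its behaviour far from $B_r'$. I would fix an intermediate radius $r' \in (r,1)$ and a smooth cut-off $\chi \in C_c^\infty(B_1')$ with $\chi \equiv 1$ on $\overline{B_{r'}'}$, then decompose $f = f_1 + f_2$ with $f_1 := \chi f$ and $f_2 := (1-\chi)f$. By linearity, $w_k = E_{\gamma,k} f_1 + E_{\gamma,k} f_2$, and it suffices to control each contribution separately.

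For the localised piece $E_{\gamma,k} f_1$, the multiplication by $\chi$ is bounded on $H^s(\R^n)$ for every $s \in \R$, so the local hypothesis $f \in H^{2k+\gamma-\floor{\gamma}+2N}(B_1')$ upgrades to $f_1 \in H^{2k+\gamma-\floor{\gamma}+2N}(\R^n)$ globally. The global bulk bound \eqref{eq:bulk_CS_1} with $\nu = 2N$ is then directly applicable and yields
\[
\||\nabla'|^{2N} x_{n+1}^{(1-2(\gamma-\floor{\gamma}))/2} \nabla E_{\gamma,k} f_1\|_{L^2(\R^{n+1}_+)} \leq C \|f_1\|_{H^{2k+\gamma-\floor{\gamma}+2N}(\R^n)} < \infty,
\]
even on the whole of $\R^{n+1}_+$.

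The complementary piece $E_{\gamma,k} f_2$ would be handled by the pseudolocality of the extension operator. Writing $E_{\gamma,k} f_2(x', x_{n+1}) = G_{\gamma,k}(\cdot,x_{n+1}) \ast_{x'} f_2$ with $\widehat{G_{\gamma,k}}(\xi, x_{n+1}) = c_{n,\gamma,k} |\xi|^{2k} \phi_{\gamma-k}(|\xi| x_{n+1})$, one notes that for $(x', x_{n+1}) \in B_r^+$ and $y' \in \supp f_2$ the tangential distance satisfies $|x'-y'| \geq r' - r > 0$, so the convolution only probes the kernel on a region bounded away from the origin. Exploiting the smoothness of $\phi_{\gamma-k}$ on $(0,\infty)$ and the integral representation of $K_{\gamma-k}$, the kernel $G_{\gamma,k}(y', x_{n+1})$ is $C^\infty$ off the origin, with tangential derivatives of arbitrary order uniformly bounded (and rapidly decaying) on the set $\{|y'| \geq r' - r,\, 0 \leq x_{n+1} \leq 1\}$. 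This would give pointwise bounds on $|\nabla'|^{2N+1} E_{\gamma,k} f_2$ over $B_r^+$; combined with the local $L^2$-integrability of the weight $x_{n+1}^{(1-2(\gamma-\floor{\gamma}))/2}$ — valid because $1-2(\gamma-\floor{\gamma}) > -1$ — the $L^2$-bound on $B_r^+$ follows.

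The main obstacle is making the smoothness-and-decay properties of the off-diagonal kernel $G_{\gamma,k}$ rigorous. This reduces to inverse-Fourier analysis exploiting the smoothness of $t \mapsto t^{\gamma-k} K_{\gamma-k}(t)$ on $(0,\infty)$ together with its exponential decay as $t \to \infty$ and the Bessel recurrences \eqref{eq:Bessel}, generalising the classical Poisson-kernel computations underlying the Caffarelli-Silvestre extension. Once this kernel information is in hand, the rest of the argument is bookkeeping.
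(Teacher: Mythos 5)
Your decomposition $f=f_1+f_2$ and the treatment of the localised piece via the global bulk estimate \eqref{eq:bulk_CS_1} match the paper exactly, and your pseudolocality strategy for the far piece is in the same spirit as the paper's (which computes the Poisson kernel explicitly from formula 9.6.25 in \cite{AS65} and then runs a Schur-test argument, tracking a quantitative $\epsilon^{2\gamma}$ decay). However, there is a real gap: the target norm contains the \emph{full} gradient $\nabla w_k$, including the normal derivative $\p_{n+1}w_k$, while your argument controls only the tangential contribution $|\nabla'|^{2N+1}E_{\gamma,k}f_2$. The piece $|\nabla'|^{2N}\p_{n+1}E_{\gamma,k}f_2$ is never addressed, and the pointwise-bound strategy you sketch does \emph{not} carry over to it. Your stated off-diagonal kernel property --- tangential derivatives ``uniformly bounded and rapidly decaying'' on $\{|y'|\geq r'-r,\ 0\leq x_{n+1}\leq 1\}$ --- is correct for the tangential derivatives (e.g.\ for $k=0$, $G_\gamma(y',x_{n+1})=c\,x_{n+1}^{2\gamma}(|y'|^2+x_{n+1}^2)^{-(n+2\gamma)/2}$ and its tangential derivatives stay bounded, indeed vanish, as $x_{n+1}\to 0$ off the diagonal). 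But $\p_{n+1}G_\gamma\sim c\,x_{n+1}^{2\gamma-1}|y'|^{-n-2\gamma}$ as $x_{n+1}\to 0$, so whenever $\gamma-k<1/2$ the off-diagonal normal derivative of the kernel is \emph{unbounded}; the reasoning ``bounded integrand combined with $L^2$-integrability of the weight $x_{n+1}^{b/2}$'' no longer applies. What in fact saves the estimate is the cancellation between the $x_{n+1}^{2(\gamma-k)-1}$ kernel singularity and the weight, yielding an exponent $(\gamma-\floor{\gamma})-1/2>-1/2$ which is just $L^2$ near $x_{n+1}=0$ --- a cancellation one must track explicitly rather than deduce from soft boundedness.

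The paper closes this hole by a separate reduction: a short computation shows that $x_{n+1}^{1-2(\gamma-\floor{\gamma})}\p_{n+1}u$ equals a power of $x_{n+1}$ times $E_{\gamma-1}((-\D')f)$ when $\gamma>1$, or $E_{1-\gamma}((-\D')^{\gamma}f)$ when $\gamma\in(0,1)$, after which the already-proved off-diagonal tangential estimate is recycled for these shifted-order extensions. You would need this observation, or a direct kernel estimate that keeps the precise $x_{n+1}$-exponent of $\p_{n+1}G_{\gamma,k}$ on the off-diagonal set, in order to close the argument.
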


The argument for this relies on the pseudolocality of the operator at hand.

\begin{proof}
We split 
\begin{align*}
w_k = E_{\gamma,k}(f\eta) + E_{\gamma,k}(f(1-\eta)),
\end{align*}
where $\eta$ is a cut-off function that is one on $B_{r}'$ for some $r\in(0,1)$ and vanishes outside of $B'_1$.
For $E_{\gamma,k}(f\eta)$ the claim is a direct consequence of Lemma ~\ref{lem:Neumann_higher}. It hence suffices to study the regularity of $ E_{\gamma,k}(f(1-\eta))$. To this end, we argue as in \cite{RS17}. For convenience of notation we only prove the argument for $k=0$; the argument for $k\in \{1,\dots,\floor{\gamma}\}$ is analogous. Let $\psi$ be a second smooth cut-off function which is equal to one on the support of $\eta$ and vanishes outside of $B_{1}'$. Then,
\begin{align*}
(\psi u_2)(x',\epsilon) = \psi(x') (P_{\epsilon}^{\gamma}\ast ((1-\eta)f))(x')=:T_{\epsilon}f(x').
\end{align*}
By an explicit computation, we obtain that $P_{\epsilon}^{\gamma}(x'):=\F^{-1}_{x'} (\phi_{\gamma}(\epsilon |\cdot|))(x') = c_{\gamma,n} \frac{\epsilon^{2\gamma}}{(|x'|^2 + \epsilon^2)^{\frac{n+2\gamma}{2}}} $ (this exploits formula 9.6.25 in \cite{AS65}). Using the explicit form of $P_{\epsilon}^{\gamma}(x')$ or heat kernel estimates (as outlined in the next section and in \cite{CS16}), we obtain that for any $a>0$
\begin{align}
\label{eq:Pgamma}
\begin{split}
\int\limits_{|z|> a} |(\nabla')^{\alpha}P_\epsilon^{\gamma}(z)| dz& =\int\limits_{|z|> a} \epsilon^{-n}|(\nabla')^{\alpha}P_1^{\gamma}\Big(\frac z \epsilon\Big)| dz 
= \int\limits_{|y|>a/\epsilon} \epsilon^{-|\alpha|}|(\nabla')^{\alpha} P_1^{\gamma}(y)| dy\\
& \leq C_{n,\alpha,\gamma} \int\limits_{|y|>a/\epsilon} \epsilon^{-|\alpha|} |y|^{-n-2\gamma-|\alpha|} dy \leq C \epsilon^{2\gamma}.
\end{split}
\end{align}
As the convolution in the expression for $T_{\epsilon}$  is only active in regions in which $|x'-y'|>a$ for some suitable $a>0$, by virtue of Schur's lemma and an integration by parts, we then deduce that 
\begin{align*}
\|\langle \nabla' \rangle^{2N} T_{\epsilon}(\langle \nabla' \rangle^{2N} g)\|_{L^2(\R^n)} \leq C \epsilon^{2\gamma} \|g\|_{L^2(\R^n)},
\end{align*}
whence 
\begin{align*}
\|T_{\epsilon} f\|_{H^{2N}(\R^n)} \leq C_N \epsilon^{2\gamma} \|f\|_{H^{-2N}(\R^n)}. 
\end{align*}
Integrating in $x_{n+1}$, then implies 
that
\begin{align*}
\|x_{n+1}^{\frac{1-2(\gamma-\floor{\gamma})}{2}} (\nabla')^{2N} u_2 \|_{L^2(B_r' \times (0,1))}
&\leq C \|x_{n+1}^{\frac{1-2(\gamma-\floor{\gamma})}{2}}  (\nabla' )^{2N} (\psi u_2)\|_{L^2(B_r' \times (0,1))}\\
&\leq C \Big(\int\limits_{0}^{1} \epsilon^{{1-2(\gamma-\floor{\gamma})}}  \|T_{\epsilon}f\|_{H^{2N}(\R^n)}^2 d\epsilon\Big)^{\frac 1 2} \leq C_\gamma \|f\|_{H^{-2N}(\R^n)},
\end{align*}
which is the desired statement.

For the estimate of the normal derivative $x_{n+1}^{1-2(\gamma-\floor{\gamma})}\p_{n+1} u$ we notice that a short computation shows that
\begin{align*}
x_{n+1}^{1-2(\gamma-\floor{\gamma})}\p_{n+1} u
=
\left\{
\begin{array}{ll}
 c_{n,\gamma} x_{n+1}^{2-2(\gamma-\floor{\gamma})}  E_{\gamma-1}((-\D') f), &\ \mbox{ if } \gamma>1, \\
 c_{n,\gamma} E_{1-\gamma}((-\D')^{\gamma}f), & \ \mbox{ if } \gamma\in (0,1).
\end{array}
\right.
\end{align*}
Arguing as above then concludes the proof.
\end{proof}

We summarise the results from this section for $f\in H^{2\gamma}(\R^n)$:

\begin{prop}
\label{prop:H2gamma}
Let $\gamma>0$ and let $f\in H^{2\gamma}(\R^n)$. Then the function $u:=E_{\gamma}(f) \in C^{\infty}_{loc}(\R^{n+1}_+)\cap H^{1}(\R^{n+1}_+, x_{n+1}^{1+2\gamma - 2 \floor{\gamma}})$ is a solution to the scalar higher order problem
\begin{align*}
(\D_b)^{\floor{\gamma}+1} u & = 0 \mbox{ in } \R^{n+1}_+,\\
\lim\limits_{x_{n+1}\rightarrow 0} u &= f \mbox{ on } \R^n \times \{0\}, \\
\lim\limits_{x_{n+1}\rightarrow 0}  (\D_b)^k u &= c_{n,\gamma,k} \lim\limits_{x_{n+1}\rightarrow 0}(-\D')^k f \mbox{ on } \R^n \times \{0\} \mbox{ for } k \in\{1,\dots, \floor{\gamma}\},\\
\lim\limits_{x_{n+1}\rightarrow 0} x_{n+1}^{1-2\gamma + 2\floor{\gamma}} \p_{x_{n+1}} (\D_b)^{\floor{\gamma}} u &= c_{n,\gamma}(-\D')^\gamma f  \mbox{ on } \R^n \times \{0\}\\
\lim\limits_{x_{n+1}\rightarrow 0} x_{n+1}^{1-2\gamma + 2\floor{\gamma}} \p_{x_{n+1}} (\D_b)^k u &= 0  \mbox{ on } \R^n \times \{0\} \mbox{ for } k \in \{0,\dots, \floor{\gamma}-1\}.
\end{align*}
All limits $x_{n+1}\rightarrow 0$ are understood in an $L^{2}(\R^n)$ sense.

Setting $u_0:=u$ and defining the functions $u_{j+1}=\D_b u_{j}$ for $j\in\{0,\dots,\floor{\gamma}-1\}$, this can also be rewritten as the following system of second order equations 
\begin{align}
\label{eq:WUCP_syst}
\begin{split}
\D_b u_{m} & = 0 \mbox{ in } \R^{n+1}_+,\\
\D_b u_{j} & = u_{j+1} \mbox{ in } \R^{n+1}_+ \mbox{ for } j\in\{0,\dots,m-1\},\\
\lim\limits_{x_{n+1}\rightarrow 0} u_j & = c_{n,\gamma,j} (-\D')^{j}f \mbox{ on } \R^n \times \{0\} \mbox{ for } j \in \{0,\dots,m\},\\
\lim\limits_{x_{n+1}\rightarrow 0} x_{n+1}^b \p_{x_{n+1}} u_{m} & = c_{n,\gamma}(-\D')^{\gamma}f \mbox{ on } \R^n \times \{0\},\\
\lim\limits_{x_{n+1}\rightarrow 0} x_{n+1}^{b}\p_{x_{n+1}} u_j & = 0 \mbox{ on } \R^n \times \{0\} \mbox{ for } j \in \{0,\dots,m-1\},
\end{split}
\end{align}
where $m=\floor{\gamma}$. Again, all limits $x_{n+1}\rightarrow 0$ are understood in an $L^{2}(\R^n)$ sense.
\end{prop}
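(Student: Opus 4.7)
The plan is to assemble Proposition \ref{prop:H2gamma} directly from the ingredients that have already been established in Lemmas \ref{lem:frac_higher}, \ref{lem:bulk_higher}, \ref{lem:Neumann_higher} and Corollary \ref{cor:eq_sing}. I would take $u := E_{\gamma} f$ to be the extension constructed in Lemma \ref{lem:frac_higher}, given on the Fourier side by $\hat u(\xi, x_{n+1}) = c_{\gamma,n}\hat f(\xi)\phi_{\gamma}(|\xi|x_{n+1})$. The interior smoothness $u \in C^{\infty}_{loc}(\R^n \times (0,\infty))$ then follows from the real-analyticity of $\phi_{\gamma}$ on $(0,\infty)$ (or equivalently from interior elliptic regularity for the non-degenerate equation \eqref{eq:higher_frac} on $\R^n \times [\epsilon,\infty)$ for any $\epsilon>0$).

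For the weighted $H^1$-bound with weight $x_{n+1}^{b}$, $b = 1-2\gamma+2\floor{\gamma}$, I would split into gradient and zero-order pieces. The gradient bound $\|x_{n+1}^{b/2} \nabla u\|_{L^2(\R^{n+1}_+)} \lesssim \|f\|_{H^{2\gamma}(\R^n)}$ is immediate from the estimate \eqref{eq:bulk_CS} of Corollary \ref{cor:eq_sing}, since $b/2 = (1-2(\gamma-\floor{\gamma}))/2$. For the zero-order contribution, by Plancherel and the change of variables $z = |\xi| x_{n+1}$,
\[
\|x_{n+1}^{b/2} u\|_{L^2(\R^{n+1}_+)}^2 = c^2_{\gamma,n}\int_{\R^n} |\hat f(\xi)|^2 |\xi|^{-b-1}\,d\xi \cdot \int_0^{\infty} z^{b}|\phi_{\gamma}(z)|^2 \, dz,
\]
and the $z$-integral is finite since $\phi_\gamma(z) \sim z^{\gamma-1/2}e^{-z}$ at infinity and is bounded as $z\to 0$, while $b>-1$. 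Since $-(b+1)/2 \in [-1,0]$, the resulting $\dot{H}^{-(b+1)/2}$ norm of $f$ is controlled by $\|f\|_{H^{2\gamma}(\R^n)}$ after a standard low-/high-frequency split.

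The bulk equation $(\Delta_b)^{\floor{\gamma}+1} u = 0$ in $\R^{n+1}_+$ follows from iterating Lemma \ref{lem:bulk_higher} starting from the equation \eqref{eq:higher_frac} satisfied by $u$. For the boundary conditions, I would apply Lemma \ref{lem:Neumann_higher} to the family $w_k = (\Delta_b)^k u$: the $L^2$-limit $\lim_{x_{n+1}\to 0}(\Delta_b)^k u = c_{n,\gamma,k}(-\Delta')^k f$ is \eqref{eq:boundary_higher_frac}; the vanishing of the weighted Neumann data $\lim_{x_{n+1}\to 0} x_{n+1}^b \partial_{x_{n+1}}(\Delta_b)^k u = 0$ for $k\in\{0,\dots,\floor{\gamma}-1\}$ is \eqref{eq:boundary_higher}; and the identity $\lim_{x_{n+1}\to 0} x_{n+1}^b \partial_{x_{n+1}}(\Delta_b)^{\floor{\gamma}} u = c_{n,\gamma}(-\Delta')^{\gamma} f$ is \eqref{eq:boundary_higher_m}. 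The second order systems formulation \eqref{eq:WUCP_syst} is obtained by simply defining $u_j := (\Delta_b)^j u$: then $\Delta_b u_j = u_{j+1}$ for $j < m = \floor{\gamma}$ by definition, $\Delta_b u_m = 0$ is a restatement of the bulk equation, and the boundary conditions translate verbatim.

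There is no real obstacle: the content of the proposition is essentially a dictionary between the scalar higher-order formulation and the second-order systems formulation, with both endpoints already established. The only computation not already done verbatim in the preceding lemmas is the weighted $L^2$ bound on $u$ itself, which amounts to the convergence of $\int_0^\infty z^b |\phi_\gamma(z)|^2 dz$ — a direct consequence of the known asymptotics of $\phi_\gamma$.
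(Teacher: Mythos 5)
Your proposal takes the same route the paper does: Proposition \ref{prop:H2gamma} carries no separate proof in the paper, being introduced with ``We summarise the results from this section,'' and the intended argument is precisely the one you give, namely reading off the bulk equation from Lemma~\ref{lem:bulk_higher} and the five boundary conditions from \eqref{eq:boundary_higher_frac}, \eqref{eq:boundary_higher}, \eqref{eq:boundary_higher_m}, with \eqref{eq:WUCP_syst} being a relabelling. You also correctly read the weight exponent in the displayed membership as $b=1-2\gamma+2\floor\gamma$; the exponent $1+2\gamma-2\floor\gamma$ in the statement is the opposite sign of the $b$ used everywhere else (e.g.\ in Lemma~\ref{lem:bulk_higher}, Corollary~\ref{cor:eq_sing}, and Proposition~\ref{prop:H2gamma1a}), and your reading is the consistent one.

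The one substantive step you supply that is not already in the cited lemmas --- the zero-order weighted bound $\|x_{n+1}^{b/2}u\|_{L^2(\R^{n+1}_+)}<\infty$ --- is not quite right as stated. Your Plancherel computation correctly reduces this to the finiteness of $\int_{\R^n}|\hat f(\xi)|^2|\xi|^{-b-1}\,d\xi$, i.e.\ $\|f\|_{\dot H^{-\sigma}(\R^n)}$ with $\sigma=(1+b)/2=1-(\gamma-\floor\gamma)\in(0,1)$. But a negative-order \emph{homogeneous} Sobolev norm is not controlled by any inhomogeneous $H^{2\gamma}$ norm via a low-/high-frequency split: the low-frequency part $\int_{|\xi|<1}|\hat f(\xi)|^2|\xi|^{-2\sigma}d\xi$ requires $n>2\sigma=2-2(\gamma-\floor\gamma)$, which fails for $n=1$ and $\gamma-\floor\gamma\le 1/2$ (and more generally fails for $f$ with $\hat f(0)\neq 0$ when $n\le 2\sigma$). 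This matches the familiar fact that the Caffarelli--Silvestre extension is naturally a $\dot H^1(\R^{n+1}_+,x_{n+1}^{b})$ object, not an $H^1$ one. The clean fix is to state membership as $\dot H^1(\R^{n+1}_+,x_{n+1}^b)$ (which is what Corollary~\ref{cor:eq_sing} actually gives) together with $L^2_{loc}(\R^{n+1}_+,x_{n+1}^b)$, i.e.\ $H^1_{loc}(\R^{n+1}_+,x_{n+1}^b)$, exactly as in the variable-coefficient Proposition~\ref{prop:H2gamma1a}; the local $L^2$ bound then follows by a Hardy/Poincar\'e inequality on bounded sets, and $H^1_{loc}$ is all that the subsequent unique continuation arguments (Propositions~\ref{prop:doubling}, \ref{prop:limit}, \ref{prop:SUCP_inf_order}) use.
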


\subsection{The variable coefficient setting -- characterisation through a system of degenerate elliptic equations}
\label{sec:var_coef}

In this section, we derive analogous results to Proposition ~\ref{prop:H2gamma} in the presence of variable coefficients, i.e. we are now concerned with the operator $L^\gamma$, where $L=-\nabla \cdot \tilde{a}^{ij} \nabla$ and the coefficients $\tilde{a}^{ij}$ satisfy the conditions stated in \hyperref[cond:a1]{(A1)}-\hyperref[cond:a3]{(A3)} with $\mu = 2\floor{\gamma}$.
In contrast to the previous argument in which the Fourier transform diagonalised the tangential operator, we here rely on a spectral decomposition. We argue analogously as in \cite{RonSti16} and thus only present the arguments formally. For convenience of notation we set
\begin{align*}
L_b:= x_{n+1}^{-b}(\p_{x_{n+1}}x_{n+1}^{b} \p_{x_{n+1}} - x_{n+1}^b L),
\end{align*}
where $L$ is as above and $b=1-2\gamma+2\floor{\gamma}$. 

To this end, we recall that for the self-adjoint, positive operator $L$ we can carry out a spectral decomposition and obtain a unique associated resolution of the identity which is supported on the spectrum of $L$ with
\begin{align*}
( Lf, g )_{L^2(\R^n)} = \int\limits_{0}^{\infty} \lambda d E_{f,g}(\lambda) \mbox{ for all } f\in \mbox{Dom}(L), \ g \in L^2(\Omega). 
\end{align*}
Based on this we can define the action of the heat semigroup and of the fractional powers of $L$:
\begin{align*}
(e^{-tL}f,g)_{L^2(\R^n)}&= \int\limits_{0}^{\infty} e^{-t \lambda} d E_{f,g}(\lambda), \ f,g \in L^2(\Omega), \ t\geq 0, \\
(L^{\gamma} f,g)_{L^2(\R^n)}&= \int\limits_{0}^{\infty} \lambda^{\gamma} d E_{f,g}(\lambda), \ f\in \Dom(L^{\gamma}),g \in L^2(\Omega), \ t\geq 0,
\end{align*}
where $\Dom(L^{\gamma}):=\{f\in L^2(\Omega): \ \int\limits_{0}^{\infty} \lambda^{2\gamma} dE_{f,f}(\lambda)<\infty\}$.

Our main result in the context of variable coefficients mirrors the statement of the constant coefficient case:

\begin{prop}
\label{prop:H2gamma1}
Let $\gamma>0$ and let $f\in \Dom(L^{\gamma})$. Then the function 
\begin{align*}
u(x',x_{n+1}):= c_{\gamma} x_{n+1}^{2\gamma} \int\limits_{0}^{\infty} e^{-tL}f(x') e^{-\frac{x_{n+1}^{2}}{4t}} \frac{dt}{t^{1+\gamma}} \in C^{2\floor{\gamma}+2,1}_{loc}(\R^n \times (0,\infty))
\end{align*}
is a solution to the scalar higher order problem
\eqref{eq:higher_order_scalar}.
The function $u(x',x_{n+1})$ can also be represented as
\begin{align*}
u(x',x_{n+1}):= \tilde{c}_{\gamma}\int\limits_{0}^{\infty} e^{-t L} L^{\gamma} f(x') e^{-\frac{x_{n+1}^2}{4t}} \frac{dt}{t^{1-\gamma}}.
\end{align*}
Setting $u_0(x',x_{n+1}):=u(x',x_{n+1})$ and defining the functions $u_{j+1}(x',x_{n+1})=L_b u_{j}(x',x_{n+1})$ for $j\in\{1,\dots,\floor{\gamma}\}$, allows one to rewrite \eqref{eq:higher_order_scalar} as the the system \eqref{eq:WUCP_syst1},
where $m=\floor{\gamma}$. 
All boundary conditions hold in an $L^2(\R^n)$ sense. 
\end{prop}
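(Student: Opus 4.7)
The plan is to mirror the constant coefficient argument of Section \ref{sec:const_coef} by replacing the tangential Fourier transform with the spectral decomposition of $L$. The key observation is that, formally, on each spectral subspace $dE_{f,f}(\lambda)$ the operator $L_b$ acts as $x_{n+1}^{-b}\p_{x_{n+1}}x_{n+1}^b\p_{x_{n+1}} - \lambda$, which is precisely the Bessel-type operator studied in Section \ref{sec:const_coef}. Thus it is natural to represent the extension as $u(x',x_{n+1})=\int_0^\infty \phi_\gamma(\sqrt{\lambda}\,x_{n+1})\,dE_f(\lambda)\,f(x')$ (interpreted through the functional calculus), where $\phi_\gamma$ is the modified Bessel function from Lemma~\ref{lem:frac_higher}.

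First I would verify that the two integral representations in the statement coincide with this spectral representation. Using the classical subordination identity
\begin{align*}
\phi_\gamma(\sqrt{\lambda}\,x_{n+1})=c_\gamma x_{n+1}^{2\gamma}\int_0^\infty e^{-t\lambda}e^{-\frac{x_{n+1}^2}{4t}}\frac{dt}{t^{1+\gamma}}=\tilde c_\gamma \lambda^\gamma\int_0^\infty e^{-t\lambda}e^{-\frac{x_{n+1}^2}{4t}}\frac{dt}{t^{1-\gamma}},
\end{align*}
and interchanging integration with the spectral measure (which is justified by Fubini and the fact that $f\in\Dom(L^\gamma)$, together with the standard heat semigroup bounds $\|e^{-tL}\|_{L^2\to L^2}\leq 1$), both formulas for $u$ reduce to the same spectral representation. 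Regularity of $u$ in $\R^n\times(0,\infty)$ follows from the parabolic regularity of $e^{-tL}$ under the assumed regularity of $\tilde a$: interior Schauder-type estimates on the heat equation give the claimed $C^{2\floor{\gamma}+2,1}_{loc}$ smoothness after differentiating under the integral sign (the Gaussian factor provides decay in $t$ near $0$).

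Next I would verify the bulk equation $L_b^{\floor{\gamma}+1}u=0$. Differentiating the spectral representation, one checks that $\phi_\gamma(\sqrt{\lambda}\,x_{n+1})$ satisfies
\begin{align*}
\big(x_{n+1}^{-b}\p_{x_{n+1}}x_{n+1}^b\p_{x_{n+1}}-\lambda\big)\phi_\gamma(\sqrt{\lambda}\,x_{n+1})=0 \quad\text{in}\quad (0,\infty),
\end{align*}
by the Bessel ODE computation already used in Lemma~\ref{lem:frac_higher}. Integrating against $dE_f(\lambda)$ yields $L_b u=-\int_0^\infty \lambda\,\phi_\gamma(\sqrt{\lambda}\,x_{n+1})\,dE_f(\lambda)$, and iterating as in Lemma~\ref{lem:bulk_higher}, the change of Bessel index under $L_b$ corresponds spectrally to multiplication by $\lambda$ combined with the recurrence $\p_{x_{n+1}}(t^s K_s(t))=c_s t^s K_{s-1}(t)$. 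Iterating $\floor{\gamma}+1$ times produces $\phi_{-1}$ type data and, equivalently, kills the solution, giving $L_b^{\floor{\gamma}+1}u=0$. Setting $u_j:=L_b^j u$, the recurrence shows $u_j=c_{n,\gamma,j}\int_0^\infty \lambda^j \phi_{\gamma-j}(\sqrt{\lambda}\,x_{n+1})\,dE_f(\lambda)$, so the system \eqref{eq:WUCP_syst1} is satisfied by construction.

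For the boundary conditions I would use the asymptotics $\phi_s(t)\to c_s$ as $t\to 0$ for $s>0$ and the recurrence $\p_{x_{n+1}}\phi_s(\sqrt{\lambda}\,x_{n+1})=c_s\lambda\,x_{n+1}\phi_{s-1}(\sqrt{\lambda}\,x_{n+1})$. For $j\in\{0,\dots,\floor{\gamma}\}$ the spectral representation gives, in the limit $x_{n+1}\to 0$ and via dominated convergence on the spectral side (justified by $f\in\Dom(L^j)$), $u_j(\cdot,x_{n+1})\to c_{n,\gamma,j}L^j f$ in $L^2(\R^n)$. For the Neumann conditions, the factor $x_{n+1}^b\p_{x_{n+1}}$ applied to $\phi_{\gamma-j}(\sqrt{\lambda}\,x_{n+1})$ produces a factor $x_{n+1}^{b+1}\cdot \lambda\,\phi_{\gamma-j-1}(\sqrt{\lambda}\,x_{n+1})$, whose pointwise limit as $x_{n+1}\to 0$ vanishes for $j<\floor{\gamma}$ (as $b+1>0$ and $\phi_{\gamma-j-1}$ is bounded near $0$ when $\gamma-j-1>0$), while at $j=\floor{\gamma}$ one has $\gamma-\floor{\gamma}-1\in(-1,0)$ so that $\phi_{\gamma-\floor{\gamma}-1}(t)\sim c t^{2(\gamma-\floor{\gamma})-2}$, and the prefactor $x_{n+1}^b\cdot x_{n+1}\cdot x_{n+1}^{2(\gamma-\floor{\gamma})-2}=x_{n+1}^0$ survives to produce the spectral multiplier $\lambda^\gamma$, i.e.\ $c_{n,\gamma}L^\gamma f$.

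The main obstacle will be making the spectral calculus fully rigorous under the sole assumption $f\in\Dom(L^\gamma)$ (rather than in, say, $\Dom(L^{\floor{\gamma}+1})$): one must justify the various interchanges of the spectral integral with derivatives and limits in $x_{n+1}$ using only $\int_0^\infty(1+\lambda)^{2\gamma}\,dE_{f,f}(\lambda)<\infty$, and argue that the $L^2$ convergence of the boundary traces holds up to the critical index $j=\floor{\gamma}$. This essentially requires bounds of the form $|\lambda^j\phi_{\gamma-j}(t)|\lesssim (1+\lambda)^\gamma g(t)$ with $g\in L^2_{x_{n+1}}$-compatible decay, which follow from the Bessel function asymptotics but need careful bookkeeping. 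All other steps are direct spectral analogues of the constant coefficient arguments in Lemmas \ref{lem:bulk_higher} and \ref{lem:Neumann_higher}.
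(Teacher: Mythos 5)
Your proposal follows essentially the same approach as the paper: both rely on the spectral decomposition of $L$ together with the one-dimensional Bessel analysis already developed for the constant coefficient case in Section \ref{sec:const_coef}. The paper works with the heat-kernel subordination formula in physical space and verifies the boundary limits and the bulk system by testing against $g$ and changing variables in the $t$-integral, whereas you phrase the same computation as the spectral multiplier $\phi_\gamma(\sqrt{\lambda}\,x_{n+1})$; the subordination identity you state shows these are two descriptions of the same object, so the content is identical.

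One point in your write-up is not quite right: the claim that the bulk equation $L_b^{\floor{\gamma}+1}u=0$ holds because iterating ``produces $\phi_{-1}$ type data and, equivalently, kills the solution.'' That is not the mechanism. After $\floor{\gamma}$ applications of $L_b$, the function $w_{\floor{\gamma}}=L_b^{\floor{\gamma}}u$ has spectral multiplier proportional to $\lambda^{\floor{\gamma}}\phi_{s}(\sqrt{\lambda}\,x_{n+1})$ with $s=\gamma-\floor{\gamma}$, and $\phi_s$ is precisely the Bessel function annihilated by the ODE $\phi_s''+\frac{b}{t}\phi_s'-\phi_s=0$ with $b=1-2s$, i.e.\ by the operator $L_b$ itself. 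Equivalently, in Lemma \ref{lem:bulk_higher} one has $L_bw_k=\frac{2\floor{\gamma}-2k}{x_{n+1}}\p_{x_{n+1}}w_k$, and the prefactor $2\floor{\gamma}-2k$ vanishes at $k=\floor{\gamma}$, which directly gives $L_bw_{\floor{\gamma}}=0$; no object of the form $\phi_{\gamma-\floor{\gamma}-1}$ ever appears in the bulk iteration. The negative-index Bessel function $\phi_{\gamma-\floor{\gamma}-1}$ enters only in the Neumann-data computation (as you correctly describe there), not in the derivation of the bulk equation.
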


\begin{rmk}
\label{rmk:Sob_reg}
With the systems representation being established, we also obtain that $u_k \in L^{2}_{loc}(\R^{n+1}_+,x_{n+1}^b)$. Then, direct energy estimates also yield that $u_k\in H^{1}_{loc}(\R^{n+1}_+, x_{n+1}^b)$ for all $k\in\{1,\dots,\floor{\gamma}\}$, if $f\in \Dom(L^{\gamma})$.  
\end{rmk}

\begin{proof}
The fact that $u(x',x_{n+1})=c_{\gamma} x_{n+1}^{2\gamma} \int\limits_{0}^{\infty} e^{-tL}f(x') e^{-\frac{x_{n+1}^{2}}{4t}} \frac{dt}{t^{1+\gamma}}$ solves the equation
\begin{align*}
\p_{x_{n+1}} x_{n+1}^{1-2\gamma} \p_{x_{n+1}} u -x_{n+1}^{1-2\gamma} L u &= 0 \mbox{ in } \R^{n+1}_+,\\
\lim\limits_{x_{n+1}\rightarrow 0} u & = f \mbox{ on } \R^{n}\times \{0\}.
\end{align*}
follows as in \cite{RonSti16, StingaTorrea10}. By interior elliptic regularity estimates and the assumed coefficient regularity, this also implies the claimed regularity result.

We discuss the attainment of the Dirichlet boundary conditions for the function $u$: To this end, we observe that
\begin{align*}
(u(\cdot, x_{n+1}),g(\cdot))
&=c_\gamma \int\limits_{0}^{\infty} (e^{-t L} f, g) \left( \frac{x_{n+1}^2}{t}\right)^{\gamma} e^{- \frac{x_{n+1}^2}{4t}} \frac{dt}{t}\\
& =c_\gamma \int\limits_{0}^{\infty} \int\limits_{0}^{\infty} e^{-t \lambda} \left( \frac{x_{n+1}^2}{t}\right)^{\gamma} e^{- \frac{x_{n+1}^2}{4t}} \frac{dt}{t} dE_{f,g}(\lambda)\\
& = -c_\gamma\int\limits_{0}^{\infty} \int\limits_{0}^{\infty} e^{- \lambda \frac{x_{n+1}^2}{z}} z^{\gamma} e^{- \frac z 4} \frac{dz}{z} dE_{f,g}(\lambda)
\end{align*}
Here we used the change of coordinates $z= \frac{x_{n+1}^2}{t}$. Thus, passing to the limit $x_{n+1}\rightarrow 0$ dominated convergence yields the claimed result.

Next, we seek to show that the function $u$ also satisfies the higher order equation \eqref{eq:higher_order_scalar} and the system  \eqref{eq:WUCP_syst1}.
To this end, we set $w_k(x',x_{n+1}):=L_b^k u(x',x_{n+1}) $ for $k\in \{0,\dots,\floor{\gamma}\}$. We claim that these functions solve the system \eqref{eq:WUCP_syst1}, where all boundary conditions hold in an $L^2$ sense. 

In order to observe this, analogously as in Lemma ~\ref{lem:bulk_higher}, we infer that the functions $w_k$ solve the bulk equation
\begin{align*}
(\p_{x_{n+1}}^2 + \frac{1-2\gamma +2k}{x_{n+1}} \p_{x_{n+1}} -L)w_k = 0 \mbox{ in } \R^{n+1}_+,
\end{align*}
whence
\begin{align*}
L_b w_k = \frac{c_{\gamma,k}}{x_{n+1}}\p_{x_{n+1}} w_k \mbox{ in } \R^{n+1}_+.
\end{align*}
We claim that 
\begin{align}
\label{eq:w_k}
\begin{split}
w_k(x',x_{n+1}) 
&= c_{\gamma,k} x_{n+1}^{2\gamma-2k}\int\limits_{0}^{\infty}e^{-t L} L^k f(x') e^{-\frac{x_{n+1}^2}{4t}} \frac{dt}{t^{1+\gamma-k}}\\
& = \tilde{c}_{\gamma,k} \int\limits_{0}^{\infty} e^{-t L} L^{\gamma} f(x') e^{-\frac{x_{n+1}^2}{4t}} \frac{dt}{t^{1-\gamma+k}}.
\end{split}
\end{align}
The latter in particular also shows the equivalent representation for $u$. 
We obtain the first representation for $w_k$ by induction and the following computation
\begin{align*}
(w_{k+1}(\cdot, x_{n+1}),g(\cdot)) 
&= (L_b w_k(\cdot, x_{n+1}),g(\cdot)) = \left(\frac{c_{\gamma,k}}{x_{n+1}}\p_{x_{n+1}} w_k(\cdot, x_{n+1}), g(\cdot) \right)\\
&= c_{\gamma,k} \int\limits_{0}^{\infty} (e^{-tL}L^k f, g) \frac{1}{x_{n+1}}\p_{x_{n+1}}\left( x_{n+1}^{2(\gamma-k)} e^{-\frac{x_{n+1}^2}{4t}} \right) \frac{dt}{t^{1+\gamma-k}}\\
&= c_{\gamma,k} \int\limits_{0}^{\infty} (e^{-tL}L^k f, g) x_{n+1}^{2\gamma-2k-2}\left( \frac{2(\gamma-k)}{t^{1+\gamma-k}} - \frac{x_{n+1}^{2}}{2t^{2+\gamma-k}}\right) e^{-\frac{x_{n+1}^2}{4t}}  dt\\
&= 2c_{\gamma,k}x_{n+1}^{2\gamma-2k-2} \int\limits_{0}^{\infty} (e^{-tL}L^k f, g) \p_{t}\left( t^{k-\gamma} e^{-\frac{x_{n+1}^2}{4t}} \right) dt\\
& = c_{\gamma,k+1}x_{n+1}^{2\gamma-2k-2} \int\limits_{0}^{\infty} (e^{-tL}L^{k+1}f,g) e^{-\frac{x_{n+1}^2}{4t}} \frac{dt}{t^{\gamma-k}}.
\end{align*}
Together with the representation for $u$ this yields the first identity in \eqref{eq:w_k}.
Arguing along the same lines as above (where the argument is detailed for $u$), this also immediately implies the claim on the Dirichlet data. 

We next deduce the alternative characterisation of $w_{k}(x',x_{n+1})$: We have
\begin{align*}
(w_{k}(\cdot, x_{n+1}),g(\cdot))
 & =c_{\gamma,k} \int\limits_{0}^{\infty} \int\limits_{0}^{\infty} e^{-t\lambda} \lambda^k \left( \frac{x_{n+1}^2}{t} \right)^{\gamma-k} e^{- \frac{x_{n+1}^2}{4t}} \frac{dt}{t} d E_{f,g}(\lambda)\\
 &= \tilde c_{\gamma,k}\int\limits_{0}^{\infty} \int\limits_{0}^{\infty} e^{-r \lambda} \lambda^k (r \lambda)^{\gamma-k} e^{-\frac{x_{n+1}^2}{4r}} \frac{dr}{r} d E_{f,g}(\lambda)\\
 & = \tilde c_{\gamma,k} \int\limits_{0}^{\infty} (e^{-r L} L^{\gamma} f, g) e^{-\frac{x_{n+1}^2}{4r}} \frac{dr}{r^{1-\gamma+k}}.
\end{align*}
Here we used the change of coordinates $r=\frac{x_{n+1}^2}{4t\lambda}$.

It remains to discuss the Neumann data:
\begin{align*}
(\p_{n+1} w_k(\cdot, x_{n+1}),g(\cdot))
& = -c_{\gamma,k} \int\limits_{0}^{\infty} (e^{-t L} L^{\gamma} f, g) r^{\gamma-k-1} \frac{x_{n+1}}{2} e^{-\frac{x_{n+1}^2}{4r}} \frac{dr}{r}.
\end{align*}
If $\gamma-k>1$, dominated convergence allows us to infer
\begin{align*}
&\lim\limits_{x_{n+1}\rightarrow 0}(x_{n+1}^{1-2\gamma+2\floor{\gamma}}\p_{n+1} w_k(\cdot, x_{n+1}),g(\cdot))\\
& = -\frac{c_{\gamma,k}}{2}\lim\limits_{x_{n+1}\rightarrow 0} x_{n+1}^{2-2\gamma+2\floor{\gamma}} \int\limits_{0}^{\infty} (e^{-t L} L^{\gamma} f, g) r^{\gamma-k-1}  e^{-\frac{x_{n+1}^2}{4r}} \frac{dr}{r} =0,
\end{align*}
as the $r$ powers are still integrable in zero.

For $\gamma-\floor{\gamma}<1$, we argue similarly as in \cite{StingaTorrea10}:
\begin{align*}
(x_{n+1}^{1-2\gamma+2\floor{\gamma}}\p_{n+1} w_k(\cdot, x_{n+1}),g(\cdot))
& = \tilde c_{\gamma,k} \int\limits_{0}^{\infty} \int\limits_{0}^{\infty} e^{-r \lambda} \lambda^{\gamma} r^{\gamma-k-1} x_{n+1}^{2-2\gamma+2\floor{\gamma}} e^{- \frac{x_{n+1}^2}{4r}} \frac{dr}{r} d E_{f,g}(\lambda)\\
& =\tilde c_{\gamma,k} \int\limits_{0}^{\infty}\int\limits_{0}^{\infty} e^{-\frac{z}{x_{n+1}}\lambda}\lambda^{\gamma} z^{1-\gamma+\floor{\gamma}} e^{-\frac{z}{4}} \frac{dz}{z} dE_{f,g}(\lambda).
\end{align*}
Here we used the change of coordinates $z = \frac{x_{n+1}^2}{r}$. By dominated convergence we may again pass to the limit and obtain 
\begin{align*}
\lim\limits_{x_{n+1}\rightarrow 0} (x_{n+1}^{1-2\gamma+2\floor{\gamma}}\p_{n+1} w_k(\cdot, x_{n+1}),g(\cdot)) = \tilde c_{\gamma,k} \int\limits_{0}^{\infty} z^{1-\gamma+\floor{\gamma}} e^{-z} \frac{dz}{z} (L^{\gamma}f,g).
\end{align*}
This concludes the argument.
\end{proof}

\bibliographystyle{alpha}
\bibliography{citationsHT}

\end{document}